\newtheorem{theorem}{Theorem}[section]
\newtheorem{corollary}[theorem]{Corollary}
\newtheorem{lemma}[theorem]{Lemma}
\newtheorem{proposition}[theorem]{Proposition}
\newtheorem{conjecture}[theorem]{Conjecture}
\newtheorem{problem}[theorem]{Problem}
\theoremstyle{definition}
\newtheorem{definition}[theorem]{Definition}
\newtheorem{remark}[theorem]{Remark}
\newtheorem{example}[theorem]{Example}
\newcommand{\dimv}{\underline{\dim}}
\title[Moduli of Representations of Quivers]{Moduli of Representations of Quivers}
\author[M. Reineke]{Markus Reineke}
\begin{document}

\begin{abstract}
An introduction to moduli spaces of representations of quivers is given, and results on their global geometric properties are surveyed. In particular, the geometric approach to the problem of classification of quiver representations is motivated, and the construction of moduli spaces is reviewed. Topological, arithmetic and algebraic methods for the study of moduli spaces are discussed.
\end{abstract}

\begin{classification} Primary 16G20, secondary 14D20, 14L24
\end{classification}

\begin{keywords} Representations of quivers, invariant theory, moduli spaces, stability, Betti numbers, localization, rational points, Hall algebras, Hilbert schemes
\end{keywords}

\maketitle

\section{Introduction}\label{introduction}

One of the fundamental problems in the representation theory of algebras is the classification up to isomorphism of the finite-dimensional representations of a given algebra. But ``most'' algebras are wild, in the sense that the problem of classification of their representations is as difficult as the classification of representations of free algebras, or of any wild quiver. This is sometimes referred to as a ``hopeless'', or ``impossible'' problem (see however e.g. \cite{GDP,MFK} for more optimistic opinions). The main obstacle in this case is the dependence of the isomorphism classes of representations on arbitrarily many continuous parameters, to which many of the classical tools of the representation theory of algebras do not apply.\\[1ex]
Nevertheless, one can approach the classification problem geometrically, by ``materializing'' the continuous parametrization phenomena
in spaces whose points correspond naturally to isomorphism classes of representations -- these are the moduli spaces we consider in the present paper. We will focus on global qualitative geometric properties of such spaces, to hopefully derive a qualitative understanding of the classification problem.\\[1ex]
The first aim of this paper is to motivate the geometric approach to the classification problem, the definition of the moduli spaces, and in particular the notion of stability of representations. To this end, we first collect some very basic observations and examples in section \ref{theclass}. We recall the basic concepts of Geometric Invariant Theory as in \cite{Mukai}, and use them to construct the moduli spaces of stable representations, following \cite{King}, in section \ref{defmoduli}. In section \ref{aas}, we discuss the purely algebraic aspects of the notion of stability, following \cite{F,HN,Ru}. In this latter section, complete proofs are given.\\[1ex]
Our second aim is to discuss the kind of questions one can ask about the moduli spaces. We first collect several of their geometric properties, mainly following work of H. Derksen, A. King, L. Le Bruyn, A, Schofield, M. Van den Bergh, J. Weyman and others, in section \ref{further}. The notion (and utility) of cell decompositions is discussed in section \ref{sectbettigross}, as a motivation for the topological and arithmetic considerations in all following sections.\\[1ex]
Based on this, our third main theme is the discussion of the methods and results of the author's (and others) work \cite{ERSM,MR,RHNS,RUse,RFQM,RCNH,RCRP,RLoc,RW}. We focus on the (essentially easy) methods which are used in these papers, namely torus localization in section \ref{loc}, counting points over finite fields in section \ref{arithm}, Hall algebras in section \ref{rolehall}, and framings of moduli in section \ref{sm}. As applications of these techniques, the main results of the above papers are stated, together with indications of the proofs. Several conjectures, open questions and potential directions for further research are discussed.

\section{The classification problem for representations of quivers}\label{theclass}

\subsection{Quivers and their representations}

We start by fixing some standard notation for quivers and their representations (for all basic notions of the representation theory of quivers and of finite dimensional algebras, we refer to \cite{ASS,ARS}). Let $Q$ be a finite quiver, possibly with oriented cycles, which is given by a finite set of vertices $I$ and a finite set of arrows $Q_1$. The arrows will be denoted by $(\alpha:i\rightarrow j)\in Q_1$. We denote by ${\bf Z}I$ the free abelian group generated by $I$. An element $d\in{\bf Z}I$ will be written as $d=\sum_{i\in I}d_ii$. On ${\bf Z}I$, we have the Euler form of $Q$, a non-symmetric bilinear form, defined by
$$\langle d,e\rangle=\sum_{i\in I}d_ie_i-\sum_{(\alpha:i\rightarrow j)\in Q_1}d_ie_j$$
for $d,e\in{\bf Z}I$. We denote by $\dim d=\sum_{i\in I}d_i$ the total dimension of $d$. Standard examples of quivers in this paper will be:
\begin{itemize}
\item the $m$-loop quiver $L_m$ with $I=\{i\}$ and $Q_1=\{(\alpha_1,\ldots,\alpha_m:i\rightarrow i)\}$,
\item the $m$-arrow generalized Kronecker quiver $K_m$ with $I=\{i,j\}$ and $Q_1=\{(\alpha_1,\ldots,\alpha_m:i\rightarrow j)\}$,
\item the $m$-subspace quiver $S_m$ with $I=\{i_1,\ldots,i_m,j\}$ and $Q_1=\{(\alpha_k:i_k\rightarrow j)\, :\, k=1,\ldots,m\}$.
\end{itemize}

We fix an algebraically closed field of characteristic $0$. Let ${\rm mod}\, kQ$ be the abelian category of finite-dimensional representations of $Q$ over $k$ (or, equivalently, finite dimensional representations of the path algebra $kQ$). Its objects are thus given by tuples $$M=((M_i)_{i\in I},(M_\alpha:M_i\rightarrow M_j)_{\alpha:i\rightarrow j})$$ of finite dimensional $k$-vector spaces and $k$-linear maps between them. The dimension vector $\dimv M\in{\bf N}I$ is defined as $\dimv M=\sum_{i\in I}\dim_kM_ii$. We denote by ${\rm Hom}(M,N)$ (resp.~${\rm Ext}^1(M,N)$) the $k$-vector space of homomorphisms (resp. extension classes) between two representations $M,N\in{\rm mod}\, kQ$ (all higher ${\rm Ext}$'s vanish since the category ${\rm mod}\, kQ$ is hereditary).

\subsection{The classification problem}

The basic problem in the representation theory of $Q$ is the following:

\begin{problem}\label{basicproblem} Classify the representations in ${\rm mod}\, kQ$ up to isomorphism, and give normal forms for the representations.
\end{problem}

This problem is solved in the case of Dynkin quivers and extended Dynkin quivers (i.e.~the unoriented graph underlying $Q$ is a disjoint union of Dynkin graphs of type $A_n$, $D_n$, $E_6$, $E_7$ or $E_8$ or their extended versions $\widetilde{A}_n$, $\widetilde{D}_n$, $\widetilde{E}_6$, $\widetilde{E}_7$ or $\widetilde{E}_8$) in \cite{Gabriel} and \cite{DF,Nazarova}, respectively (see also \cite{ASS,SS1}). By the Krull-Schmidt theorem, the problem is reduced to the classification of indecomposable representations. These are classified by a discrete parameter (their dimension vector, which is a positive root for the associated root system), together with at most one continuous parameter. The precise statement will be given in section \ref{kt} in the form of Kac's theorem.\\[1ex]
For all other quivers, called wild, the classification problem is unsolved \cite[Chapters XVII and XIX]{SS2} (see however \cite{Ker} for the study of discrete aspects of the problem, i.e.~the Auslander-Reiten theory of wild quivers ). The main focus of the present paper lies on approaching the classification problem with geometric methods, and on trying to understand its nature qualitatively.\\[1ex]
Even the very statement of Problem \ref{basicproblem} leaves much room for interpretation: what could be the nature of a solution of the classification problem? By what kind of object, and by how explicit an object, should the representations be classified?\\[1ex]
For example, is it a solution to parametrize a certain class of representations by the points of some algebraic variety which (at least in principle) can be described by explicit defining (in-)equalities in some projective space? This is what moduli spaces of representations can achive (at least for the class of stable representations defined in section \ref{aas}), see Corollary \ref{corstable}.\\[1ex]
And what is a normal form for a representation? It could mean a recipe for producing a representative of an isomorphism class of a representation $M$ (i.e.~describing the matrices $M_\alpha$ representing the arrows $\alpha$ of $Q$) from a certain set of discrete and continuous invariants. But should these invariants determine $M$ uniquely? And might there be different set of invariants producing isomorphic representations? This is unavoidable even in trivial examples, see section \ref{exdiag} below. Or does an algorithm for constructing representations explicitely (like the reflection functors of \cite{BGP} in the case of Dynkin quivers) qualify as a normal form?\\[1ex]
In section \ref{hilbert}, we will see that it is indeed possible -- in a rather direct combinatorial way -- to obtain a classification together with explicit normal forms for a problem closely related to Problem \ref{basicproblem}, namely the classification of representations of quivers together with a fixed presentation as a quotient of a given finitely generated projective representation.


\subsection{Kac's Theorem}\label{kt}

We will now recall Kac's theorem, which shows that for wild quivers, the classification problem for (indecomposable) representations depends on arbitrarily many continuous parameters. This gives an explanation for the essentially different nature of the classification problem for wild quivers. For more details on the theorem and its proof, see \cite{Kac1,Kac15,KRi}.\\[1ex]
Let $Q$ be a quiver without oriented cycles. We denote by $$(d,e)=\langle d,e\rangle-\langle e,d\rangle$$ the symmetrization of the Euler form of $Q$. On ${\bf Z}I$, we define reflections $s_i$ for $i\in I$ by $$s_i(d)=d-(d,i)i,$$ and we define the Weyl group $W(Q)$ as the subgroup of ${\rm GL}({\bf Z}I)$ generated by the $s_i$ for $i\in I$. The fundamental domain $F(Q)$ is defined as the set of all non-zero dimension vectors $0\not=d\in{\bf N}I$ with connected support (i.e.~the full subquiver with set of vertices ${\rm supp}(d)=\{i\in I,\; d_i\not=0\}$ is connected), such that $(d,i)\leq 0$ for all $i\in I$. The set of real roots $$\Delta^{\rm re}(Q)=W(Q)I\subset{\bf Z}I$$ is defined as the set of all Weyl group translates of coordinate vectors, and the set of (positive) imaginary roots $$\Delta^{\rm im}(Q)=W(Q)F(Q)$$ is defined as the set of all Weyl group translates of elements of the fundamental domain. Finally, we define $\Delta(Q)$ as the union of both sets of roots.

\begin{theorem} There exists an indecomposable representation of $Q$ of dimension vector $d\in{\bf N}I$ if and only if $d$ is a root, i.e.~$d\in\Delta(Q)$. In case $d\in\Delta^{\rm re}(Q)$, there exists a unique indecomposable (up to isomorphism) of dimension vector $d$.
In case $d\in\Delta^{\rm im}(Q)$, the number of parameters of the set of indecomposable representations is $1-\langle d,d\rangle$. \end{theorem}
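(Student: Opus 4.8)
The plan is to treat the three assertions by first exploiting the Weyl group symmetry through reflection functors, reducing all dimension vectors to the fundamental domain, and then handling the remaining global count arithmetically. I would begin with the real root case and the uniqueness statement, using the Bernstein--Gelfand--Ponomarev reflection functors \cite{BGP}. Since $Q$ has no oriented cycles, every vertex can be made a sink or a source after a suitable admissible reordering. For a sink $i$, the functor $S_i^+$ restricts to an equivalence between the full subcategory of ${\rm mod}\, kQ$ of representations without a direct summand isomorphic to the simple representation $S_i$ at $i$, and the analogous subcategory for the quiver with $i$ turned into a source; on dimension vectors it acts by the reflection $s_i$ whenever $s_i(\dimv M)$ is again a dimension vector. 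In particular it preserves indecomposability and is compatible with $s_i$. For a real root $d=w(i)\in\Delta^{\rm re}(Q)$ I would fix a reduced expression for $w$ and apply the corresponding composite of reflection functors, reducing $M$ to the simple $S_i$, which is manifestly the unique indecomposable of dimension vector $i$. Tracking the equivalence backwards yields both existence and uniqueness of the indecomposable of dimension vector $d$.

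For the imaginary case the same functors, interpreted as $s_i$-equivariant bijections on isomorphism classes of indecomposables, reduce the problem --- since $\langle\cdot,\cdot\rangle$ restricted to the diagonal is $W(Q)$-invariant and $\Delta^{\rm im}(Q)=W(Q)F(Q)$ --- to a dimension vector $d$ lying in the fundamental domain $F(Q)$, where $\langle d,d\rangle\leq 0$. The heuristic for the number of parameters is the orbit count on the representation variety $R_d(Q)=\bigoplus_{(\alpha:i\to j)\in Q_1}\Hom(k^{d_i},k^{d_j})$, on which $G_d=\prod_{i\in I}{\rm GL}_{d_i}(k)$ acts with isomorphism classes as orbits and with the scalars ${\bf G}_m$ acting trivially. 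Since $\dim R_d-\dim G_d=\sum_{(\alpha:i\to j)}d_id_j-\sum_{i}d_i^2=-\langle d,d\rangle$, the expected dimension of the family of indecomposables modulo isomorphism is $\dim R_d-\dim G_d+1=1-\langle d,d\rangle$, which is $\geq 1$ precisely in the imaginary range.

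The hard part is to turn this heuristic into a theorem, i.e.~to show that the indecomposable locus in $R_d(Q)$ is a nonempty constructible set of the predicted dimension. Following Kac \cite{Kac1,Kac15}, I would count over finite fields: let $A_d(q)$ be the number of absolutely indecomposable representations of dimension vector $d$ over ${\bf F}_q$. The groupoid (mass) formula expresses $|R_d({\bf F}_q)|/|G_d({\bf F}_q)|$ in terms of isomorphism classes weighted by automorphism groups, and the Krull--Schmidt theorem converts the resulting generating function for all representations into one for the indecomposables by an exponential (plethystic) identity of Hall-algebra type. Extracting $A_d(q)$, one aims to show that it is a polynomial in $q$ with $A_d(q)=0$ for $d\notin\Delta(Q)$, with $A_d(q)=1$ for $d\in\Delta^{\rm re}(Q)$, and with $\deg A_d(q)=1-\langle d,d\rangle$ and positive leading coefficient for $d\in\Delta^{\rm im}(Q)$; comparing the estimate $A_d(q)\sim q^{1-\langle d,d\rangle}$ with the fact that a constructible set of dimension $D$ over ${\bf F}_q$ has on the order of $q^D$ points, and using a specialization/Galois-descent argument to pass from absolutely indecomposable to indecomposable representations over $k$, yields both the existence-iff-root statement and the exact parameter count $1-\langle d,d\rangle$. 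The main obstacle is precisely the polynomiality and degree of the Kac polynomial $A_d(q)$: its vanishing off the root system forces $d$ to be a root, while its degree pins down the number of parameters, and establishing these properties in full is the deepest ingredient of the argument.
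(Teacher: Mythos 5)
The paper itself offers no proof of this statement: it is quoted as Kac's theorem, with the proof delegated to \cite{Kac1,Kac15,KRi}. So the comparison is with Kac's original argument, whose overall architecture --- reflection functors plus counting of (absolutely) indecomposable representations over finite fields --- your outline does reproduce. But as written there are genuine gaps, not just compressed details. The first is in your real-root reduction: the functor $S_i^+$ (resp. $S_i^-$) is only defined when $i$ is a sink (resp. source) of the \emph{current} quiver, and applying it reverses all arrows at $i$. Your claim that ``every vertex can be made a sink or a source after a suitable admissible reordering'' is false --- being a sink or source is an intrinsic property of the orientation, not of an ordering of the vertices --- so the composite of reflection functors along an arbitrary reduced expression of $w$ is in general simply not defined, and even when it is, it lands in a reorientation of $Q$ rather than in $Q$ itself. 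The standard repair is to first prove that the classification (for instance the count $A_d(q)$ of absolutely indecomposables) depends only on the underlying graph and not on the orientation --- which in Kac's approach comes out of the counting formula, since $\dim R_d$ and $\dim G_d$ are orientation-independent --- and only then run the reflection induction, choosing for each reflection an orientation in which the relevant vertex is a sink. Your text never addresses this, although the tool needed to fix it is already present in your second part.

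The second and more serious gap is that everything which makes the theorem a theorem is concentrated in exactly the properties of $A_d(q)$ that you defer: polynomiality, the vanishing of $A_d$ off the root system, $A_d=1$ on real roots, the degree $1-\langle d,d\rangle$ with positive leading coefficient on imaginary roots, Weyl-invariance (needed for your reduction to the fundamental domain), and the spreading-out argument returning from finite fields to the algebraically closed characteristic-zero field fixed in the paper. As it stands, the proposal assumes the statement to be proved, repackaged as properties of the Kac polynomial; the computation $\dim R_d-\dim G_d+1=1-\langle d,d\rangle$ is, as you yourself say, only a heuristic, and converting it into the actual parameter count for the indecomposable locus when $d$ lies in the fundamental domain (roughly, showing indecomposables form a constructible subset of $R_d$ of full dimension there) is the hard geometric core of Kac's proof. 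So you have produced the correct road map --- indeed the same one the cited literature follows --- but the decisive steps are named rather than proven.
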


The last part of the theorem roughly means that, for an imaginary root $d$, the isomorphism classes of indecomposables of dimension vector $d$ form a family depending on $1-\langle d,d\rangle$ continuous parameters. We will not need the precise definition of the notion of numbers of parameters in the following (this notion associates to an action of an algebraic group $G$ on a constructible subset $X$ of an algebraic variety a number $p$, which equals the dimension of a quotient variety $X/G$ in case it exists, which is not true in general. In our situations, quotient varietes -- the moduli spaces -- will exist, thus there is no need for this more general concept).

\subsection{Generic classification of representations -- an example}\label{genex}

The following trivial example shows that it is often rather easy to classify representations ``generically'', and that this has consequences for the type of questions that should be asked on the nature of Problem \ref{basicproblem}.\\[1ex] 
Consider the three-arrow Kronecker quiver $K_3$, and consider the dimension vector $d=(n,n)$ for $n\geq 1$. In view of Kac's theorem, we expect the classification of indecomposable representations of dimension vector $d$ to depend on $$1-\langle d,d\rangle=n^2+1$$ continuous parameters. Such a representation is given by a tripel $(A,B,C)$ of $n\times n$-matrices.  Assume that $A=I_n$ is the identity matrix, that $B={\rm diag}(\lambda_1,\ldots,\lambda_n)$ is diagonal with pairwise different diagonal entries, and that $C$ is of the form
$$C=\left[\begin{array}{lllll}a_{1,1}&a_{1,2}&\ldots&a_{1,n-1}&a_{1,n}\\
1&a_{2,2}&\ldots&a_{2,n-1}&a_{2,n}\\
a_{3,1}&1&\ldots&a_{3,n-1}&a_{3,n}\\
&&\ldots&&\\
a_{n,1}&a_{n,2}&\ldots&1&a_{n,n}
\end{array}\right]$$
(thus $C_{k+1,k}=1$ for all $k=1,\ldots,n-1$), and that all $a_{i,j}$ are non-zero. Call this representation $M(\lambda_*,a_{**})$.
\begin{lemma} Almost all representations of $K_3$ of dimension vector $d$ (i.e.~a Zariski-open set in the space $M_n(k)^3$ of all representations) are isomorphic to one of the representations $M(\lambda_*,a_{**})$. All the $M(\lambda_*,a_{**})$ are indecomposable with trivial endomorphism ring. Given a tuple $(\lambda_*,a_{**})$ as above, there are only finitely many (in fact, at most $n!$) such tuples $(\lambda_*',a_{**}')$ such that $M(\lambda_*',a_{**}')\simeq M(\lambda_*,a_{**})$.
\end{lemma}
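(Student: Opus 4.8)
The plan is to identify isomorphism classes of representations of $K_3$ of dimension vector $(n,n)$ with the orbits of the group $\operatorname{GL}_n(k)\times\operatorname{GL}_n(k)$ acting on the space $M_n(k)^3$ of triples by $(g,h)\cdot(A,B,C)=(hAg^{-1},hBg^{-1},hCg^{-1})$, the two factors acting by base change at the source and target vertex. All three assertions then become statements about this action, and I would treat them by successive normalization of a generic triple together with careful bookkeeping of the residual symmetry group at each stage.

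For the first (genericity) assertion I would normalize in three steps, each valid on a dense open subset. First, $A$ is invertible on a dense open set; applying $(I,A^{-1})$ brings the triple to $(I,A^{-1}B,A^{-1}C)$, after which the stabilizer of the first component $I$ is the diagonally embedded $\operatorname{GL}_n(k)$ acting by simultaneous conjugation on the remaining pair. Second, the matrix $A^{-1}B$ has pairwise distinct eigenvalues on a dense open set (the discriminant of its characteristic polynomial is a nonzero polynomial, as one sees by exhibiting a single such triple); conjugating by a diagonalizing $g$ puts the second component into the form $\operatorname{diag}(\lambda_1,\ldots,\lambda_n)$ with distinct $\lambda_i$, leaving as residual symmetry exactly the diagonal torus $T=(k^*)^n$. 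Third, since conjugation by a fixed invertible matrix is a linear automorphism of $M_n(k)$, for generic initial $C$ the transformed third component has all entries nonzero; the torus acts on its entry $c_{k+1,k}$ by $c_{k+1,k}\mapsto(t_{k+1}/t_k)c_{k+1,k}$, so the equations $t_{k+1}=t_k/c_{k+1,k}$ determine a torus element, unique up to the globally trivial overall scaling, that scales the whole subdiagonal to $1$. The resulting triple is exactly $M(\lambda_*,a_{**})$, and the off-diagonal entries are generically nonzero, so the normal forms meet a dense open subset.

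For the second assertion I would simply compute the endomorphism ring. An endomorphism of $M=M(\lambda_*,a_{**})$ is a pair $(g,h)$ with $hA=Ag$, $hB=Bg$ and $hC=Cg$; from $A=I$ one gets $h=g$, from commutation with the diagonal matrix $B$ with distinct entries one gets that $g$ is diagonal, and from $(g_i-g_j)c_{ij}=0$ together with the nonvanishing of all off-diagonal entries of $C$ one gets $g_i=g_j$ for all $i,j$, so $g$ is scalar. Hence $\operatorname{End}(M)=k$, which forces $M$ to be indecomposable. For the third assertion, an isomorphism between two normal forms is again a pair $(g,g)$ with $g$ conjugating one diagonal matrix with distinct entries to another; such a $g$ must be a monomial matrix $g=PD$ with $P$ a permutation matrix and $D$ diagonal. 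There are $n!$ possible permutation parts, and for each one the requirement that $gCg^{-1}$ again have subdiagonal equal to $1$ pins down $D$ up to the inconsequential global scalar; thus at most $n!$ normal forms can be isomorphic to a given one.

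The routine parts are the endomorphism computation and the monomial-matrix count. The step requiring the most care is establishing that the successive genericity conditions can be imposed simultaneously on a single dense open set — that is, that the composite normalization map from the family of normal forms (an irreducible variety of dimension $n+(n^2-(n-1))=n^2+1$, matching $1-\langle d,d\rangle$) times the group $\operatorname{GL}_n(k)\times\operatorname{GL}_n(k)$ is dominant. I expect to verify dominance either constructively, by exhibiting one triple satisfying every open condition at once and invoking irreducibility of $M_n(k)^3$, or by the dimension count: the generic orbit has dimension $2n^2-1$ by the second assertion, so its codimension $n^2+1$ equals the number of normal-form parameters, and the finiteness from the third assertion makes the normalization map generically finite onto its image.
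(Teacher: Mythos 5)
Your proposal is correct and follows essentially the same route as the paper's proof: generic normalization of $A$ to the identity and $B$ to a diagonal matrix with pairwise distinct eigenvalues, use of the residual diagonal torus to scale the subdiagonal of $C$ to $1$, and the observation that any isomorphism between two normal forms is given by a monomial matrix, which yields both the scalar endomorphism ring and the bound $n!$. The only differences are presentational: you address the genericity/dominance issue more explicitly than the paper (which simply invokes the open conditions step by step), and you compute the endomorphism ring directly rather than as the special case $\lambda_*'=\lambda_*$, $a_{**}'=a_{**}$ of the isomorphism count.
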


\begin{proof} Let $M$ be an arbitrary representation of $K_3$ of dimension vector $d$, given by matrices $A,B,C$.
Generically, we can assume the matrix $A$ to be invertible (since this is characterized by the open condition of non-vanishing of its determinant), and we can assume the second matrix to be diagonalizable with pairwise different eigenvalues (since this is characterized by the open condition that its characteristic polynomial has no multiple zeroes). Up to the action of the base change group ${\rm GL}_n(k)\times{\rm GL}_n(k)$, we can thus assume that $A=I_n$, the identity matrix, and that $B={\rm diag}(\lambda_1,\ldots,\lambda_n)$. The subgroup of ${\rm GL}_n(k)\times{\rm GL}_n(k)$ fixing the matrices $A$ and $B$ is the diagonally embedded subgroup $T_n(K)$ of diagonal matrices, acting on $C$ via conjugation. Generically, all entries of $C=(a_{i,j})_{i,j}$ are non-zero, and $T_n(K)$ acts on $C$ by
$${\rm diag}(t_1,\ldots,t_n)*C=(\frac{t_i}{t_j}C_{i,j})_{i,j}.$$
Thus, we can assume without loss of generality that $C_{i+1,i}=1$ for all $i$. In other words, almost all representations $M$ are isomorphic to a representation $M(\lambda_*,a_{**})$ as defined above.\\[1ex]
Now assume that two such representations $M(\lambda_*,a_{**})$ and $M(\lambda_*',a_{**}')$, given by triples of matrices
$$(I_n,{\rm diag}(\lambda_1,\ldots,\lambda_n),C)\mbox{, resp. }(I_n,{\rm diag}(\lambda_1',\ldots,\lambda_n'),C'),$$
are isomorphic. This means that there exists a matrix $g\in{\rm GL}_n(k)$ such that
$$g\, {\rm diag}(\lambda_1,\ldots,\lambda_n)={\rm diag}(\lambda_1',\ldots,\lambda_n')\, g\mbox{ and }gC=C'g.$$
The first condition implies that $g$ is a monomial matrix, i.e.~there exists a permutation $\sigma\in S_n$ such that $g_{i,j}=0$ for $j\not=\sigma(i)$, and that $\lambda_i'=\lambda_{\sigma(i)}$. Setting $g_{i,\sigma(i)}=p_i$, the condition $gCg^{-1}=C'$ reads
$$\frac{p_i}{p_j}a_{\sigma(i),\sigma(j)}=a_{ij}'$$
for all $i,j=1,\ldots,n$. In particular, this holds for the pairs $(i+1,i)$, thus the $p_i$ are given inductively as
$$p_k=(\prod_{l<k}a_{\sigma(l+1),\sigma(l)})^{-1}p_1.$$
This shows that at most $n!$ (depending on the permutation $\sigma$) such matrices $C'$ are conjugate to $C$ under $g$.\\[1ex]
Finally, consider the case that $\lambda_*'=\lambda_*$ and $a_{**}'=a_{**}$ componentwise. The above computation shows that $\sigma$ is the identity, and that all $p_i$ are determined by $p_1$. Thus, the endomorphism ring of $M_(\lambda_*,a_{**})$ reduces to the scalars.\end{proof}

This elementary example is instructional for several reasons. First of all, we see that it is often very easy to classify almost all indecomposables of a given dimension vector -- in fact, the above example can be easily generalized to other dimension vectors for a generalized Kronecker quiver, or to other quivers. But such a generic classification does not capture the ``boundary phenomena'' which constitute the essence of the classification problem. In a very coarse analogy, one could argue that for the one-loop quiver $L_1$, a generic classification reduces to the classification of diagonalizable matrices by their eigenvalues, completely missing the Jordan canonical form (i.e.~all higher-dimensional indecomposable representations).\\[1ex]
Second, in the example we can see that there is no easy way to refine the given matrices to a normal form, in the sense that different systems of parameters give non-isomorphic representations. This phenomenom also can be seen in an even more simple example: \begin{example}\label{exdiag} In parametrizing semisimple representations of the one-loop quiver up to isomorphism, i.e.~diagonalizable matrices up to conjugation, the most natural normal form is ${\rm diag}(\lambda_1,\ldots,\lambda_n)$ for $\lambda_1,\ldots,\lambda_n\in{k}$, but this is only unique up to permutation. On the other hand, we have a classification up to isomorphism by the characteristic polynomial, but this does not yield a normal form.
\end{example}

This phenomenom of ``normal forms up to a finite number of identifications'' is in fact closely related to the rationality problem for moduli spaces of quivers, to be discussed in section \ref{ned}.

\section{Definition of moduli spaces and basic geometric properties}\label{defmoduli}

\subsection{Geometric approach}

Our basic approach to the continuous parametrization phenomena in the classification of quiver representations is to ``materialize'' the inherent continuous parameters, by defining geometric objects parametrizing isomorphism classes of certain types of representations (or, as we will consider in section \ref{sm}, representations together with some appropriate additional structure). See \cite{Bo,G,Kraft,RUse} for overviews over the use of geometric techniques in the representation theory of algebras.\\[1ex]
One basic decision we have to make a priori is in which geometric category we want to work. For example, it is possible almost tautologically to define stacks \cite{LMB} parametrizing isomorphism classes of arbitrary representations. But the geometric intuition seems to be lost almost completely for these objects.\\[1ex]
Therefore, we will try to define such parameter spaces in the context of algebraic varieties. Thus, we will use in the following the language of varieties, the Zariski topology on them, etc..\\[1ex]
The basic idea behind this geometric approach is very simple:
Fix a dimension vector $d$, and fix $k$-vector spaces $M_i$ of dimension $d_i$ for all $i\in I$.
Consider the affine $k$-space $$R_d=R_d(Q)=\bigoplus_{\alpha:i\rightarrow j}{\rm Hom}_k(M_i,M_j).$$
Its points $M=(M_\alpha)_\alpha$ obviously parametrize representations of $Q$ on the vector spaces $M_i$. The reductive linear algebraic group $$G_d=\prod_{i\in I}{\rm GL}(M_i)$$ acts on $R_d$ via the base change action
$$(g_i)_i\cdot(M_\alpha)_\alpha=(g_jM_\alpha g_i^{-1})_{\alpha:i\rightarrow j}.$$
By definition, the $G_d$-orbits $G_d*M$ in $R_d$ correspond bijectively to the isomorphism classes $[M]$ of $k$-representations of $Q$ of dimension vector $d$. Our problem of defining parameter spaces can therefore be rephrased as follows:
\begin{problem}\label{geometricapproach} Find a subset $U\subset R_d$, an algebraic variety $X$ and a morphism $\pi:U\rightarrow X$, such that the fibres of $\pi$ are precisely the orbits of $G_d$ in $U$.
\end{problem}

More precisely, we ask the subset $U$ to be ``as large as possible'' (and in particular to be a Zariski open subset of $R_d$), to capture as much of the ``boundary phenomena'' as possible, as motivated by section \ref{genex}.

\subsection{Indecomposables and geometry -- an example}\label{ex5k}

We will now exhibit an example that, in general (even for quivers without oriented cycles and indivisible dimension vectors), a set $U$ as in Problem \ref{geometricapproach} cannot be the set of indecomposable representations, or even the set of Schur representations.\\[1ex]
Consider the $5$-arrow Kronecker quiver $K_5$ and the dimension vector $d=(2,3)$. Define a family of representations $M(\lambda,\mu)$ for $(\lambda,\mu)\not=(0,0)$ by the five matrices
$$\left[\begin{array}{ll}1&0\\0&0\\0&0\end{array}\right],\;
\left[\begin{array}{ll}0&0\\0&1\\0&0\end{array}\right],\;
\left[\begin{array}{ll}0&0\\0&0\\0&1\end{array}\right],\;
\left[\begin{array}{ll}0&0\\ \lambda&0\\0&0\end{array}\right],\;
\left[\begin{array}{ll}0&\mu\\0&0\\0&0\end{array}\right].$$

The following lemma is proved by a direct calculation.

\begin{lemma} All $M(\lambda,\mu)$ for $(\lambda,\mu)\not=(0,0)$ are Schur representations. We have $M(\lambda,\mu)\simeq M(\lambda',\mu')$ if and only if $\lambda'=t\lambda$ and $\mu'=\frac{1}{t}\mu$ for some $t\in k^*$.
\end{lemma}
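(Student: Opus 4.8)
The plan is to reduce both assertions to the same piece of elementary linear algebra. A homomorphism from $M(\lambda,\mu)$ to $M(\lambda',\mu')$ is a pair $(g,h)$ with $g\in M_2(k)$ and $h\in M_3(k)$ intertwining the two families of $3\times 2$ matrices, and an endomorphism of $M(\lambda,\mu)$ is the special case $(\lambda',\mu')=(\lambda,\mu)$. Writing $A_1,\dots,A_5$ for the five defining matrices of $M(\lambda,\mu)$ and $A_1',\dots,A_5'$ for those of $M(\lambda',\mu')$, the condition to be a homomorphism is $hA_k=A_k'g$ for $k=1,\dots,5$, and for an isomorphism one additionally requires $g,h$ invertible. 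The structural observation that drives everything is that the first three matrices do not depend on the parameters, so $A_k'=A_k$ for $k=1,2,3$, and that these three alone already rigidify $g$ and $h$.

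First I would extract the constraints coming from $A_1,A_2,A_3$, which are the matrix units $E_{11},E_{22},E_{32}$ (a single entry $1$ in the indicated position). Reading each equation $hA_k=A_kg$ entry by entry, one finds that all off-diagonal entries of $g$ and of $h$ must vanish, and that the surviving diagonal entries are forced into the shape $g={\rm diag}(g_{11},g_{22})$ and $h={\rm diag}(g_{11},g_{22},g_{22})$. The one subtlety worth flagging is the coincidence $h_{22}=h_{33}=g_{22}$: since $A_2=E_{22}$ and $A_3=E_{32}$ both receive the second basis vector of $M_i$, the second and third diagonal entries of $h$ are tied together, and it is exactly this coupling that will later produce a single scaling parameter rather than two independent ones.

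With $g$ and $h$ in this diagonal form I would then feed them into the two remaining equations, where the parameters finally enter. For the Schur statement (the endomorphism case $A_k'=A_k$), the equations $hA_4=A_4g$ and $hA_5=A_5g$ collapse to $\lambda(g_{22}-g_{11})=0$ and $\mu(g_{11}-g_{22})=0$; since $(\lambda,\mu)\neq(0,0)$, at least one coefficient is nonzero, forcing $g_{11}=g_{22}$, so $(g,h)$ is a scalar and ${\rm End}(M(\lambda,\mu))=k$. For the isomorphism statement the same two equations read $g_{22}\lambda=g_{11}\lambda'$ and $g_{11}\mu=g_{22}\mu'$; invertibility gives $g_{11},g_{22}\in k^*$, so putting $t=g_{22}/g_{11}$ yields precisely $\lambda'=t\lambda$ and $\mu'=t^{-1}\mu$. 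Conversely, given such a $t$, the explicit pair $g={\rm diag}(1,t)$, $h={\rm diag}(1,t,t)$ is readily checked to satisfy $hA_k=A_k'g$ for all $k$, establishing the ``if'' direction.

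Since the whole argument is a finite, transparent entry-by-entry computation, there is no genuine obstacle; the statement's phrase ``a direct calculation'' is accurate. The only thing requiring care is bookkeeping the entrywise equations correctly and noticing the forced repetition $h_{22}=h_{33}$, which is the single feature responsible for the reciprocal scaling $(\lambda,\mu)\mapsto(t\lambda,t^{-1}\mu)$ that governs the classification.
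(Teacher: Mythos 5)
Your proof is correct and is exactly the ``direct calculation'' the paper invokes without writing out: the paper gives no details, and your entry-by-entry analysis of the intertwining equations $hA_k=A_k'g$ (the first three matrices forcing $g=\mathrm{diag}(g_{11},g_{22})$, $h=\mathrm{diag}(g_{11},g_{22},g_{22})$, the last two then yielding both the Schur property and the reciprocal scaling $\lambda'=t\lambda$, $\mu'=t^{-1}\mu$) supplies it faithfully. No gaps; the converse direction with the explicit pair $g=\mathrm{diag}(1,t)$, $h=\mathrm{diag}(1,t,t)$ is also verified correctly.
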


Now suppose there exists a subset $U$ of $R_{(2,3)}(Q)$ which contains all $M(\lambda,\mu)$ for $(\lambda,\mu)\not=(0,0)$, together with a variety $X$ and a morphism $\pi:U\rightarrow X$ as above.
We have
$$\lim_{t\rightarrow 0}M(t,1)=M(0,1)\mbox{ and }\lim_{t\rightarrow 0}M(1,t)=M(1,0)\mbox{ in }U.$$
Applying the continuous map $\pi$, we get a contradiction: we have $$M(t,1)\simeq M(1,t)\mbox{ for all }t\not=0\mbox{, but }M(0,1)\not\simeq M(1,0).$$.

Thus, in trying to define a set $U$ as in Problem \ref{geometricapproach}, we have to make a choice between $M(0,1)$ and $M(1,0)$. It will turn out that these two representations can be distinguished by the structure of their submodule lattices. For example, $M(0,1)$ admits a subrepresentation of dimension vector $(1,1)$, whereas $M(1,0)$ does not.\\[1ex]
The above example is just one of the typical problems in defining quotients: it shows that the potential ``quotient variety $U/G_d$'' would be a non-separated scheme. In fact, even more severe problems are encountered, for example situations with ``nice'' actions of the group $G_d$ on an open subset $U$ of $R_d$, where nevertheless the desired $X$ cannot be realized as a quasiprojective variety, see \cite{Bo}.\\[1ex]
The above example, most importantly, tells us that the class of indecomposable (or Schurian) representations is not well-adapted to our geometric approach to the classification problem!

\subsection{Review of Geometric Invariant Theory}\label{git}

Geometric Invariant Theory provides a general way of defining subsets $U$ as in Problem \ref{geometricapproach}. We follow mainly the presentation of \cite{Mukai}, since the generality of the standard text \cite{MFK} is not used in the following.\\[1ex]
We will first consider the general problem of constructing quotient varieties which parametrize orbits of a group acting linearly on a vector space.\\[1ex]
Suppose we are given a vector space $V$ together with a linear representation of a reductive algebraic group $G$ on $V$.
A regular (that is, polynomial) function $f:V\rightarrow k$ on $V$ is called an invariant if $$f(gv)=f(v)\mbox{ for all }g\in G\mbox{ and }v\in V.$$
We denote by $k[V]^G$ the subring of invariant functions of the ring $k[V]$ of all regular functions on $V$.\\[1ex]
Since the group $G$ is reductive, a theorem of Hilbert asserts that the invariant ring $k[V]^G$ is finitely generated, thus it qualifies as the coordinate ring of a variety $$V//G:={\rm Spec}(k[V]^G).$$
The embedding of $k[V]^G$ in $k[V]$ dualizes to a morphism (associating to an element $v\in V$ the ideal of invariants vanishing on $v$) denoted by $\pi:V\rightarrow V//G$.\\[1ex] This morphism is $G$-invariant by definition, and it fulfills the following universal property:\\[1ex]
Given a $G$-invariant morphism $h:V\rightarrow X$, there exists a unique map $\overline{h}:V//G\rightarrow X$ such that $h=\overline{h}\circ \pi$.\\[1ex]
This property shows that $\pi$ can be viewed as the optimal approximation (in the category of algebraic varieties) to a quotient of $V$ by $G$.\\[1ex]
Moreover, one can prove that any fibre of $\pi$ (which is necessarily $G$-stable) contains exactly one closed $G$-orbit.
The quotient variety $X$ therefore parametrizes the closed orbits of $V$ in $G$.\\[1ex]
To obtain an open subset $U$ as in Problem \ref{geometricapproach}, we define $V^{\rm st}$ as the set of all points $v\in V$ such that the orbit $Gv$ is closed, and such that the stabilizer of $v$ in $G$ is zero-dimensional (thus, finite). Then we have the following:

\begin{theorem}\label{vst} $V^{\rm st}$ is an open (but possibly empty) subset of $V$, and the restriction $$\pi:V^{\rm st}\rightarrow \pi(V^{\rm st})=:V^{\rm st}/G$$ gives a morphism whose fibres are exactly the $G$-orbits in $V^{\rm st}$.
\end{theorem}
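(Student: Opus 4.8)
The plan is to prove the two assertions separately, disposing of the fibre statement first (which is essentially formal given the properties of $\pi$ recalled above) and then concentrating on the openness, which is the substantive part.

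\emph{Fibres are orbits.} First I would note that if $v,v'\in V^{\rm st}$ satisfy $\pi(v)=\pi(v')$, then $Gv$ and $Gv'$ are both \emph{closed} orbits contained in the single fibre $\pi^{-1}(\pi(v))$. Since every fibre of $\pi$ contains exactly one closed $G$-orbit (as recalled above), we conclude $Gv=Gv'$; conversely all points of a $G$-orbit share the same value under the $G$-invariant morphism $\pi$. Hence the fibres of $\pi\colon V^{\rm st}\to\pi(V^{\rm st})$ are exactly the $G$-orbits meeting $V^{\rm st}$. This step uses only the closedness of the orbits, not the finiteness of the stabilizer.

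\emph{Openness.} Here I would set $S=\{v\in V:\dim Gv=\dim G\}$, the locus of points with zero-dimensional (hence finite) stabilizer. Because the stabilizer dimension is upper semicontinuous, equivalently the orbit dimension $\dim Gv=\dim G-\dim\mathrm{Stab}(v)$ is lower semicontinuous, the set $S$ is open in $V$. The crucial point is a dimension argument internal to $S$: for $v\in S$ every orbit in the boundary $\overline{Gv}\setminus Gv$ has dimension strictly less than $\dim Gv=\dim G$ and therefore cannot lie in $S$, so that $\overline{Gv}\cap S=Gv$. Consequently, for $v\in S$ the orbit $Gv$ is closed \emph{in $V$} if and only if $\overline{Gv}\subseteq S$, i.e.\ if and only if $\overline{Gv}$ is disjoint from the closed, $G$-stable set $Z:=V\setminus S$. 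This yields the clean description $V^{\rm st}=\{v\in S:\overline{Gv}\cap Z=\emptyset\}$.

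It remains to show that $\{v\in S:\overline{Gv}\cap Z\neq\emptyset\}$ is closed, and this is where I expect the main work to lie. I would establish the criterion $\overline{Gv}\cap Z\neq\emptyset\ \Longleftrightarrow\ \pi(v)\in\pi(Z)$. For the forward direction, a point $w\in\overline{Gv}\cap Z$ satisfies $\overline{Gw}\subseteq Z$ (as $Z$ is closed and $G$-stable) and $\overline{Gw}\subseteq\overline{Gv}$; the unique closed orbit of the fibre $\pi^{-1}(\pi(v))$, being contained in the closure of every orbit of that fibre, lies in $\overline{Gw}$ and hence in $Z$, so $\pi(v)\in\pi(Z)$. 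For the converse, if $\pi(v)=\pi(z)$ with $z\in Z$, then $\overline{Gz}\subseteq Z$ contains that same closed orbit, which also lies in $\overline{Gv}$, producing a point of $\overline{Gv}\cap Z$. Both directions rest squarely on the uniqueness of the closed orbit in each fibre. Finally, invoking the standard separation property of the quotient morphism — that $\pi$ sends closed $G$-stable subsets to closed subsets — the image $\pi(Z)$ is closed, so $\pi^{-1}(\pi(Z))$ is closed in $V$, and therefore $V^{\rm st}=S\setminus\pi^{-1}(\pi(Z))$ is open. The toy case $G={\rm GL}_1(k)$ acting on $k^2$ by $t\cdot(x,y)=(tx,t^{-1}y)$, where $Z=\{(0,0)\}$, $\pi(x,y)=xy$ and $V^{\rm st}=\{xy\neq 0\}$, serves as a reassuring check that every step of the argument behaves as claimed.
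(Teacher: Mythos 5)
Your proof is correct, but there is nothing in the paper to compare it against: Theorem \ref{vst} is stated in the review of Geometric Invariant Theory as a standard fact, following the presentation of Mukai's book \cite{Mukai}, and no proof is given in the paper itself. Your proposal therefore amounts to supplying the missing argument, and it does so soundly. The fibre statement is handled the natural way --- $G$-invariance of $\pi$ plus uniqueness of the closed orbit in each fibre, where one should also note (as you do implicitly) that $V^{\rm st}$ is $G$-stable, so each such orbit lies entirely inside $V^{\rm st}$. The openness argument, organized around the identity $V^{\rm st}=S\setminus\pi^{-1}(\pi(Z))$, is also correct, but be aware that it consumes three standard GIT facts beyond the two properties the paper explicitly records (the universal property of $\pi$ and one-closed-orbit-per-fibre): (i) lower semicontinuity of orbit dimension, giving openness of $S$; (ii) that $\overline{Gv}\setminus Gv$ is a union of orbits of strictly smaller dimension, together with its consequence that every orbit closure contains a closed orbit --- this last point is exactly what licenses your claim that the unique closed orbit of a fibre is contained in the closure of every orbit of that fibre; and (iii) the separation property that $\pi$ maps closed $G$-stable subsets to closed subsets, which is the one step where reductivity of $G$ enters essentially (via the Reynolds operator). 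All three are available in the cited GIT literature, so invoking them is appropriate for a statement of this kind; a fully self-contained treatment would only need to add proofs of these lemmas. Your ${\rm GL}_1$-on-$k^2$ example does check out and is a good sanity test of the decomposition $V^{\rm st}=S\setminus\pi^{-1}(\pi(Z))$.
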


We consider now a relative version of this construction:
Choose a character of $G$, that is, a morphism of algebraic groups $\chi:G\rightarrow k^*$.  A regular function $f$ is called $\chi$-semi-invariant if $$f(gv)=\chi(g)f(v)\mbox{ for all } g\in G\mbox{ and }v\in V.$$
We denote by $k[V]^{G,\chi}$ the subspace of $\chi$-semi-invariants, and by
$$k[V]^G_\chi:=\bigoplus_{n\geq 0}k[V]^{G,\chi^n}$$
the subring of semi-invariants for all powers of the character $\chi$. This is naturally an ${\bf N}$-graded subring of $k[V]$ with $k[V]^G$ as the subring of degree $0$ elements.\\[1ex]
An element $v\in V$ is called $\chi$-semistable if there exists a function $f\in k[V]^{G,\chi^n}$ for some $n\geq 1$ such that $f(v)\not=0$. Denote by $V^{\chi-{\rm sst}}$ the (open) subset of $\chi$-semistable points. An element $v$ is called $\chi$-stable if the following conditions are satisfied: $v$ is $\chi$-semistable, its orbit $Gv$ is closed in $V^{\chi-{\rm sst}}$, and its stabilizer in $G$ is zero-dimensional. Denote by $V^{\chi-{\rm st}}$ the (again open) subset of stable points.\\[1ex]
Let $V^{\chi-{\rm sst}}//G$ be the quasiprojective variety defined as ${\rm Proj}(k[V]^G_\chi)$. Associating to $v\in V^{\chi-{\rm sst}}$ the ideal of functions $f$ vanishing on $v$ gives -- by definition of semistability -- a well-defined map $\pi$ from $V^{\rm \chi-sst}$ to $V^{\chi-{\rm sst}}//G$.\\[1ex]
Then the following properties, similar to Theorem \ref{vst}, hold:

\begin{theorem} The variety $V^{\chi-{\rm sst}}//G$ parametrizes the closed orbits of $G$ in $V^{\chi-{\rm sst}}$, and the restriction of $\pi$ to $V^{\chi-{\rm st}}$ has as fibres precisely the $G$-orbits in $V^{\chi-{\rm st}}$.
\end{theorem}

Therefore, we see that the open subsets of $\chi$-stable points in $V$ with respect to a given character $\chi$ of $G$ give us candidates for open subsets as in Problem \ref{geometricapproach}. The case of Theorem \ref{vst} corresponds to the trivial choice $\chi={\rm id}$ of character - in this case, all points are semistable.\\[1ex]
We summarize the basic geometric properties of quotients in the following diagram:
$$\begin{array}{ccccc}V^{\chi-{\rm st}}&\subset&V^{\chi-{\rm sst}}&\subset&V\\
\downarrow&&\downarrow&&\downarrow\\
V^{\chi-{\rm st}}/G&\subset&V^{\chi-{\rm sst}}//G&\rightarrow&V//G\\
&&||&&||\\
&&{\rm Proj}(k[V]^G_\chi)&\rightarrow&{\rm Spec}(k[V]^G)\end{array}$$
\begin{proposition}\label{collprop} The following properties hold:
\begin{itemize}
\item $V^{\chi-{\rm st}}\subset V^{\chi-{\rm sst}}\subset V$ is a chain of open inclusions,
\item the variety $V//G$ is affine,
\item the morphism $V^{\chi-{\rm sst}}//G\rightarrow V//G$ is projective,
\item if the action of $G$ on $V^{\chi-{\rm st}}$ is free, the morphism $V^{\chi-{\rm st}}\rightarrow V^{\chi-{\rm st}}/G$ is a $G$-principal bundle,
\item if $V^{\chi-{\rm st}}/G$ is non-empty, its dimension equals $\dim V-\dim G$.
\end{itemize}
\end{proposition}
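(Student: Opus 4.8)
The plan is to establish the five items essentially independently, using only Hilbert's finiteness theorem and the two quotient theorems recorded above, and to reserve the genuine work for the last one. The first two are immediate. For the chain of inclusions, semistability of $v$ means $f(v)\neq 0$ for some $f\in k[V]^{G,\chi^n}$, so $V^{\chi-{\rm sst}}$ is the union over all $n\geq 1$ and all such $f$ of the principal open sets $\{v\in V:f(v)\neq 0\}$ and is therefore open in $V$; the inclusion $V^{\chi-{\rm st}}\subset V^{\chi-{\rm sst}}$ is built into the definition of stability, and its openness is part of the construction recalled above. That $V//G$ is affine holds by construction, since $k[V]^G$ is a finitely generated $k$-algebra by Hilbert's theorem and $V//G={\rm Spec}(k[V]^G)$.

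For projectivity of $V^{\chi-{\rm sst}}//G\rightarrow V//G$ I would invoke the general fact that ${\rm Proj}(R)\rightarrow{\rm Spec}(R_0)$ is projective whenever $R=\bigoplus_{n\geq 0}R_n$ is a graded algebra that is finitely generated over $R_0$. The one nontrivial input is finite generation of the full semi-invariant ring $R=k[V]^G_\chi$, which I would deduce from Hilbert's theorem applied to the $G$-action on $V\times k$ defined by $g\cdot(v,t)=(gv,\chi(g)^{-1}t)$: its invariant ring is canonically $\bigoplus_{n\geq 0}k[V]^{G,\chi^n}$, with grading by the power of $t$, that is, $R$ itself. Given finite generation over $R_0=k[V]^G$, after passing to a Veronese subalgebra to force generation in degree one I obtain a graded surjection $R_0[x_0,\ldots,x_N]\rightarrow R$, hence a closed immersion ${\rm Proj}(R)\hookrightarrow{\bf P}^N\times{\rm Spec}(R_0)$ over ${\rm Spec}(R_0)$; composing with the projection exhibits the morphism as projective.

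The dimension formula is a fibre-dimension count. As $V^{\chi-{\rm st}}/G$ is assumed non-empty, $V^{\chi-{\rm st}}$ is a non-empty open subset of the irreducible variety $V$, so $\dim V^{\chi-{\rm st}}=\dim V$, and its image $V^{\chi-{\rm st}}/G$ is irreducible. By the second quotient theorem the fibres of the surjection $\pi:V^{\chi-{\rm st}}\rightarrow V^{\chi-{\rm st}}/G$ are precisely the $G$-orbits of stable points, each of dimension $\dim G$ since stable points have finite stabilizer. The theorem on the dimension of fibres of a dominant morphism then yields $\dim V=\dim(V^{\chi-{\rm st}}/G)+\dim G$, which is the claim.

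The substantial point, and the one I expect to be the main obstacle, is that a free action on $V^{\chi-{\rm st}}$ makes $\pi:V^{\chi-{\rm st}}\rightarrow V^{\chi-{\rm st}}/G$ a principal $G$-bundle: knowing only that $\pi$ is a geometric quotient with free fibres does not by itself yield local triviality. My plan is to invoke Luna's \'etale slice theorem. Since $G$ is reductive and acts on the stable locus with closed orbits (by the preceding theorem) and, under the freeness hypothesis, trivial stabilizers, every point lies on an \'etale slice $S$ for which the natural map $G\times S\rightarrow V^{\chi-{\rm st}}$ is \'etale with saturated image, identifying $\pi$ over that image with the projection $G\times S\rightarrow S$. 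This exhibits $\pi$ as \'etale-locally trivial, and since in characteristic zero $G$ is smooth, this is exactly the assertion that $\pi$ is a principal $G$-bundle. Carefully checking the hypotheses of the slice theorem, together with the saturation and flatness statements that make the local pictures glue, is the delicate part, and is where I would expect to spend most of the effort.
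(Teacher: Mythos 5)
The first thing to say is that the paper contains no proof of this proposition: it is a summary of standard facts of Geometric Invariant Theory quoted from \cite{Mukai} (and, for the quiver application, \cite{King}), so there is no ``paper proof'' to compare against; your proposal supplies exactly the arguments the paper delegates to the literature. Your treatment of the second, third and fifth items is correct and is the standard one. In particular, finite generation of $k[V]^G_\chi$ via Hilbert's theorem applied to the lifted action $g\cdot(v,t)=(gv,\chi(g)^{-1}t)$ on $V\times{\bf A}^1$ is precisely the classical trick (one small imprecision: the Veronese step gives a graded surjection onto $R^{(k)}$, not onto $R$, which suffices because ${\rm Proj}(R^{(k)})\simeq{\rm Proj}(R)$). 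The fibre-dimension count in the last item is also fine, provided you note that $\pi(V^{\chi-{\rm st}})$ is open in $V^{\chi-{\rm sst}}//G$ (the stable locus is saturated), so that the target is an irreducible variety and the theorem on dimensions of fibres applies. In the first item, be aware that openness of $V^{\chi-{\rm st}}$ inside $V^{\chi-{\rm sst}}$ is a genuine assertion (closedness of the orbit and finiteness of the stabilizer are not obviously open conditions); you defer it to the preamble, which is consistent with the paper's level of detail, but a self-contained proof would need the argument from \cite{King} or \cite{MFK}.

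The one place where your plan, as written, does not go through is the application of Luna's slice theorem in the fourth item. Luna's theorem applies to a reductive group acting on an \emph{affine} variety, at a point whose orbit is closed \emph{in that affine variety}. But $V^{\chi-{\rm st}}$ is only quasi-affine, and the orbits of stable points are closed only in $V^{\chi-{\rm sst}}$, not in $V$ (the closure in $V$ of such an orbit typically contains $0$), so you cannot take a slice ``in the stable locus'' directly. The fix is standard and should be made explicit: a stable point $v$ satisfies $f(v)\neq 0$ for some $f\in k[V]^{G,\chi^n}$, $n\geq 1$; the principal open set $V_f=\{f\neq 0\}$ is affine and $G$-stable (because $f$ is a semi-invariant), and $Gv$, being closed in $V^{\chi-{\rm sst}}\supset V_f$, is closed in $V_f$. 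Now apply Luna's theorem to the $G$-action on $V_f$ at $v$, where the stabilizer is trivial by hypothesis; the resulting \'etale-local trivializations glue over the affine opens $D_+(f)$ covering $V^{\chi-{\rm sst}}//G$. Finally, be precise about what ``$G$-principal bundle'' means: Luna gives triviality in the \'etale topology, which is the correct reading of the proposition; Zariski-local triviality can fail for non-special groups (such as the group $PG_d$ relevant to the quiver setting), so you should not claim more than \'etale-local (equivalently fppf-local) triviality.
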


\subsection{Geometric Invariant Theory for quiver representations}\label{gitquiver}

We will now apply the above constructions to the action of the group $G_d$ on the vector space $R_d(Q)$; the details can be found in \cite{King}.\\[1ex]
First of all, we note that the (diagonally embedded) scalar matrices in $G_d$ act trivially on $R_d(Q)$ (and therefore are contained in the stabilizer of any point). This means that we have to pass to the action of the factor group $PG_d=G_d/k^*$ first to admit orbits with zero-dimensional stabilizers.\\[1ex]
An orbit $G_d*M=PG_d*M$ is closed in $R_d(Q)$ if and only if the corresponding representation $M$ is semisimple by \cite{Artin}. The quotient variety $R_d(Q)//PG_d$ therefore parametrizes isomorphism classes of semisimple representations of $Q$ of dimension vector $d$. It will be denoted by $M_d^{\rm ssimp}(Q)$ and called the moduli space of semisimple representations.\\[1ex]
The stabilizer of $M$ in $G_d$ is nothing else than the automorphism group ${\rm Aut}(M)$, thus its stabilizer in $PG_d$ is zero-dimensional (in fact trivial) if and only if $M$ is a Schur representation. Thus, we see that the open subset $R_d(Q)^{\rm st}$ consists precisely of the simple representations of dimension vector $d$, and that the quotient variety $R_d(Q)^{\rm st}(Q)/PG_d=M_d^{\rm simp}(Q)$ is a moduli space for (isomorphism classes of) simple representations.\\[1ex]
This shows that the concept of quotients in its original form gives interesting moduli spaces only in the case of quivers with oriented cycles, since for a quiver without oriented cycles, the only simple representations are the one-dimensional ones $S_i$ of dimension vector $i$ for $i\in I$. Therefore, the generality of the relative setup is necessary to obtain interesting moduli spaces.\\[1ex]
The characters of the general linear group are just integer powers of the determinant map, thus the characters of the group $PG_d$ are of the form
$$(g_i)_i\mapsto\prod_{i\in I}\det(g_i)^{m_i},$$
for a tuple $(m_i)_{i\in I}$ such that $\sum_{i\in I}m_id_i=0$ to guarantee well-definedness on $PG_d$.\\[1ex]
Thus, let us choose a linear function $\Theta:{\bf Z}I\rightarrow{\bf Z}$ and associate to it a character
$$\chi_\Theta((g_i)_i):=\prod_{i\in I}\det(g_i)^{\Theta(d)-\dim d\cdot \Theta_i}$$
of $PG_d$ (this adjustment of $\Theta$ by a suitable multiple of the function $\dim$ has the advantage that a fixed $\Theta$ can be used to formulate stability for arbitrary dimension vectors, and not only those with $\Theta(d)=0$). We will denote the corresponding sets of $\chi_\Theta$-(semi-)stable points by
$$R_d^{\rm sst}(Q)=R_d^{\rm \Theta-sst}(Q)=R_d(Q)^{\rm \chi_\Theta-sst}\mbox{ and }R_d^{\rm st}(Q)=R_d^{\rm \Theta-st}(Q)=R_d(Q)^{\rm \chi_\Theta-st}.$$
The corresponding quotient varieties will be denoted as follows:
$$M_d^{\rm (\Theta-)st}(Q)=R_d^{\rm \Theta-st}(Q)/G_d\mbox{ and }M_d^{\rm (\Theta-)sst}(Q)=R_d^{\rm \Theta-sst}(Q)//G_d.$$

As an immediate application of the concepts of section \ref{git}, we get:

\begin{corollary}\label{corstable} The variety $M_d^{\rm \Theta-st}(Q)$ parametrizes isomorphism classes of $\Theta$-stable representations of $Q$ of dimension vector $d$.
\end{corollary}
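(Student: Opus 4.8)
The plan is to deduce the statement as a direct specialization of the relative GIT theorem of section \ref{git} (the statement immediately following Theorem \ref{vst}) to the concrete data $V=R_d(Q)$, $G=PG_d$ and $\chi=\chi_\Theta$. The entire content of the corollary is the translation of the general quotient-theoretic conclusion into the language of quiver representations, so I expect the argument to be short and essentially formal.

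First I would assemble the dictionary that makes this translation work. By construction, the $G_d$-orbits in $R_d(Q)$ are in bijection with the isomorphism classes of representations of dimension vector $d$; since the scalar subgroup $k^*\subset G_d$ acts trivially, these orbits coincide with the $PG_d$-orbits, so the variety $M_d^{\rm \Theta-st}(Q)=R_d^{\rm \Theta-st}(Q)/G_d$ is literally the set of $PG_d$-orbits lying in the stable locus. Moreover the stable locus $R_d^{\rm \Theta-st}(Q)=R_d(Q)^{\chi_\Theta-{\rm st}}$ is, by definition, the set of points whose corresponding representation is what we call $\Theta$-stable at this stage.

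Then I would invoke the relative theorem directly: the restriction of the quotient map $\pi$ to the stable locus $R_d(Q)^{\chi_\Theta-{\rm st}}$ has as its fibres precisely the $PG_d$-orbits. Combining this with the dictionary above, the (closed) points of $M_d^{\rm \Theta-st}(Q)$ are in natural bijection with the $PG_d$-orbits contained in $R_d^{\rm \Theta-st}(Q)$, hence with the isomorphism classes of $\Theta$-stable representations of dimension vector $d$. This is exactly the asserted parametrization.

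The one point deserving attention, rather than a genuine obstacle, is the passage from $G_d$ to $PG_d$: since the scalars $k^*$ lie in the stabilizer of every point of $R_d(Q)$, the stable locus for the $G_d$-action in the literal sense of Theorem \ref{vst} would be empty, which is why one must work with $PG_d$, whose generic stabilizers can be trivial. With this understood, the reason the fibres of $\pi$ over the stable locus are single orbits is precisely the defining content of stability, namely that a stable point has a closed orbit within the semistable locus together with a zero-dimensional (here in fact trivial) stabilizer in $PG_d$; these two conditions are exactly what prevents $\pi$ from identifying two distinct orbits. No input beyond the cited GIT theorem is needed.
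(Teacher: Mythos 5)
Your proposal is correct and follows essentially the same route as the paper: the corollary is stated there as an immediate application of the relative GIT theorem of section \ref{git} to $V=R_d(Q)$, $G=PG_d$, $\chi=\chi_\Theta$, using the dictionary between $PG_d$-orbits and isomorphism classes set up in section \ref{gitquiver}. Your remark about passing from $G_d$ to $PG_d$ because the scalars act trivially is precisely the point the paper also emphasizes before defining the quotients.
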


The closed orbits of $G_d$ in $R_d^{\rm \Theta-sst}(Q)$ correspond to the so-called $\Theta$-polystable representations. These are defined as direct sums of stable representations of the same slope. They can also be viewed as the semisimple objects in the abelian subcategory ${\rm mod}_\mu kQ$ N of semistable representations of fixed slope $\mu$ -- see section \ref{aas} for details.
Therefore, we get:

\begin{corollary} 
The variety $M_d^{\rm \Theta-sst}(Q)$ parametrizes isomorphism classes of $\Theta$-polystable representations of $Q$ of dimension vector $d$.
\end{corollary}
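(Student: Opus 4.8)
The plan is to deduce the corollary from the immediately preceding theorem on GIT quotients of the semistable locus, together with a module-theoretic identification of the closed orbits. Applying that theorem with $V=R_d(Q)$, $G=G_d$ and $\chi=\chi_\Theta$, the variety $M_d^{\rm \Theta-sst}(Q)=R_d^{\rm \Theta-sst}(Q)//G_d$ parametrizes precisely the closed $G_d$-orbits contained in $R_d^{\rm \Theta-sst}(Q)$. Since the $G_d$-orbits in $R_d(Q)$ correspond bijectively to isomorphism classes of representations of dimension vector $d$, it remains only to show that the closed orbits inside the semistable locus are exactly the orbits of the $\Theta$-polystable representations.

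The key step is to show that any semistable $M$ degenerates, within $R_d^{\rm \Theta-sst}(Q)$, to the direct sum of its Jordan--H\"older factors in the abelian category ${\rm mod}_\mu kQ$ of semistable representations of slope $\mu=\Theta(d)/\dim d$. Given a short exact sequence $0\rightarrow M'\rightarrow M\rightarrow M''\rightarrow 0$ in this category, the standard one-parameter family obtained by scaling the extension class to zero (conjugating the block-triangular matrices of $M$ by a one-parameter torus) yields a morphism from $k$ into $R_d(Q)$ whose value at $0$ is the split representation $M'\oplus M''$ and whose value at every $t\not=0$ is isomorphic to $M$. Because $M'$ and $M''$ are semistable of the same slope $\mu$, every member of this family is again semistable of slope $\mu$, so the entire degeneration takes place inside $R_d^{\rm \Theta-sst}(Q)$. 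Iterating along a composition series of $M$ in ${\rm mod}_\mu kQ$ --- whose simple factors are exactly the $\Theta$-stable representations of slope $\mu$ --- shows that the orbit closure of $M$ in the semistable locus contains the orbit of the associated graded object $\bigoplus_i S_i$, a direct sum of stables, that is, a polystable representation.

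The forward implication now follows immediately: if the orbit of $M$ is closed in $R_d^{\rm \Theta-sst}(Q)$, then the degeneration just constructed cannot be proper, so $M$ is isomorphic to the semisimple object $\bigoplus_i S_i$ and is therefore polystable. For the converse one uses that a degeneration within the semistable locus preserves the Jordan--H\"older factors in ${\rm mod}_\mu kQ$; since a polystable $M$ is already semisimple in this category, it coincides with its own associated graded and hence admits no proper degeneration, so its orbit is closed. This converse is the relative analog, for the semistable locus, of the characterization of closed orbits in $R_d(Q)$ as semisimple representations; cf.~\cite{King,Artin}. Combined with the preceding theorem and the bijection between orbits and isomorphism classes, this identifies the points of $M_d^{\rm \Theta-sst}(Q)$ with the isomorphism classes of $\Theta$-polystable representations of dimension vector $d$.

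I expect the principal difficulty to lie in the degeneration step of the second paragraph, and within it the preservation of semistability along the one-parameter family: the flat-limit computation scaling an extension class to zero is routine, but the fact that every fibre of the family remains semistable of slope $\mu$ rests entirely on the slope equality $\mu(M')=\mu(M'')=\mu(M)$ and on the characterization of semistability via slopes of subrepresentations developed in section \ref{aas}. A secondary subtlety attaches to the converse, namely that degenerations preserve Jordan--H\"older factors in ${\rm mod}_\mu kQ$; this is most cleanly obtained from the upper semicontinuity of $\dim\Hom(-,-)$ along orbit closures together with the fact that ${\rm mod}_\mu kQ$ is a finite-length abelian category, and I would deliberately route the argument through the uniqueness of the closed orbit in each fibre rather than through a direct orbit-closedness computation for polystables.
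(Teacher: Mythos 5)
Your proposal is correct and follows essentially the same route as the paper: apply the preceding GIT theorem to $V=R_d(Q)$, $G=G_d$, $\chi=\chi_\Theta$, so that $M_d^{\rm \Theta-sst}(Q)$ parametrizes the closed $G_d$-orbits in $R_d^{\rm \Theta-sst}(Q)$, and then identify these closed orbits with the $\Theta$-polystable representations, i.e.\ the semisimple objects of ${\rm mod}_\mu kQ$. The paper simply asserts this identification (deferring to section \ref{aas} and the references \cite{King,Artin}), whereas you additionally supply the standard justification --- degeneration to the associated graded object along a one-parameter subgroup, plus semicontinuity of $\dim\Hom$ and uniqueness of the closed orbit in each fibre --- which is exactly the argument those references give.
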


\subsection{Basic geometric properties}\label{bgp}

We summarize the varieties appearing in the definition of quiver moduli in the following diagram:

$$\begin{array}{ccccc}R_d^{\rm \Theta-st}(Q)&\subset&R_d^{\rm \Theta-sst}(Q)&\subset&R_d(Q)\\
\downarrow&&\downarrow&&\downarrow\\
M_d^{\rm \Theta-st}(Q)&\subset&M_d^{\rm \Theta-sst}(Q)&\rightarrow&M_d^{\rm ssimp}(Q)\\
\end{array}$$

As an application of Proposition \ref{collprop}, we have the following geometric properties:

\begin{itemize}
\item $R_d^{\rm \Theta-st}(Q)\subset R_d^{\rm \Theta-sst}(Q)\subset R_d(Q)$ is a chain of open inclusions,
\item $M_d^{\rm \Theta-st}(Q)$ is a smooth variety,
\item $M_d^{\rm ssimp}(Q)$ is an affine variety,
\item $M_d^{\rm \Theta-sst}(Q)\rightarrow M_d^{\rm ssimp}(Q)$ is a projective morphism,
\item $R_d^{\rm \Theta-st}(Q)\rightarrow M_d^{\rm \Theta-st}(Q)$ is a $PG_d$-principal fibration.
\item The dimension of the variety $M_d^{\rm \Theta-st}(Q)$, if non-empty, equals $1-\langle d,d\rangle$.
\end{itemize}

The relation between the moduli spaces $M_d^{\rm st}(Q)$ and $M_d^{\rm sst}(Q)$ can be made more precise using the Luna stratification; see \cite{AL} for more details:\\[1ex]
Given a polystable representation $$M=U_1^{m_1}\oplus\ldots\oplus U_s^{m_s}$$ for pairwise non-isomorphic stables $U_k$ of dimension vectors $d^k$ (necessarily of slope $\mu(d)$), we denote its polystable type by $$(d^*)^{m_*}=((d^1)^{m_1}\ldots (d^s)^{m_s}).$$
The subset $S_{d^*}$ of $M_d^{\rm sst}(Q)$ of all polystable representations of type $(d^*)^{m_*}$ is locally closed, yielding a finite stratification of $M_d^{\rm sst}(Q)$, such that the stratum corresponding to type $(d^1)$ is precisely $M_d^{\rm st}(Q)$.\\[1ex]
This can be used, for example, to determine the singularities in $M_d^{\rm sst}(Q)$, see \cite{AL}. Other applications will be mentioned in sections \ref{counting},\ref{dsm}.\\[1ex]
As noted before, in case the quiver $Q$ has no oriented cycles, the only simple representations are the one-dimensional ones $S_i$ attached to each vertex $i\in I$. Thus, in this case, $M_d^{\rm ssimp}(Q)$ consists of a single point, corresponding to the semisimple representation $\bigoplus_{i\in I}S_i^{d_i}$ of dimension vector $d$. Consequently, the variety $M_d^{\rm \Theta-sst}$ is projective in this case.
Now assume that $d$ is coprime for $\Theta$, which by definition means that $\mu(e)\not=\mu(d)$ for all $0\not=e<d$. For generic $\Theta$, this is equivalent to $d$ being coprime in the sense that ${\rm gcd}(d_i\, ,\, i\in I)=1$:\\[1ex]
Suppose that $d$ is $\Theta$-coprime, and that $k\in{\bf N}$ is a common divisor of all entries of $d$. Then $\Theta(\frac{1}{k}d)=\Theta(d)$, and thus $k=1$. Conversely, suppose that $d$ is coprime. Then the conditions $\mu(e)\not=\mu(d)$ for all $0\not=e<d$ define finitely many proper hyperplanes, and a generic choice of $\Theta$ avoids all of them.\\[1ex]
 Then, by assumption, every semistable representation is already stable, thus the variety $M_d^{\rm \Theta-sst}=M_d^{\rm \Theta-st}$ is smooth.\\[1ex]
Thus, in the case of a $\Theta$-coprime dimension vector for a quiver without oriented cycles, we end up which smooth projective moduli spaces, which therefore classify for a consideration with classical techniques of algebraic geometry - see the following sections \ref{sectbettigross}, \ref{loc}, \ref{arithm} for some applications of this principle.\\[1ex]
We have thus found many solutions to Problem \ref{geometricapproach}. The next step is to characterize the stable representations in algebraic terms:\\[1ex]
Define the slope of a non-zero dimension vector as $$\mu(d):=\frac{\Theta(d)}{\dim d},$$
and define the slope of a representation $M\not=0$ as the slope of its dimension vector $\mu(M):=\mu(\dimv M)$. The following characterization of $\chi_\Theta$-(semi-)stable points in $R_d(Q)$ is given in \cite{King}:

\begin{theorem} A representation $M\in R_d(Q)$ is $\chi_\Theta$-semistable if and only if $\mu(U)\leq \mu(M)$ for all non-zero subrepresentations $U$ of $V$. The representation $M$ is $\chi_\Theta$-stable if and only if $\mu(U)<\mu(M)$ for all non-zero proper subrepresentations $U$ of $V$.
\end{theorem}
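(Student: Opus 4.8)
The plan is to deduce the statement from the Hilbert--Mumford numerical criterion of Geometric Invariant Theory, which reduces $\chi_\Theta$-(semi)stability of a point $M\in R_d(Q)$ to the sign of the integer pairing $\langle\chi_\Theta,\lambda\rangle$ (defined by $\chi_\Theta(\lambda(t))=t^{\langle\chi_\Theta,\lambda\rangle}$) evaluated on all one-parameter subgroups (1-PS) $\lambda\colon k^*\to G_d$ for which the limit $\lim_{t\to 0}\lambda(t)\cdot M$ exists in $R_d(Q)$. Concretely, this criterion asserts that $M$ is $\chi_\Theta$-semistable if and only if $\langle\chi_\Theta,\lambda\rangle\geq 0$ for every such $\lambda$, and that a semistable $M$ is $\chi_\Theta$-stable if and only if moreover $\langle\chi_\Theta,\lambda\rangle>0$ for every such $\lambda$ that is nontrivial modulo the center, i.e.~nontrivial in $PG_d$. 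Granting this, the whole argument becomes a translation of 1-PS into subrepresentations, together with a single weight computation.

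First I would classify the relevant 1-PS. Up to conjugation a 1-PS of $G_d=\prod_i{\rm GL}(M_i)$ is a tuple of diagonal 1-PS, hence the same datum as a ${\bf Z}$-grading $M_i=\bigoplus_n M_i^{(n)}$ of each vector space, with $\lambda_i(t)$ acting by $t^n$ on $M_i^{(n)}$. Since $\lambda(t)$ scales the block $M_\alpha^{(n,m)}\colon M_i^{(m)}\to M_j^{(n)}$ of each arrow by $t^{n-m}$, the limit $\lim_{t\to0}\lambda(t)\cdot M$ exists precisely when $M_\alpha^{(n,m)}=0$ for $n<m$, which says exactly that the descending filtration $M^{\geq p}:=\bigoplus_{n\geq p}M^{(n)}$ consists of subrepresentations of $M$. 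Thus 1-PS with convergent limit correspond bijectively to filtrations of $M$ by subrepresentations.

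Now comes the weight computation. Writing the adjusted character as $\chi_\Theta((g_i)_i)=\prod_i\det(g_i)^{w_i}$ with $w_i=\Theta(d)-\dim d\cdot\Theta_i$, and setting $W(U):=\sum_i w_i\dim U_i$ for any family of subspaces (here $\dim U$ denotes the total dimension and $\Theta(U)$ the value of $\Theta$ on the dimension vector of $U$), a direct calculation gives
$$W(U)=\Theta(d)\dim U-\dim d\cdot\Theta(U)=(\dim d)(\dim U)\big(\mu(M)-\mu(U)\big),$$
so in particular $W(M)=W(0)=0$. Pairing with $\lambda$ yields $\langle\chi_\Theta,\lambda\rangle=\sum_n n\big(W(M^{\geq n})-W(M^{\geq n+1})\big)$, and since $W(M^{\geq n})$ vanishes for $n\gg 0$ and also for $n\ll 0$ (because $W(M)=0$), Abel summation collapses this to
$$\langle\chi_\Theta,\lambda\rangle=\sum_n W(M^{\geq n})=(\dim d)\sum_n(\dim M^{\geq n})\big(\mu(M)-\mu(M^{\geq n})\big).$$
Both equivalences follow at once. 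If $\mu(U)\leq\mu(M)$ for all nonzero subrepresentations $U$, every summand is $\geq 0$, so $\langle\chi_\Theta,\lambda\rangle\geq0$ for all admissible $\lambda$ and $M$ is semistable; conversely, applying the criterion to the two-step 1-PS which is $1$ on a chosen subrepresentation $U$ and $0$ on a complement (its limit exists precisely because $U$ is a subrepresentation, and then $\langle\chi_\Theta,\lambda\rangle=W(U)$) recovers $\mu(U)\leq\mu(M)$. For stability one argues identically with strict inequalities: if $\mu(U)<\mu(M)$ for all proper nonzero $U$, then any $\lambda$ nontrivial in $PG_d$ has a proper nonzero step $M^{\geq n}$ contributing a strictly positive summand, forcing $\langle\chi_\Theta,\lambda\rangle>0$, while the two-step 1-PS (which is noncentral exactly when $U$ is proper and nonzero) gives the converse.

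I anticipate two points needing care. The genuinely nontrivial input is the Hilbert--Mumford criterion itself, namely the reduction of character-twisted (semi)stability to signs of $\langle\chi_\Theta,\lambda\rangle$ over 1-PS; I would import this from GIT, e.g.~via the standard device of linearizing the trivial bundle on $R_d(Q)\oplus k_{-\chi_\Theta}$ and applying the usual numerical criterion for orbit closures, and this is the step where one must be scrupulous that the sign conventions match the construction of ${\rm Proj}(k[V]^G_\chi)$. The second delicate point is the precise matching of limit-existence with the subrepresentation condition — the direction of the filtration and the sign of $n-m$ — and, for the stability statement, verifying that $\langle\chi_\Theta,\lambda\rangle=0$ together with convergence forces $\lambda$ to be central; the latter is immediate from the displayed formula, since all its summands are nonnegative and no cancellation can occur.
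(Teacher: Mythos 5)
The paper itself contains no proof of this theorem --- it is quoted directly from \cite{King} --- and your argument is precisely the one given in that reference (King's Propositions 2.5 and 3.1): one-parameter subgroups of $G_d$ whose limit at $t\to 0$ exists correspond to filtrations of $M$ by subrepresentations, and the weight pairing collapses by summation by parts to $\sum_n W(M^{\geq n})$ with $W(U)=\Theta(d)\dim U-\dim d\cdot\Theta(U)=(\dim d)(\dim U)\bigl(\mu(M)-\mu(U)\bigr)$, which is indeed the correct computation for the adjusted character $\chi_\Theta$ used in this survey. The two-step degeneration attached to a single subrepresentation $U$ then gives both converse implications, and your stability clause (strict positivity for every one-parameter subgroup nontrivial in $PG_d$, i.e.\ modulo the diagonal scalars $k^*$ --- which is what ``the center'' should mean here, not the full center $\prod_{i}k^*$ of $G_d$) matches King's criterion relative to the kernel of the action. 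The one ingredient you import unproved, the Hilbert--Mumford numerical criterion for character-twisted affine GIT, is exactly the part of \cite{King} preceding this translation, so the proposal is correct with that acknowledged black box.
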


Thus, (semi-)stability can be characterized in terms of the submodule structure of a representation. Since this simple description of stable representations leads to some very interesting representation-theoretic aspects, we devote the following section to details of this concept.

\section{Algebraic aspects of stability}\label{aas}

As before, let $Q$ be an arbitrary finite quiver, and let $\Theta:{\bf Z}I\rightarrow{\bf Z}$ be a linear function, called a stability in the following. We also consider, as before, the functional $\dim$ on ${\bf Z}I$ defined by $\dim d=\sum_{i\in I}d_i$ (this could be replaced by any other strictly positive linear function on dimension vectors). For a non-zero dimension vector $d\in{\bf N}I$, we define its slope by
$$\mu(d):=\frac{\Theta(d)}{\dim d}\in{\bf Q}.$$
We define the slope of a non-zero representation $X$ of $Q$ over some field $k$ as the slope of its dimension vector, thus $\mu(X):=\mu(\dimv X)\in{\bf Q}$. The set
$${\bf N}I_\mu=\{d\in{\bf N}I\setminus\{0\}\, :\, \mu(d)=\mu\}\cup\{0\}$$
forms a subsemigroup of ${\bf N}I$.\\[1ex]
We call the representation $X$ semistable if $\mu(U)\leq\mu(X)$ for all non-zero subrepresentations $U$ of $X$, and we call $X$ stable if $\mu(U)<\mu(X)$ for all non-zero proper subrepresentations $U$ of $X$.

\begin{lemma}\label{l41} Let $0\rightarrow X\rightarrow Y\rightarrow Z\rightarrow 0$ be a short exact sequence of non-zero $k$-representations of $Q$. Then the following holds:
\begin{enumerate}
\item $\mu(X)\leq\mu(Y)$ if and only if $\mu(X)\leq \mu(Z)$ if and only if $\mu(Y)\leq\mu(Z)$.
\item The same holds with $\leq$ replaced by $<$.
\item $\min(\mu(X),\mu(Z))\leq \mu(Y)\leq\max(\mu(X),\mu(Z))$.
\end{enumerate}
\end{lemma}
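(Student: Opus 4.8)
The plan is to reduce all three statements to a single elementary inequality between fractions. The crucial structural input is that a short exact sequence $0\rightarrow X\rightarrow Y\rightarrow Z\rightarrow 0$ forces additivity of dimension vectors, $\dimv Y=\dimv X+\dimv Z$, and hence --- since both $\dim$ and $\Theta$ are linear --- additivity of the two functionals: $\dim Y=\dim X+\dim Z$ and $\Theta(Y)=\Theta(X)+\Theta(Z)$. This is the only place the exact sequence enters the argument; everything afterwards is arithmetic.

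First I would fix notation by writing $a=\dim X$, $c=\dim Z$ (so $\dim Y=a+c$) and $p=\Theta(X)$, $r=\Theta(Z)$ (so $\Theta(Y)=p+r$). Since $X,Y,Z$ are nonzero and $\dim$ is strictly positive on nonzero dimension vectors, the denominators $a$, $c$, $a+c$ are all strictly positive; this is precisely what guarantees that clearing denominators never reverses an inequality, and it is the one hypothesis I would take care to invoke. The three slopes are then $\mu(X)=p/a$, $\mu(Z)=r/c$, and $\mu(Y)=(p+r)/(a+c)$, the last being the mediant of the first two.

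For part (1), I would cross-multiply each of the three inequalities in turn. The inequality $\mu(X)\leq\mu(Y)$ becomes $p(a+c)\leq a(p+r)$, which simplifies to $pc\leq ar$; the inequality $\mu(Y)\leq\mu(Z)$ becomes $c(p+r)\leq r(a+c)$, again simplifying to $pc\leq ar$; and $\mu(X)\leq\mu(Z)$ is literally $pc\leq ar$. Thus all three conditions are equivalent to the single inequality $pc\leq ar$, which proves (1). Part (2) is then identical verbatim with every $\leq$ replaced by $<$, since no step in the reduction relies on non-strictness.

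Finally, part (3) follows by combining (1) with a case distinction. If $\mu(X)\leq\mu(Z)$, then by (1) both $\mu(X)\leq\mu(Y)$ and $\mu(Y)\leq\mu(Z)$ hold, so $\mu(Y)$ lies between them; the case $\mu(Z)\leq\mu(X)$ is symmetric under swapping the roles of $X$ and $Z$. Since the mediant always falls between the two fractions, there is no genuine obstacle in this proof --- it is essentially a packaged version of the mediant inequality --- and the only point demanding attention is the strict positivity of the denominators noted above, without which both the mediant interpretation and the direction of every cross-multiplication would be in doubt.
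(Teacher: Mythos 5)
Your proof is correct and takes essentially the same route as the paper: both arguments use only the additivity $\dimv Y=\dimv X+\dimv Z$ coming from the exact sequence, identify $\mu(Y)$ as the mediant $\frac{\Theta(X)+\Theta(Z)}{\dim X+\dim Z}$, and then reduce parts (1)--(3) to elementary inequalities between fractions. The only difference is that you carry out the cross-multiplications explicitly (reducing all three conditions to $pc\leq ar$) where the paper declares this verification trivial, and you make explicit the positivity of denominators and the case distinction for part (3) that the paper leaves to the reader.
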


\begin{proof} Let $d$ and $e$ be the dimension vectors of $X$ and $Z$, respectively. Then the dimension vector of $Y$ equals $d+e$, and thus the slope of $Y$ equals
$$\mu(Y)=\frac{\Theta(d)+\Theta(e)}{\dim d+\dim e}.$$
It is now trivial to verify that
$$\frac{\Theta(d)}{\dim d}\leq\frac{\Theta(d)+\Theta(e)}{\dim d+\dim e}\iff \frac{\Theta(d)}{\dim d}\leq\frac{\Theta(e)}{\dim e}\iff
\frac{\Theta(d)+\Theta(e)}{\dim d+\dim e}\leq\frac{\Theta(e)}{\dim e},$$
and the same statement with $\leq$ replaced by $<$. This proves the first two parts of the lemma. The third part follows immediately.\end{proof}

This lemma shows that semistability of a representation $X$ can also be characterized by the condition $\mu(X)\leq\mu(W)$ for any non-zero factor representation $W$ of $X$.\\[1ex]
Denote by ${\rm mod}_\mu kQ$ the full subcategory of ${\rm mod}\, kQ$ consisting of semistable representations of slope $\mu\in{\bf Q}$.

\begin{lemma}\label{basicsemistable} The following properties of the subcategories ${\rm mod}_\mu kQ$ hold:
\begin{enumerate}
\item Let $0\rightarrow X\rightarrow Y\rightarrow Z\rightarrow 0$ be a short exact sequence of non-zero $k$-representations of $Q$ of the same slope $\mu$. Then $Y$ is semistable if and only if $X$ and $Z$ are semistable.
\item ${\rm mod}_\mu kQ$ is an abelian subcategory of ${\rm mod}\, kQ$.
\item If $\mu>\nu$, then ${\rm Hom}({\rm mod}_\mu kQ,{\rm mod}_\nu kQ)=0$.
\item The stable representations of slope $\mu$ are precisely the simple objects in the abelian category ${\rm mod}_\mu kQ$. In particular, they are indecomposable, their endomorphism ring is trivial, and there are no non-zero morphisms between non-isomorphic stable representations of the same slope.
\end{enumerate}
\end{lemma}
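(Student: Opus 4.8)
The plan is to reduce everything to two elementary closure observations about semistable representations, both immediate from the definitions together with Lemma \ref{l41} and the factor characterization of semistability recorded after it. First I would record: (a) if $U$ is a non-zero subrepresentation of a semistable $X$ with $\mu(U)=\mu(X)$, then $U$ is itself semistable, since any $V\subseteq U$ satisfies $V\subseteq X$ and hence $\mu(V)\le\mu(X)=\mu(U)$; and (b) dually, if $W$ is a non-zero factor of a semistable $X$ with $\mu(W)=\mu(X)$, then $W$ is semistable, since any factor $W'$ of $W$ is a factor of $X$ and the factor characterization gives $\mu(X)\le\mu(W')$, i.e.\ $\mu(W)\le\mu(W')$. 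These two facts carry almost all the weight.

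For part (1), assume $X,Z$ semistable of slope $\mu$ and take a non-zero $U\subseteq Y$. Writing $U'=U\cap X\subseteq X$ and $U''$ for its image in $Z$, I get a short exact sequence $0\to U'\to U\to U''\to 0$ with $\mu(U')\le\mu$ and $\mu(U'')\le\mu$; Lemma \ref{l41}(3) then bounds $\mu(U)\le\max(\mu(U'),\mu(U''))\le\mu$ (the degenerate cases $U'=0$ or $U''=0$ being immediate), so $Y$ is semistable. Conversely, if $Y$ is semistable then $X$ inherits semistability because subrepresentations of $X$ are subrepresentations of $Y$, while $Z$ inherits it from the factor characterization, since factors of $Z$ are factors of $Y$ and $\mu(Z)=\mu(Y)$.

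The real work is part (2), and this is the step I expect to be the main obstacle, because I must show that $\ker f$, the image, and ${\rm coker}\, f$ of a morphism $f\colon X\to Y$ in ${\rm mod}_\mu kQ$ again lie in ${\rm mod}_\mu kQ$. First I would pin down the slopes: the image $I={\rm im}\, f$ is a subrepresentation of $Y$, so $\mu(I)\le\mu$, and a factor of $X$, so $\mu(I)\ge\mu$ by the factor characterization, forcing $\mu(I)=\mu$; then a mediant computation (equality of two slopes in a short exact sequence forces the third to agree) applied to $0\to\ker f\to X\to I\to 0$ and $0\to I\to Y\to{\rm coker}\, f\to 0$ shows $\mu(\ker f)=\mu({\rm coker}\, f)=\mu$. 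With the slopes fixed, observation (a) makes $\ker f\subseteq X$ and $I\subseteq Y$ semistable, and observation (b) makes ${\rm coker}\, f$ semistable. Since ${\rm mod}_\mu kQ$ is a full subcategory of the abelian category ${\rm mod}\, kQ$ which I have now shown is closed under kernels, cokernels and (via part (1) applied to a split sequence) finite direct sums, it is an abelian subcategory.

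Part (3) is then a one-line image argument: for $f\colon X\to Y$ with $\mu(X)=\mu>\nu=\mu(Y)$ and $f\ne 0$, the image $I$ satisfies $\mu\le\mu(I)\le\nu$ by the factor and subrepresentation inequalities, contradicting $\mu>\nu$. For part (4), I would identify stables with simples: for $X\in{\rm mod}_\mu kQ$, a subobject in ${\rm mod}_\mu kQ$ is precisely a subrepresentation of slope $\mu$ (any such being semistable by (a)), and since $X$ semistable forces every subrepresentation to have slope $\le\mu$, the absence of a proper non-zero such subobject is equivalent to $\mu(U)<\mu(X)$ for all proper non-zero $U\subseteq X$, i.e.\ to stability of $X$. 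The ``in particular'' clauses are then Schur's lemma in the abelian category ${\rm mod}_\mu kQ$: simplicity gives indecomposability (a non-trivial summand would be a proper non-zero subobject of slope $\mu$), every non-zero endomorphism or morphism between simples has zero kernel and full image hence is an isomorphism, so ${\rm End}(X)$ is a finite-dimensional division algebra over the algebraically closed field $k$ and thus equals $k$, and non-isomorphic stables of the same slope admit no non-zero maps.
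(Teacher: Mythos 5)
Your proof is correct, and its skeleton largely coincides with the paper's: both rest on Lemma \ref{l41} together with the factor characterization of semistability recorded after it, the forward direction of part (1) is argued identically via the induced sequence $0\rightarrow U\cap X\rightarrow U\rightarrow (U+X)/X\rightarrow 0$ and the mediant bound of Lemma \ref{l41}(3), and parts (3) and (4) proceed the same way (slope estimate on the image, then Schur's Lemma). The genuine difference is organizational: where the paper deduces semistability of ${\rm Ker}(f)$, ${\rm Im}(f)$ and ${\rm Coker}(f)$ in part (2) from the extension-closure statement of part (1), you isolate the two closure observations (a) (a subrepresentation of equal slope in a semistable is semistable) and (b) (a factor of equal slope is semistable) and use those instead, which makes part (2) logically independent of part (1) -- observation (a) in particular requires no slope arithmetic at all. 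The same device gives you a one-line proof of semistability of $Z$ in the converse of part (1), where the paper instead pulls back a subrepresentation of $Z$ along $Y\rightarrow Z$ and applies Lemma \ref{l41}(1). Your write-up is also more careful in spots the paper leaves implicit: the degenerate cases $U\cap X=0$ or $U''=0$ (needed since Lemma \ref{l41} concerns sequences of non-zero representations), the identification of subobjects of $X$ in ${\rm mod}_\mu kQ$ with slope-$\mu$ subrepresentations, and closure under finite direct sums in the abelian-subcategory claim. One clause in part (4) deserves a word of justification: that any non-trivial direct summand of a stable $X$ has slope $\mu$ is correct, but because a summand is simultaneously a subrepresentation (giving $\leq\mu$) and a factor (giving $\geq\mu$), not by fiat. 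The trade-off between the two routes is mild: the paper's is more economical, with part (1) doing double duty, while your observations (a) and (b) are reusable (they also underlie the subobject identification in part (4)) and make the bookkeeping cleaner.
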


\begin{proof} Suppose that $X$ and $Z$ are semistable, and let $U$ be a subrepresentation of $Y$. This yields an induced exact sequence
$$0\rightarrow U\cap X\rightarrow U\rightarrow (U+X)/X\rightarrow 0$$
of subrepresentations of $X$, $Y$ and $Z$, respectively. By semistability of $X$ and $Z$, we have $\mu(U\cap X)\leq\mu(X)=\mu$ and $\mu((U+X)/X)\leq\mu(Z)=\mu$. Applying the third part of the previous lemma, we get
$$\mu(U)\leq\max(\mu(U\cap X),\mu((U+X)/X)))\leq\mu=\mu(Y),$$
proving semistability of $Y$.\\[1ex]
Conversely, suppose that $Y$ is semistable. A subrepresentation $U$ of $X$ can then be viewed as a subrepresentation of $Y$, and thus $\mu(U)\leq\mu(Y)=\mu=\mu(X)$, proving semistability of $X$. A subrepresentation $U$ of $Z$ induces an exact sequence $0\rightarrow X\rightarrow V\rightarrow U\rightarrow 0$ by pullback, and thus
$\mu(V)\leq\mu(Y)=\mu=\mu(X)$. Applying the first part of the previous lemma, we get $\mu(U)\leq\mu(V)\leq\mu=\mu(Z)$, proving semistability of $Z$. This proves the first part of the lemma. It also proves that the subcategory ${\rm mod}_\mu kQ$ is closed under extensions.\\[1ex]
Given a morphism $f:X\rightarrow Y$ in ${\rm mod}_\mu kQ$, we have $\mu=\mu(X)\leq\mu({\rm Im}(f))\leq \mu(Y)=\mu$ by semistabililty of $X$ and $Y$, and thus $\mu({\rm Im}(f))=\mu$. Thus, ${\rm Ker}(f)$, ${\rm Im}(f)$ and ${\rm Coker}(f)$ all have the same slope $\mu$, and they are all semistable by the first part. This proves that ${\rm mod}_\mu kQ$ is abelian. The same argument proves the third part: if $f:X\rightarrow Y$ is a non-zero morphism, then $\mu(X)\leq\mu({\rm Im}(f))\leq\mu(Y)$.\\[1ex]
By the definition of stability, a representation is stable of slope $\mu$ if and only if it has no non-zero proper subrepresentation in ${\rm mod}_\mu kQ$, proving that the stables of slope $\mu$ are the simples in ${\rm mod}_\mu kQ$. The remaining statements of the fourth part follow from Schur's Lemma.
\end{proof}

\begin{definition} A subrepresentation $U$ of a representation $X$ is called scss (short for strongly contradicting semistability) if its slope is maximal among the slopes of subrepresentations of $X$, that is, $\mu(U)=\max\{\mu(V)\,|\, V\subset X\}$, and it is of maximal dimension with this property.
\end{definition}

Such a subrepresentation clearly exists, since there are only finitely many dimensions and slopes of subrepresentations. By its defining property, it is clearly semistable.

\begin{lemma} Every representation $X$ admits a unique scss subrepresentation.
\end{lemma}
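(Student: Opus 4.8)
The plan is to take existence for granted -- it was already observed before the statement that a subrepresentation of maximal slope and, among those, of maximal dimension exists, since only finitely many dimension vectors and slopes of subrepresentations of $X$ occur -- and to concentrate entirely on uniqueness. (If $X=0$ the only subrepresentation is $0$ and there is nothing to prove, so assume $X\neq 0$.) Suppose $U_1$ and $U_2$ are both scss subrepresentations of $X$, and write $\mu_{\max}=\mu(U_1)=\mu(U_2)$ for the maximal slope of a subrepresentation of $X$. Each $U_i$ is semistable, as already noted: any subrepresentation of $U_i$ is a subrepresentation of $X$ and hence has slope at most $\mu_{\max}=\mu(U_i)$.

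The key step is to show that the sum $U_1+U_2$, which is again a subrepresentation of $X$, also has slope exactly $\mu_{\max}$. I would argue as follows. By Lemma \ref{basicsemistable}(1) applied to the split sequence $0\to U_1\to U_1\oplus U_2\to U_2\to 0$, the direct sum $U_1\oplus U_2$ is semistable of slope $\mu_{\max}$. The sum-of-inclusions map $U_1\oplus U_2\to X$ has image exactly $U_1+U_2$, so $U_1+U_2$ is a quotient of $U_1\oplus U_2$; write $0\to K\to U_1\oplus U_2\to U_1+U_2\to 0$. Semistability of $U_1\oplus U_2$ gives $\mu(K)\leq\mu_{\max}$, and Lemma \ref{l41}(1) then yields $\mu(U_1+U_2)\geq\mu_{\max}$. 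On the other hand $U_1+U_2$ is a nonzero subrepresentation of $X$, so $\mu(U_1+U_2)\leq\mu_{\max}$. Hence $\mu(U_1+U_2)=\mu_{\max}$.

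With this in hand the conclusion is immediate. The subrepresentation $U_1+U_2$ has the maximal slope $\mu_{\max}$ and contains $U_1$. Since $U_1$ was chosen of maximal total dimension among subrepresentations of slope $\mu_{\max}$, while $U_1\subseteq U_1+U_2$ forces the reverse inequality on total dimensions, the two total dimensions agree; an inclusion of representations with equal total dimension has equal dimension vector and is therefore an equality, so $U_1+U_2=U_1$. By the symmetric argument $U_1+U_2=U_2$, whence $U_1=U_2$, proving uniqueness.

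I expect the genuinely substantive point to be the middle paragraph -- the observation that the \emph{sum} of two maximal-slope subrepresentations again has maximal slope. Everything else is formal: existence is a finiteness argument, and the final step is a comparison of dimension vectors. An alternative route to the key step, avoiding the quotient argument, is to apply additivity of $\Theta$ and $\dim$ to the exact sequence $0\to U_1\cap U_2\to U_1\oplus U_2\to U_1+U_2\to 0$; since $\mu(U_1\cap U_2)\leq\mu_{\max}$ and $\mu(U_1+U_2)\leq\mu_{\max}$ while the total contributions to $\Theta$ and to $\dim$ are both forced to the value they would have at slope $\mu_{\max}$, both inequalities must be equalities, which simultaneously shows that $U_1+U_2$ (and $U_1\cap U_2$, when nonzero) has slope $\mu_{\max}$.
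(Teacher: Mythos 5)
Your proof is correct and follows essentially the same route as the paper: the paper also reduces to the exact sequence $0\to U_1\cap U_2\to U_1\oplus U_2\to U_1+U_2\to 0$, applies Lemma \ref{l41} to conclude $\mu(U_1+U_2)=\mu_{\max}$, and finishes with the same maximality-of-dimension comparison. Your main paragraph merely repackages the intersection as the kernel $K$ and detours through semistability of $U_1\oplus U_2$ via Lemma \ref{basicsemistable}, while your ``alternative route'' at the end is word-for-word the paper's argument.
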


\begin{proof} Suppose $U$ and $V$ are scss subrepresentations of $X$, necessarily of the same slope $\mu$. The exact sequence $0\rightarrow U\cap V\rightarrow U\oplus V\rightarrow U+V\rightarrow 0$ yields $\mu(U\cap V)\leq\mu=\mu(U\oplus V)$, thus $\mu\leq\mu(U+V)$ by Lemma \ref{l41}. By maximality of the slope $\mu$ among subrepresentations of $X$, we have $\mu(U+V)=\mu$. By maximality of the dimension of $U$ and $V$, we have $\dim U+V\leq \dim U,\dim V$, and thus $U=V$.\end{proof}

\begin{remark}\label{automhnf} The uniqueness of the scss of a representation $X$ has some interesting applications: for example, the scss has to be fixed under arbitrary automorphisms $\varphi$ of $X$, since applying $\varphi$ to a subrepresentations does not change its dimension vector, and thus also its slope and dimension.\\[1ex]
This implies the compatibility of semistability with base extension: let $k\subset K$ be a Galois extension, and let $X$ be a semistable $k$-representation of $Q$. The scss $V$ of $\bar{X}=K\otimes_k X$ is fixed under all automorphism of $\bar{X}$, thus in particular under the Galois group of the extension $k\subset K$. Thus, it descends to a subrepresentation $U$ of $X$ (that is, $V\simeq K\otimes_k U$). By semistability of $X$, we have $\mu(V)=\mu(U)\leq\mu(X)=\mu(\bar{X})$, thus $\bar{X}$ is semistable.\end{remark}

\begin{definition} A filtration $0=X_0\subset X_1\subset\ldots\subset X_s=X$ of a representation $X$ is called Harder-Narasimhan (abbreviated by HN) if the subquotients $X_i/X_{i-1}$ are semistable for $i=1,\ldots,s$, and $\mu(X_1/X_0)>\mu(X_2/X_1)>\ldots>\mu(X_s/X_{s-1})$.
\end{definition}

\begin{lemma} Every representation $X$ possesses a unique Harder-Narasimhan filtration.
\end{lemma}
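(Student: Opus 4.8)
The plan is to prove both existence and uniqueness simultaneously by induction on $\dim X$, using the (unique) scss subrepresentation established above as the crucial building block: it will always be forced to be the top piece $X_1$ of the filtration. For existence, if $X$ is semistable then $0\subset X$ already is an HN filtration, so assume it is not. Let $X_1$ be the scss of $X$; it is semistable, and since $X$ is not semistable its slope $\mu(X_1)$ is strictly larger than $\mu(X)$, so in particular $X_1\neq X$ and $\dim(X/X_1)<\dim X$. The inductive hypothesis then furnishes an HN filtration of $X/X_1$, which I would pull back along $X\to X/X_1$ to obtain a chain $X_1\subset X_2\subset\ldots\subset X_s=X$ whose subquotients are the semistable HN factors of $X/X_1$, already with strictly decreasing slopes. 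The only point requiring care is the slope inequality at the junction, namely $\mu(X_1)>\mu(X_2/X_1)$: writing $\nu=\mu(X_2/X_1)$, if $\nu\geq\mu(X_1)$ then Lemma \ref{l41} applied to $0\to X_1\to X_2\to X_2/X_1\to 0$ would give $\mu(X_2)\geq\mu(X_1)$; since $X_1$ has maximal slope among subrepresentations of $X$ this forces $\mu(X_2)=\mu(X_1)$, whereupon $X_2$ is a subrepresentation of the maximal slope with $\dim X_2>\dim X_1$, contradicting the maximal-dimension clause defining the scss. Hence $\nu<\mu(X_1)$ and the chain is a genuine HN filtration.

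For uniqueness, I would first show that the top term of \emph{any} HN filtration is necessarily the scss of $X$, which then reduces uniqueness to an induction. Given an HN filtration $0=X_0\subset\ldots\subset X_s=X$ with factor slopes $\mu_1>\ldots>\mu_s$, let $U$ be the scss, semistable of the maximal subrepresentation slope $\mu_{\max}$; note $\mu_{\max}\geq\mu_1$ since $X_1$ is itself a subrepresentation of slope $\mu_1$. Taking the least index $i$ with $U\subseteq X_i$, the composite $U\hookrightarrow X_i\to X_i/X_{i-1}$ is a non-zero morphism between semistable representations, so Lemma \ref{basicsemistable}(3) yields $\mu_{\max}\leq\mu_i\leq\mu_1\leq\mu_{\max}$, forcing $\mu_i=\mu_1$ and hence $i=1$; thus $U\subseteq X_1$. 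Since $X_1$ has slope $\mu_1=\mu_{\max}$, maximality of $\dim U$ gives $U=X_1$. The induced filtration on $X/X_1$ is again HN (its subquotients are $X_j/X_{j-1}$ for $j\geq 2$, semistable with strictly decreasing slopes), so by induction on dimension it is unique, and therefore so is the whole filtration.

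The hard part will be the bookkeeping around the scss: both the junction inequality in the existence step and the identification $U=X_1$ in the uniqueness step rely on correctly combining the two defining properties of the scss (maximal slope \emph{and} maximal dimension) with the slope estimates of Lemma \ref{l41} and, crucially, the vanishing of $\mathrm{Hom}$ between semistables of decreasing slope from Lemma \ref{basicsemistable}(3). Once one is confident that the first term of every HN filtration must be the scss, the remainder is a clean induction on $\dim X$.
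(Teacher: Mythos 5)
Your proposal is correct and follows essentially the same route as the paper's proof: existence by induction on dimension, pulling back the HN filtration of $X/X_1$ along the quotient by the scss and securing the junction inequality $\mu(X_1)>\mu(X_2/X_1)$ from the two maximality clauses of the scss via Lemma \ref{l41}, then uniqueness by showing (using the Hom-vanishing of Lemma \ref{basicsemistable}) that the first term of any HN filtration must equal the scss and inducting on the quotient. The only differences are cosmetic: you argue the junction inequality by contradiction where the paper argues it directly, and you spell out the base case explicitly.
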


\begin{proof} Existence is proved by induction on the dimension of $X$. Let $X_1$ be the scss of $X$. By induction, we have a HN filtration $0=Y_0\subset Y_1\subset\ldots \subset Y_{s-1}=X/X_1$. Via the projection $\pi:X\rightarrow X/X_1$, we pull this back to a filtration of $X$ defined by $X_i=\pi^{-1}(Y_{i-1})$ for $i=1,\ldots,s$. Now $X_1/X_0$ is semistable since $X_1$ is the scss of $X$, and $X_{i+1}/X_{i}\simeq Y_i/Y_{i-1}$ is semistable by the choice of the $Y_i$ for $i=1,\ldots,s-1$. We also infer
$\mu(X_2/X_1)>\ldots>\mu(X_s/X_{s-1})$ from the corresponding property of the slopes of the subquotients in the HN filtration of $X/X_1$. Since $X_2$ is a subrepresentation of $X$ of strictly larger dimension than $X_1$, we have $\mu(X_1)>\mu(X_2)$ since $X_1$ is scss in $X$, and thus $\mu(X_1/X_0)=\mu(X_1)>\mu(X_2/X_1)$.\\[1ex]
To prove uniqueness, we proceed again by induction on the dimension. Assume that $0=X_0'\subset\ldots\subset X_s'=X$ is a HN filtration of $X$. Let $t$ be minimal such that $X_1$ is contained in $X_t'$, thus the inclusion induces a non-zero map from $X_1$ to $X_t'/X_{t-1}'$. Both representations being semistable and $X_1$ being scss, we have $\mu(X_1')\leq \mu(X_1)\leq\mu(X_t'/X_{t-1}')\leq\mu(X_1')$, thus $\mu(X_1)=\mu(X_1')$ and $t=1$, which means $X_1\subset X_1'$. Again since $X_1$ is scss, we conclude that $X_1=X_1'$. By induction, we know that the induced filtrations on the factor $X/X_1$ coincide, thus the filtrations of $X$ coincide.\end{proof}

Interpreted properly, the HN filtration is even functorial: index the HN filtration by ${\bf Q}$ by defining
$$X^{(a)}=X_k\mbox{ if }\mu(X_k/X_{k-1})\geq a>\mu(X_{k+1}/X_k)\mbox{ for all }a\in{\bf Q}.$$

\begin{lemma} Any morphism $f:X\rightarrow Y$ respects the HN filtration, in the sense that $f(X^{(a)})\subset Y^{(a)}$ for all $a\in{\bf Q}$.
\end{lemma}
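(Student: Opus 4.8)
The plan is to reformulate the claim as the vanishing of a single composite and then reduce it to the elementary Hom-vanishing of Lemma \ref{basicsemistable}(3). First I would observe that $f(X^{(a)})\subset Y^{(a)}$ holds if and only if the composite
$$X^{(a)}\hookrightarrow X\stackrel{f}{\longrightarrow}Y\twoheadrightarrow Y/Y^{(a)}$$
vanishes, so it suffices to prove $\Hom(X^{(a)},Y/Y^{(a)})=0$. Unwinding the definition of the ${\bf Q}$-indexed filtration, if $0=X_0\subset\ldots\subset X_s=X$ is the HN filtration and $k$ is chosen with $\mu(X_k/X_{k-1})\geq a>\mu(X_{k+1}/X_k)$, then $X^{(a)}=X_k$ carries the filtration $0\subset X_1\subset\ldots\subset X_k$ whose subquotients are semistable of slopes $\geq a$, while $Y/Y^{(a)}$ inherits from the HN filtration of $Y$ a filtration whose subquotients are semistable of slopes $<a$. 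The degenerate cases $X^{(a)}=0$ and $Y^{(a)}=Y$ make the statement trivially true and are handled separately.

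It therefore remains to prove the general principle that there are no non-zero morphisms from a representation filtered by semistables of slope $\geq a$ to one filtered by semistables of slope $<a$. I would prove this by a double induction on the lengths of the two filtrations. For the reduction on the source: writing $A=X^{(a)}$ with first HN piece $A_1$ (semistable of slope $\geq a$), any $g\colon A\to Y/Y^{(a)}$ restricts to $A_1$; once $\Hom(A_1,Y/Y^{(a)})=0$ is known, $g$ factors through $A/A_1$, whose HN filtration is strictly shorter but still has all slopes $\geq a$, and induction finishes this direction. For the reduction on the target: with $S$ semistable of slope $\geq a$ and $B=Y/Y^{(a)}$ having bottom HN quotient $B/B_{q-1}$ (semistable of slope $<a$), Lemma \ref{basicsemistable}(3) applies directly, since $\mu(S)\geq a>\mu(B/B_{q-1})$, to give $\Hom(S,B/B_{q-1})=0$; hence any map $S\to B$ factors through $B_{q-1}$, and induction on the length of the filtration of $B$ concludes.

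The only genuinely load-bearing input is Lemma \ref{basicsemistable}(3), which supplies the base case $\Hom({\rm mod}_\mu kQ,{\rm mod}_\nu kQ)=0$ for $\mu>\nu$; everything else is bookkeeping with the explicit HN filtrations produced earlier. Accordingly, I expect no serious obstacle here — the one point demanding care is to arrange the double induction so that the factorization argument is applied on the correct side (source versus target) at each step, and to treat the boundary values of $a$, where $X^{(a)}$ or $Y^{(a)}$ degenerates, separately.
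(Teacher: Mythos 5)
Your proof is correct, but it is organized differently from the paper's. The paper proves, by a single induction on $k$, the quantitative statement that if $f(X_k)\subset Y_l\setminus Y_{l-1}$ then $\mu(Y_l/Y_{l-1})\geq\mu(X_k/X_{k-1})$: at each step one examines the composite $X_{k-1}\rightarrow X_k\stackrel{f}{\rightarrow} Y_l\rightarrow Y_l/Y_{l-1}$ and splits into the two cases where it vanishes (then $f$ induces a non-zero map $X_k/X_{k-1}\rightarrow Y_l/Y_{l-1}$ between semistables, giving the slope inequality) or does not vanish (then the inductive hypothesis applies to $X_{k-1}$, giving a strictly better inequality); the containment $f(X^{(a)})\subset Y^{(a)})$ is then read off from the slope comparison. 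You instead reduce the claim to the orthogonality statement $\Hom(X^{(a)},Y/Y^{(a)})=0$ and prove it by a double d\'evissage on the two HN filtrations, peeling the top subrepresentation off the source and the bottom quotient off the target. The only genuine external input is the same in both arguments, namely Lemma \ref{basicsemistable}(3). Your route is arguably cleaner and in effect establishes the orthogonality half of the torsion-pair statement ${\rm Hom}({\cal T}_a,{\cal F}_a)=0$ which the paper proves immediately afterwards --- note that the paper derives that later lemma \emph{from} functoriality, so your argument reverses the logical order, which is perfectly legitimate since your d\'evissage never invokes functoriality. What the paper's version buys is slightly finer information: it locates exactly which layer of the HN filtration of $Y$ the image of each $X_k$ reaches, with an explicit slope comparison, rather than only the containment of the ${\bf Q}$-indexed filtration steps.
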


\begin{proof} We will prove by induction on $k$ the following property:\\[1ex]
If $f(X_k)\subset Y_l\setminus Y_{l-1}$, then $\mu(Y_l/Y_{l-1})\geq \mu(X_k/X_{k-1})$.\\[1ex]
The claimed property follows from this: given $a\in{\bf Q}$, we have $X^{(a)}=X_k$ for the index $k$ satisfying $\mu(X_k/X_{k-1})\geq a>\mu(X_{k+1}/X_k)$. Choosing $l$ minimal such that $f(X_k)\subset Y_l$, we then have $\mu(Y_l/Y_{l-1})\geq \mu(X_k/X_{k-1})\geq a$, and thus $Y_l\subset Y^{(a)}$ by definition.\\[1ex]
In case $k=0$ there is nothing to show. For $k=1$, suppose $f(X_1)\subset Y_l\setminus Y_{l-1}$. Then $f$ induces a non-zero map between the semistable representations $X_1$ and $Y_l/Y_{l-1}$, showing $\mu(X_1)\leq\mu(Y_l/Y_{l-1})$ as claimed.\\[1ex]
For general $k$, suppose that $f(X_k)\subset Y_l\setminus Y_{l-1}$, and consider the diagram
$$\begin{array}{ccccccccc}0&\rightarrow&X_{k-1}&\stackrel{\alpha}{\rightarrow}&X_k&\rightarrow&X_k/X_{k-1}&\rightarrow&0\\
&&&&f\downarrow&&&&\\ 
0&\rightarrow&Y_{l-1}&{\rightarrow}&Y_l&\stackrel{\beta}{\rightarrow}&Y_l/Y_{l-1}&\rightarrow&0\end{array}$$
If $\beta f\alpha$ equals $0$, the map $f$ induces a non-zero map $X_k/X_{k-1}\rightarrow Y_l/Y_{l-1}$ between semistable representations, and thus $\mu(X_k/X_{k-1})\leq\mu(Y_l/Y_{l-1})$ as desired. If $\beta f\alpha$ is non-zero, we have $f(X_{k-1})\subset Y_l\setminus Y_{l-1}$, and we can conclude by induction that $\mu(Y_l/Y_{l-1})\geq\mu(X_{k-1}/X_{k-2})>\mu(X_k/X_{k-1})$, which again gives the desired estimate.\end{proof}

We call the slopes $\mu(X_1/X_0),\ldots,\mu(X_s/X_{s-1})$ in the unique HN filtration of $X$ the weights of $X$.

\begin{definition} Given $a\in{\bf Q}$, define ${\cal T}_a$ as the class of all representations $X$ all of whose weights are $\geq a$, and define ${\cal F}_a$ as the class of all representations $X$ all of whose weights are $<a$.
\end{definition}

\begin{lemma} For each $a\in{\bf Q}$, the pair $({\cal T}_a,{\cal F}_a)$ defines a torsion pair in ${\rm mod}\, kQ$. For $a<b$, we have ${\cal T}_a\supset{\cal T}_b$ and ${\cal F}_a\subset{\cal F}_b$.
\end{lemma}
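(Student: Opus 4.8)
The plan is to verify the two defining conditions of a torsion pair in ${\rm mod}\, kQ$: first, the vanishing ${\rm Hom}({\cal T}_a,{\cal F}_a)=0$, and second, the existence for every representation $X$ of a short exact sequence $0\rightarrow T\rightarrow X\rightarrow F\rightarrow 0$ with $T\in{\cal T}_a$ and $F\in{\cal F}_a$. The monotonicity statement will then follow at once from the definitions of the two classes.

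First I would produce the torsion sequence, and the natural candidate is $T:=X^{(a)}$ from the functorial indexing of the HN filtration. Writing the HN filtration of $X$ as $0=X_0\subset X_1\subset\ldots\subset X_s=X$ with $\mu(X_1/X_0)>\ldots>\mu(X_s/X_{s-1})$, let $k$ be the largest index with $\mu(X_k/X_{k-1})\geq a$, so that $X^{(a)}=X_k$. The key observation is that a truncation of an HN filtration is again an HN filtration: the filtration $0\subset X_1\subset\ldots\subset X_k$ has semistable subquotients with strictly decreasing slopes, so by the uniqueness lemma it is the HN filtration of $X_k$, and its weights $\mu(X_1/X_0),\ldots,\mu(X_k/X_{k-1})$ are all $\geq a$; hence $T=X_k\in{\cal T}_a$. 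Dually, the induced filtration $0\subset X_{k+1}/X_k\subset\ldots\subset X/X_k$ has subquotients isomorphic to $X_{k+i}/X_{k+i-1}$, again semistable with strictly decreasing slopes, so it is the HN filtration of $F:=X/X_k$, with all weights $<a$; hence $F\in{\cal F}_a$. This yields the desired sequence $0\rightarrow X^{(a)}\rightarrow X\rightarrow X/X^{(a)}\rightarrow 0$.

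For the Hom-vanishing I would invoke the functoriality lemma just proved, that $f(X^{(a)})\subset Y^{(a)}$ for every morphism $f:X\rightarrow Y$. If $T\in{\cal T}_a$, then all weights of $T$ are $\geq a$, so $a$ is at most every weight and $T^{(a)}=T$; if $F\in{\cal F}_a$, then all weights of $F$ are $<a$, so $a$ exceeds every weight and $F^{(a)}=0$. Consequently any morphism $f:T\rightarrow F$ satisfies $f(T)=f(T^{(a)})\subset F^{(a)}=0$, so $f=0$, which gives ${\rm Hom}({\cal T}_a,{\cal F}_a)=0$. For the monotonicity with $a<b$: a representation all of whose weights are $\geq b$ has all weights $\geq a$, whence ${\cal T}_b\subset{\cal T}_a$; and all weights $<a$ forces all weights $<b$, whence ${\cal F}_a\subset{\cal F}_b$.

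The only step requiring real care—the main obstacle, such as it is—is the correct identification of the weights of $T$ and of $F$ appearing in the torsion sequence, which rests on recognizing the truncated and induced filtrations as genuine HN filtrations via the uniqueness lemma. Everything else is formal, the Hom-vanishing being a direct consequence of the already-established functoriality of the HN filtration.
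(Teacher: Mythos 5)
Your proof is correct, but it verifies a different (equivalent) set of torsion-pair axioms than the paper does. You prove the Dickson-style conditions: the vanishing ${\rm Hom}({\cal T}_a,{\cal F}_a)=0$ (via functoriality of the ${\bf Q}$-indexed HN filtration, exactly as in the paper) together with the existence, for every $X$, of a torsion sequence $0\rightarrow X^{(a)}\rightarrow X\rightarrow X/X^{(a)}\rightarrow 0$, where you identify the weights of $X^{(a)}$ and $X/X^{(a)}$ by observing that the truncated and induced filtrations are themselves HN filtrations and invoking the uniqueness lemma. The paper instead verifies the maximal-orthogonality axioms (as in its reference on torsion pairs): after the same Hom-vanishing argument, it shows that ${\rm Hom}(X,{\cal F}_a)=0$ forces $X\in{\cal T}_a$, since otherwise the top HN quotient $X/X_{s-1}$ would lie in ${\cal F}_a$ yet receive the nonzero projection from $X$, and dually that ${\rm Hom}({\cal T}_a,Y)=0$ forces $Y\in{\cal F}_a$, since otherwise the scss $Y_1$ would lie in ${\cal T}_a$ yet include nontrivially into $Y$. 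The two characterizations are equivalent in ${\rm mod}\, kQ$, and each proof yields the other's conditions with one extra line: from your torsion sequence, ${\rm Hom}(X,{\cal F}_a)=0$ makes the quotient term vanish, so $X=X^{(a)}\in{\cal T}_a$, and conversely. What your route buys is an explicit, functorial torsion radical $X\mapsto X^{(a)}$, which is more information than bare membership criteria; what the paper's route buys is economy, using only the two extreme constituents of the HN filtration (scss and top quotient) and avoiding the truncation argument altogether. The monotonicity statement is handled identically (it is immediate from the definitions) in both.
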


\begin{proof} Assume $X\in{\cal T}_a$ and $Y\in{\cal F}_a$. In the ${\bf Q}$-indexed HN filtration, we thus have $X^{(b)}=X$ for all $a\leq b$, and $Y^{(b)}=0$ for all $a<b$. By functoriality, any morphism $f:X\rightarrow Y$ is already zero, proving ${\rm Hom}({\cal T}_a,{\cal F}_a)=0$.\\[1ex]
Now assume ${\rm Hom}(X,{\cal F}_a)=0$ for some representation $X$. Suppose $X$ has a weight strictly less than $a$, then certainly the slope of the (semistable) top HN factor $X/X_{s-1}$ is strictly less than $a$, too, thus it belongs to ${\cal F}_a$. But the projection map $X\rightarrow X/X_{s-1}$ is non-zero, a contradiction. Thus, $X$ belongs to ${\cal T}_a$. Finally, assume ${\rm Hom}({\cal T}_a,Y)=0$ for some representation $Y$. If $Y$ has a weight $\geq a$, then certainly the slope of its (semistable) scss $Y_1$ is $\geq a$. Thus $Y_1$ belongs to ${\cal T}_a$. But the inclusion $Y_1\rightarrow Y$ is non-zero, a contradiction. Thus, $Y$ belongs to ${\cal F}_a$. The inclusion properties of the various torsion and free classes follows from the definitions.\end{proof}

\section{Further geometric properties of moduli spaces}\label{further}

\subsection{The choice of $\Theta$}

In general, the choice of a ``suitable'' stability $\Theta$ for a given quiver, or a given dimension vector, is a very subtle problem. We will not consider this in general, but only work out the possible choices in some examples.\\[1ex]
First note that there are two operations on functionals $\Theta$ as above which do not change the classes of (semi-)stable representations:
\begin{itemize}
\item first, we can multiply $\Theta$ by a non-negative integer;
\item second, we can add an integer multiple of the functional $\dim$ to $\Theta$.
\end{itemize}

This shows immediately that all choices of $\Theta$ for the multiple-loop quiver $L_m$ are equivalent to the choice $\Theta=0$. In this case, we know that all representations are semistable (of slope $0$), and that the stable representations are precisely the simple ones. Thus in this case, the only relevant moduli spaces are the moduli $M_d^{\rm st}(L_m)$ of $d$-dimensional simples.\\[1ex]
For generalized Kronecker quivers, the above operations on stability conditions allow reduction to three choices, namely $\Theta_1(d,e)=0$, $\Theta_2(d,e)=e$ and $\Theta_3(d,e)=d$. The first two only lead to trivial notions of stability: for $\Theta_1=0$, all representations are semistable, and the stables are precisely the simples $S_1$, $S_2$. For $\Theta_2$, assume that $M$ is semistable of dimension vector $(d,e)$. If $d\not=0\not=e$, consideration of a subrepresentation $S_2\subset M$ yields a contradiction. Thus, the only semistables are direct sums of copies of either $S_1$ or $S_2$, and these simple are the only stables. Thus, the only non-trivial choice of stability is $\Theta_3$. In this case, a representation $M$ is semistable if and only if for all non-zero subspaces $U$ of $M_i$, we have
$$\dim\sum_{k=1}^mM_{\alpha_k}(U)\geq\frac{\dim M_j}{\dim M_i}\dim U,$$
and it is stable if this inequality is strict for all proper non-zero subspaces.

\subsection{Stable representations and Schur representations}\label{stableschur}

From Lemma \ref{basicsemistable}, we know that every stable representation is a Schur representation. We will now show in an example (for a generalized Kronecker quiver), that in general there exists no choice $\Theta$ of stability making all Schur representations of a fixed dimension vector stable. It is also not possible in general to choose a $\Theta$ for a given Schur representation $M$ such that $M$ becomes $\Theta$-stable.\\[1ex]
Continuing the example from section \ref{ex5k}, we have

\begin{lemma} The representation $M(\lambda,\mu)$ for $(\lambda,\mu)\not=0$ is semistable if and only if it is stable if and only if $\lambda\not=0$.
\end{lemma}

\begin{proof} The representation $M(0,1)$ admits a subrepresentation of dimension vector $(1,1)$, contradicting (semi-)stability. If $\lambda\not=0$, the only possible dimension vectors of subrepresentations of $M(\lambda,\mu)$ are easily worked out as $$(0,1),(0,2),(0,3),(1,2),(1,3),(2,3),$$
proving stability.\end{proof}

Therefore, the notion of stability chooses one of the two representations $M(0,1)$, $M(1,0)$ as ``more canonical'', due to its submodule structure.\\[1ex]
As another example, we consider the $4$-subspace quiver $S_4$ with dimension vector $d=i_1+i_2+i_3+i_4+2)$ and stability $\Theta=j^*$. A representation is given by four vectors $v_1,v_2,v_3,v_4$ in $k^2$, which we will assume to be non-zero. Then this representation is stable of no two of the vectors are proportional, and it is semistable if no three of them are proportional. It is indecomposable if it is semistable, and the four vectors cannot be grouped into two pairs of proportional ones. The stable representations therefore admit a normal form $(e_1,e_2,e_1+e_2,e_1+\lambda e_2)$ for $\lambda\not=0,1$. The moduli space $M_d^{\rm sst}(S_4)$ is isomorphic to a projective line ${\bf P}^1$ via the map
$R_d^{\rm sst}(Q)\rightarrow {\bf P}^1$ given by
$$(v_1,v_2,v_3,v_4)\mapsto(\det[v_1|v_2]\det[v_3|v_4]:\det[v_1|v_4]\det[v_3|v_2])$$
(see section \ref{coord} for more details). In this way, we realize $M_d^{\rm st}(K_4)$ is the open subset ${\bf P}^1\setminus\{0,1,\infty\}$. But the moduli space cannot distinguish between all the indecomposables: for example, all of the three isomorphism classes
$$(e_1,e_1,e_2,e_2),\; (e_1,e_1,e_2,e_1+e_2),\, (e_1,e_2,e_1+e_2,e_1+e_2)$$
are sent to the point $(0:1)$ of ${\bf P}^1$. The first of these is decomposable into a direct sum $U_1\oplus U_2$ of three-dimensional representations. The second and the third are indecomposable, the second being a non-trivial extension of $U_1$ by $U_2$, the third being a non-trivial extension of $U_2$ by $U_1$.\\[1ex]
As already suggested above, one may ask whether for a given Schur representation, there always exists a stability making this representation stable. The above example shows that this is not possible in general!
The point of view of the present paper is not to worry about the different choices of stability (and even not about the question whether a particular moduli space is non-empty), but to try to formulate results which hold for arbitrary choices of stability.

\subsection{Existence of stable representations}

An obvious question in relation to the choice of $\Theta$ is the following: under which conditions is $M_d^{\rm sst}(Q)$ (resp. $M_d^{\rm st}(Q)$) non-empty? For the semistable representations, the Harder-Narasimhan filtration yields a recursive criterion, see \cite{RHNS}.

\begin{proposition} $M_d^{\rm sst}(Q)$ is non-empty if and only if there exists no non-trivial decomposition $d=d^1+\ldots+d^s$ fulfilling the following three conditions:
\begin{itemize}
\item $M_{d^k}^{\rm sst}(Q)\not=\emptyset$ for all $k=1,\ldots,s$,
\item $\mu(d^1)>\ldots>\mu(d^s)$,
\item $\langle d^k,d^l\rangle=0$ for all $k<l$.
\end{itemize}
\end{proposition}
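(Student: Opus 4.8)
The plan is to reduce the statement to a dimension count on the Harder--Narasimhan stratification of the representation space, exploiting that $R_d(Q)$ is irreducible.

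First I would note that $M_d^{\rm sst}(Q)$ is non-empty precisely when the open subset $R_d^{\rm sst}(Q)\subset R_d(Q)$ is non-empty. Since $R_d(Q)$ is an affine space, it is irreducible, so a non-empty open subset is automatically dense. Now recall that every representation possesses a unique Harder--Narasimhan filtration; associating to a point of $R_d(Q)$ the tuple $(d^1,\dots,d^s)$ of dimension vectors of the subquotients of its HN filtration partitions $R_d(Q)$ into finitely many locally closed strata $R_d^{d^*}$, indexed by the ordered decompositions $d=d^1+\dots+d^s$ with $\mu(d^1)>\dots>\mu(d^s)$ and $M_{d^k}^{\rm sst}(Q)\ne\emptyset$ for all $k$ (the latter because HN subquotients are semistable). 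The semistable locus is exactly the open stratum $d^*=(d)$. By irreducibility, $R_d^{\rm sst}(Q)$ is empty if and only if $R_d(Q)$ is covered by the proper strata, equivalently if and only if some proper stratum ($s\ge 2$) is dense, i.e.~has codimension $0$.

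The computational heart is therefore to determine the codimension of a stratum $R_d^{d^*}$. Here I would use that $R_d^{d^*}$ is isomorphic to the associated fibre bundle $G_d\times_P R_P^{\rm sst}$, where $P\subset G_d$ is the parabolic subgroup stabilizing a fixed flag of the prescribed type, $R_P\subset R_d(Q)$ is the linear subspace of flag-compatible representations, and $R_P^{\rm sst}\subset R_P$ is the open locus whose induced subquotients are all semistable; by uniqueness of the HN filtration together with the decreasing-slope condition, such a flag is forced to be the HN filtration, so this bundle maps isomorphically onto the stratum. Computing $\dim(G_d/P)$, reading off the block structure of $R_P$ arrow by arrow, and using that $R_{d^k}^{\rm sst}(Q)$ is dense in the affine space $R_{d^k}(Q)$ (hence of full dimension) whenever non-empty, the off-diagonal contributions collapse into the Euler form, yielding
$${\rm codim}_{R_d(Q)}R_d^{d^*}=-\sum_{k<l}\langle d^k,d^l\rangle.$$

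To finish, I would show each summand is non-negative: choosing semistable representations $X^k,X^l$ of dimension vectors $d^k,d^l$ (possible by the first listed condition), the slope inequality $\mu(d^k)>\mu(d^l)$ forces $\Hom(X^k,X^l)=0$ by Lemma~\ref{basicsemistable}(3), whence $\langle d^k,d^l\rangle=\dim\Hom(X^k,X^l)-\dim{\rm Ext}^1(X^k,X^l)\le 0$. Thus the codimension is a sum of non-negative terms, so it vanishes if and only if $\langle d^k,d^l\rangle=0$ for all $k<l$. Combining with the second paragraph: $R_d^{\rm sst}(Q)$ is empty if and only if there is a non-trivial decomposition $d=d^1+\dots+d^s$ satisfying exactly the three listed conditions, which is the contrapositive of the assertion. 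The main obstacle I anticipate is precisely the codimension computation: verifying that the fibre-bundle description is an isomorphism onto the stratum (via uniqueness of HN) and carrying out the block-matrix bookkeeping so that the contributions reassemble exactly into $-\sum_{k<l}\langle d^k,d^l\rangle$. Everything else --- irreducibility, openness of the semistable locus, and non-negativity of the summands --- is either standard or an immediate consequence of the algebraic properties of stability established above.
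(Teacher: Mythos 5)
Your proof is correct, and it follows essentially the approach the paper points to: the paper states this proposition without giving a proof, referring to \cite{RHNS}, where exactly this Harder--Narasimhan stratification of $R_d(Q)$ and the codimension count ${\rm codim}\, R_d^{d^*}=-\sum_{k<l}\langle d^k,d^l\rangle$ are carried out. Your three ingredients --- irreducibility of $R_d(Q)$ together with finiteness of the strata, the fibre-bundle description $G_d\times_P R_P^{\rm sst}$ of a stratum via uniqueness of the HN filtration, and the vanishing $\Hom(X^k,X^l)=0$ for decreasing slopes forcing $\langle d^k,d^l\rangle\leq 0$ --- are precisely the ones used there.
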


Criteria for non-emptyness of both $M_d^{\rm st}(Q)$ and $M_d^{\rm sst}(Q)$ can be formulated using the concept of generic subrepresentations \cite{Scho}; these criteria are also highly recursive. We write $e\hookrightarrow d$ if the set of representations of dimension vector $d$ admitting a subrepresentation of dimension vector $e$ is dense in $R_d(Q)$. Whether this condition holds for two given dimension vectors can be determined recursively:
\begin{theorem} $e\hookrightarrow d$ if and only if $\langle e',d-e\rangle\geq 0$ for all $e'\hookrightarrow e$.
\end{theorem}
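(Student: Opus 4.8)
The plan is to study the universal \emph{incidence variety} of subrepresentations and to reduce the combinatorial criterion to a dimension count together with an obstruction-theoretic reading of its fibres. Fix $k$-vector spaces $M_i$ of dimension $d_i$ and let ${\rm Gr}=\prod_{i\in I}{\rm Gr}(e_i,d_i)$ be the product of Grassmannians parametrizing subspaces $U_i\subset M_i$ of dimension $e_i$. Define
$$Z=\{(M,(U_i)_i)\in R_d(Q)\times{\rm Gr}\;:\;M_\alpha(U_i)\subset U_j\mbox{ for all }(\alpha:i\rightarrow j)\},$$
with its two projections $p:Z\rightarrow R_d(Q)$ and $q:Z\rightarrow{\rm Gr}$. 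The fibre of $q$ over a fixed flag is the linear space of all $M$ making the $U_i$ a subrepresentation, so $q$ is a vector bundle and $Z$ is smooth and irreducible. Counting dimensions through $q$ gives
$$\dim Z=\sum_{i\in I}e_i(d_i-e_i)+\sum_{(\alpha:i\rightarrow j)}\big(d_id_j-e_i(d_j-e_j)\big)=\dim R_d(Q)+\langle e,d-e\rangle.$$
Since ${\rm Gr}$ is projective, $p$ is proper; hence $e\hookrightarrow d$ holds precisely when $p$ is dominant, i.e.\ when the generic fibre of $p$ has the minimal possible dimension $\langle e,d-e\rangle$.

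Next I would interpret these fibres. Over a representation $M$ in the image of $p$, the fibre $p^{-1}(M)$ is the quiver Grassmannian of $e$-dimensional subrepresentations $U\subset M$; at the point $[U]$ its Zariski tangent space is ${\rm Hom}(U,M/U)$, its obstructions lie in ${\rm Ext}^1(U,M/U)$, and every component has dimension at least the expected value $\langle e,d-e\rangle$ (as the fibre is cut out inside ${\rm Gr}$ by $\sum_\alpha e_i(d_j-e_j)$ equations). Because the category is hereditary, the Euler relation gives $\dim{\rm Hom}(U,M/U)-\dim{\rm Ext}^1(U,M/U)=\langle e,d-e\rangle$. Combining these facts, the generic fibre attains the minimal dimension $\langle e,d-e\rangle$ \emph{if and only if} a general subrepresentation $U$ of a general $M$ in the image satisfies ${\rm Ext}^1(U,M/U)=0$. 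Thus the theorem reduces to the statement that this generic vanishing is equivalent to $\langle e',d-e\rangle\geq 0$ for all $e'\hookrightarrow e$.

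For the implication ``$e\hookrightarrow d\Rightarrow$ inequalities'' I expect a clean argument. When $p$ is dominant the restriction map $Z\rightarrow R_e(Q)$, $(M,U)\mapsto M|_U$, is also dominant (every $X$ of dimension $e$ occurs as the sub of the split extension $X\oplus Y$), so intersecting the two resulting dense open subsets of the irreducible $Z$ shows that a general $M$ contains a subrepresentation $U$ which is simultaneously a \emph{general} representation of dimension $e$ and satisfies ${\rm Ext}^1(U,M/U)=0$. Given $e'\hookrightarrow e$, such a general $U$ has a subrepresentation $U'$ of dimension $e'$; the long exact sequence of ${\rm Ext}^1(-,M/U)$ applied to $0\rightarrow U'\rightarrow U\rightarrow U/U'\rightarrow 0$, together with the vanishing of higher ${\rm Ext}$'s, forces ${\rm Ext}^1(U',M/U)=0$, and hence $\langle e',d-e\rangle=\dim{\rm Hom}(U',M/U)\geq 0$.

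The converse is where the real work lies, and I expect it to be the main obstacle. One must show that the inequalities $\langle e',d-e\rangle\geq 0$ for all $e'\hookrightarrow e$ force the generic fibre of $p$ to have minimal dimension, i.e.\ that the \emph{generic} extension group between a generic subrepresentation of type $e$ and a generic quotient of type $d-e$ vanishes. This amounts to Schofield's formula $${\rm ext}(e,f)=\max_{e'\hookrightarrow e}\big(-\langle e',f\rangle\big)$$ for the generic value ${\rm ext}(e,f)=\min\{\dim{\rm Ext}^1(X,Y):\dimv X=e,\ \dimv Y=f\}$, specialized to $f=d-e$: the right-hand side vanishes exactly under our hypothesis. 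I would prove this formula by a simultaneous induction on $\dim e$ (equivalently, along the finite poset of sub-dimension-vectors ordered by $\hookrightarrow$), the subtle point being to show that the sub-dimension-vector $e'$ maximizing $-\langle e',f\rangle$ is actually \emph{realized} by the image of a homomorphism attaining the generic extension, and dually that generic subrepresentations and quotients are themselves generic representations. Controlling this genericity---keeping track of the upper-semicontinuity of $\dim{\rm Hom}$ and $\dim{\rm Ext}^1$ and verifying that the extremal $e'$ occurs generically---is the technical heart of the proof; the recursion is well-founded since $e'\hookrightarrow e$ forces $e'\leq e$, with the base cases $0\hookrightarrow d$ and $e\hookrightarrow e$ being trivial.
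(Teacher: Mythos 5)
Your geometric setup is correct, and it is in fact the standard one (it is Schofield's): the incidence variety $Z$ is a vector bundle over the product of Grassmannians, $\dim Z=\dim R_d(Q)+\langle e,d-e\rangle$, the tangent space to the fibre $p^{-1}(M)$ at $U$ is ${\rm Hom}(U,M/U)$, and hence, in characteristic $0$ by generic smoothness, $p$ is dominant if and only if ${\rm Ext}^1(U,M/U)=0$ for general $(M,U)\in Z$; since the Grassmannian is projective, the image of $p$ is closed, so dominance is indeed equivalent to $e\hookrightarrow d$. Your proof of the implication ``$e\hookrightarrow d$ implies $\langle e',d-e\rangle\geq 0$ for all $e'\hookrightarrow e$'' is complete and correct; in fact you do not even need $U$ to be generic in $R_e(Q)$, because $e'\hookrightarrow e$ together with properness of the corresponding incidence projection means that \emph{every} representation of dimension vector $e$ has a subrepresentation of dimension vector $e'$, after which the long exact sequence and heredity give ${\rm Ext}^1(U',M/U)=0$ and $\langle e',d-e\rangle=\dim{\rm Hom}(U',M/U)\geq 0$ exactly as you say. (For calibration: the paper under review offers no proof at all --- it is a survey quoting this theorem from Schofield's \emph{General representations of quivers} --- so the only meaningful comparison is with Schofield's original argument, which your correct half reproduces.)

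The genuine gap is the converse, which is the substantive half of the ``if and only if''. You correctly reduce it to the vanishing case of Schofield's formula ${\rm ext}(e,f)=\max_{e'\hookrightarrow e}(-\langle e',f\rangle)$ with $f=d-e$, i.e.\ to producing, from the purely numerical hypothesis $\langle e',d-e\rangle\geq 0$ for all $e'\hookrightarrow e$, a single pair of representations $X,Y$ of dimension vectors $e,d-e$ with ${\rm Ext}^1(X,Y)=0$ (equivalently, one point of $Z$ where $dp$ is surjective). But this reduction is essentially a restatement rather than a proof: the inequality ${\rm ext}(e,d-e)\leq\max_{e'\hookrightarrow e}(-\langle e',d-e\rangle)$ \emph{is} the theorem, and your plan for it --- a ``simultaneous induction on $\dim e$'', with the admission that realizing the extremal $e'$ as the image of a suitable generic homomorphism, and keeping generic subrepresentations and quotients themselves generic, is ``the technical heart'' --- is precisely the part of Schofield's argument (his construction of exact sequences with generic end terms and the accompanying double induction) that you leave unexecuted. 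Semicontinuity of $\dim{\rm Hom}$ and $\dim{\rm Ext}^1$ alone will not produce the required point of $Z$; some construction must convert the numerical inequalities into an actual extension-free pair, and none is given. As it stands, one direction of the theorem is proved and the other is only conjecturally outlined, so the proposal does not constitute a proof of the statement.
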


Based on this notion, we have the following criterion from \cite{Scho}:

\begin{theorem} We have
\begin{enumerate}
\item $M_d^{\rm sst}(Q)\not=\emptyset$ if and only if $\mu(e)\leq \mu(d)$ for all $e\hookrightarrow d$,
\item $M_d^{\rm st}(Q)\not=\emptyset$ if and only if $\mu(e)<\mu(d)$ for all $e\not=d$ such that $0\not=e\hookrightarrow d$.
\end{enumerate}
\end{theorem}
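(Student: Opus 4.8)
The plan is to reduce the non-emptiness of the moduli space to a statement about the \emph{generic} representation of dimension vector $d$, and then to read off its (semi-)stability directly from the notion $e\hookrightarrow d$ together with the slope characterization of (semi-)stability (Section~\ref{aas}). First I would observe that $M_d^{\rm sst}(Q)\neq\emptyset$ is equivalent to $R_d^{\rm sst}(Q)\neq\emptyset$, and likewise for the stable loci. Since $R_d^{\rm sst}(Q)$ and $R_d^{\rm st}(Q)$ are open subsets of the irreducible affine space $R_d(Q)$, each is non-empty if and only if it is dense, that is, if and only if it contains a dense open subset---equivalently, if and only if the generic representation of dimension vector $d$ is semistable (resp. stable).

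The key step is the following genericity statement: for the generic $M\in R_d(Q)$, the set of dimension vectors $e$ for which $M$ admits a subrepresentation of dimension vector $e$ is exactly $\{e:e\hookrightarrow d\}$. To prove this I would, for each $e$ with $0\leq e\leq d$, consider the locus $Z_e\subset R_d(Q)$ of those $M$ admitting a subrepresentation of dimension vector $e$; realizing $Z_e$ as the image under the (projective) projection of the incidence variety parametrizing pairs consisting of a representation together with an $e$-dimensional subrepresentation shows that $Z_e$ is constructible. By definition, $e\hookrightarrow d$ means $Z_e$ is dense, hence (being constructible in an irreducible variety) contains a dense open subset, whereas if $e\not\hookrightarrow d$ then $\overline{Z_e}$ is a proper closed subset. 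Since there are only finitely many $e$ with $0\leq e\leq d$, intersecting the finitely many relevant dense opens and removing the finitely many proper closed sets yields a dense open $U\subset R_d(Q)$ on which the subrepresentation dimension vectors are precisely the $e$ with $e\hookrightarrow d$.

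Granting this, both parts follow. A representation is semistable if and only if $\mu(V)\leq\mu(M)$ for every subrepresentation $V$; since $\mu(V)$ depends only on $\dimv V$ and there are finitely many candidate dimension vectors, the generic $M\in U$ is semistable if and only if $\mu(e)\leq\mu(d)$ for all $e\hookrightarrow d$. This gives part (1): if the inequality holds, the generic representation is semistable, so $R_d^{\rm sst}(Q)$ is dense and non-empty; conversely, if some $e\hookrightarrow d$ had $\mu(e)>\mu(d)$, then the dense locus $Z_e$ would meet the (dense, hence non-empty open) semistable locus, producing a semistable $M$ with a destabilizing subrepresentation, a contradiction. Part (2) is identical, using the strict slope criterion for stability and restricting to proper non-zero subrepresentations, i.e. to dimension vectors $e$ with $0\neq e\neq d$; here a proper subrepresentation of slope equal to $\mu(d)$ destroys stability while leaving semistability intact, which is exactly why the criterion switches from $\leq$ to $<$.

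The main obstacle is the genericity statement, and within it the passage from ``$e\hookrightarrow d$'' (a density statement about the single locus $Z_e$) to the existence of one dense open $U$ realizing \emph{all} the generic subrepresentation dimension vectors simultaneously. The essential inputs are the irreducibility of $R_d(Q)$, the constructibility of each $Z_e$, and the finiteness of the set of dimension vectors bounded by $d$; the strictness bookkeeping in part (2) (the roles of $e=0$ and $e=d$, and the distinction between $\leq$ and $<$) requires care but is routine once the genericity statement is established.
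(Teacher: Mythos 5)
Your proposal is correct, but there is no in-paper proof to compare it with: the paper states this theorem as a quoted result from Schofield \cite{Scho} and gives no argument, so your proof supplies what the survey omits rather than paralleling it. Your route is the standard derivation of the criterion from the theory of general representations, and the ingredients you invoke are all sound: openness of $R_d^{\rm sst}(Q)$ and $R_d^{\rm st}(Q)$ in the irreducible affine space $R_d(Q)$, the slope characterization of (semi-)stability, the equivalence of non-emptiness of the moduli space with non-emptiness of the corresponding locus in $R_d(Q)$, and the genericity statement obtained by intersecting finitely many dense open subsets. One simplification worth noting: the locus $Z_e$ is not merely constructible but \emph{closed} in $R_d(Q)$, since it is the image of the incidence variety inside $R_d(Q)\times\prod_{i\in I}{\rm Gr}_{e_i}(M_i)$ (cut out by the closed conditions $M_\alpha(U_i)\subseteq U_j$) under the first projection, which is proper because Grassmannians are projective. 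Hence $e\hookrightarrow d$ is equivalent to $Z_e=R_d(Q)$, i.e.\ \emph{every} representation of dimension vector $d$ admits a subrepresentation of dimension vector $e$; this makes your backward implications immediate (any semistable or stable representation would itself carry the offending subrepresentation) and lets you dispense with the ``dense set meets dense open set'' step. The only bookkeeping caveat is that in part (1) the quantifier should run over non-zero $e$, since $\mu(0)$ is undefined and the paper's semistability criterion quantifies over non-zero subrepresentations; your treatment of part (2), where you observe that a proper subrepresentation automatically has dimension vector different from $d$, is exactly right.
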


A ``less recursive'' criterion is formulated in \cite{AL}.

\begin{theorem} Assume $d=\sum_{k=1}^s m_kd^k$ can be written as a positive combination of dimension vectors $d^k$ such that $M_{d^k}^{\rm st}(Q)\not=\emptyset$ for all $k$. Then $M_d^{\rm st}(Q)\not=\emptyset$ if and only if $(m_1,\ldots,m_s)$ is the dimension vector of a simple representation of the quiver with vertices $i_1,\ldots,i_s$ and $\delta_{k,l}-\langle d^k,d^l\rangle$ arrows between each pair of vertices $i_k,i_l$.
\end{theorem}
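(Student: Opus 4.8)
The plan is to exploit the structure revealed by the Luna stratification and the category of semistable representations of fixed slope. Since each $d^k$ satisfies $M_{d^k}^{\rm st}(Q)\neq\emptyset$, pick a stable representation $U_k$ of dimension vector $d^k$; by Lemma \ref{basicsemistable}, the $U_k$ are simple objects in the abelian category ${\rm mod}_\mu kQ$ of semistable representations of the common slope $\mu=\mu(d)$. The key observation is that a stable representation $X$ of dimension vector $d$ is, in particular, a simple object in ${\rm mod}_\mu kQ$; conversely, any simple object in ${\rm mod}_\mu kQ$ of dimension vector $d$ is stable. So the problem reduces to asking: when does the subcategory generated by the $U_k$ under extensions contain a simple object of dimension vector $d$?

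First I would identify this subcategory more precisely. The full subcategory $\mathcal{C}$ of ${\rm mod}_\mu kQ$ whose objects are iterated extensions of the $U_k$ is, by the Jordan--H\"older theory in the abelian category ${\rm mod}_\mu kQ$, equivalent to the category of finite-dimensional representations of a quiver $Q'$ whose vertices are $i_1,\ldots,i_s$ (indexing the simples $U_k$) and whose arrows encode the extension data. Concretely, the number of arrows from $i_k$ to $i_l$ should be $\dim{\rm Ext}^1(U_k,U_l)$, while the number of loops at $i_k$ should be $\dim{\rm Ext}^1(U_k,U_k)$. Using that ${\rm mod}\, kQ$ is hereditary, together with ${\rm Hom}(U_k,U_l)=\delta_{k,l}k$ (stables of the same slope have no nonzero maps between non-isomorphic ones, and trivial endomorphism rings), the Euler form gives $\dim{\rm Ext}^1(U_k,U_l)=\dim{\rm Hom}(U_k,U_l)-\langle d^k,d^l\rangle=\delta_{k,l}-\langle d^k,d^l\rangle$, which is exactly the prescribed number of arrows in the quiver of the theorem.

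Next I would invoke an equivalence of abelian categories $\mathcal{C}\simeq{\rm mod}\, kQ'$ sending $U_k$ to the simple $S_{i_k}$. Under this equivalence, an object of $\mathcal{C}$ of dimension vector $\sum_k m_k d^k$ corresponds to a $Q'$-representation of dimension vector $(m_1,\ldots,m_s)$, and simple objects correspond to simple objects. Since stable representations of dimension vector $d$ are exactly the simple objects of ${\rm mod}_\mu kQ$ lying in $\mathcal{C}$ with dimension vector $d$, the existence of a $\Theta$-stable representation of dimension vector $d$ is equivalent to the existence of a simple $Q'$-representation of dimension vector $(m_1,\ldots,m_s)$. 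This gives both directions of the claimed equivalence at once.

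The main obstacle will be justifying the categorical equivalence $\mathcal{C}\simeq{\rm mod}\, kQ'$ cleanly: one must check that $\mathcal{C}$ is closed under extensions inside ${\rm mod}_\mu kQ$ (which follows from Lemma \ref{basicsemistable}) and that the ${\rm Ext}^1$-quiver genuinely controls the category, i.e.\ that no higher obstructions or unexpected morphisms appear. Here heredity of ${\rm mod}\, kQ$ is essential, since it forces all higher ${\rm Ext}$'s to vanish and makes the extension-closed subcategory generated by finitely many simples with trivial endomorphism rings into an honest module category over the path algebra of the ${\rm Ext}$-quiver. One subtle point I would be careful about is that the equivalence must match the notion of simplicity on both sides; but since both are characterized intrinsically (no proper nonzero subobject in the respective abelian category), and the equivalence preserves the subobject lattice, this is automatic once the equivalence is established. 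The remaining verification that $\delta_{k,l}-\langle d^k,d^l\rangle\geq 0$, so that the target quiver is well-defined with a non-negative number of arrows, follows from $\langle d^k,d^l\rangle\leq\dim{\rm Hom}(U_k,U_l)=\delta_{k,l}$.
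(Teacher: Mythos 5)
Your reduction of stability to simplicity inside the extension closure $\mathcal{C}$ of the chosen stables $U_1,\ldots,U_s$ is where the argument breaks down, and it breaks irreparably. By construction, every object $X$ of $\mathcal{C}$ admits a filtration whose subquotients are copies of the $U_k$; if that filtration has length at least two, its bottom step is a proper nonzero subrepresentation of $X$ isomorphic to some $U_j$, of the same slope $\mu$ as $X$, so $X$ is not stable. Hence the only simple objects of ${\rm mod}_\mu kQ$ that lie in $\mathcal{C}$ are the $U_k$ themselves, and $\mathcal{C}$ contains no stable representation of dimension vector $d=\sum_k m_kd^k$ whenever $\sum_k m_k\geq 2$. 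Taken literally, your argument would therefore ``prove'' that $M_d^{\rm st}(Q)=\emptyset$ for every nontrivial combination, contradicting the theorem. Concretely: take $Q=L_2$, $\Theta=0$, $U_1$ the one-dimensional representation with both loops acting by zero, and $m_1=2$; then $\mathcal{C}$ consists of representations on which both loop matrices are simultaneously strictly triangularizable, none of which is simple, yet $M_2^{\rm simp}(L_2)\not=\emptyset$. The auxiliary claim $\mathcal{C}\simeq{\rm mod}\, kQ'$ is also false whenever $Q'$ has oriented cycles (the typical case here, since $\delta_{kk}-\langle d^k,d^k\rangle>0$ forces loops at $i_k$): the extension closure of the $U_k$ is equivalent to the category of \emph{nilpotent} representations of $Q'$, whose only simples are the vertex simples --- consistent with the fact that $\mathcal{C}$ acquires no new simple objects.

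The theorem (due to \cite{AL}; the paper states it without proof) is not about representations built \emph{from} the $U_k$ by extensions, but about representations that \emph{degenerate to} the polystable representation $M=\bigoplus_k U_k^{m_k}$, and the correct tool is Luna's \'etale slice theorem, i.e.\ the Luna stratification of section \ref{bgp}. The stabilizer of $M$ in $G_d$ is $\prod_k{\rm GL}_{m_k}(k)$ (by the orthogonality of stables you correctly recorded), and the normal space to the orbit $G_dM$ in $R_d(Q)$ is
$${\rm Ext}^1(M,M)\cong\bigoplus_{k,l}{\rm Hom}_k(k^{m_k},k^{m_l})\otimes{\rm Ext}^1(U_k,U_l),$$
which, by your computation $\dim{\rm Ext}^1(U_k,U_l)=\delta_{kl}-\langle d^k,d^l\rangle$, is precisely the representation space $R_m(Q')$ of the local quiver $Q'$ with its natural $\prod_k{\rm GL}_{m_k}$-action. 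Luna's theorem identifies an \'etale neighbourhood of $[M]$ in $M_d^{\rm sst}(Q)$ with an \'etale neighbourhood of the origin in $R_m(Q')//{\rm GL}_m$, matching the stable locus with the simple locus. Since $R_d^{\rm st}(Q)$, being open in the irreducible space $R_d(Q)$, is dense in $R_d^{\rm sst}(Q)$ when nonempty, and since simplicity of $Q'$-representations is invariant under rescaling (so simples of dimension vector $m$ exist if and only if they exist arbitrarily close to $0$), this local identification yields both directions of the equivalence. Your ${\rm Ext}^1$ computation is the one genuinely correct ingredient, and it is exactly where the arrows of the local quiver come from --- but it has to enter through the slice/deformation argument at the polystable point, not through an equivalence with an extension-closed subcategory.
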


This reduces the problem to the question of existence of simple representations, which is solved in \cite{LBP}:

\begin{theorem} We have $M_d^{\rm simp}(Q)\not=\emptyset$ if and only if ${\rm supp}(d)$ is a quiver of type $\widetilde{A}_n$ with cyclic orientation and $d_i=1$ for all $i\in{\rm supp}(d)$, or ${\rm supp}(d)$ is not of the above type and $\langle d,i\rangle\leq 0\geq\langle i,d\rangle$ for all $i\in{\rm supp}(d)$.
\end{theorem}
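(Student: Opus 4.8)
The plan is to reduce the statement to a combinatorial criterion about generic subrepresentations, and then to prove the two implications using the already established characterization of stable loci together with the recursive description of the relation $e\hookrightarrow d$.

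First I would exploit the trivial stability $\Theta=0$. As recalled in Section~\ref{gitquiver}, for $\Theta=0$ the stable points are exactly the simple representations, so $M_d^{\rm simp}(Q)=M_d^{\rm st}(Q)$ in this case. Specializing the stated criterion for non-emptiness of $M_d^{\rm st}(Q)$ to $\Theta=0$, every slope $\mu$ vanishes, so the condition ``$\mu(e)<\mu(d)$ for all $e\neq d$ with $0\neq e\hookrightarrow d$'' reads ``$0<0$ for all such $e$'', which holds precisely when no such $e$ exists. Thus the theorem is equivalent to the assertion
$$M_d^{\rm simp}(Q)\neq\emptyset\iff \mbox{there is no }e\mbox{ with }0\neq e\hookrightarrow d\mbox{ and }e\neq d,$$
i.e. a generic representation of dimension vector $d$ admits no proper non-zero subrepresentation. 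It therefore suffices to match this last condition with the combinatorial conditions in the statement. (The one-dimensional cases $d=i$ are handled directly, since $S_i$ is always simple.)

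For necessity I would argue representation-theoretically. Connectedness of ${\rm supp}(d)$ is immediate, since restricting a representation to a connected component of its support produces a proper non-zero subrepresentation. Now suppose a simple $M$ with $\dim d\geq2$ exists and ${\rm supp}(d)$ is not an oriented cycle. For $i\in{\rm supp}(d)$, the simple $S_i$ can be neither a subrepresentation nor a quotient of $M$ (it would be proper and non-zero), so ${\rm Hom}(S_i,M)=0={\rm Hom}(M,S_i)$. Since ${\rm mod}\,kQ$ is hereditary, the Euler form gives $\langle i,d\rangle=-\dim{\rm Ext}^1(S_i,M)\leq0$ and $\langle d,i\rangle=-\dim{\rm Ext}^1(M,S_i)\leq0$, which are the required inequalities. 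In the remaining case, where ${\rm supp}(d)$ is an oriented cycle, a direct analysis suffices: using the eigenvalues of the composition of the arrow-maps around the cycle (or the kernels of these maps when the $d_i$ are unequal) one exhibits, for a generic representation, a proper subrepresentation unless all $d_i=1$; as $R_d(Q)$ is irreducible and the simple locus is open, this forces $d_i=1$.

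For sufficiency I would show directly that a generic representation has no proper non-zero subrepresentation. The locus of representations admitting a subrepresentation of a fixed dimension vector $e$ is the image of the incidence variety $Z_e\subset R_d(Q)\times\prod_i{\rm Gr}(e_i,d_i)$ of pairs (representation, compatible collection of subspaces); its codimension is controlled by the generic value of $\dim{\rm Ext}^1(U,M/U)$, and the generic representation admits such a subrepresentation exactly when this generic ${\rm Ext}$ vanishes, i.e. exactly when $e\hookrightarrow d$. The task is then to prove, by induction on $\dim d$ and using Schofield's recursion $e\hookrightarrow d\iff\langle e',d-e\rangle\geq0$ for all $e'\hookrightarrow e$, that the inequalities $\langle d,i\rangle\leq0\geq\langle i,d\rangle$ (in the non-cycle case) rule out every proper $0\neq e\hookrightarrow d$. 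The hard part will be exactly this implication, namely the combinatorial equivalence between the two-sided Euler inequalities and the non-existence of a proper generic subrepresentation: a naive dimension count only yields codimension $\langle e,d-e\rangle$ for the ``has a subrepresentation of dimension $e$'' locus and hence settles the strict cases, whereas the boundary cases $\langle e,d-e\rangle=0$ — where generic homomorphisms and extensions can coexist — require the full force of Schofield's recursive criterion. These are also precisely the configurations in which oriented-cycle supports produce genuine obstructions, so the induction must be organized to isolate and dispose of the cyclic-support exception at each step. I expect the bookkeeping of this induction, rather than any single estimate, to be the principal difficulty.
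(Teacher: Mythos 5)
First, a point of calibration: the survey itself contains no proof of this theorem --- it is quoted verbatim from Le Bruyn--Procesi \cite{LBP} --- so your proposal has to stand as a free-standing argument. Its first half holds up well. The reduction is correct: for $\Theta=0$ stability coincides with simplicity, and (either via Schofield's criterion specialized to zero slopes, or more directly via properness of the projection to $R_d(Q)$ of the incidence variety $Z_e\subset R_d(Q)\times\prod_{i}{\rm Gr}(e_i,d_i)$ together with irreducibility of $R_d(Q)$) non-emptiness of $M_d^{\rm simp}(Q)$ is equivalent to the non-existence of a proper non-zero $e$ with $e\hookrightarrow d$. The necessity direction is also essentially complete: the ${\rm Hom}/{\rm Ext}$-vanishing argument yields the two-sided Euler inequalities, the eigenvector/kernel analysis on an oriented cycle correctly forces $d_i=1$ there, and you were right to treat $d=i$ separately (as stated in the survey, the criterion is in fact slightly off for an isolated loop-free vertex).

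The genuine gap is the entire sufficiency direction, which is the substantive half of the theorem: your proposal names it (``the combinatorial equivalence between the two-sided Euler inequalities and the non-existence of a proper generic subrepresentation''), declares it the principal difficulty, and stops --- no induction is actually set up, no base case or step is carried out. Moreover, the tool you reach for points the wrong way: Schofield's recursion \cite{Scho} characterizes when $e\hookrightarrow d$ \emph{holds}, but to exclude $e\hookrightarrow d$ you must exhibit a witness $e'\hookrightarrow e$ with $\langle e',d-e\rangle<0$, which presupposes control over generic subrepresentations of all smaller dimension vectors $e$ --- something your hypotheses, which only bound $\langle d,i\rangle$ and $\langle i,d\rangle$ against coordinate vectors, do not hand you inductively. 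The standard way to close the gap (essentially that of \cite{LBP}) needs only the trivial direction of this theory: since $\dim Z_e=\dim R_d(Q)+\langle e,d-e\rangle$ and the projection $Z_e\rightarrow R_d(Q)$ is proper, the relation $e\hookrightarrow d$ forces $\langle e,d-e\rangle\geq 0$, and the image of $Z_e$ is a proper closed subset whenever $\langle e,d-e\rangle<0$. Sufficiency therefore reduces to a purely combinatorial lemma: under the stated hypotheses, $\langle e,d-e\rangle<0$ for every non-zero $e\leq d$ (componentwise) with $e\neq d$. The analysis of the equality cases of this lemma is exactly where the cyclic $\widetilde{A}_n$ exception arises: on an oriented cycle with $d=(m,\ldots,m)$, $m\geq 2$, the vector $e=(1,\ldots,1)$ gives $\langle e,d-e\rangle=0$, and the generic representation does indeed acquire such a subrepresentation. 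Proving this combinatorial lemma is the real content of the theorem, and it is absent from your proposal.
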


\subsection{Universal bundles}

It is a general philosophy in moduli theory that moduli spaces should, most desirably, be equipped with so-called universal (or tautological) bundles.\\[1ex]
As an elementary example, we consider the tautological bundle on the Grassmannian ${\rm Gr}_k(V)$ parametrizing $k$-dimensional subspaces of a vector space $V$, i.e.~we can label the points of ${\rm Gr}_k(V)$ by $k$-dimensional subspaces $U\subset V$.  There exists a vector bundle $\pi:\mathcal{T}\mapsto{\rm Gr}_k(V)$, which is a subbundle of the trivial bundle $p_1:{\rm Gr}_k(V)\times V\mapsto {\rm Gr}_k(V)$ with the following property: $\pi^{-1}(U)\subset\{U\}\times V$ consists of all $(\{U\},v)$ such that $v\in U$.\\[1ex]
Similarly, in the context of quiver moduli, we ask for the following: let $M_d^{\rm st}(Q)$ be a moduli space of stable representations of $Q$ of dimension vector $d$. We want to define vector bundles $\pi_i:\mathcal{V}_i\mapsto M_d^{\rm st}(Q)$ of rank $d_i$ for all $i\in I$ and vector bundle maps $\mathcal{V}_\alpha:\mathcal{V}_i\rightarrow\mathcal{V}_j$ for all arrows $\alpha:i\rightarrow j$ in $Q$ such that the following holds:\\[1ex]
consider the fibres $\pi_i^{-1}(M)$ for some point $M\in M_d^{\rm st}(Q)$. Then the representation of $Q$ induced on the vector spaces $\pi_i^{-1}(M)$ by the maps $\mathcal{V}_\alpha$ is isomorphic to $M$.\\[1ex]
The idea behind the construction of this universal representation (see \cite{King}) is quite obvious: we consider the trivial vector bundles $R_d^{\rm st}(Q)\times M_i\rightarrow M_i$ and the vector bundle maps $(M,m_i)\mapsto (M,M_\alpha(m_i))$ for $\alpha:i\rightarrow j$. These become $G_d$-bundles via the standard action of $G_d$ on each $M_i$. We want these bundles to descend to bundles on the quotients $M_d^{\rm st}(Q)=R_d^{\rm st}(Q)/G_d$. This works only if the induced action of the stabilizer of a point on the fibres of the bundle is trivial, which is not true: consider the action of scalar matrices in $G_d$, which are the stabilizers of stable representations. The way out of this problem is to twist the $G_d$-action on $R_d^{\rm st}(Q)\times M_i$ by a character $\chi$, which necessarily has to take the value $\lambda^{-1}$ on a scalar matrix $\lambda\in G_d$. Such a character exists if and only if the dimension vector $d$ is coprime in the sense that ${\rm gcd}(d_i\, :\, i\in I)=1$: we can then choose a tuple $s_i$ of integers such that $\sum_{i\in I}s_id_i=1$ and define $\chi((g_i)_i)=\prod_{i\in I}\det(g_i)^{-s_i}$.\\[1ex]
It is likely that no universal bundle on $M_d(Q)$ exists in case $d$ is not coprime, but there is no proof of this yet.

\subsection{Coordinates}\label{coord}

We now turn to the question of coordinates for quiver moduli. By definition, we have
$$M_d^{\rm sst}(Q)={\rm Proj}(\bigoplus_{n\geq 0}k[R_d]^{G_d,\chi_\Theta^n}),$$
thus knowledge of generating (semi-)invariants provides coordinates for the moduli in the following sense:\\[1ex]
Let $R=\bigoplus_{n\geq 0}R_n$ be such a semi-invariant ring. Then $R_0$ is finitely generated (being the invariant ring for the action of $G_d$ on $R_d$) by, say, $f_1,\ldots,f_s$. Consider $R$ as an $R_0$-algebra. This is again finitely generated, since the ring of semi-invariants (more precisely, the underlying non-graded ring) can be viewed as the ring of invariants for the smaller group ${\rm Ker}(\chi_\Theta)$. Since the ${\rm Proj}$-construction is not sensitive to ``thinning'' $R$, i.e.~replacing $R$ by $R^{(k)}=\bigoplus_{n\geq 0}R_{kn}$ for $k\geq 1$, we can choose generators $g_0,\ldots,g_t$, homogeneous of some degree $k\geq 1$, for $R^{(k)}$. Then ${\rm Proj}(R)$ admits an embedding into ${\bf A}^s\times{\bf P}^t$ dual to the surjection $k[x_1,\ldots,x_s][y_0,\ldots,y_t]\rightarrow R$ attaching $f_i$ to $x_i$ and $g_j$ to $y_j$.\\[1ex]
This procedure can be carried out in principle for $R$ the ring of semi-invariants of the action of $G_d$ on $R_d$ with respect to a character $\chi_\Theta$. We start with the ring of invariants:

\begin{theorem}[Le Bruyn-Procesi] The ring of invariants for the action of $G_d$ on $R_d$ is generated by traces along oriented cycles, i.e.~for a cycle $\omega=\alpha_u\ldots\alpha_1$ given by
$$i_1\stackrel{\alpha_1}{\rightarrow}i_2\stackrel{\alpha_2}{\rightarrow}\ldots\stackrel{\alpha_{u-1}}{\rightarrow}i_u\stackrel{\alpha_u}{\rightarrow}i_1$$
in $Q$, we consider the function ${\rm tr}_\omega$ assigning to a representation $M=(M_\alpha)_\alpha$ the trace ${\rm tr}(M_{\alpha_u}\cdot\ldots\cdot M_{\alpha_1})$.
\end{theorem}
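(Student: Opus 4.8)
This is a classical invariant-theoretic statement, and I will sketch the standard approach to proving it.

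The plan is to reduce the computation of $k[R_d]^{G_d}$ to the first and second fundamental theorems of invariant theory for the general linear group. First I would reformulate the setup: the ring $k[R_d]$ is the ring of polynomial functions on $\bigoplus_{\alpha:i\to j}\Hom_k(M_i,M_j)$, and the group $G_d=\prod_{i\in I}{\rm GL}(M_i)$ acts by simultaneous base change. The crucial observation is that this situation is a special case of a ``mixed tensor'' invariant problem: we are looking at invariants of several ${\rm GL}$-factors acting on a collection of $\Hom$-spaces, where each arrow $\alpha:i\to j$ contributes a copy of $M_i^*\otimes M_j$. I would set this up carefully so that the relevant functions can be expressed in terms of the standard pairing between a space and its dual.

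The key technical input is the First Fundamental Theorem (FFT) of invariant theory for ${\rm GL}_n$. Recall that for the diagonal action of a single ${\rm GL}(V)$ on a direct sum of copies of $V$ and $V^*$, the invariants are generated by the contractions $\langle \varphi, v\rangle$ pairing a vector with a covector. The essential step is to apply this simultaneously across all vertices. Concretely, a $G_d$-invariant function must, for each vertex $i$, contract every covariant $M_i$-index against a contravariant $M_i^*$-index (this is the FFT for each factor ${\rm GL}(M_i)$). Tracing through how the arrows supply these indices, one sees that the indices can only be matched up by following arrows around the quiver in such a way that one returns to the starting vertex — that is, along \emph{oriented cycles}. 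Each such closed matching, when written out, is exactly a product of matrix entries whose index contractions produce a trace ${\rm tr}(M_{\alpha_u}\cdots M_{\alpha_1})$ along a cycle $\omega$. Hence the invariants are spanned by products of such traces ${\rm tr}_\omega$, which is the claim.

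The main obstacle, and the point requiring genuine care, is the \textbf{bookkeeping of the index contractions} and verifying that a closed contraction necessarily decomposes into oriented cycles rather than some more exotic matching. One must argue that in any $G_d$-invariant, the pattern in which $M_i$-indices are paired with $M_i^*$-indices, at each vertex simultaneously, forces a decomposition of the whole contraction diagram into disjoint closed loops in the quiver; each loop is an oriented cycle and contributes precisely a trace factor. A clean way to organize this is to invoke the FFT in its polarized form (reducing to multilinear invariants via polarization and restitution), and then to interpret the resulting trace monomials diagrammatically. I would note that the \emph{generation} statement is what the FFT delivers; if one additionally wanted the relations among the ${\rm tr}_\omega$ (the Second Fundamental Theorem, governed by the Cayley–Hamilton identities and the vanishing of antisymmetrizations of size exceeding $\dim M_i$), that would be a separate and more delicate argument — but the statement here asserts only generation, so the FFT suffices.
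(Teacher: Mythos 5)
The paper itself contains no proof of this statement: it is quoted as a classical result of Le Bruyn and Procesi (the paper's reference [LBP]), so there is no internal argument to compare yours against; the relevant benchmark is the original proof, and your sketch follows essentially that route. The outline is sound: over the characteristic-$0$ field fixed in the paper, polarization reduces the problem to multilinear invariants; the first fundamental theorem for each factor ${\rm GL}(M_i)$ says that invariants of mixed tensors are spanned by complete contractions of $M_i$-indices against $M_i^*$-indices; since an arrow $\alpha:i\rightarrow j$ places its contravariant index at $i$ and its covariant index at $j$, any complete contraction pattern is forced to follow arrows head-to-tail and close up, hence decomposes into oriented cycles of $Q$, and restitution converts each closed loop into a trace monomial ${\rm tr}_\omega$. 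The two points you single out as requiring care are indeed the only delicate ones: the simultaneous use of the FFT over all vertices (handled by applying it one vertex at a time, observing that the span of contractions at vertex $i$ is stable under the remaining ${\rm GL}(M_j)$-actions), and the fact that a closed matching splits into disjoint loops, which is just the cycle decomposition of the permutations encoding the contractions at the vertices. Your final remark is also correct and worth keeping: the theorem as stated asserts only generation, so the FFT suffices, whereas a description of the relations among the ${\rm tr}_\omega$ would require the second fundamental theorem and is not claimed here.
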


Taking enough traces along oriented cycles, we thus get an embedding of $M_d^{\rm (s)simp}(Q)$ into an affine space.\\[1ex]
To formulate a similar statement for semi-invariants, we have to introduce some additional notation. We start with the case of quivers without oriented cycles. Given representations $M$ and $N$ of $Q$, we can compute ${\rm Hom}(M,N)$ and ${\rm Ext}^1(M,N)$ as the kernel and cokernel, respectively, of the map
$$d_{M,N}:\bigoplus_{i\in I}{\rm Hom}_k(M_i,N_i){\rightarrow}\bigoplus_{\alpha:i\rightarrow j}{\rm Hom}_k(M_i,N_j)$$
given by
$$d_{M,N}((f_i)_i)=(N_\alpha f_i-f_j M_\alpha)_{(\alpha:i\rightarrow j)}.$$
In case $$\langle\dimv M,\dimv N\rangle=\dim{\rm Hom}(M,N)-\dim{\rm Ext}^1(M,N)=0,$$
we thus have a map $d_{M,N}$ between vector spaces of the same dimension, and we can consider its determinant $c(M,N)=\det d_{M,N}$.
Varying $M$ and $N$ in their respective spaces of representations, we get a polynomial function $$c:R_d(Q)\times R_d(Q)\rightarrow k,$$
which is in fact a semi-invariant function for the action of $G_d\times G_e$. We can also fix the representation $M$ and vary the representation $N$ to obtain a semi-invariant function $c_M$ on $R_e(Q)$. The following is proved in \cite{SV}:

\begin{theorem}[] The functions $c_M$ for representations $M$ such that $\langle\dimv M,d\rangle=0$ generate the ring of semi-invariants on $R_d$.
\end{theorem}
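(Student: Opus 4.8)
The plan is to reduce the statement to a first-fundamental-theorem computation and then to match the resulting invariants with the determinantal functions $c_M$. First I would note that a $\chi_\Theta^n$-semi-invariant is exactly an invariant for the subgroup $\mathrm{SL}(d)=\prod_{i\in I}\mathrm{SL}(M_i)$ of $G_d$, since every character of $G_d$ is a product of powers of determinants and hence trivial on $\mathrm{SL}(d)$. Decomposing $k[R_d]^{\mathrm{SL}(d)}$ into weight spaces for the residual torus $G_d/\mathrm{SL}(d)\cong (k^*)^I$ recovers precisely the grading by semi-invariant weights $\sigma\in\mathbf{Z}I$ subject to $\sigma\cdot d=0$ (the latter because the global scalars act trivially on $R_d$). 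Thus the ring of semi-invariants is $k[R_d]^{\mathrm{SL}(d)}$, and since $d_{M\oplus M',N}=d_{M,N}\oplus d_{M',N}$ gives $c_{M\oplus M'}=c_M\cdot c_{M'}$, the family $\{c_M\}$ is already closed under multiplication; consequently ``$c_M$ generate the ring'' is \emph{equivalent} to the assertion that the $c_M$ linearly span $k[R_d]^{\mathrm{SL}(d)}$. It therefore suffices to span each weight space.

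Next I would pin down the weight of $c_M$ and check that the available weights exhaust all possibilities. A direct base-change computation shows that, for fixed $M$, the function $N\mapsto\det d_{M,N}$ transforms under $g=(g_k)_k\in G_d$ by the character $\prod_k\det(g_k)^{-\langle\dimv M,k\rangle}$ (up to the chosen sign convention); here the hypothesis $\langle\dimv M,d\rangle=0$ is exactly the condition that $d_{M,N}$ is a square matrix, and it guarantees $\sigma\cdot d=0$ for the weight $\sigma=-\langle\dimv M,\cdot\rangle$. Since the quiver is acyclic, the vertices can be ordered so that the Euler matrix is unitriangular, hence unimodular; every functional on $\mathbf{Z}I$ is then of the form $\langle m,\cdot\rangle$, so the $c_M$ already carry every admissible weight. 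What remains is the genuine content: that within a single weight space there are no semi-invariants beyond linear combinations of the determinantal ones.

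For this spanning statement the route I would take is the Cauchy formula. Writing $R_d=\bigoplus_{\alpha:i\to j}M_i^*\otimes M_j$, one expands the coordinate ring as $k[R_d]=\bigoplus_{(\lambda_\alpha)}\bigotimes_{\alpha:i\to j}S_{\lambda_\alpha}(M_i)\otimes S_{\lambda_\alpha}(M_j^*)$ and extracts, vertex by vertex, the $\mathrm{SL}(M_i)$-invariant part of the product of Schur functors attached to the arrows at $i$. For $\mathrm{SL}(M_i)$ only ``rectangular'' total contributions survive, and the surviving invariants can be organized as contractions forming a single determinant of a block matrix of generic homomorphisms. After polarization and specialization this block matrix is an instance of $d_{M,N}$ for a test representation $M$ assembled from projective presentations that encode the chosen Schur data. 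Identifying the two descriptions expresses an arbitrary semi-invariant of weight $\sigma$ as a combination of $c_M$'s.

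The main obstacle is precisely this last matching step, i.e. the true first-fundamental-theorem content: showing that the $\mathrm{SL}(d)$-invariants produced by the Cauchy decomposition are not merely of the right dimension but are literally realized by determinants of the maps $d_{M,N}$. Controlling which tuples $(\lambda_\alpha)$ contribute an invariant of a prescribed weight, and reconstructing from that combinatorial data an honest representation $M$ with $\langle\dimv M,d\rangle=0$ whose determinantal semi-invariant hits the given invariant, is where the real work lies; this is the content of \cite{SV}, with independent arguments due to Derksen--Weyman and to Domokos--Zubkov. A subsidiary point to handle carefully is effectivity: one must upgrade the virtual test objects coming from the Euler-form bookkeeping to genuine representations. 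Finally, if one wished to drop acyclicity, the two-term complex $d_{M,N}$ and hence $c_M$ are adapted to the hereditary, cycle-free setting, so one would have to combine these determinantal semi-invariants with the trace invariants of the Le Bruyn--Procesi theorem.
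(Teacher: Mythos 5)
The first thing to note is that the paper does not prove this statement at all: it is a survey, and the theorem is quoted verbatim from Schofield--Van den Bergh \cite{SV} (``The following is proved in \cite{SV}''). So there is no in-paper argument to compare against, and your proposal has to be judged as a self-contained proof attempt. As such, it has a genuine gap, which you yourself flag: everything before the decisive step is correct, and the decisive step is deferred to the very references whose theorem is being proved.

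To be precise about what is solid: the identification of semi-invariants with $\prod_{i\in I}{\rm SL}(M_i)$-invariants, graded by characters $\sigma$ with $\sigma\cdot d=0$, is correct; the weight computation showing that $c_M$ has weight $-\langle\dimv M,\cdot\rangle$ and that $\langle\dimv M,d\rangle=0$ is exactly the squareness condition for $d_{M,N}$ is correct; and the observation that $d_{M\oplus M',N}=d_{M,N}\oplus d_{M',N}$ gives $c_{M\oplus M'}=c_Mc_{M'}$, so that ring generation is \emph{equivalent} to linear spanning of each weight space, is a genuinely good reduction --- it is in fact how the theorem is formulated in the literature. But the theorem \emph{is} that spanning statement, and your third paragraph only gestures at it. Two concrete points where the sketch would break down if pushed: (i) at a vertex incident to several arrows, the $\mathrm{SL}(M_i)$-invariants of a tensor product of Schur functors are counted by Littlewood--Richardson coefficients, not merely by ``rectangular total contributions'', and assembling the resulting contractions into a single determinant $\det d_{M,N}$ for an honest representation $M$ (via polarization and specialization) is exactly the hard content of the Derksen--Weyman and Domokos--Zubkov proofs --- it is not a routine bookkeeping step; (ii) effectivity is not merely ``subsidiary'': unimodularity of the Euler form only gives $\sigma=-\langle m,\cdot\rangle$ for $m\in{\bf Z}I$, and one must separately show that weight spaces whose $m$ fails to lie in ${\bf N}I$ vanish, since otherwise the $c_M$ could not possibly generate. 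Also note that the route you sketch (Cauchy formula plus first fundamental theorem) is the Derksen--Weyman/Domokos--Zubkov strategy, not the argument of \cite{SV} itself, which proceeds by quite different representation-theoretic means; either would be acceptable, but neither is carried out here.
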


In case $Q$ has oriented cycles, we need a more general version unifying the above two theorems. Choose an arbitrary map $v:P\rightarrow Q$ between finitely generated projective representations $$P=\bigoplus_{i\in I}P_i^{a_i}\mbox{ and }Q=\bigoplus_{i\in I}P_i^{b_i}$$ ($P_i$ denoting the projective indecomposables associated to the vertex $i\in I$), such that $$\sum_{i\in I}(a_i-b_i)d_i=0.$$
Then the induced map $${\rm Hom}(v,M):{\rm Hom}(Q,M)\rightarrow{\rm Hom}(P,M)$$ is a map between vector spaces of the same dimension, and again we can consider its determinant $c_v(M)=\det{\rm Hom}(v,M)$.
This defines a semi-invariant function on $R_d(Q)$. Again by \cite{SV}, we have:

\begin{theorem} The ring of semi-invariant functions on $R_d(Q)$ is generated by the functions $c_v$.
\end{theorem}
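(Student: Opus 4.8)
The plan is to establish the stronger statement that each graded piece (weight space) of the semi-invariant ring is already \emph{linearly} spanned by the functions $c_v$, from which algebra generation follows formally. Decompose
$$k[R_d(Q)]^{G_d}_{\chi_\Theta}=\bigoplus_\sigma \operatorname{SI}(Q,d)_\sigma,$$
the sum running over weights $\sigma=(\sigma_i)_{i\in I}$ with $\sum_{i\in I}\sigma_id_i=0$, where $\operatorname{SI}(Q,d)_\sigma$ consists of the semi-invariants transforming by $\prod_{i}\det(g_i)^{\sigma_i}$. A routine check, using the functoriality of $\Hom(v,-)$ and the behaviour of the determinant under the $G_d$-action, shows that each $c_v$ lies in $\operatorname{SI}(Q,d)_\sigma$ for $\sigma_i=a_i-b_i$. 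The key formal reduction is the multiplicativity $c_{v\oplus v'}=c_v\cdot c_{v'}$, which shows that the linear span of all $c_v$ is a subalgebra of the semi-invariant ring; hence it suffices to prove that $\operatorname{SI}(Q,d)_\sigma=\operatorname{span}\{c_v\ :\ a_i-b_i=\sigma_i\text{ for all }i\}$ for each fixed weight $\sigma$.

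To identify $\operatorname{SI}(Q,d)_\sigma$ intrinsically, I would decompose the coordinate ring via Cauchy's formula: since $k[R_d(Q)]=\bigotimes_{\alpha:i\to j}\operatorname{Sym}(M_i\otimes M_j^*)$, each tensor factor splits as $\bigoplus_\lambda S_\lambda(M_i)\otimes S_\lambda(M_j^*)$ over the Schur functors $S_\lambda$. Extracting the $\prod_i\operatorname{GL}(M_i)$-isotypic component of determinant weight $\sigma_i$ in the $i$-th slot forces, for each $i$, the relevant Schur functor to be the rectangle $(|\sigma_i|^{d_i})$ (its dual when $\sigma_i<0$), since the only one-dimensional $\operatorname{GL}(M_i)$-representations occurring in the tensor algebra are powers of $\det$, realised precisely by rectangular $S_\lambda$. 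In this way $\operatorname{SI}(Q,d)_\sigma$ is identified with a multiplicity space of $\prod_i\operatorname{GL}(M_i)$-invariant contractions of a tensor product of rectangular Schur functors attached to the vertices along the arrows of $Q$.

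The remaining task is to realise every element of this multiplicity space as a linear combination of the $c_v=\det\Hom(v,M)$. Here the acyclic case, already available, serves as the template: a map $v\colon P\to Q$ of projectives is the universal device producing such a contraction, since applying $\Hom(-,M)$ and taking the determinant of the resulting square map between $\bigoplus_iM_i^{b_i}$ and $\bigoplus_iM_i^{a_i}$ yields exactly the admissible rectangular contractions, the combinatorics of $v$ (its entries being $k$-linear combinations of paths in $Q$) matching a spanning set of the multiplicity space. For a quiver \emph{with} oriented cycles the only new feature relative to the acyclic situation is the appearance of closed paths; their contribution lives in weight zero, where the theorem of Le~Bruyn and Procesi guarantees that the traces $\operatorname{tr}_\omega$ generate. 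One recovers these traces from determinants by Newton's identities applied to $\det(\operatorname{id}_{M_i}-t\,M_\omega)=c_{1-t\omega}(M)$ for the endomorphism $\omega$ of $P_i$, so the cyclic generators are subsumed among the $c_v$.

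The main obstacle is the \emph{spanning} (surjectivity) in the last step: one must show that, for each fixed weight $\sigma$, arbitrary elements of the rectangular multiplicity space are swept out, and not merely that each individual $c_v$ lands inside it. Concretely this requires producing, for every basis contraction, an explicit presentation $v\colon P\to Q$ whose associated determinant matches it, and then checking that these determinants fill the whole weight space. The presence of oriented cycles complicates matters because $kQ$ is infinite-dimensional; however, a fixed weight $\sigma$ bounds the total degree $\sum_i\sigma_i^+d_i$ of the semi-invariants involved, so only paths up to a controlled length occur, and the cyclic contributions reduce to the finitely many Le~Bruyn--Procesi generators. Handling this finiteness carefully, together with the explicit matching of contractions to presentations, is the technical heart of the argument.
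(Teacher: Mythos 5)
First, note what you are being compared against: the paper offers no proof of this theorem at all --- it is quoted from Schofield--Van den Bergh \cite{SV} --- so your sketch has to stand on its own merits, and it does not. The fatal step is your identification of the weight spaces. It is false that extracting the $\det^{\sigma_i}$-isotypic component at each vertex forces the individual Schur functors in the Cauchy decomposition to be rectangles $(|\sigma_i|^{d_i})$; that would be true only if every vertex carried a single incident arrow. In general several arrows meet at a vertex $i$, so the $i$-th slot of a summand of $k[R_d(Q)]$ is a tensor product $S_{\lambda^{(1)}}(M_i)\otimes\cdots\otimes S_{\lambda^{(r)}}(M_i)$ (together with duals coming from arrows ending at $i$), and the semi-invariance condition is that this \emph{tensor product} contain $\det^{\sigma_i}$ --- a Littlewood--Richardson condition on the whole family of partitions, not a rectangularity condition on each factor. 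Concretely, for the Kronecker quiver $K_2$ with $d=(2,2)$ and a representation given by a pair $(A,B)$ of $2\times 2$ matrices, the semi-invariant $\det(A+B)-\det(A)-\det(B)$ is bilinear in $(A,B)$, hence lives in the summand with $\lambda=\mu=(1)$ in the two Cauchy factors; neither partition is the rectangle $(1,1)$, and it is the Littlewood--Richardson coefficient $c^{(1,1)}_{(1),(1)}=1$ that produces the determinant weight. So the ``multiplicity space of rectangular contractions'' you propose to match against the $c_v$ is not the right object, and the matching step built on it collapses.

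Beyond this, the actual content of the theorem --- that the determinants $\det\Hom(v,M)$ \emph{linearly span} each weight space --- is precisely what you defer in your final paragraph as ``the technical heart''. That heart is the proof: everything preceding it (the weight decomposition, the multiplicativity $c_{v\oplus v'}=c_v\cdot c_{v'}$, the computation that $c_v$ has weight $a_i-b_i$, and the recovery of the Le Bruyn--Procesi traces from characteristic polynomials $\det\Hom(\lambda\,{\rm id}_{P_i}-\omega,M)$ for $\lambda\in k$, which is correct in characteristic zero) is routine bookkeeping. The known arguments do the hard part in genuinely different ways: Schofield--Van den Bergh reduce to the classical first fundamental theorem of invariant theory by a careful analysis of exactly these Littlewood--Richardson multiplicity spaces, while Derksen--Weyman (in the acyclic case) argue via generic representations and exact sequences. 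Your proposal is therefore a correct formal reduction, attached to a wrong description of the weight spaces, with an acknowledged hole where the main argument should be.
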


Note again that the full semi-invariant ring, which is described by the above theorems, is not of the type we considered in the definition of moduli spaces, i.e.~not associated to a single stability function $\Theta$. The geometric object that is described by the ${\rm Proj}$ of this ring is the quotient of the stable points in $R_d$ by the action of the smaller group $\prod_{i\in I}{\rm SL}(M_i)$. It parametrizes quiver representations $M$ together with fixed volume forms of all vector spaces $M_i$, under isomorphisms preserving the volume forms.

\subsection{An example -- subspace quivers}\label{exsub}

We consider an example to illustrate the above strategy for computing coordinates for quiver moduli and to show the difference between the full ring of semi-invariants and the ring of semi-invariants associated to a fixed character. Consider the $m$-subspace quiver $S_m$ and dimension vector $(1,\ldots,1,2)$. A representation is given as an $m$-tuple $(v_1,\ldots,v_m)$ of vectors in $k^2$, on which the group ${\rm GL}_2(k)\times(k^*)^m$ acts via $$(g,x_1,\ldots,x_m)*(v_k)_k=(\frac{1}{x_k}gv_k)_k.$$
It is easy to see that the full ring of semi-invariants $R$ is generated by the functions
$$D_{kl}=\det[v_k|v_l]\mbox{ for }1\leq k<l\leq m.$$
These functions fulfill the Pl\"ucker relations $$D_{ik}D_{jl}=D_{ij}D_{kl}+D_{ik}D_{jl}$$
for all $i<j<k<l$. Thus, the ${\rm Proj}$ of this ring is nothing but the Grassmannian of $2$-planes ${\rm Gr}_2(k^m)$.\\[1ex]
The function $D_{kl}$ is a semi-invariant for the character $$\chi(g,x_1,\ldots,x_m)=\frac{\det(g)}{x_kx_l}.$$
Let us consider the ``most symmetric'' stability $\Theta=(0,\ldots,0,-1)$. Now a monomial $\prod_{i<j}D_{ij}^{m_{ij}}$ belongs to $k[R_d]^{G_d}_{\chi_\Theta}$ if and only if for any $i=1,\ldots,n$, we have
$$\sum_{j<i}m_{ji}+\sum_{j>i}m_{ij}=2k$$ for some $k\geq 1$. It turns out a minimal system of generators becomes rather large even for small values of $n$.\\[1ex]
We consider the particular case $n=5$. One can see directly that the semi-invariant ring is generated by the following functions:
$$c=D_{12}D_{23}D_{34}D_{45}D_{51},\;\;\; x_i=D_{i,i+1}D_{i,i+4}D_{i+4,i}D_{i+2,i+3}^2\mbox{ for }i\in{\bf Z}_5.$$
The Pl\"ucker relations give the following (defining) relations between the generators:
$$x_ix_{i+1}=c^2+cx_{i+3}\mbox{ for }i\in{\bf Z}_5.$$
Consequently, the moduli space $M_d^{\rm (s)st}(S_5)$ is the surface in ${\bf P}^5$ with coordinates $(x_1:x_2:x_3:x_4:x_5:c)$ determined by the five equations above.\\[1ex]
The case $m=7$ was worked out in \cite{Albrandt}, using results of \cite{HMS}: the above methods yields an embedding of the $4$-dimensional moduli space into ${\bf P}^{35}$, determined by $58$ defining equations.\\[1ex]
From these examples we can see that, even in simple examples, coordinatization of the moduli spaces leads to difficult explicit calculations in commutative algebra.\\[1ex]
Another question is whether, even if we have explicit generators and defining relations, this is helpful for studying the moduli spaces, since it is difficult to extract global geometric information from defining equations.

\section{Cohomology and cell decompositions}\label{sectbettigross}

One of the possible directions towards a study of the global geometry of quiver moduli pursued by the author in \cite{ERSM, MR, RHNS, RFQM, RCNH, RCRP, RLoc, RW} is the determination of Betti numbers of quiver moduli. We will first consider the question why knowledge of the Betti numbers should be interesting for such a study.\\[1ex]
Whenever we use cohomology of varieties, we will work over the base field $k={\bf C}$ of the complex numbers, and we will consider all quasiprojective varieties with their ${\bf C}$-topology, induced from the natural ${\bf C}$-topology on complex projective spaces. Then we consider singular cohomology (or singular cohomology with compact support) with coefficients in ${\bf Q}$, disregarding all potential torsion phenomena. We then denote by $b_i(X)=\dim_{\bf Q}H^i(X,{\bf Q})$ the $i$-th Betti number of $X$ (for arbitrary base fields $k$ as before, $\ell$-adic cohomology should be considered; standard comparison theorems guarantee the compatibility of these two approaches).
 
\subsection{Cell decompositions}

\begin{definition} A variety $X$ is said to admit a cell decomposition if there exists a filtration
$$X=X_0\supset X_1\supset \ldots\supset X_t=\emptyset$$
by closed subvarieties, such that the successive complements $X_{j-1}\setminus X_j$ for $j=1,\ldots,t$ are isomorphic to affine spaces ${\bf A}^{d_j}$.
\end{definition}

This notion is not to be confused with the topological notion of cell decomposition, for example in the context of CW-complexes. In the literature (for example \cite{CG}) one also finds a variant of this notion, where the successive complements are only required to be isomorphic to disjoint unions of affine spaces; further refinement of such a filtration leads to one in the above sense.\\[1ex]
Examples of such varieties are provided by affine space itself, by projective spaces (where $X_i$ consists of all points $(x_0:\ldots:x_n)$ in projective $n$-space such that the first $i$ coordinates are zero, for $i=0,\ldots,n-1$), Grassmannians, etc..\\[1ex]
If $X$ admits a cell decomposition, then all odd cohomology of $X$ vanishes, and the $2i$-th Betti number is given as the number of $i$-dimensional cells, i.e.~the number of indices $j$ such that $X_{j-1}\setminus X_j\simeq {\bf A}^i$.

\subsection{The importance of cell decompositions for quiver moduli}

Now assume that $M_d^{\rm st}(Q)$ is a quiver moduli admitting a universal representation ${\cal V}$, and assume that $M_d^{\rm st}(Q)$ admits a cell decomposition. Then we can write $$M_d^{\rm st}(Q)=\bigcup_{k=1}^nY_k\mbox{, where }Y_k\simeq{\bf A}^{d_k}.$$
The restriction of each vector bundle ${\cal V}_i$ to each $Y_k$ is a vector bundle over an affine space, and thus trivial. This means that we can find isomorphisms $$\phi_{ik}:Y_k\times M_i\rightarrow {\cal V}_i|_{Y_k}.$$
For any arrow $\alpha:i\rightarrow j$ we can then consider the composite map $$f_\alpha=\phi_{jk}^{-1}\circ{\cal V}_\alpha|_{Y_k}\circ\phi_{ik}:Y_k\times M_i\rightarrow Y_k\times M_j.$$
For any point $x$ in ${\bf A}^{d_k}\simeq Y_k$, the restriction $f_\alpha(x)$ of $f_\alpha$ to the fibres over $x$ is a linear map from $M_i$ to $M_j$ by definition. Thus, the tuple $(f_\alpha(x))_\alpha$ defines a quiver representation $M(x)$. The collection of all $M(x)$ for $x\in Y_k$ thus give a normal form for all the quiver representations belonging to $Y_k$.\\[1ex]
We see that existence of a cell decomposition of the moduli space $M_d^{\rm st}(Q)$ gives an explicit parametrization of all isomorphism classes of stable representations of dimension vector $d$, together with explicit normal forms. This is of course very desirable in view of Problem \ref{basicproblem}.\\[1ex]
One may conjecture that such cell decompositions exist for the moduli of stable representations whenever $d$ is coprime for $\Theta$. Instances of this conjecture will be proved later for certain (very special) dimension vectors of generalized Kronecker quivers (see section \ref{bb}), and for the framed versions of moduli spaces in section \ref{hilbert}.

\subsection{Negative examples and discussion}\label{ned}

An interesting testing case for the above conjecture is provided by the surface considered in section \ref{exsub}: it is not clear whether this projective rational surface admits a cell decomposition.\\[1ex]
Turning to the non-coprime case, one cannot hope for cell decompositions to exist in general. Consider again the example of the $4$-subspace quiver $S_4$ and dimension vector $(1,1,1,1,2)$. We have seen in section \ref{stableschur} that the moduli of stables is isomorpic to ${\bf P}^1$ minus three points in this case. This space definitely has (two-dimensional) first cohomology, so it cannot admit a cell decomposition (since this implies vanishing of the odd Betti numbers). Another example is provided by two-dimensional simple representations of the two-loop quiver $L_2$.\\[1ex]
One might suspect that the problem is caused by the missing semistables (or semisimples). This is, however, not the case. Considering the dimension vector $(2,2)$ for the five-arrow Kronecker quiver, we get the counting polynomial $$t^{13}+t^{12}+3t^{11}+2t^{10}+3t^9+t^8+t^7-t^6+t^3+t^2+t+1$$ (see section \ref{arithm}) for $M_d^{\rm sst}(Q)$, thus there cannot exist a cell decomposition, as will be explained in section \ref{cdrp} (a polynomial counting points over finite fields of a variety admitting a cell decompositions necessarily has nonnegative coefficients).\\[1ex]
Despite these negative results, there are several possible variants of the problem: the first possibility is to ask for a torus decomposition, i.e.~we relax the defining condition of a cell decomposition and ask the successive complements to be isomorphic to tori instead. We will see in section \ref{simples} that this is supported by the conjecture \ref{conjpos}.\\[1ex]
Another option is to look at variants of quiver moduli, and to ask for these spaces to admit a cell decomposition. For moduli of simple representations, we will consider the noncommutative Hilbert schemes in section \ref{hilbert}, and cell decompositions will be constructed. In general, the smooth models of section \ref{sm} provide candidates -- again one might conjecture that they always admit cell decompositions.\\[1ex]
The most desirable variant of the original moduli spaces we would like to have is a desingularization of the moduli of semistables, or, in other words, a ``compactification'' of the moduli of stables. By this we mean a smooth variety $X$ admitting a projective birational morphism to $M_d^{\rm sst}(Q)$ (then $X$ is projective over $M_d^{\rm ssimp}(Q)$, and contains $M_d^{\rm st}(Q)$ as an open subset). Existence of cell decompositions for such desingularisation poses a difficult problem: suppose that $X$ has a cell decomposition. In particular, we have $X\setminus X_1\simeq{\bf A}^{d_1}$, i.e.~there exists an open subset which is isomorphic to affine space -- in other words, the variety $X$ is rational. Since $X$ maps to $M_d^{\rm sst}(Q)$ birationally, this would imply that the latter is rational, too. But it is shown in \cite{SchoB} that all quiver moduli are birational to moduli of simple representations of the multiple-loop quiver, and in this case rationality is a long-standing open problem, see \cite{LB0}.\\[1ex]
One other possible relaxation of the notion of cell decomposition is therefore to ask for an orbifold decomposition, i.e.~the successive complements should look like quotients of affine space (or a torus) by a finite group action.\\[1ex]
There is one notable exception to this problem of construction of smooth compactifications. Namely, the moduli space $M_2^{\rm ssimp}(L_m)$ has been desingularized in \cite{SN}. This desingularization is analysed in detail in \cite{Ol}. In particular, it is shown there that the desingularization admits a cell decomposition. Moreover, the obstruction to a generalization of the construction of \cite{SN} to higher dimensions is studied in detail, refining \cite{LBR}.\\[1ex]
One of the problems in constructing cell decompositions is that there are only few general techniques for doing so. One is the Bialynicki-Birula method, to be discussed in section \ref{bb}.

\subsection{Betti numbers and prediction of cell decompositions}

One of the main motivations for computing and studying Betti numbers of quiver moduli can be seen in the following line of reasoning, based on the above discussion: suppose the Betti numbers of the moduli space in question are computed, and that they admit some combinatorial interpretation (for example numbers of certain types of trees in the case of Hilbert schemes in section \ref{hilbert}). Then this gives a prediction for a combinatorial parametrization of the cells in a cell decomposition, and sometimes actually a construction of the cells.

\subsection{Betti numbers for quiver moduli in the coprime case}\label{sectbetti}

We review the main results of \cite{RHNS}.

\begin{definition}\label{defpdq} Given $Q$, $d$ and $\Theta$ as before, we define the following rational function in $q$:

$$P_d(q)=\sum_{d^*}(-1)^{s-1}q^{-\sum_{k\leq l}\langle d^l,d^k\rangle}\prod_{k=1}^s\prod_{i\in I}\prod_{j=1}^{d_i}(1-q^{-j})^{-1}\in{\bf Q}(q),$$
where the sum ranges over all tuples $d^*=(d^1,\ldots,d^s)$ of dimension vectors such that
\begin{itemize}
\item $d=d^1+\ldots+d^s$
\item $d^k\not=0$ for all $k=1,\ldots,s$,
\item $\mu(d^1+\ldots+d^k)>\mu(d)$ for all $k<s$.
\end{itemize}
\end{definition}

We will see in section \ref{apphall} how the definition of this function is motivated (basically, we have $$P_d(q)=\frac{|R_d^{\rm sst}(Q)({\bf F}_q)|}{|G_d({\bf F}_q)|}$$ for any finite field ${\bf F}_q$).

\begin{theorem}\label{betti} If $d$ is coprime for $\Theta$, then $(q-1)\cdot P_d(q)=\sum_i\dim_{\bf Q}H^{i}(M_d,{\bf Q})q^{i/2}$.
\end{theorem}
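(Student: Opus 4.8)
The plan is to compute everything through point counts over finite fields ${\bf F}_q$ and to read off the Betti numbers only at the very end, via the Weil conjectures. I would proceed in three stages: first establish the arithmetic identity $P_d(q)=|R_d^{\rm sst}(Q)({\bf F}_q)|/|G_d({\bf F}_q)|$ that is announced in section \ref{apphall}; then convert this ``stacky'' count into the honest point count $|M_d({\bf F}_q)|$ using coprimality; and finally pass from the point-counting polynomial to cohomology. For the first stage I would exploit the uniqueness of the Harder-Narasimhan filtration. Writing $e_d:=|R_d(Q)({\bf F}_q)|/|G_d({\bf F}_q)|$ for the groupoid count of all representations, one has $e_d=q^{-\langle d,d\rangle}\prod_{i\in I}\prod_{j=1}^{d_i}(1-q^{-j})^{-1}$, since $|R_d({\bf F}_q)|=q^{\sum_{\alpha:i\rightarrow j}d_id_j}$ and $|{\rm GL}_n({\bf F}_q)|=q^{n^2}\prod_{j=1}^n(1-q^{-j})$. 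Stratifying $R_d(Q)$ by HN type and using that ${\rm Hom}$ vanishes from higher to lower slope (Lemma \ref{basicsemistable}(3)), the only contribution from gluing semistable subquotients $M^k$ of strictly decreasing slopes is $\prod_{k<l}|{\rm Ext}^1(M^l,M^k)|=q^{-\sum_{k<l}\langle d^l,d^k\rangle}$, giving the recursion $e_d=\sum q^{-\sum_{k<l}\langle d^l,d^k\rangle}\prod_k f_{d^k}$ over decompositions with $\mu(d^1)>\ldots>\mu(d^s)$, where $f_d$ is the semistable groupoid count. Inverting this triangular system yields exactly the alternating sum of Definition \ref{defpdq}, with the exponents combining as $\sum_{k<l}\langle d^l,d^k\rangle+\sum_k\langle d^k,d^k\rangle=\sum_{k\leq l}\langle d^l,d^k\rangle$, so that $f_d=P_d(q)$.

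For the second stage I use the coprimality hypothesis. Since $d$ is coprime for $\Theta$, every semistable representation is stable, so $R_d^{\rm sst}(Q)=R_d^{\rm st}(Q)$, and by section \ref{bgp} the map $R_d^{\rm st}(Q)\rightarrow M_d$ is a $PG_d$-principal bundle. Because $PG_d$ is connected, Lang's theorem gives $H^1({\bf F}_q,PG_d)=0$, so the fibre over each ${\bf F}_q$-point of $M_d$ is a trivial torsor with exactly $|PG_d({\bf F}_q)|$ rational points; hence $|R_d^{\rm st}(Q)({\bf F}_q)|=|PG_d({\bf F}_q)|\cdot|M_d({\bf F}_q)|$. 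From the exact sequence $1\rightarrow{\bf G}_m\rightarrow G_d\rightarrow PG_d\rightarrow 1$ and the vanishing of $H^1({\bf F}_q,{\bf G}_m)$ one gets $|PG_d({\bf F}_q)|=|G_d({\bf F}_q)|/(q-1)$, whence $(q-1)P_d(q)=(q-1)|R_d^{\rm sst}({\bf F}_q)|/|G_d({\bf F}_q)|=|M_d({\bf F}_q)|$.

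The third stage extracts the Betti numbers. In the case at hand $M_d$ is smooth projective (coprimality forces smoothness, and for $Q$ without oriented cycles $M_d^{\rm ssimp}$ is a point, so $M_d$ is projective). After spreading out to a smooth projective model over a finitely generated subring, the Grothendieck-Lefschetz trace formula reads $|M_d({\bf F}_{q^r})|=\sum_i(-1)^i{\rm tr}(F^r\mid H^i_{\rm et})$, while Deligne's purity forces every Frobenius eigenvalue on $H^i$ to have absolute value $q^{i/2}$. Since the left-hand side equals the expression $(q-1)P_d(q)$ evaluated at $q^r$, which is a polynomial in $q^r$ with integer exponents, comparison of absolute values shows that the odd cohomology vanishes and that the eigenvalues on $H^{2i}$ are all equal to $q^i$; thus the coefficient of $q^i$ in $(q-1)P_d(q)$ equals $\dim H^{2i}$. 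The standard comparison between $\ell$-adic and singular cohomology identifies these with the topological Betti numbers, giving $(q-1)P_d(q)=\sum_i b_i(M_d)q^{i/2}$.

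The genuinely essential input is the last stage: the passage from point counts to Betti numbers rests on Deligne's purity together with the knowledge that the count is a polynomial of Tate type. The real content lies in verifying that $(q-1)P_d(q)$ is actually a polynomial in $q$ (a priori it is only a rational function, the factors $(1-q^{-j})^{-1}$ contributing poles that must cancel in the alternating sum) and that the cohomology is concentrated in even degrees; this is exactly what purity, combined with polynomiality of the count, delivers. By contrast, the Hall-algebra inversion of the first stage, though combinatorially lengthy, is ``essentially easy'' in the sense of the present paper, and the coprimality/Lang-theorem bookkeeping of the second stage is routine once the principal bundle structure from section \ref{bgp} is in hand.
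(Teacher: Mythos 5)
Your proposal is, in substance, the paper's own proof. Your first stage is exactly the Hall-algebra argument of section \ref{apphall}: the identity $e_d=\sum_* e_{d_s}^{\rm sst}\cdots e_{d_1}^{\rm sst}$ coming from existence and uniqueness of the HN filtration, pushed through the evaluation homomorphism $\int$ into the twisted polynomial ring, is precisely your stratification count, and the ``explicit resolution of the recursion'' into the alternating sum of Definition \ref{defpdq} is the same combinatorial inversion the paper attributes to \cite{RHNS}. Your second stage (Lang's theorem for the connected group $PG_d$, plus $|PG_d({\bf F}_q)|=|G_d({\bf F}_q)|/(q-1)$) is exactly what the paper compresses into ``general arithmetic considerations'' yielding $|M_d^{\rm sst}({\bf F}_q)|=(q-1)|R_d^{\rm sst}({\bf F}_q)|/|G_d({\bf F}_q)|$ in the coprime case, and your third stage (smooth projectivity, counting polynomial, Deligne's purity, comparison with singular cohomology) is the method described in section \ref{cdrp}; note that polynomiality of $(q-1)P_d(q)$ can also be obtained directly from Lemma \ref{key}, so purity is really only needed for evenness of the cohomology and for matching coefficients with Betti numbers.

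There is, however, one incorrect intermediate claim that you should repair. The gluing factor for a stratum of HN type $(d^1,\ldots,d^s)$ is not $\prod_{k<l}|{\rm Ext}^1(M^l,M^k)|$ but the ratio $\prod_{k<l}|{\rm Ext}^1(M^l,M^k)|/|{\rm Hom}(M^l,M^k)|$, which equals $q^{-\sum_{k<l}\langle d^l,d^k\rangle}$ by the homological interpretation of the Euler form. Your appeal to Lemma \ref{basicsemistable}(3) points the wrong way: that lemma kills ${\rm Hom}(M^k,M^l)$ for $k<l$, i.e.\ maps from the higher-slope subquotient to the lower-slope one, whereas the Hom spaces entering the gluing count are ${\rm Hom}(M^l,M^k)$, from lower slope to higher slope, and these do not vanish in general. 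Already for the one-arrow Kronecker quiver $K_1$ with stability $\Theta(d,e)=d$, take $M^1=P$ the indecomposable of dimension vector $(1,1)$ (which is stable) and $M^2=S_j$: then ${\rm Hom}(S_j,P)\simeq k\not=0$ and ${\rm Ext}^1(S_j,P)=0$, so $|{\rm Ext}^1(M^2,M^1)|=1$ while the correct factor is $q^{-\langle d^2,d^1\rangle}=q^{-1}$. Your recursion nevertheless survives, because you wrote its coefficient as the Euler-form power, which is the correct one; the actual role of the slope ordering and of the vanishing ${\rm Hom}(M^k,M^l)=0$ is to force uniqueness of the HN filtration, i.e.\ that every representation lies in exactly one stratum and is counted there exactly once. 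This is precisely the bookkeeping that the paper's formalism performs automatically, via the fact that $\int$ is an algebra homomorphism into the ring with multiplication $t^d\circ t^e=q^{-\langle d,e\rangle}t^{d+e}$.
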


In particular, this theorem reproves a result of \cite{KW} that there is no odd cohomology of $M_d(Q)$ in the coprime case: namely, the left hand side of the formula in the theorem is a rational function in $q$, so the right hand side is so, too, and all potential contributations to half-powers of $q$ -- coming from the odd cohomology -- have to vanish.\\[1ex]
A drawback of this formula is that, although we know a priori that the result is a polynomial in $q$, all summands are only rational functions in $q$, with denominators being products of terms of the form $(1-q^i)$. In particular, we cannot specialize the formula to $q=1$, which would be very interesting because then the Poincare polynomial specializes to the Euler characteristic.\\[1ex]
We also do not get a ``positive'' formula in the sense that one can see directly from the summands that the coefficients of the resulting polynomial have to be nonnegative integers. In contrast, the formula for $P_d(q)$ involves signs. We will see positive formulas for the Betti numbers of quiver moduli in special cases, namely in section \ref{lockronecker} for generalized Kronecker quivers, and in section \ref{hilbert} for Hilbert schemes.\\[1ex]
Nevertheless, the above formula gives rise to a fast algorithm for computing the Betti numbers (which was further optimized in \cite{Weist} to compute the Euler characteristic of moduli of generalized Kronecker quivers). This serves as an important tool for computer experiments which motivated many of the developments described below.

\subsection{Asymptotic aspects}

One can argue that, in studying quiver moduli qualitatively (i.e.~studying common features enjoyed by all quiver moduli, in contrast to determination of special features of particular ones) one should not consider a fixed dimension vector $d$, but consider either all of them at the same time (see results on generating series over all $d$ such that $\mu(d)=d$ in Theorems \ref{formulanumbstab} and \ref{cohomsm}), or consider the behaviour of the moduli for large $d$. The latter case is what is considered in this section.\\[1ex]
We first consider a very simple instance of this principle: in case $d$ is coprime for $\Theta$, we will see in section \ref{apphall} that the Poincare polynomial of singular cohomology is given by $$(q-1)\cdot\frac{|R_d^{\rm sst}(Q)({\bf F}_q)|}{|G_d|({\bf F}_q)|}.$$
Since $R_d^{\rm sst}(Q)$ is open in the affine space $R_d^{\rm sst}$, we know that the counting polynomial is of degree $\dim R_d(Q)$. For large enough coprime $d$, the number of non-semistable representations should be ``small'' compared to the number of all representations, so that $$(q-1)\frac{|R_d(Q)|}{|G_d|}$$ should be a good approximation to the Poincare polynomial. This number equals (by a direct calculation)
$$q^{1-\langle d,d\rangle}(1-q^{-1})\prod_{i\in I}\prod_{k=1}^{d_i}(1-q^{-k})^{-1}.$$
A very surprising route towards predictions of the asymptotic behaviour of quiver moduli opens up in connection to methods from string theory, see e.g. \cite{BGL,Denef,DenefMoore}. The idea is, very roughly, to view e.g.~moduli of representations of generalized Kronecker quivers as moduli spaces of the possible states of strings between branes. These should form a microscopic model for the behaviour of macroscopic physical systems like e.g.~certain types of black holes. Known or expected properties of such physical systems then yield predictions on the microscopic system (i.e.~the quiver moduli) ``in the large'', i.e.~for large values of the dimension vector.\\[1ex]
M. Douglas made the following conjecture for generalized Kronecker quivers $K_m$: for large dimension vectors $(d,e)$ ($d$ and $e$ coprime), the logarithm of the Euler characteristic $\log \chi(M_{(d,e)}(Q))$ should depend continuously on the ratio $e/d$. A more precise formulation can be given as follows:\\[1ex]
there should exist a continuous function $f:{\bf R}_{\geq 0}\rightarrow{\bf R}$ with the following property: for any $r\in{\bf R}_{\geq 0}$ and any $\varepsilon>0$, there exist $\delta>0$ and $N\in{\bf N}$ such that for coprime $(d,e)$ with $d+e>N$ and $|e/d-r|<\delta$, we have $$|f(r)-\frac{\log\chi(M_{(d,e)}(K_m))}{d}|<\varepsilon.$$

This conjecture is extremely surprising mathematically, since there are no general geometric or representation-theoretic techniques to relate moduli spaces $M_d(Q)$ and $M_e(Q)$ for ``close'' coprime dimension vectors $d$ and $e$. Nevertheless, computer experiments \cite{Weist} using the above mentioned algorithm for computation of Betti numbers give substantial evidence for this conjecture. A posteriori, it turns out that, if such a function $f$ exists, it already is uniquely determined up to a constant as $$f(r)=C\cdot\sqrt{r(m-r)-1}.$$
This can be seen using natural identifications of moduli spaces (which are special to moduli for generalized Kronecker quivers). Namely, the natural duality, resp. the reflection functors, yield isomorphisms
$$M_{(d,e)}^{\rm st}(K_m)\simeq M_{(e,d)}^{\rm st}(K_m)\mbox{, resp. }M_{d,e}^{\rm st}(K_m)\simeq M_{(md-e,d)}^{\rm st}(K_m).$$
These identifications translate into functional equations for the function $f$ (if it exists), which already determine it up to a scalar factor.\\[1ex]
Using localization techniques (see section \ref{loc}), it is possible to obtain exponential lower bounds for the Euler characteristic, thus proving part of the above conjecture; see \cite{RW,Weist2}.\\[1ex]
A slight reformulation of the above (conjectural) formula for the asymptotic behaviour yields a conjecture for arbitrary quivers:\\[1ex]
For every quiver $Q$, there exists a constant $C_Q$ such that for large coprime $d$, we have $\log\chi(M_d(Q))\sim C_Q\sqrt{-\langle d,d\rangle}$.\\[1ex]
It is a very interesting problem to make this more precise. If this conjecture is true in some form, it has the surprising consequence that the Euler characteristic of a quiver moduli ``in the large'' is already determined by it dimension $1-\langle d,d\rangle$!\\[1ex]
In one instance, the exponential behaviour of the Euler characteristic can indeed be proved: we consider the moduli space ${\rm Hilb}_{d,1}(L_m)$ (see section \ref{hilbert} for the definition), or, in other words, the moduli space $M_{(1,d)}^{\rm st}(Q)$ for the quiver $Q$ given by vertices $I=\{i,j\}$ and arrows $$Q_1=\{(\alpha:i\rightarrow j),(\beta_1,\ldots,\beta_m:j\rightarrow j)\}.$$
In this case, the parametrization of a cell decomposition by $m$-ary trees (as a special case of the combinatorial notions of section \ref{hilbert}) yields the following \cite{RCNH}:
$$\chi({\rm Hilb}_{d,1}(L_m))\sim C\cdot d^{-3/2}\cdot(m^m/(m-1)^{(m-1)})^d.$$
In this case, it is even possible to describe the asymptotic behaviour of the individual Betti numbers \cite{RCNH}: for each $d\in{\bf N}$, define a discrete random variable $X_d$ by
$${\bf P}(X_d=k)=\frac{1}{\chi({\rm Hilb}_{d,1}(L_m))}\cdot\dim H^{(m-1)d(d-1)-2k}({\rm Hilb}_{d,1}(L_m),{\bf Q}).$$
Then the sequence of random variables
$$\sqrt{8/(m(m-1))}\cdot d^{-3/2}\cdot X_d$$
admits a continuous limit distributation, the so-called Airy distribution \cite{FL}.

\section{Localization}\label{loc}

The localization principle in topology states that a lot of topological information on a space $X$ can be retrieved from the set of fixed points $X^T$ under the action of a torus $T$ on $X$. For example, $\chi(X)=\chi(X^T)$ for any action of a torus on a quasi-projective variety. See \cite{Ca,CG,EM,GKM}.

\subsection{Localization for quiver moduli}\label{locqm}

We consider the torus $T_Q=({\bf C}^*)^{Q_1}$, i.e.~one copy of ${\bf C}^*$ for each arrow $\alpha$ in $Q$ (in some situations, it is also interesting to consider arbitrary subtori, or for example ${\bf C}^*$ embedded diagonally into $T_Q$, see section \ref{bb}).\\[1ex]
The torus $T_Q$ acts on the path algebra ${\bf C}Q$ via rescaling of the generators $\alpha\in{\bf C}Q$ corresponding to the arrows $\alpha$. By functoriality, $T_Q$ acts on the category of representations and also on all moduli of representations. More precisely, $T_Q$ acts on $R_d(Q)$ (written as a right action) via
$$(M_\alpha)_\alpha(t_\alpha)_\alpha=(t_\alpha M_\alpha)_\alpha.$$
This action naturally commutes with the (left) $G_d$-action on $R_d$. The action fixes (semi-)stable representations, since it does not change the possible dimension vectors of subrepresentations. Thus, the $T_Q$-action on $R_d^{\rm sst}(Q)$ descends to an action on $M_d^{\rm (s)st}(Q)$.\\[1ex]
We will now derive a description of the fixed point set $M_d^{\rm st}(Q)^{T_Q}$ in the case where $d$ is coprime for $\Theta$. Let $M=(M_\alpha)_\alpha$ be a $T_Q$-fixed point in $M_d(Q)$. Thus $M$ is a stable representation, and in particular it has trivial endomorphism ring. Consider the group $$G=\{(g,t)\in PG_d\times T_Q\, :\, gM=Mt\}.$$
By definition, the second projection $p_2:G\rightarrow T_Q$ is surjective. On the other hand, it is injective since the stabilizer of $M$ in $PG_d$ is trivial. Thus, we can invert the second projection, providing us with a map $\varphi:T_Q\rightarrow PG_d$, such that $$\varphi(t)M=Mt\mbox{ for all }t\in T_Q.$$
We can lift $\varphi$ to $G_d$, again denoted by $\varphi$. Denote by $\varphi_i:T_Q\rightarrow {\rm GL}(M_i)$ the $i$-component of $i\in I$. The defining condition of $\varphi$ thus tells us that
$$\varphi_j(t)M_\alpha\varphi_i(t)^{-1}=t_\alpha M_\alpha$$
for all $\alpha:i\rightarrow j$ in $Q_1$ and all $t=(t_\alpha)_\alpha\in T_Q$.
The map $\varphi_i$ can be viewed as a representation of $T_Q$ on $M_i$, which we can decompose into weight spaces, denoting by $X(T_Q)$ the character group of $T_Q$:
$$M_i=\bigoplus_{\lambda\in X(T_Q)}M_{i,\lambda}\mbox{, where }M_{i,\lambda}=\{m\in M_i\, :\, \varphi_i(t)m=\lambda(t)m\mbox{ for all }t\in T_Q\}.$$
The character group $X(T_Q)$ has a basis $e_\alpha$ with $e_\alpha(t)=t_\alpha$, for $\alpha\in Q_1$.
The above equation now yields $$M_\alpha(M_{i,\lambda})\subset M_{j,\lambda+e_\alpha}\mbox{ for all }\alpha:i\rightarrow j\mbox{ and all }\lambda\in X(T_Q).$$
This means that $M$ is automatically a kind of graded representation, which we can view as a representation of a covering quiver, defined as follows:\\[1ex]
let $\widehat{Q}$ be the quiver with set of vertices
$$\widehat{Q}_0=Q_0\times X(T_Q)$$ and arrows
$$\widehat{Q}_1=\{(\alpha,\lambda):(i,\lambda)\rightarrow(j,\lambda+e_\alpha),\, (\alpha:i\rightarrow j)\in Q_1,\,\lambda\in X(T_Q)\}.$$
This covering quiver carries a natural action of $X(T_Q)$ via translation. Then $M$ can be viewed as a representation of $\widehat{Q}$ of some dimension vector $\widehat{d}$ lifting $d$, i.e.~such that $$\pi(\widehat{d}):=\sum_{i,\lambda}\widehat{d}_{(i,\lambda)}=d_i\mbox{ for all }i\in I.$$
This representation is again stable, for the stabiliy $\widehat{\Theta}$ on $\widehat{Q}$ defined by $$\widehat{\Theta}(\widehat{d})=\Theta(\pi(\widehat{d})):$$
by rigidity of the HN-filtration under automorphisms, the HN-filtration is stable under all translation symmetries, thus the filtration descends to a HN filtration of the original representation $M$, which is necessarily trivial by stability of $M$. Conversely, stable representations of $\widehat{Q}$ project to stable representations of $Q$.
From this, we finally get:
\begin{proposition} If $d$ is $\Theta$-coprime, the set of fixed points of $T_Q$ on $M_d^{\rm st}(Q)$ admits a description
$$M_d^{\rm st}(Q)^{T_Q}\simeq\bigoplus_{\widehat{d}}M_{\widehat{d}}^{\rm st}(\widehat{Q}),$$
the union over all translation classes of dimension vectors $\widehat{d}$ for $\widehat{Q}$ which project to $d$.
\end{proposition}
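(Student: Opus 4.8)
The plan is to show that the fixed points of the torus action are exactly the stable representations of the covering quiver, and that the excerpt has essentially already done all the work. The statement to prove is that $M_d^{\rm st}(Q)^{T_Q}$ decomposes as a disjoint union of $M_{\widehat{d}}^{\rm st}(\widehat{Q})$ over translation classes of lifts $\widehat{d}$ of $d$. The strategy is to exhibit a bijection between $T_Q$-fixed points on the one side and stable $\widehat{Q}$-representations (up to translation) on the other, and then to upgrade this set-theoretic bijection to an isomorphism of varieties.

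First I would collect the construction already carried out in the preceding paragraphs: given a $T_Q$-fixed point $M$, stability forces a trivial endomorphism ring, hence the graph subgroup $G=\{(g,t)\in PG_d\times T_Q : gM=Mt\}$ projects isomorphically onto $T_Q$, yielding the cocharacter $\varphi\colon T_Q\to G_d$ satisfying $\varphi(t)M=Mt$. Decomposing each $M_i$ into $\varphi_i$-weight spaces $M_{i,\lambda}$ and using the compatibility $M_\alpha(M_{i,\lambda})\subset M_{j,\lambda+e_\alpha}$ produces a $\widehat{Q}$-representation $\widehat{M}$ of some lift $\widehat{d}$ of $d$. The excerpt already argues that $\widehat{M}$ is $\widehat{\Theta}$-stable via rigidity of the HN filtration, and conversely that any stable $\widehat{Q}$-representation projects to a stable $Q$-representation that is $T_Q$-fixed. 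Thus the two maps are mutually inverse on the level of isomorphism classes.

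The first genuine point to nail down is \emph{well-definedness up to translation}: the weight grading $\varphi$ is canonical only once the splitting $G\cong T_Q$ is fixed, and a different choice of lift of $\varphi$ to $G_d$ (or equivalently an overall shift of weights) changes $\widehat{d}$ by translation under the $X(T_Q)$-action on $\widehat{Q}$. I would verify that two lifts $\widehat{M}$, $\widehat{M}'$ of the same $M$ differ precisely by such a translation, so the assignment $M\mapsto[\widehat{d},\widehat{M}]$ is well-defined on the translation quotient. Conversely, translation-equivalent stable $\widehat{Q}$-representations give the same point of $M_d^{\rm st}(Q)$, since the projection $\pi$ forgets the grading. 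The coprimality hypothesis is used throughout: it guarantees semistable equals stable, so that every $T_Q$-fixed semistable point is already stable with trivial stabilizer in $PG_d$, which is exactly what makes the graph-subgroup splitting an \emph{isomorphism} rather than merely a surjection.

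The \textbf{main obstacle} is promoting this bijection of points to an isomorphism of varieties, i.e.\ checking it is a morphism in both directions with the correct scheme structure. The natural way is to produce the decomposition on the level of fixed-point loci functorially: the closed subvariety $M_d^{\rm st}(Q)^{T_Q}$ represents $T_Q$-equivariant families, and the weight-space decomposition of the universal bundle (which exists since $d$ is coprime) over the fixed locus splits the universal $Q$-representation into an $X(T_Q)$-graded object, yielding a universal $\widehat{Q}$-representation and hence a morphism $M_d^{\rm st}(Q)^{T_Q}\to\coprod_{\widehat{d}}M_{\widehat{d}}^{\rm st}(\widehat{Q})$. The inverse morphism comes from applying $\pi$ to the universal family on the right-hand side. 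I would then check these are mutually inverse by the pointwise computation above and conclude by, e.g., smoothness of both sides (each $M_{\widehat{d}}^{\rm st}(\widehat{Q})$ is smooth since $\widehat{d}$ inherits coprimality). The subtlety to watch is that $\widehat{Q}$ has infinitely many vertices while only finitely many lifts $\widehat{d}$ contribute a nonempty moduli space; I would note that $\widehat{d}$ is supported on finitely many vertices because $d$ is finite, so the disjoint union is finite and no convergence issues arise. Note also that the symbol $\oplus$ in the statement should be read as a disjoint union $\coprod$ of components.
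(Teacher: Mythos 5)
Your proposal is correct and follows essentially the same route as the paper: the paper's own justification for this proposition is precisely the preceding construction (the graph subgroup $G\subset PG_d\times T_Q$, the cocharacter $\varphi$, the weight-space decomposition $M_i=\bigoplus_\lambda M_{i,\lambda}$, the resulting representation of the covering quiver $\widehat{Q}$, and the stability transfer via rigidity of the HN filtration), taken up to translation classes of lifts $\widehat{d}$. Your additions --- well-definedness under change of the lift of $\varphi$ to $G_d$, the upgrade from a bijection of points to an isomorphism of varieties via equivariant universal bundles, and finiteness of the union --- only make explicit what the paper leaves implicit; the one small correction is that injectivity of the projection $G\rightarrow T_Q$ needs only stability of $M$ (trivial stabilizer in $PG_d$, by Schur), not coprimality, whose real role is to let one conclude that stable $\widehat{Q}$-representations project to stable, rather than merely semistable, $Q$-representations.
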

Note that this result is trivial if the quiver $Q$ is a tree, but it yields something non-trivial in case of generalized Kronecker quivers, in case $Q$ has oriented cycles, etc.. For example, in the case of generalized Kronecker quivers, moduli of bipartite quivers appear, as in the following section.

\subsection{Localization for generalized Kronecker quivers}\label{lockronecker}

Consider the three-arrow Kronecker quiver $K_3$. Up to translation, we can assume the support of $\widehat{d}$ to be contained in the connected component $C$ of $\widehat{K_3}$ containing the vertex $(i,0)$. This component has the form of a hexagonal lattice:
$$\ldots\tiny\begin{array}{ccccccccccccc}
&&&&\bullet&&&&\bullet&&&&\\
&&&&\uparrow&&&&\uparrow&&&&\\
&&&&\bullet&&&&\bullet&&&&\\
&&&\swarrow&&\searrow&&\swarrow&&\searrow&&&\\
&&\bullet&&&&\bullet&&&&\bullet&&\\
&&\uparrow&&&&\uparrow&&&&\uparrow&&\\
&&\bullet&&&&(i,0)&&&&\bullet&&\\
&\swarrow&&\searrow&&\swarrow&&\searrow&&\swarrow&&\searrow&\\
\bullet&&&&\bullet&&&&\bullet&&&&\bullet\\
&&&&\uparrow&&&&\uparrow&&&&\\
&&&&\bullet&&&&\bullet&&&&\\
&&&\swarrow&&\searrow&&\swarrow&&\searrow&&&\\
&&\bullet&&&&\bullet&&&&\bullet&&
\end{array}\ldots$$

As a particular example, we consider the dimension vector $d=(2,3)$ for the $m$-arrow Kronecker quiver. In this case, it is easy to work out the stable representations of the covering quiver whose dimension vectors project to $d$. Namely, we find indecomposable representations supported on a subquiver of type $A_5$ with alternating orientation, and indecomposable representations supported on a subquiver of type $D_4$, in both cases corresponding to the maximal roots of the respective Dynkin types. Additionally, we have to chose a labelling of the arrows, considered up to natural symmetry. Thus, we arrive at the following dimension vectors for the covering quiver:
$$\begin{array}{rcrcr}
&&1&\stackrel{i}{\rightarrow}&1\\
&&j\downarrow&&\\
1&\stackrel{k}{\rightarrow}&1&&\\
l\downarrow&&&&\\
1&&&&\end{array}$$
$$\begin{array}{rcr}
1&&\\
i\uparrow&&\\
2&\stackrel{j}{\rightarrow}&1\\
k\downarrow&&\\
1&&\end{array}$$
In the first case, the indices $i,j,k,l\in\{1,\ldots,m\}$ fulfill $i\not=j\not=k\not=l$, and they are considered up to the symmetry $(i,j,k,l)\leftrightarrow(l,k,j,i)$. In the second case, the indices $i,j,k\in\{1,\ldots,m\}$ are pairwise different, and they are considered up to the natural $S_3$-action. We conclude that the fixed point set consists of
$$\frac{m(m-1)^3}{2}+\frac{m(m-1)(m-2)}{6}=\frac{m(m-1)(3m^2-5m+1)}{6}$$
isolated fixed points, so this number is precisely the Euler characteristic of the moduli space $M_{(2,3)}(K_m)$. This can also be obtained directly from Theorem \ref{betti}, but the advantage here is that we get a positive formula a priori (see the discussion following Theorem \ref{betti}).\\[1ex]
For more general coprime dimension vectors $d$, this approach is the central tool in \cite{Weist2} for obtaining exponential lower bounds for $\chi(M_d^{\rm st}(K_m))$, by constructing ``enough'' stable representations for the covering quiver.

\subsection{Cell decompositions and the Bialynicki-Birula method}\label{bb}

Suppose the rank $1$ torus ${\bf C}^*$ acts on a smooth projective variety $X$. The following is proved in \cite{BB1,BB2}:

\begin{theorem} Let $C$ be a connected component of the fixed point set $X^{{\bf C}^*}$, and let $A(C)$ be the set of all $x\in X$ such that $\lim_{t\rightarrow 0}xt\in C$. Then both $C$ and $A(C)$ are smooth, and $A(C)$ is locally closed in $X$. Associating to $x\in A(C)$ the limit $\lim_{t\rightarrow 0} xt$ defines a morphism $\pi:A(C)\rightarrow C$, which turns $A(C)$ into a Zariski locally trivial affine bundle. There exists a descending filtration by closed subvarieties $X=X_0\supset X_1\supset\ldots\supset X_t=\emptyset$ such that the successive complements $X_{i-1}\setminus X_i$ are precisely the $A(C)$.
\end{theorem}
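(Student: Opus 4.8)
The plan is to reduce the whole statement to linear algebra on tangent spaces, by analysing the ${\bf C}^*$-action in the neighbourhood of the fixed locus, once the limit map has been shown to be everywhere defined. First I would establish that $\lim_{t\to 0}xt$ exists for every $x\in X$. Since $X$ is projective, hence proper, the orbit morphism ${\bf C}^*\to X$, $t\mapsto xt$, extends uniquely to a morphism ${\bf A}^1\to X$ by the valuative criterion of properness together with separatedness; the image of $0$ is by definition the limit, and it is manifestly a fixed point. This produces a set-theoretic retraction $X\to X^{{\bf C}^*}$ and a disjoint decomposition $X=\bigsqcup_C A(C)$ indexed by the connected components $C$ of the fixed locus, so that once each $A(C)$ is understood, only the assembly into a filtration remains.

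Next I would study the action near a fixed point $p$. Because $X$ is smooth and ${\bf C}^*$ is linearly reductive, the induced action on $T_pX$ is a linear representation; the fixed locus is then smooth, with $T_p(X^{{\bf C}^*})=(T_pX)^0$ the weight-zero subspace, which already yields smoothness of $C$. Writing the weight decomposition $T_pX=T^+\oplus T^0\oplus T^-$ into positive, zero and negative weights, the crux is an equivariant local model: a ${\bf C}^*$-equivariant (\'etale, or formal) isomorphism between a neighbourhood of $p$ in $X$ and a neighbourhood of $0$ in $T_pX$. In such coordinates a point $(x^+,x^0,x^-)$ is scaled on $x^+$ by positive and on $x^-$ by negative powers of $t$, so $\lim_{t\to 0}xt$ exists exactly when $x^-=0$, and then equals $(0,x^0,0)$. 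Hence $A(C)$ is locally $T^+\oplus T^0$, the limit map is the projection to $T^0$, and its fibres are the affine spaces $T^+$. Globalising, $A(C)$ is the total space of the positive part $N^+$ of the normal bundle $N_{C/X}$; it is therefore smooth and locally closed, and $\pi: A(C)\to C$ is a Zariski-locally-trivial affine bundle.

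Finally I would assemble the filtration. The $A(C)$ already partition $X$, so it suffices to order the components $C_1,\ldots,C_r$ so that the partial unions $X_i:=\bigcup_{j\ge i}A(C_j)$ are closed. For this one shows that $\overline{A(C)}\setminus A(C)$ consists only of points whose limits lie in ``lower'' components, for an ordering induced by the values of a ${\bf C}^*$-equivariant ample line bundle --- a function of moment-map type that strictly decreases along the flow towards $\lim_{t\to 0}xt$. Choosing the ordering compatibly makes each $X_i$ closed and exhibits $X_{i-1}\setminus X_i=A(C_i)$ as the successive complements.

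The hard part will be the equivariant local linearisation: the ${\bf C}^*$-equivariant identification of a neighbourhood of $p$ with $T_pX$ under its linear action is the technical core from which smoothness, the affine-bundle structure and local closedness all follow. It rests on Luna's \'etale slice theorem, or on a direct study of the action on the completed local ring $\widehat{\mathcal O}_{X,p}$, where the grading by ${\bf C}^*$-weights lets one integrate the infinitesimal action. The remaining delicate point is the control of the closures $\overline{A(C)}$ needed to order the components correctly; everything else is then a routine globalisation.
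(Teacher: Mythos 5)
First, a point of order: the paper does not prove this statement at all --- it is the Bia\l{}ynicki-Birula decomposition theorem, quoted verbatim with the citation \cite{BB1,BB2} --- so your attempt can only be measured against the classical arguments. Your skeleton (limits exist by properness; weight decomposition $T_pX=T^+\oplus T^0\oplus T^-$; cells cut out by vanishing of the negative part; filtration ordered by an equivariant ample linearization) is reasonable, and it is closer in spirit to the modern slice-theoretic proofs than to Bia\l{}ynicki-Birula's own, which instead linearizes an ample line bundle, embeds $X$ equivariantly into a projective space ${\bf P}(V)$ with a \emph{linear} ${\bf C}^*$-action (this already uses normality, via a Sumihiro-type theorem), works out the decomposition of ${\bf P}(V)$ explicitly, and then restricts to $X$.

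However, there are genuine gaps, and they sit exactly where you declare things ``routine.'' (1) \emph{Saturation}: in an \'etale or formal equivariant neighbourhood $U$ of a fixed point you cannot compute $\lim_{t\rightarrow 0}xt$ from coordinates, because the orbit segment may leave $U$ before the limit is reached; so ``the limit exists exactly when $x^-=0$'' is false for a general $U$. One needs either Luna's \emph{strongly} \'etale slices inside an invariant affine open (whose existence is again Sumihiro's theorem --- Luna's theorem is a statement about affine $G$-varieties, so it does not apply to projective $X$ off the shelf), or the sweeping identity $A(C)={\bf C}^*\cdot\bigl(A(C)\cap U_C\bigr)$ reducing everything to a neighbourhood of $C$; neither appears in your plan, and even the smoothness of $C$ needs the local model, not merely the linearity of the tangent representation. (2) \emph{Zariski versus \'etale}: Luna's theorem produces an \'etale equivariant morphism $U\rightarrow T_pX$, not an isomorphism onto a neighbourhood, and formal linearization gives still less; so your local model yields at best \'etale-local (or formal) triviality of $\pi$. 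The theorem asserts \emph{Zariski} local triviality, which is its real content and historically its hardest point. Closing this gap requires an extra mechanism you do not mention: for instance the equivariant contraction trick (an equivariant open immersion between attractors which is an isomorphism near the zero sections is an isomorphism globally, because every point flows into any neighbourhood of the section), or descent using that the relevant structure group --- graded polynomial automorphisms of affine space --- is special. (3) Your ``globalisation'' claim that $A(C)$ is the total space of $N^+_{C/X}$ is an over-claim: this identification holds formally along $C$ (on the level of associated graded objects), but the global statement is not part of the theorem, does not follow from local models, and is delicate in general; fortunately only local triviality is needed. Without (1) and (2) what your argument proves is the analytic/\'etale-local version of the statement, which is substantially weaker than the theorem you were asked to prove.
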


In particular, in the case where $X^{{\bf C}^*}$ is finite, the sets $A(x)$ for $x\in X^{{\bf C}^*}$ yield a cell decomposition of $X$!\\[1ex]
We have seen in section \ref{lockronecker} that there are finitely many fixed points of $T_{K_m}$ acting on the moduli space $M_{(2,3)}^{\rm st}(K_m)$. Choosing a sufficiently general embedding of ${\bf C}^*$ into $T_{K_m}$, these are precisely the ${\bf C}^*$-fixed points, and the above theorem proves that $M_{(2,3)}^{\rm st}(K_m)$ admits a cell decomposition. See \cite{KW} for details. In the particular case $m=3$, the resulting cell decomposition has the following form:

\begin{proposition} If $X$ is a stable representation of the $3$-arrow Kronecker quiver of dimension vector $(2,3)$, then $X$ is isomorphic to exactly one of the following triples of $3\times 2$-matrices ($\ast$ indicating an arbitrary entry):
$$
\left[\begin{array}{cc} 1&~ \\ ~&1 \\ ~&~ \end{array}\right]
\left[\begin{array}{cc} ~&~ \\ 1&~ \\ ~&1 \end{array}\right]
\left[\begin{array}{cc} ~&~ \\ ~&~ \\ ~&~ \end{array}\right]
\mbox{ or }
\left[\begin{array}{cc} 1&~ \\ ~&~ \\ ~&1 \end{array}\right]
\left[\begin{array}{cc} ~&\ast \\ 1&~ \\ ~&~ \end{array}\right]
\left[\begin{array}{cc} ~&~ \\ ~&1 \\ ~&~ \end{array}\right]
\mbox{ or}
$$
\begin{sloppypar}
	
\end{sloppypar}
$$
\left[\begin{array}{cc} 1&~ \\ ~&\ast \\ ~&\ast \end{array}\right]
\left[\begin{array}{cc} ~&~ \\ 1&~ \\ ~&1 \end{array}\right]
\left[\begin{array}{cc} ~&~ \\ ~&1 \\ ~&~ \end{array}\right]
\mbox{ or }
\left[\begin{array}{cc} 1&~ \\ ~&1 \\ ~&\ast \end{array}\right]
\left[\begin{array}{cc} ~&~ \\ \ast&~ \\ ~&1 \end{array}\right]
\left[\begin{array}{cc} ~&~ \\ 1&~ \\ ~&~ \end{array}\right]
\mbox{ or}
$$
\begin{sloppypar}
	
\end{sloppypar}
$$
\left[\begin{array}{cc} 1&~ \\ ~&1 \\ ~&\ast \end{array}\right]
\left[\begin{array}{cc} ~&~ \\ 1&~ \\ ~&\ast \end{array}\right]
\left[\begin{array}{cc} ~&~ \\ ~&~ \\ ~&1 \end{array}\right]
\mbox{ or }
\left[\begin{array}{cc} 1&~ \\ ~&1 \\ ~&~ \end{array}\right]
\left[\begin{array}{cc} ~&~ \\ \ast&~ \\ \ast&\ast \end{array}\right]
\left[\begin{array}{cc} ~&~ \\ 1&~ \\ ~&1 \end{array}\right]
\mbox{ or}
$$
\begin{sloppypar}
	
\end{sloppypar}
$$
\left[\begin{array}{cc} \ast&\ast \\ 1&~ \\ ~&\ast \end{array}\right]
\left[\begin{array}{cc} 1&~ \\ ~&~ \\ ~&1 \end{array}\right]
\left[\begin{array}{cc} ~&~ \\ ~&1 \\ ~&~ \end{array}\right]
\mbox{ or }
\left[\begin{array}{cc} 1&~ \\ ~&\ast \\ ~&\ast \end{array}\right]
\left[\begin{array}{cc} ~&~ \\ 1&1 \\ ~&\ast \end{array}\right]
\left[\begin{array}{cc} ~&~ \\ ~&~ \\ ~&1 \end{array}\right]
\mbox{ or}
$$
\begin{sloppypar}
	
\end{sloppypar}
$$
\left[\begin{array}{cc} 1&~ \\ ~&\ast \\ ~&\ast \end{array}\right]
\left[\begin{array}{cc} ~&~ \\ ~&1 \\ \ast&\ast \end{array}\right]
\left[\begin{array}{cc} ~&~ \\ 1&~ \\ ~&1 \end{array}\right]
\mbox{ or }
\left[\begin{array}{cc} ~&\ast \\ 1&\ast \\ \ast&\ast \end{array}\right]
\left[\begin{array}{cc} 1&~ \\ ~&1 \\ ~&~ \end{array}\right]
\left[\begin{array}{cc} ~&~ \\ ~&~ \\ ~&1 \end{array}\right]
\mbox{ or}
$$
\begin{sloppypar}
	
\end{sloppypar}
$$
\left[\begin{array}{cc} \ast&~ \\ ~&1 \\ \ast&\ast \end{array}\right]
\left[\begin{array}{cc} 1&~ \\ ~&~ \\ ~&\ast \end{array}\right]
\left[\begin{array}{cc} ~&~ \\ 1&~ \\ ~&1 \end{array}\right]
\mbox{ or }
\left[\begin{array}{cc} \ast&~ \\ \ast&\ast \\ \ast&\ast \end{array}\right]
\left[\begin{array}{cc} 1&~ \\ ~&1 \\ ~&~ \end{array}\right]
\left[\begin{array}{cc} ~&~ \\ 1&~ \\ ~&1 \end{array}\right]
\mbox{ or}
$$
\begin{sloppypar}
	~
\end{sloppypar}
$$
\left[\begin{array}{cc} ~&\ast \\ 1&~ \\ \ast&\ast \end{array}\right]
\left[\begin{array}{cc} \ast&~ \\ ~&1 \\ \ast&\ast \end{array}\right]
\left[\begin{array}{cc} 1&~ \\ ~&~ \\ ~&1 \end{array}\right]
\mbox{    }
\left.\begin{array}{cc} ~&~ \\ ~&~ \\ ~&~ \end{array}\right.
\left.\begin{array}{cc} ~&~ \\ ~&~ \\ ~&~ \end{array}\right.
\left.\begin{array}{cc} ~&~ \\ ~&~ \\ ~&~ \end{array}\right. 
$$
\end{proposition}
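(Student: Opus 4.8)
The plan is to realize the displayed list as the Bialynicki-Birula cell decomposition attached to a generic one-parameter subgroup of $T_{K_3}$, and then to identify each cell with one of the normal forms by an explicit base-change normalization. Since $d=(2,3)$ is coprime and $K_3$ has no oriented cycles, the discussion of section \ref{bgp} shows that $M_{(2,3)}^{\rm st}(K_3)$ is a smooth projective variety of dimension $1-\langle d,d\rangle=6$, so the Bialynicki-Birula theorem of section \ref{bb} applies to any action of ${\bf C}^*$. First I would embed ${\bf C}^*$ into $T_{K_3}=({\bf C}^*)^3$ with sufficiently general weights, so that the ${\bf C}^*$-fixed points coincide with the $T_{K_3}$-fixed points. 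By the covering-quiver analysis of section \ref{lockronecker}, these are exactly the $13$ isolated stable representations (the $12$ arising from the $A_5$-pattern and the single one from the $D_4$-pattern). The theorem then yields a cell decomposition with precisely one affine cell $A(x_\ell)$ for each fixed point $x_\ell$, the dimension of $A(x_\ell)$ being the number of positive weights of ${\bf C}^*$ on the tangent space $T_{x_\ell}M_{(2,3)}^{\rm st}(K_3)\cong{\rm Ext}^1(x_\ell,x_\ell)$.

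Next, for each fixed point I would compute its attracting cell concretely. A stable representation $M$ flows to $x_\ell$ precisely when the limit $\lim_{t\rightarrow 0}M\cdot t$ — the associated graded of $M$ with respect to the weight filtration induced by the cocharacter — equals $x_\ell$. Using the weight decompositions $M_i=k^2$ and $M_j=k^3$ furnished by the map $\varphi$ of section \ref{locqm}, the group $G_d={\rm GL}_2\times{\rm GL}_3$ can be used to normalize the ``leading'' (weight-homogeneous) entries of the three matrices to the pivots $1$ that reproduce $x_\ell$, while the sub-leading entries, which vanish in the limit, remain as free parameters. These free parameters are exactly the $\ast$-entries of the corresponding matrix triple. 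Carrying this out for all $13$ fixed points produces the $13$ displayed triples, in which the number of $\ast$-entries equals the dimension of the cell.

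The main obstacle is the explicit, case-by-case determination of the attracting cells together with the verification that the displayed list is simultaneously complete and non-redundant. Concretely, one must check that each triple is genuinely stable (no proper subrepresentation violates the slope inequality of section \ref{aas}), that distinct triples are pairwise non-isomorphic so that the cells are disjoint, and that every stable representation of dimension vector $(2,3)$ lies in exactly one of them. As a consistency check I would confirm that the numbers of $\ast$-entries are $0,1,2,2,2,3,3,3,4,4,4,5,6$, so that the resulting Poincar\'e polynomial $1+q+3q^2+3q^3+3q^4+q^5+q^6$ is palindromic of top degree $6=\dim M_{(2,3)}^{\rm st}(K_3)$, as demanded by Poincar\'e duality on a smooth projective variety, and that it specializes at $q=1$ to the Euler characteristic $13$ computed in section \ref{lockronecker}. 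The remaining work is finite but intricate linear algebra, and the full verification is carried out in \cite{KW}.
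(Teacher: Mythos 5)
Your proposal follows exactly the paper's own route: the paper likewise invokes the Bialynicki-Birula theorem for a sufficiently general embedding of ${\bf C}^*$ into $T_{K_3}$, identifies the $13$ isolated fixed points ($12$ of type $A_5$ plus one of type $D_4$) via the covering-quiver localization of section \ref{lockronecker}, and defers the explicit determination of the attracting cells to \cite{KW}, just as you do. Your consistency checks (cell dimensions $0,1,2,2,2,3,3,3,4,4,4,5,6$, the palindromic Poincar\'e polynomial, and $\chi=13$) are correct and confirm the match.
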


\section{Arithmetic approach}\label{arithm}

\subsection{Cell decompositions and counting points over finite fields}\label{cdrp}

In the same spirit as the study of Betti numbers, counting rational points over finite fields can give predictions for the structure of potential cell decompositions. We present here the basic idea of this approach (without reference to schemes or other concepts from arithmetic algebraic geometry).\\[1ex]
Suppose $X$ is a quasiprojective variety, embedded as a locally closed subset of a projective space ${\bf P}^N({\bf C})$. Thus $X$ is given by certain polynomial equalities and inequalities in the coordinates, i.e.
$$X=\{(x_0:\ldots:x_N)\, :\, P_i(x_0,\ldots,x_N)=0,\, Q_j(x_0,\ldots,x_N)\not=0\}$$
for certain homogeneous polynomials $P_i$, $Q_j$. Suppose the coefficients appearing in these defining polynomials are all contained in some ring of algebraic numbers $R$ (thus a finite extension of ${\bf Z}$). For any prime $p\subset R$, the factor $R/p$ is a finite field $k$. We can then consider the defining conditions of $X$ modulo $p$ and thus get a locally closed subset $X(k)$ of ${\bf P}^n(k)$ (which is a finite set), and we can study its cardinality $|X(k)|$, and in particular how it depends on $k$.\\[1ex]
Suppose that $X$ is a variety admitting a cell decomposition $$X=X_0\supset X_1\supset\ldots\supset X_t=\emptyset\mbox{ with } X_{i-1}\setminus X_i\simeq{\bf A}^{d_i},$$
such that all steps $X_i$ of the filtration are also defined over $R$. Then $$|X(k)|=\sum_i(|X_{i-1}(k)|\setminus |X_i(k)|)=\sum_i|(X_{i-1}\setminus X_i)(k)|=\sum_i|{\bf A}^{d_i}(k)|=\sum_i|k|^{d_i}.$$
Thus, in this case, we can define a polynomial with nonnegative integer coefficients $$P_X(t)=\sum_iq^{d_i}\in{\bf N}[t]$$ such that $P_X(|k|)=|X(k)|$.\\[1ex]
Like in the setting of Betti numbers, we can get a prediction for the nature of a cell decomposition from this counting polynomial. Note also that the property of admitting such a counting polynomial is very special among all varieties. One standard example for varieties without counting polynomial are elliptic curves. Also note the following elementary example: consider a unit circle $X$, defined by $X=\{(x,y)\, :\, x^2+y^2=1\}$. It is defined over ${\bf Z}$, and using stereographic projection, it is a simple exercise to see that $|X(k)|$ equals $|k|$ or $|k|+1$, depending on whether $-1$ is a square in $k$ or not. On the other hand, the change of variables $u=x+iy$, $v=x-iy$ transforms $X$ to $\{(u,v)\, :\, uv=1\}$, a hyperbola with $|X(k)|=|k|-1$ for any finite field $k$. Thus, the number of points over finite fields, and in particular the property of admitting a counting polynomial, depends on the chosen embedding of $X$ into projective space.\\[1ex]
If $X$ admits a cell decomposition, we see from the above that $P_X(t)$ also equals $\sum_i\dim H^{2i}(X,{\bf Q})t^i$. This holds in much bigger generality for the class of so-called (cohomologically) pure varieties. Deligne's solution of the Weil conjectures \cite{De} states in particular that any smooth projective variety is pure. Thus, for a smooth projective variety admitting a counting polynomial, we automatically know the Betti numbers. This is the method with which Theorem \ref{betti} was obtained.\\[1ex]
For general quasi-projective varieties, one can at least compute the Euler characteristic from a counting polynomial as above, see e.g.~\cite{RCRP}:

\begin{lemma}\label{key} Suppose there exists a rational function $P_X(t)\in{\bf Q}(t)$ such that $P_X(|k|)=|X(k)|$ for almost all reductions $k$ of the ring $R$ over which $X$ is defined. Then $P_X(t)\in{\bf Z}[t]$ is actually a polynomial with integer coefficients, and $P_X(1)=\chi_c(X)$, the Euler characteristic of $X$ in singular cohomology with compact support.
\end{lemma}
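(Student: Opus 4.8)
The plan is to reduce the statement to the Grothendieck--Lefschetz trace formula for a single reduction of $X$, and then to extract both the integrality of $P_X$ and the value $P_X(1)$ from an elementary comparison of exponential sums in an auxiliary variable $n$. First I would spread $X$ out: realizing $X$ by equations with coefficients in $R$, I obtain a scheme of finite type over ${\rm Spec}(R)$, and after inverting finitely many primes I fix one ``good'' prime $p\subset R$ with residue field ${\bf F}_p$ and reduction $X_p$, together with an auxiliary prime $\ell\not=p$. For such a good reduction the $\ell$-adic compactly supported cohomology groups $H^i_c(\overline{X_p},{\bf Q}_\ell)$ are finite-dimensional, and a comparison theorem identifies their dimensions with the complex Betti numbers $\dim_{\bf Q}H^i_c(X({\bf C}),{\bf Q})$; in particular $\sum_i(-1)^i\dim H^i_c(\overline{X_p},{\bf Q}_\ell)=\chi_c(X)$. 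Reading the hypothesis over all finite fields of good characteristic, we have $|X_p({\bf F}_{p^n})|=P_X(p^n)$ for all but finitely many $n$.

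Next I would invoke the trace formula: for every $n$,
$$|X_p({\bf F}_{p^n})|=\sum_i(-1)^i{\rm tr}({\rm Frob}^n\mid H^i_c(\overline{X_p},{\bf Q}_\ell))=\sum_i(-1)^i\sum_k\alpha_{i,k}^n,$$
where the $\alpha_{i,k}$ are the finitely many Frobenius eigenvalues. Before comparing, I show $P_X$ is a polynomial: writing $P_X=f/g$ in lowest terms with coprime $f,g\in{\bf Z}[t]$, a B\'ezout relation $Af+Bg=D$ with $A,B\in{\bf Z}[t]$ and $D\in{\bf Z}\setminus\{0\}$ forces $g(m)\mid D$ whenever $f(m)/g(m)\in{\bf Z}$; since $P_X(p^n)\in{\bf Z}$ for infinitely many $n$ while $|g(p^n)|\to\infty$ if $\deg g\geq 1$, we must have $\deg g=0$, so $P_X(t)=\sum_j c_jt^j\in{\bf Q}[t]$. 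Now the identity $\sum_j c_j(p^j)^n=\sum_i(-1)^i\sum_k\alpha_{i,k}^n$ holds for infinitely many $n$, so by linear independence of the exponential sequences $n\mapsto\beta^n$ for pairwise distinct bases $\beta$ the two sides agree basewise. Hence every eigenvalue $\alpha_{i,k}$ equals some power $p^j$, and $c_j=\sum_{i,k:\,\alpha_{i,k}=p^j}(-1)^i\in{\bf Z}$, which proves $P_X\in{\bf Z}[t]$.

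Finally, the same coefficient matching computes the Euler characteristic. Summing the signed multiplicities over all eigenvalues, and grouping by cohomological degree rather than by base, gives
$$P_X(1)=\sum_j c_j=\sum_i(-1)^i\dim H^i_c(\overline{X_p},{\bf Q}_\ell)=\chi_c(X),$$
where the first equality is $P_X(1)=\sum_j c_j\cdot 1^j$, the middle rearrangement counts each eigenvalue once inside $\dim H^i_c$, and the last step uses the comparison theorem together with the standard fact $\chi_c(X)=\chi(X)$ for complex varieties.

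The main obstacle is the arithmetic--geometric bookkeeping that the survey deliberately suppresses: justifying the spreading-out, the existence of a good reduction whose cohomology is compatible with that of the generic fibre, and above all the comparison theorem equating the dimensions of $\ell$-adic and singular compactly supported cohomology. A secondary subtlety is that the hypothesis supplies reductions of varying residue characteristic, whereas the trace-formula argument is cleanest over a single tower ${\bf F}_{p^n}$; this is legitimate once one interprets ``reductions $k$'' as all finite fields of good characteristic, and in any case the eigenvalues remain controlled by Deligne's weight bound $|\alpha_{i,k}|\leq p^{i/2}$ on $H^i_c$.
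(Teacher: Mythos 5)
The paper itself gives no proof of this lemma (it is quoted from \cite{RCRP}), so your proposal can only be measured against the standard argument, which in outline it reproduces: spreading out, the Grothendieck--Lefschetz trace formula over a good reduction, matching of Frobenius eigenvalues against powers of $p$, and comparison back to singular cohomology. Within that outline, your B\'ezout argument forcing $\deg g=0$, hence $P_X\in{\bf Q}[t]$, is correct and complete.

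The substantive problem is the one you try to dismiss in your closing paragraph. The hypothesis supplies the count only over the residue fields $R/\mathfrak{p}$ --- one finite field per prime, of varying characteristic --- whereas your linear-independence step requires the count over an entire tower ${\bf F}_{p^n}$, for all large $n$ and a single fixed $p$. Re-reading ``reductions'' as ``all finite fields of good characteristic'' is a change of hypothesis, not a justification; under the literal hypothesis the exponential-sum comparison never gets started. The repair is Galois-theoretic rather than Archimedean: with $K={\rm Frac}(R)$, form the virtual ${\rm Gal}(\bar K/K)$-representation $\sum_i(-1)^i[H^i_c(X_{\bar K},{\bf Q}_\ell)]-\sum_j c_j[{\bf Q}_\ell(-j)]$; by the trace formula and generic base change its Frobenius traces vanish at almost all primes of $R$; Chebotarev density plus linear independence of characters (Brauer--Nesbitt) force its semisimplification to vanish; matching multiplicities of the characters ${\bf Q}_\ell(-j)$ then gives $c_j\in{\bf Z}$, and matching dimensions gives $P_X(1)=\sum_jc_j=\sum_i(-1)^i\dim H^i_c=\chi_c(X)$. (For the quiver moduli to which this lemma is applied, the counting formula does hold over every finite field, so the stronger hypothesis you use is satisfied there; but the lemma as stated is the weaker, varying-characteristic statement.)

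Two smaller slips are repairable from what you already establish. First, linear independence of the sequences $n\mapsto\beta^n$ is false if the identity is only assumed ``for infinitely many $n$'', which is how you invoke it: with bases $1$ and $-1$, the combination $1^n-(-1)^n$ vanishes for all even $n$. You need the identity for all sufficiently large $n$ --- which you do have --- and then a Vandermonde argument on finitely many consecutive exponents applies. Second, you cannot conclude that every $\alpha_{i,k}$ is a power of $p$: eigenvalues whose signed multiplicities cancel between even and odd cohomology are invisible to the point count. The matching only yields that the signed multiplicity of each base $\beta$ equals $c_j$ for $\beta=p^j$ and $0$ otherwise --- which, fortunately, is exactly what your final computation of $P_X(1)$ uses, so that part of the argument stands.
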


We say that a variety $X$ (or, more precisely, a chosen model of $X$ over $R$) has the polynomial counting property if a polynomial as in the lemma exists.

\subsection{Counting points of quiver moduli over finite fields}\label{counting}

For quiver moduli, there is a canonical choice for a model over ${\bf Z}$, since we can define quiver representations over the integers, and since we have natural embeddings of quiver moduli into projective spaces by the results of section \ref{coord}. It is proved in \cite{RCRP} that quiver moduli fit into the above discussion nicely:

\begin{theorem}\label{simplecounting} For arbitrary quivers $Q$, stabilities $\Theta$ and dimension vectors $d$, both $M_d^{\rm st}(Q)$ and $M_d^{\rm sst}(Q)$ have the polynomial counting property. 
\end{theorem}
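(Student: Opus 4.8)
The plan is to reduce everything to counting ${\bf F}_q$-rational points of the representation spaces $R_d(Q)$ and then descending along the quotient maps, invoking Lemma \ref{key} only at the very end. Indeed, to establish the polynomial counting property for $M_d^{\rm st}(Q)$ or $M_d^{\rm sst}(Q)$ it suffices to exhibit a single rational function $P(t)\in{\bf Q}(t)$ with $P(q)=|M_d^{\rm (s)st}(Q)({\bf F}_q)|$ for almost all $q$, since Lemma \ref{key} then forces $P\in{\bf Z}[t]$ and so upgrades rationality to genuine polynomiality with integer coefficients. Throughout I therefore only need to produce rational-function counts.

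First I would count the representation spaces themselves. Since $R_d(Q)$ is an affine space, $|R_d(Q)({\bf F}_q)|=q^{\dim R_d}$ is a polynomial. Using the uniqueness of the Harder--Narasimhan filtration proved in Section \ref{aas}, every representation is stratified by its HN type, and the locus with prescribed semistable subquotients of strictly decreasing slopes is controlled, up to an explicit power of $q$ given by the Euler form, by the semistable loci of the pieces. This gives the recursion
$$\frac{|R_d(Q)({\bf F}_q)|}{|G_d({\bf F}_q)|}=\sum_{(d^1,\ldots,d^s)}q^{-\sum_{k<l}\langle d^l,d^k\rangle}\prod_{k=1}^s\frac{|R_{d^k}^{\rm sst}(Q)({\bf F}_q)|}{|G_{d^k}({\bf F}_q)|},$$
the sum over decompositions $d=d^1+\ldots+d^s$ with $\mu(d^1)>\ldots>\mu(d^s)$. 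This system is triangular with respect to refinement of HN type, so it inverts to express each semistable groupoid count as a rational function of $q$; this is precisely the function $P_d(q)$ of Definition \ref{defpdq}, and in particular $|R_d^{\rm sst}(Q)({\bf F}_q)|$ is a polynomial. A parallel argument inside the abelian category ${\rm mod}_\mu kQ$, whose simple objects are exactly the stable representations by Lemma \ref{basicsemistable}, stratifies semistables by their Jordan--H\"older type; the resulting recursion, again triangular in the dimension vector, inverts to give $|R_d^{\rm st}(Q)({\bf F}_q)|$ as a rational function of $q$.

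Next I would descend to the stable moduli space. By the basic geometric properties of Section \ref{bgp}, the map $R_d^{\rm st}(Q)\rightarrow M_d^{\rm st}(Q)$ is a $PG_d$-principal fibration whose fibres are single orbits with trivial stabilizer. Over a finite field, each ${\bf F}_q$-point of $M_d^{\rm st}(Q)$ has as its fibre a Frobenius-stable $PG_d$-torsor, and since $PG_d$ is connected, Lang's theorem ($H^1({\bf F}_q,PG_d)=0$) makes this torsor trivial, so the fibre carries exactly $|PG_d({\bf F}_q)|$ rational points. Partitioning $R_d^{\rm st}(Q)({\bf F}_q)$ according to the point it lies above gives $|R_d^{\rm st}(Q)({\bf F}_q)|=|PG_d({\bf F}_q)|\cdot|M_d^{\rm st}(Q)({\bf F}_q)|$, and using $|PG_d({\bf F}_q)|=|G_d({\bf F}_q)|/(q-1)$ we obtain $|M_d^{\rm st}(Q)({\bf F}_q)|=(q-1)|R_d^{\rm st}(Q)({\bf F}_q)|/|G_d({\bf F}_q)|$, a rational function of $q$, hence the polynomial counting property for $M_d^{\rm st}(Q)$ for every $d$.

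The semistable case is where the main obstacle lies: the map $R_d^{\rm sst}(Q)\rightarrow M_d^{\rm sst}(Q)$ is \emph{not} a principal bundle, since its closed orbits are the polystable representations $\bigoplus_k U_k^{m_k}$, whose stabilizers $\prod_k{\rm GL}_{m_k}$ jump with the polystable type, and the clean Lang argument breaks down. To get around this I would use the Luna stratification of Section \ref{bgp}: $M_d^{\rm sst}(Q)$ is a finite disjoint union of locally closed strata indexed by polystable types, with open stratum $M_d^{\rm st}(Q)$, and as the stratification is intrinsic it is defined over ${\bf F}_q$, so $|M_d^{\rm sst}(Q)({\bf F}_q)|$ is the sum of the counts of the strata. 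Each stratum is a configuration of pairwise non-isomorphic stable representations of dimension vectors $d^k$ (all of total dimension strictly smaller than $d$ off the open stratum) carried with multiplicities, hence is built from symmetric products and complements of diagonals of the stable moduli $M_{d^k}^{\rm st}(Q)$ --- operations which preserve the polynomial counting property by the multiplicativity of the Hasse--Weil zeta function. Combining the count of $M_d^{\rm st}(Q)$ from the previous step with the lower strata then shows $|M_d^{\rm sst}(Q)({\bf F}_q)|$ is a rational function of $q$. The delicate points are the inversion of the HN and Jordan--H\"older recursions and the passage through this non-principal semistable quotient; once rationality is secured, polynomiality and integrality are handed to us by Lemma \ref{key}.
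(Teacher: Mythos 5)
Your proposal reproduces the paper's architecture at both ends --- the HN recursion in your second paragraph is exactly the evaluation of the Hall-algebra identity $e_d=\sum_* e_{d_s}^{\rm sst}\cdot\ldots\cdot e_{d_1}^{\rm sst}$ from section \ref{apphall}, giving $|R_d^{\rm sst}(Q)({\bf F}_q)|/|G_d({\bf F}_q)|=P_d(q)$, and your Luna-stratification treatment of $M_d^{\rm sst}(Q)$ is precisely how the paper extends the result of \cite{RCRP} from the stable to the semistable moduli. But the middle step, the sentence claiming that a ``parallel argument'' stratifying semistables by Jordan--H\"older type yields a triangular recursion that inverts to give the stable count, is a genuine gap, and it sits exactly where the main content of \cite{RCRP} lies. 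The HN argument works because every representation has a \emph{unique} HN filtration, so each isomorphism class contributes exactly once to exactly one product of semistable generating elements, with multiplicity one. Jordan--H\"older filtrations in ${\rm mod}_\mu kQ$ are not unique: a semistable representation has many of them, in a number that varies with the representation, so no analogous identity holds for products of ``stable'' generating elements; and the number of isomorphism classes of semistables with a prescribed multiset of JH factors is governed by the extension structure among those particular stables (iterated extensions), not by any explicit universal coefficient attached to their dimension vectors --- making it such requires the local-quiver theory of \cite{AL}, which you never invoke. What the paper uses instead is the inversion lemma: $e_\mu^{-1}=\sum_{[M]}\gamma_M[M]$ is supported on \emph{polystables}, with $\gamma_M=\prod_{[S]}(-1)^{m_S}|{\rm End}(S)|^{{m_S}\choose 2}$, whose evaluation is Corollary \ref{basisfornumb}; the stable groupoid count is then extracted recursively from the summands where $m$ takes a single value $1$. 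The signs $(-1)^{m_S}$ already show this is a M\"obius-type inversion in the Hall algebra and cannot arise from any stratification of $R_d^{\rm sst}(Q)$ into locally closed pieces, whose point counts are sums of nonnegative terms. Your proposal names no coefficients for its claimed recursion and offers no substitute for this lemma.

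A second defect lives in the same step: you conflate ``stable over ${\bf F}_q$'' with ``absolutely stable''. Any recursion carried out inside the abelian category ${\rm mod}_\mu\,{\bf F}_qQ$ counts its simple objects, i.e.\ representations stable over ${\bf F}_q$; but the quantity your Lang-descent step needs, $|R_d^{\rm st}(Q)({\bf F}_q)|$, counts the ${\bf F}_q$-points of the open subscheme of \emph{geometrically} stable points. These differ: a representation stable over ${\bf F}_q$ may have ${\rm End}={\bf F}_{q^r}$ with $r>1$ (Schur's lemma over a non-closed field gives a division ring, here a field extension), and it then decomposes over $\overline{{\bf F}}_q$ into Galois-conjugate stables. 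Bridging the two counts is exactly the ``arithmetic considerations'' (a M\"obius inversion over finite field extensions) that the paper flags and that your proposal omits. Your Lang's-theorem computation itself is sound --- it is the paper's ``general arithmetic considerations'', applied to the free $PG_d$-action on the stable locus --- and the symmetric-product/diagonal bookkeeping for the Luna strata is fine in outline; but both of these rest on the stable count, which your sketch does not actually obtain.
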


The proof uses Hall algebras in an essential way, see the sketch in section \ref{apphall}. In fact, in \cite{RCRP} only the case of $M_d^{\rm st}(Q)$ is considered, but the case of $M_d^{\rm sst}(Q)$ follows easily using the Luna stratification described in section \ref{bgp}.\\[1ex]
The corresponding counting polynomials $P_{M_d^{\rm st}(Q)}(q)$ and $P_{M_d^{\rm sst}(Q)}(q)$ count isomorphism classes of absolutely stable, resp. semistable, representations of $Q$ of dimension vector $d$ (here a representation is called absolutely stable if it remains stable under base extension to an algebraic closure of the finite field $k$ -- see Remark \ref{automhnf} for a short discussion of scalar extensions of (semi-)stable representations).\\[1ex]
To state an explicit formula for these polynomials, we have to introduce some notation. We consider the formal power series ring $F={\bf Q}(q)[[t_i\, :\, i\in I]]$ and define monomials $$t^d=\prod_{i\in I}t_i^{d_i}$$ for a dimension vector $d\in{\bf N}I$. Besides the usual commutative multiplication, we also consider the twisted multiplication $$t^d\circ t^e=q^{-\langle d,e\rangle}t^{d+e}$$ on $F$. Denote by $\psi_k$ for $k\geq 1$ the operator on $F$ defined by $$\psi_k(q)=q^k\mbox{ and }\psi_k(t^d)=t^{kd}.$$
We combine these operators into $$\Psi(f):=\sum_{k\geq 1}\frac{1}{k}\psi_k(f),$$ which by \cite{Card} has an inverse $$\Psi^{-1}(f)=\sum_{k\geq 1}\frac{\mu(k)}{k}\psi_k(f)$$ involving the number theoretic Moebius function $\mu(k)$. Bases on this, one defines mutually inverse operators $${\rm Exp}(f)=\exp(\Psi(f))\mbox{ and }{\rm Log}(f)=\Psi^{-1}(\log(f))$$ on $F$. Using these concepts, an explicit formula for the counting polynomials is proved in \cite{MR}:

\begin{theorem}\label{formulanumbstab} For any $Q$, $\Theta$, $d$ as above and any $\mu\in{\bf Q}$, we have the following formulas which make key use of the rational functions $P_d(q)$ introduced in Definition \ref{defpdq}:
$$(\sum_{d\in{\bf N}I_\mu}P_d(q)t^d)\circ{\rm Exp}(\frac{1}{1-q}\sum_{d\in{\bf N}I_\mu}P_{M_d^{\rm st}(Q)}(q)t^d)=1$$
and
$$\sum_{d\in{\bf N}I_\mu}P_{M_d^{\rm sst}(Q)}(q)t^d={\rm Exp}(\sum_{d\in{\bf N}I_\mu}P_{M_d^{\rm st}(Q)}(q)t^d).$$
\end{theorem}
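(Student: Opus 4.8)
The plan is to work throughout over a finite field $\mathbf{F}_q$, to reinterpret every term as a point count (or groupoid/stacky count) of representations, and to reduce both identities to statements about the abelian category $\mathcal{A}=\mathrm{mod}_\mu\, kQ$ of semistable representations of the fixed slope $\mu$. First I would set up the dictionary. By the remark following Definition~\ref{defpdq}, $P_d(q)=|R_d^{\mathrm{sst}}(Q)(\mathbf{F}_q)|/|G_d(\mathbf{F}_q)|$ is the \emph{stacky} count $\sum_{[M]}1/|\mathrm{Aut}(M)|$ of semistable representations of dimension $d$. Since by section~\ref{bgp} the map $R_d^{\mathrm{st}}(Q)\to M_d^{\mathrm{st}}(Q)$ is a $PG_d$-principal bundle and a stable representation has automorphism group exactly $\mathbf{F}_q^*$, one obtains the bridge identity
$$\frac{|R_d^{\mathrm{st}}(Q)(\mathbf{F}_q)|}{|G_d(\mathbf{F}_q)|}=\frac{P_{M_d^{\mathrm{st}}(Q)}(q)}{q-1},$$
so $P_{M_d^{\mathrm{st}}}/(q-1)$ is the stacky count of absolutely stable representations. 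The operators $\psi_k$ inside $\mathrm{Exp}$ serve precisely to convert counts of absolutely stable (geometrically simple) objects over $\mathbf{F}_q$ into counts of simple objects of $\mathcal{A}$ defined over $\mathbf{F}_q$ whose endomorphism field is an extension $\mathbf{F}_{q^k}$, via Galois descent.

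For the second formula I would argue directly. By the corollary in section~\ref{gitquiver}, the $\mathbf{F}_q$-points of $M_d^{\mathrm{sst}}(Q)$ are the polystable representations over $\mathbf{F}_q$, i.e.\ the semisimple objects of $\mathcal{A}$, and each such object is a direct sum of simples of $\mathcal{A}$, that is, of stable representations of slope $\mu$. Counting unordered direct-sum decompositions into simples is exactly the operation computed by the plethystic exponential: a simple with endomorphism field $\mathbf{F}_{q^k}$ is a Frobenius orbit of size $k$ of absolutely stable representations, which is the contribution produced by $\frac1k\psi_k$ inside $\mathrm{Exp}$. Assembling these over all dimension vectors of slope $\mu$ yields
$$\sum_{d\in\mathbf{N}I_\mu}P_{M_d^{\mathrm{sst}}(Q)}(q)\,t^d=\mathrm{Exp}\Big(\sum_{d\in\mathbf{N}I_\mu}P_{M_d^{\mathrm{st}}(Q)}(q)\,t^d\Big),$$
with the ordinary (commutative) multiplication, matching the fact that a direct sum carries no extension data.

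For the first formula I would pass to the twisted product and use that $\mathcal{A}$ is a finitary, hereditary abelian category, so that the Euler form computes $\dim\mathrm{Hom}-\dim\mathrm{Ext}^1$ on it. The series $\sum_d P_d(q)\,t^d$ is the stacky count of \emph{all} objects of $\mathcal{A}$, and the target statement is that, in the twisted product, it is the plethystic exponential of the simple contribution,
$$\sum_{d\in\mathbf{N}I_\mu}P_d(q)\,t^d=\mathrm{Exp}\Big(\frac{1}{q-1}\sum_{d\in\mathbf{N}I_\mu}P_{M_d^{\mathrm{st}}(Q)}(q)\,t^d\Big)$$
(exponential taken with respect to $\circ$; reconciling this with the operator $\mathrm{Exp}$ of the statement, which is built from the ordinary multiplication, is part of the bookkeeping). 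I would prove this via the integration map from the Hall algebra of $\mathcal{A}$ to $F$: the twist $t^d\circ t^e=q^{-\langle d,e\rangle}t^{d+e}$ is exactly what makes this map respect multiplication, so that the numbers of extensions between objects are absorbed and the logarithm of the generating function becomes supported on the simple objects, which are the stables with stacky weight $\frac{1}{q-1}$ (matched to $P_{M_d^{\mathrm{st}}}/(q-1)$ by the bridge identity). The identity $(*)$ is then the $\circ$-inverse of this, the sign turning $q-1$ into $1-q$ being produced by passing to the inverse.

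The step I expect to be the main obstacle is this last Hall-algebra identity. Three points need care. First, one must confirm that $\mathcal{A}$ is hereditary, so that the Euler form really computes $\dim\mathrm{Hom}-\dim\mathrm{Ext}^1$ on it: Lemma~\ref{basicsemistable} gives that $\mathcal{A}$ is abelian, but heredity must be checked against the ambient hereditary category, and it is what makes the Euler-form twist cancel the extension contributions. Second, the interaction between the twisted product $\circ$ appearing explicitly in $(*)$ and the operator $\mathrm{Exp}$, which is built from the ordinary multiplication on $F$, must be pinned down precisely; this is where the exact shape $\frac{1}{1-q}$, rather than $\frac{1}{q-1}$, is determined. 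Third, the field-of-definition bookkeeping, absolutely stable versus $\mathbf{F}_q$-stable simples matched to the Adams operations $\psi_k$, has to be justified by Galois descent, which is legitimate precisely because stable objects have trivial stabilizer in $PG_d$ and hence no descent obstruction. All manipulations take place in the completed ring $F=\mathbf{Q}(q)[[t_i]]$, and the polynomiality supplied by Theorem~\ref{simplecounting} then upgrades these equalities of formal power series to the stated identities. Consistency of $(*)$ with $(**)$ and with the Harder--Narasimhan recursion encoded in Definition~\ref{defpdq} provides a useful check on the constants and signs.
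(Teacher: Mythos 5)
Your dictionary and your treatment of the second formula are correct and essentially coincide with the paper's own argument: the ${\bf F}_q$-points of $M_d^{\rm sst}(Q)$ are the semisimple objects of ${\rm mod}_\mu kQ$ over ${\bf F}_q$, i.e.\ multisets of stables, and the Adams operations inside ${\rm Exp}$ carry out exactly the Galois-orbit bookkeeping relating ${\bf F}_q$-stables to absolutely stable representations; the bridge identity via Lang's theorem and Hilbert 90 is also fine. The genuine gap is in the first formula, and it sits precisely at the point you flag as your second obstacle, which is not bookkeeping but the heart of the proof. Your pivotal claim -- that $\sum_d P_d(q)t^d$ is a plethystic exponential ``with respect to $\circ$'' of $\frac{1}{q-1}\sum_d P_{M_d^{\rm st}(Q)}(q)t^d$, because the twist absorbs extension counts and makes the logarithm supported on simples -- is not well defined (the operators $\psi_k$ are not ring endomorphisms for $\circ$: $\psi_k(t^d\circ t^e)=q^{-k\langle d,e\rangle}t^{kd+ke}$, whereas $\psi_k(t^d)\circ\psi_k(t^e)=q^{-k^2\langle d,e\rangle}t^{kd+ke}$, so there is no canonical ${\rm Exp}_\circ$), and it is false under either natural reading. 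Take the quiver with one vertex and no arrows, $\Theta=0$: then $P_d(q)=1/|{\rm GL}_d({\bf F}_q)|$, the only stable lives in dimension $1$, and $\langle d,e\rangle=de$. The coefficient of $t^2$ in $\exp_\circ(\frac{t}{q-1})$ is $\frac{1}{2q(q-1)^2}$, in $\exp_\circ(\frac{t}{q-1}+\frac{t^2}{2(q^2-1)}+\cdots)$ it is $\frac{q^2+1}{2q(q-1)^2(q+1)}$, and neither equals $\frac{1}{|{\rm GL}_2({\bf F}_q)|}=\frac{1}{q(q-1)^2(q+1)}$. So there is no correct intermediate identity whose ``$\circ$-inverse'' could produce the first formula, and the sign flip $q-1\mapsto 1-q$ cannot simply be ``produced by passing to the inverse'', since ${\rm Exp}$ and $\circ$-inversion live in different ring structures.

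What actually closes the argument (section \ref{apphall}, following \cite{RCRP,MR}) is an inversion with a different support statement. In the completed Hall algebra, the inverse of $e_\mu$ (the sum of all semistable classes of slope $\mu$) is supported on the \emph{polystable} classes, with coefficients $\prod_{[S]}(-1)^{m_S}|{\rm End}(S)|^{{m_S}\choose 2}$ for $\bigoplus_S S^{m_S}$; this follows from the classical identity $\sum_{U\subseteq {\bf F}_v^m}(-1)^{\dim U}v^{{\dim U}\choose 2}=\delta_{m,0}$ applied to socles, not from heredity or from any property of the twist. Pushing this through the integration map (which is where the Euler-form twist does its work, as you correctly anticipate) gives Corollary \ref{basisfornumb}: the $\circ$-inverse of $\sum_d P_d(q)t^d$ is an explicit, already evaluated sum over multiplicity functions $m:\mathcal{S}\rightarrow{\bf N}$. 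That sum factors over the simples $S$ under the \emph{ordinary} product, using ${\rm Aut}(S^m)={\rm GL}_m({\rm End}(S))$; each factor is resummed by the $q$-binomial identity $\sum_{m\geq 0}(-1)^m v^{{m\choose 2}}\,|{\rm GL}_m({\bf F}_v)|^{-1}x^m=\prod_{j\geq 1}(1-v^{-j}x)$ with $v=|{\rm End}(S)|$; and collecting the simples into Frobenius orbits of absolutely stable representations (your $\psi_k$ bookkeeping, now legitimately applied inside an ordinary commutative product) yields exactly ${\rm Exp}(\frac{1}{1-q}\sum_d P_{M_d^{\rm st}(Q)}(q)t^d)$. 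The polystable-supported inversion lemma and this $q$-series resummation are the ideas missing from your proposal; without them, your route to the first formula does not go through.
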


Let us consider the special case $\Theta=0$. Then the first formula of the above theorem can be made more explicit:

\begin{corollary} We have
$$(\sum_{d\in{\bf N}I}q^{-\langle d,d\rangle}\prod_{i\in I}\prod_{k=1}^{d_i}(1-q^{-k})^{-1}t^d)\circ{\rm Exp}(\frac{1}{1-q}\sum_{d\in{\bf N}I}P_{M_d^{\rm simp}(Q)}(q)t^d)=1.$$
\end{corollary}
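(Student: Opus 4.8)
The plan is to derive this Corollary as the specialization $\Theta=0$ of the first identity in Theorem \ref{formulanumbstab}. The entire argument amounts to evaluating the two ingredients of that identity -- the rational function $P_d(q)$ and the stable counting polynomial $P_{M_d^{\rm st}(Q)}(q)$ -- in the degenerate case where the stability functional vanishes, and then reading off that the resulting identity is precisely the one asserted.

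First I would observe that for $\Theta=0$ every nonzero dimension vector has slope $\mu(d)=\Theta(d)/\dim d=0$, so that the subsemigroup ${\bf N}I_\mu$ equals all of ${\bf N}I$ when $\mu=0$ and is trivial otherwise. Hence taking $\mu=0$ in Theorem \ref{formulanumbstab} turns both sums over ${\bf N}I_\mu$ into sums over all of ${\bf N}I$, matching the index set appearing in the Corollary.

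Next I would simplify $P_d(q)$ using Definition \ref{defpdq}. The tuples $d^*=(d^1,\ldots,d^s)$ contributing to that sum are constrained by $\mu(d^1+\cdots+d^k)>\mu(d)$ for all $k<s$. Since $\Theta=0$ forces every partial sum to have slope $0=\mu(d)$, the strict inequality $0>0$ can never hold, so no tuple with $s\geq 2$ survives and only the trivial decomposition $d^*=(d)$ with $s=1$ remains. This collapses the defining sum to its single term $q^{-\langle d,d\rangle}\prod_{i\in I}\prod_{k=1}^{d_i}(1-q^{-k})^{-1}$, which is exactly the first factor displayed in the Corollary.

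Finally I would identify the stable counting polynomials. As recorded in section \ref{gitquiver}, the trivial character $\chi_\Theta$ arising from $\Theta=0$ has the property that its $\chi_\Theta$-stable representations are precisely the simple representations, so $M_d^{\Theta-{\rm st}}(Q)=M_d^{\rm simp}(Q)$ and therefore $P_{M_d^{\rm st}(Q)}(q)=P_{M_d^{\rm simp}(Q)}(q)$ for every $d$. Substituting this equality together with the evaluation of $P_d(q)$ into the first formula of Theorem \ref{formulanumbstab} then yields the claimed identity verbatim. I expect no genuine obstacle here; the only point requiring care is verifying that the slope condition in Definition \ref{defpdq} really does annihilate every nontrivial tuple, i.e.~that the degeneration $\mu\equiv 0$ leaves no stray boundary contribution -- for instance from the $d=0$ summand, where one checks $P_0(q)=1$ and $P_{M_0^{\rm st}(Q)}(q)=0$, so that ${\rm Exp}$ is applied to a series with vanishing constant term and the manipulation in $F$ is well defined.
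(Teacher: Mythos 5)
Your proposal is correct and follows exactly the route the paper intends: the Corollary is the specialization of the first identity of Theorem \ref{formulanumbstab} at $\Theta=0$, where the slope condition in Definition \ref{defpdq} kills all tuples with $s\geq 2$ so that $P_d(q)$ collapses to $q^{-\langle d,d\rangle}\prod_{i\in I}\prod_{k=1}^{d_i}(1-q^{-k})^{-1}$, and stability for the trivial functional coincides with simplicity, giving $P_{M_d^{\rm st}(Q)}(q)=P_{M_d^{\rm simp}(Q)}(q)$. Your additional check of the $d=0$ boundary terms is a sound precaution, fully consistent with the paper's conventions.
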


Here are some examples of the counting polynomials in the special case of the $m$-loop quiver $L_m$, denoting $a_d(q)=P_{M_d^{\rm simp}(L_m)}(q)$:
$$a_1(q)=q^m,$$
$$a_2(q)=q^{2m}\frac{(q^m-1)(q^{m-1}-1)}{q^2-1},$$
$$a_3(q)=q^{3m+1}\frac{(q^m-1)(q^{2m-2}-1)(q^{2m-2}(q^m+1)-q^{m-2}(q+1)^2+q+1)}{(q^3-1)(q^2-1)}.$$

Based on computer experiments for the $m$-loop quiver and dimensions up to $12$, the following conjecture is made in \cite{MR}:

\begin{conjecture}\label{conjpos} When written as a polynomial in the variable $q-1$, all polynomials $P_{M_d^{\rm simp}(Q)}(q)$ have nonnegative coefficients, i.e.~$P_{M_d^{\rm simp}(Q)}(q)\in{\bf N}[q-1]$.
\end{conjecture}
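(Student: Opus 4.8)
The plan is to deduce the conjecture from a stronger geometric statement: that $M_d^{\rm simp}(Q)$ admits a \emph{torus decomposition}, by which I mean a finite filtration by closed subvarieties whose successive complements are each isomorphic to a product $({\bf G}_m)^{a_j}\times{\bf A}^{b_j}$ of a split torus and an affine space. Granting such a decomposition, the conjecture follows immediately from the arithmetic of section \ref{cdrp}: one has $|M_d^{\rm simp}(Q)(k)|=\sum_j(|k|-1)^{a_j}|k|^{b_j}$, and since $q^{b_j}=((q-1)+1)^{b_j}$ expands with nonnegative coefficients, each summand lies in ${\bf N}[q-1]$. By the polynomial counting property (Theorem \ref{simplecounting}) and the integrality part of Lemma \ref{key}, the resulting element of ${\bf N}[q-1]$ is exactly the polynomial $P_{M_d^{\rm simp}(Q)}(q)$. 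Note that pure cells ${\bf A}^{b_j}$ already count within ${\bf N}[q]\subset{\bf N}[q-1]$, so the point of allowing the torus factors is precisely to cover the non-coprime cases where no cell decomposition can exist (as for the two-dimensional simples of $L_2$, recalled in section \ref{ned}).

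To construct such a decomposition I would first try torus localization. Choosing a sufficiently general one-parameter subgroup ${\bf C}^*\hookrightarrow T_Q$ and following the Bialynicki--Birula method of section \ref{bb}, the attracting sets of the fixed-point components stratify the space into affine bundles; by the analysis of section \ref{locqm} the fixed loci are themselves moduli of stable representations of the covering quiver $\widehat{Q}$, which is acyclic (a coordinate grid) in the case of the loop quivers $L_m$. The hope is to descend inductively on $\dim d$: each fixed component is a quiver modulus for a strictly simpler, acyclic problem to which the same decomposition applies, the affine-bundle structure of the attracting sets supplies the ${\bf A}^{b_j}$ factors, and the residual torus symmetry supplies the $({\bf G}_m)^{a_j}$ factors.

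I expect the main obstacle to be that, for simple representations with $\Theta=0$, the space $M_d^{\rm simp}(Q)$ is only \emph{affine} rather than projective, so the Bialynicki--Birula theorem does not apply directly and the attracting sets need not cover $M_d^{\rm simp}(Q)$; in the genuinely non-coprime case the space is moreover singular, which is exactly why tori, not cells, are forced upon us. I would attempt to circumvent this in one of two ways: either by running the localization on a smooth model that is projective over the affine base (for instance a framed smooth model of section \ref{sm}) and controlling how its torus decomposition restricts to the open subset $M_d^{\rm simp}(Q)$; or by exploiting the projective fibration $M_d^{\rm sst}(Q)\rightarrow M_d^{\rm ssimp}(Q)$ together with the Luna stratification of section \ref{bgp}, so as to reduce the singular strata to group quotients whose point counts still visibly lie in ${\bf N}[q-1]$.

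As an independent check, and a possible alternative route, I would attack the recursion directly. Specializing the first formula of Theorem \ref{formulanumbstab} to $\Theta=0$ yields $\sum_{d}P_{M_d^{\rm simp}(Q)}(q)t^d=(1-q)\,{\rm Log}(A^{-1})$, where $A=\sum_{d}q^{-\langle d,d\rangle}\prod_{i\in I}\prod_{k=1}^{d_i}(1-q^{-k})^{-1}t^d$ and the inverse is taken in the twisted product, and I would argue by induction on $\dim d$ that the coefficients are nonnegative in $q-1$. The hard part here is controlling the alternating signs introduced by ${\rm Log}$ and by the Möbius function hidden in $\Psi^{-1}$, together with the rational factor $\tfrac{1}{1-q}$: although each $P_{M_d^{\rm simp}(Q)}(q)$ is a polynomial a priori (Theorem \ref{simplecounting}), the individual summands of ${\rm Log}(A^{-1})$ are genuine rational functions, and exhibiting a cancellation that is manifestly nonnegative in $q-1$ seems to require more than formal manipulation. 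For this reason I regard the geometric torus-decomposition argument as the more promising of the two lines.
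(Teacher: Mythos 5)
The statement you are addressing is Conjecture \ref{conjpos}; the paper does not prove it. It is presented as an open conjecture from \cite{MR}, supported only by computer experiments for the $m$-loop quiver in dimensions up to $12$, and the sentence following it runs the logic in the opposite direction to yours: the conjectured positivity is taken as \emph{evidence suggesting} that a decomposition into tori should exist (see also section \ref{ned}), not as a consequence of an established decomposition. Your reduction step is fine as far as it goes: if $M_d^{\rm simp}(Q)$ had a filtration by closed subvarieties, defined over the integers, with successive complements of the form $({\bf G}_m)^{a_j}\times{\bf A}^{b_j}$, then $|M_d^{\rm simp}(Q)(k)|=\sum_j(|k|-1)^{a_j}|k|^{b_j}$, and combined with Theorem \ref{simplecounting} and Lemma \ref{key} this would give $P_{M_d^{\rm simp}(Q)}(q)\in{\bf N}[q-1]$. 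But this only relocates the entire difficulty into the existence of the decomposition, and neither of your two routes closes that gap --- as you yourself concede with ``the hope is'', ``I would attempt to circumvent'', and ``seems to require more than formal manipulation''.

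The obstacles you name are not incidental; they are exactly the ones the paper identifies as fundamental and open. The Bialynicki--Birula theorem of section \ref{bb} requires a smooth projective variety, while $M_d^{\rm simp}(Q)$ is affine and, in the non-coprime case, singular; the refined localization of section \ref{simples} applies only to the projectivization ${\bf P}M_d^{\rm simp}(Q)$ and recovers only the Euler characteristic, the paper stating explicitly that individual Betti numbers cannot be reached this way because a smooth compactification is missing. More seriously, your ``stronger geometric statement'' is itself at least as hard as a long-standing open problem: the top stratum of a torus decomposition is an open subvariety isomorphic to $({\bf G}_m)^{a_1}\times{\bf A}^{b_1}$, so its existence would make $M_d^{\rm simp}(Q)$ a rational variety; for the multiple-loop quivers this is the rationality problem for moduli of simple representations (equivalently, for centers of generic division algebras) recalled in section \ref{ned} via \cite{SchoB} and \cite{LB0}. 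This is precisely the argument the paper uses to explain why cell decompositions of desingularizations are out of reach, and it applies verbatim to the decompositions you propose. Your fallback route through Theorem \ref{formulanumbstab} and ${\rm Log}$ is, as you say, blocked by the signs coming from $\Psi^{-1}$ and the factor $\frac{1}{1-q}$, and you exhibit no mechanism for the required cancellation. So the proposal is a plausible research programme --- essentially the one the paper itself sketches, with the implication reversed --- but it is not a proof, and its key step contains (at least) an open rationality problem.
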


Even more optimistically, this conjecture suggests that there should be a decomposition into tori of any such moduli space, compare the discussion in section \ref{ned}.

\subsection{Moduli of simple representations}\label{simples}

It turns out that the first two terms in the Taylor expansion around $q=1$ can be computed. The constant term is zero except in case $d=e_i$ for some $i\in I$: the action of the torus ${\bf C}^*$, diagonally embedded into $T_Q$, has no fixed points. We can factor by this action to obtain a projectivization ${\bf P}M_d^{\rm simp}(Q)$ of $M_d^{\rm simp}(Q)$ (but note that ${\bf P}M_d^{\rm simp}(Q)$ is not a projective variety since $M_d^{\rm simp}(Q)$ is not necessarily affine).\\[1ex]
To state a formula for the linear term, we need some notation. We consider oriented cycles in the quiver $Q$, written as $\omega=\alpha_s\ldots\alpha_1$ for arrows $\alpha_1,\ldots,\alpha_s$ in $Q$. We have the notion of cyclic equivalence of cycles as the equivalence relation generated by $$\alpha_1\ldots\alpha_s\sim\alpha_2\ldots\alpha_s\alpha_1.$$
We call a cycle $\omega$ primitive if it is not cyclically equivalent to a proper power of another cycle, i.e.~$\omega\not\sim(\omega')^n$ for all $n>1$. A notion of dimension vector $\dimv\omega$ of a cycle $\omega$ can be defined by setting $(\dimv\omega)_i$ to equal the number of times $\omega$ passes through the vertex $i\in I$. Then the following formula is proved in \cite{RLoc}:

\begin{theorem} The Euler characteristic (in cohomology with compact support) of ${\bf P}M_d^{\rm simp}(Q)$, or, equivalently, the constant term in the Taylor expansion of $P_{M_d^{\rm simp}(Q)}(q)$ around $q=1$, is given as the number of cyclic equivalence classes of primitive cycles in $Q$ of dimension vector $d$.
\end{theorem}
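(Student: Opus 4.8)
The plan is to compute the Euler characteristic $\chi_c({\bf P}M_d^{\rm simp}(Q))$ by torus localization and to match the resulting fixed points with primitive cycles. First I would record the reduction to a point count. By Theorem \ref{simplecounting} the variety $M_d^{\rm simp}(Q)$ has the polynomial counting property, and since the diagonally embedded ${\bf C}^*\subset T_Q$ acts with (generically) trivial stabilizer on $M_d^{\rm simp}(Q)$ — the stabilizer of a simple $M$ consists of those scalars $s$ with $sM\simeq M$, a finite subgroup since $M$ is not diagonally fixed for $d\not=e_i$ — the projection $M_d^{\rm simp}(Q)\to{\bf P}M_d^{\rm simp}(Q)$ is a ${\bf C}^*$-fibration (with at worst finite stabilizers). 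Hence $P_{M_d^{\rm simp}(Q)}(q)=(q-1)\cdot P_{{\bf P}M_d^{\rm simp}(Q)}(q)$, so by Lemma \ref{key} the Euler characteristic $\chi_c({\bf P}M_d^{\rm simp}(Q))=P_{{\bf P}M_d^{\rm simp}(Q)}(1)$ is exactly the coefficient of $(q-1)$ in the expansion of $P_{M_d^{\rm simp}(Q)}(q)$ around $q=1$; as the constant term of this expansion vanishes (the diagonal ${\bf C}^*$ has no fixed points), this is the leading Taylor coefficient referred to in the statement.

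Next I would apply the localization principle to the residual torus $PT_Q=T_Q/{\bf C}^*$ acting on ${\bf P}M_d^{\rm simp}(Q)$, giving $\chi_c({\bf P}M_d^{\rm simp}(Q))=\chi_c(({\bf P}M_d^{\rm simp}(Q))^{PT_Q})$. The core of the argument is the identification of these fixed points. A point $[M]$ is $PT_Q$-fixed precisely when $M\cdot t\simeq M$ up to the diagonal scaling for all $t\in T_Q$; since $M$ is simple (trivial stabilizer in $PG_d$) this forces, exactly as in the covering-quiver analysis of section \ref{locqm}, a weight decomposition $M_i=\bigoplus_\lambda M_{i,\lambda}$ with $M_\alpha(M_{i,\lambda})\subset M_{j,\lambda+e_\alpha}$, where the weights $\lambda$ now live in the quotient of $X(T_Q)$ by the lattice generated by the content of the occurring cycle (so that the grading can ``close up''). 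Thus $M$ becomes a representation of a cyclic quotient of $\widehat{Q}$. I would then show that simplicity over $Q$ forces this graded representation to be the thin representation supported on a single oriented cycle: strong connectivity of the weight support follows because the $Q$-subrepresentation generated by any weight vector must be all of $M$, and one-dimensionality of each weight space follows by observing that the ``monodromy'' endomorphism (composition of the $M_\alpha$ once around the cycle) is a $Q$-endomorphism, hence a scalar by Schur's Lemma, which forces all the cycle maps to be isomorphisms and all weight spaces to have a common dimension $r$; simplicity then gives $r=1$.

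Reading off the sequence of arrows traversed by this thin representation yields an oriented cycle $\omega$ in $Q$ with $\dimv\omega=d$. I would check that isomorphic thin representations correspond exactly to cyclically equivalent cycles (a cyclic rotation of the cycle is precisely a relabeling of the one-dimensional weight spaces), and that $M$ is simple if and only if $\omega$ is primitive: if $\omega\sim(\omega')^n$ with $n>1$, the $n$-fold periodicity of the arrow sequence commutes with all structure maps and produces a ${\bf Z}/n$-grading refinement, hence a nontrivial decomposition, while for primitive $\omega$ the only invariant graded subspaces are $0$ and $M$. Since there are only finitely many cycles of bounded length $\dim d$, the fixed locus is a finite reduced set whose points are in bijection with the cyclic-equivalence classes of primitive cycles of dimension vector $d$; therefore $\chi_c$ of the fixed locus equals their number, completing the proof.

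The main obstacle is the fixed-point analysis in the middle paragraph — specifically establishing that torus-fixed simple representations are exactly the thin cyclic ones. Two points require genuine care: identifying the correct grading group (the quotient of $X(T_Q)$ by the cycle content, in which oriented cycles of $Q$ acquire zero net weight, without which no finite-dimensional graded representation could be simple), and the reduction to one-dimensional weight spaces via the scalar-monodromy argument, where one must separately exclude the nilpotent-monodromy case (it would force a gap in the weight support, contradicting strong connectivity). The passage from ``simple'' to ``primitive'' is then a clean necklace-periodicity argument, and the finiteness and reducedness of the fixed locus needed for the Euler-characteristic count are immediate once rigidity of the thin representations is observed.
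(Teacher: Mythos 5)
Your overall strategy (reduce to the linear coefficient of the counting polynomial at $q=1$, then localize for the $T_Q$-action on ${\bf P}M_d^{\rm simp}(Q)$) is the same as the paper's, and your identification of the grading group as a quotient $X(T_Q)/{\bf Z}\nu$ is correct. But the central claim of your middle paragraph --- that a torus-fixed point of ${\bf P}M_d^{\rm simp}(Q)$ must be the thin representation of a single primitive cycle, so that the fixed locus is a finite reduced set of points --- is false, and this is exactly the difficulty the paper's proof is built to handle. Strong connectivity of the weight support does not force the support to be a single oriented cycle: the graded representation is a representation of a covering quiver $\widehat{Q}_\nu$, and its support can be any strongly connected subquiver, in general with several arrows between the same pair of weight spaces. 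Concretely, take $Q=L_2$ and $d=3$, and consider the representations on ${\bf C}b_0\oplus{\bf C}b_1\oplus{\bf C}b_2$ given by $M_{\alpha_1}(b_0)=ab_1$, $M_{\alpha_1}(b_1)=bb_2$, $M_{\alpha_2}(b_1)=cb_0$, $M_{\alpha_2}(b_2)=db_1$, all other values zero, with $a,b,c,d\in{\bf C}^*$. One checks directly that these are simple for all such parameters, that they are graded for $\nu=(1,1)$, and that each class is $T_Q$-fixed in the projectivization: rescaling by $(t_1,t_2)$ is undone by a diagonal base change together with the overall scalar $s$ satisfying $s^2=t_1t_2$, since the diagonal-conjugation invariants are $(ac,bd)$. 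Because ${\rm tr}(M_{\alpha_2}M_{\alpha_1})=ac+bd$ and ${\rm tr}((M_{\alpha_2}M_{\alpha_1})^2)=(ac)^2+(bd)^2$ separate these classes up to the allowed rescaling, they sweep out a one-dimensional fixed component (isomorphic to ${\bf C}^*$) containing no thin cycle representation. Your monodromy/Schur argument cannot exclude this, because it presupposes that the support is already a cycle; here there are two arrows in each direction between adjacent weights, and the composite of structure maps around a loop in the support is an element of the path algebra acting on $M$, not an endomorphism of the $kQ$-module $M$, so Schur's Lemma does not apply to it.

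This is precisely why the paper's fixed-point theorem in section \ref{simples} has the form it does: the components of ${\bf P}M_d^{\rm simp}(Q)^{T_Q}$ are entire moduli spaces ${\bf P}M_{\widehat{d}}^{\rm simp}(\widehat{Q}_\nu)$ of simples of covering quivers, which are positive-dimensional in general (the ${\bf C}^*$ above is ${\bf P}M_{(1,1,1)}^{\rm simp}(\widehat{Q}_{(1,1)})$). The proof then iterates the localization: each fixed component is again of the same type, and only after finitely many localizations is one left with components that are single points, namely those whose covering quiver is a cyclic quiver with all multiplicities equal to $1$; these are parametrized by cyclic equivalence classes of primitive cycles of dimension vector $d$, while every other component ends up contributing Euler characteristic $0$ (as the ${\bf C}^*$-component does). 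This iteration-and-vanishing argument is the essential step missing from your proposal; without it the asserted equality is not established, even though your first paragraph (reduction to the linear term) and your final necklace argument (simple thin cycle representations correspond to primitive cycles up to cyclic equivalence) are sound.
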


The proof of this theorem works via the localization principle, in a refined form so that it applies to the action of $T_Q$ on ${\bf P}M_d^{\rm simp}(Q)$. The additional difficulty is that $PG_d$ does not act freely on the projectivization of the subset $R_d^{\rm simp}(Q)$ of $R_d(Q)$ consisting of the simple representation, so that the argument of section \ref{locqm} does not apply. Instead, this refined form of localization leads to the following description of the fixed point components:\\[1ex]
For any indivisible element $\nu\in{\bf N}Q_1$ (i.e.~${\rm gcd}(\nu_\alpha\, :\, \alpha\in Q_1)=1$), define a covering quiver $\widehat{Q}_\nu$ of $Q$ as follows: the vertices are given by $$(\widehat{Q}_\nu)_0=Q_0\times{\bf Z}Q_1/{\bf Z}\nu,$$
and the arrows are given as $$(\widehat{Q}_\nu)_1=\{(\alpha,\overline{\lambda}):(i,\overline{\lambda})\rightarrow (j,\overline{\lambda+\alpha})\,:\, (\alpha:i\rightarrow j)\in Q_1,\, \overline{\lambda}\in{\bf Z}Q_1/{\bf Z}\nu\}.$$
The group ${\bf Z}Q_1$ acts on $\widehat{Q}_\nu$ by translation, and we consider dimension vectors $\widehat{d}$ for $\widehat{Q}_\nu$ up to translational equivalence. Each $\widehat{d}$ projects to a dimension vector for $Q$.

\begin{theorem} The fixed point set ${\bf P}M_d^{\rm simp}(Q)^{T_Q}$ is isomorphic to the disjoint union of moduli spaces of the same type ${\bf P}M_{\widehat{d}}^{\rm simp}(\widehat{Q}_\nu)$, where $\nu$ ranges over the indivisible elements of ${\bf Z}Q_1$, and $\widehat{d}$ ranges over the equivalence classes of dimension vectors for $\widehat{Q}_\nu$ projecting to $d$.
\end{theorem}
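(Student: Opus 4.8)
The plan is to transport the fixed-point computation of Section~\ref{locqm} to the projectivized setting, where the one genuinely new feature is the presence of non-trivial stabilizers caused by the diagonal ${\bf C}^*\subset T_Q$ that we have divided out. Since this diagonal ${\bf C}^*$ acts trivially on ${\bf P}M_d^{\rm simp}(Q)$, the $T_Q$-action factors through $\overline{T}_Q=T_Q/{\bf C}^*$, and I would first record the fixed-point condition in concrete terms: a class $[M]$ (with $M$ simple, hence of trivial $PG_d$-stabilizer by Schur's Lemma, cf.\ Lemma~\ref{basicsemistable}) is $T_Q$-fixed precisely when, for every $t\in T_Q$, the representation $Mt$ lies in the $PG_d$-orbit of some diagonal rescaling of $M$.

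\textbf{From a fixed point to a covering quiver.} Mimicking \ref{locqm}, I would form the closed subgroup
$$\widetilde{G}=\{(g,s)\in PG_d\times T_Q\,:\,gM=Ms\}.$$
Triviality of the stabilizer makes $p_2\colon\widetilde{G}\to T_Q$ injective; write $H=p_2(\widetilde{G})$ and $\varphi=p_1\circ p_2^{-1}\colon H\to PG_d$, so that $\varphi(s)M=Ms$. The fixed-point condition becomes the single group-theoretic identity $H\cdot{\bf C}^*=T_Q$, whence $\dim H\in\{|Q_1|-1,|Q_1|\}$. I would rule out $\dim H=|Q_1|$ for $d\neq e_i$: it would force $\lambda M\cong M$ for all scalars $\lambda$, i.e.\ a ${\bf Z}$-grading of $M$ in which every arrow raises the degree by one; taking the top graded piece then produces a proper graded subrepresentation, contradicting simplicity. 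Hence $H^0=\ker(\nu)$ for a unique indivisible $\nu\in X(T_Q)={\bf Z}Q_1$, and $H\cdot{\bf C}^*=T_Q$ says exactly that $\nu$ is non-trivial on the diagonal. Decomposing each $M_i$ into $\varphi$-weight spaces indexed by $X(H^0)={\bf Z}Q_1/{\bf Z}\nu$, the relation $\varphi_j(s)M_\alpha\varphi_i(s)^{-1}=s_\alpha M_\alpha$ shows that $M_\alpha$ shifts the weight by $\overline{e_\alpha}$. This is precisely a representation $\widehat{M}$ of $\widehat{Q}_\nu$ of a dimension vector $\widehat{d}$ projecting to $d$, well-defined up to the translation action of ${\bf Z}Q_1$.

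\textbf{The converse and the bijection.} Conversely, forgetting the grading sends a $\widehat{Q}_\nu$-representation projecting to $d$ to a $T_Q$-fixed class. I would check that simplicity is preserved in both directions by the rigidity argument of Remark~\ref{automhnf}, now applied to the canonical weight decomposition in place of the Harder--Narasimhan filtration: any automorphism respects it, so a proper graded subrepresentation on one side descends to a proper subrepresentation on the other. Matching the residual diagonal scaling of $\widehat{Q}_\nu$ with the projectivization then identifies the fixed set, as $\nu$ runs over indivisible characters and $\widehat{d}$ over translation classes, with $\bigsqcup_{\nu,\widehat{d}}{\bf P}M_{\widehat{d}}^{\rm simp}(\widehat{Q}_\nu)$.

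\textbf{Upgrading to an isomorphism; the main obstacle.} To promote this bijection to an isomorphism of varieties I would invoke the refined localization principle: the fixed locus of a torus acting on a GIT quotient is again such a quotient, and both the weight-space decomposition and the forgetful functor are algebraic in families, since on the fixed locus the $H^0$-weights are locally constant and the associated eigenspace splitting is a direct-sum decomposition varying algebraically. I expect the crux to be exactly here. In the free case of \ref{locqm} the projection $p_2$ was an isomorphism and a single covering quiver sufficed; the non-free action on the projectivization forces the whole family $\{\widehat{Q}_\nu\}$ and demands that the decomposition into fixed-point components be shown compatible with the GIT structure (the ${\rm Proj}$ and ${\rm Spec}$ constructions of Section~\ref{coord}), not merely set-theoretically. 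Organizing the indexing by indivisible $\nu$ modulo translation equivalence, and verifying that distinct $\nu$ contribute disjoint components, is the book-keeping requiring the most care.
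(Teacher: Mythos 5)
Your strategy coincides with the route the paper itself indicates (the paper defers the actual proof to the localization reference, noting only that a ``refined'' localization is needed because $PG_d$ fails to act freely on the projectivized simple locus): classify a fixed point $[M]$ by the subgroup $H=\{s\in T_Q\,:\,Ms\cong M\}$, encode fixedness as $H\cdot{\bf C}^*=T_Q$, exclude $\dim H=|Q_1|$ for $d\neq e_i$ by the degree-raising grading argument, write $H^0=\ker(\nu)$ for an indivisible character $\nu$, and decompose into $H^0$-weight spaces to obtain a representation of $\widehat{Q}_\nu$ with dimension vector projecting to $d$, well defined up to translation. This forward direction is essentially sound, up to two small normalizations you should record: $\nu$ is unique only up to sign (since $\ker(\nu)=\ker(-\nu)$), and only effective classes actually occur -- the support of a simple representation is strongly connected, and summing $e_\alpha$ along a cycle in that support shows $\pm\nu\in{\bf N}Q_1$, which is why the paper can define $\widehat{Q}_\nu$ for $\nu\in{\bf N}Q_1$ only.

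The genuine gap is in your converse. For the backward map even to be defined, you must show that a simple representation of $\widehat{Q}_\nu$ has a simple pushdown to $Q$, i.e.\ that graded-simple implies simple. Your appeal to the rigidity argument of Remark \ref{automhnf} does not give this: rigidity shows that \emph{canonical} subobjects are automatically graded, and your sentence ``a proper graded subrepresentation on one side descends to a proper subrepresentation on the other'' is only the trivial direction (every graded subrepresentation is a subrepresentation). What is needed is the opposite implication: if the pushdown $M$ has a proper nonzero subrepresentation, then it has a proper nonzero \emph{graded} one. No rigidity argument can produce this, because a non-simple representation need not possess any canonical proper subobject: for $M\cong S^{\oplus m}$ isotypic semisimple, the subrepresentations of dimension vector $\dimv S$ form a projective space on which ${\rm Aut}(M)$ acts transitively, so none of them is distinguished. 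The missing idea -- and this is exactly the ``refined form of localization'' that the paper says is forced by the non-free action -- is to apply the torus fixed point theorem one level down, to subrepresentations rather than to moduli: the subrepresentations of $M$ of a fixed dimension vector $e$ form a projective variety, closed in $\prod_{i\in I}{\rm Gr}_{e_i}(M_i)$; conjugation by $\varphi(s)$ for $s\in H^0$ merely rescales the arrows of $M$, and rescaling does not change the lattice of subrepresentations, so $H^0$ acts algebraically on this variety; by Borel's fixed point theorem it contains an $H^0$-stable, i.e.\ graded, point whenever it is nonempty. With this lemma the set-theoretic bijection is complete, and the GIT bookkeeping described in your final paragraph is comparatively routine -- so the crux lies here, not where you placed it.
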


Since the Euler characteristic (in singular cohomology with compact support) is invariant under taking torus fixed points, and is additive with respect to disjoint unions, we can apply this theorem repeatedly. It turns out that, after finitely many localizations, the only resulting covering quivers contributing with non-zero Euler characteristic are cyclic quivers with dimension vector equal to $1$ at any vertex; they contribute with an Euler characteristic equal to $1$ since the projectivized space of simples is just a single point. These covering quivers are in fact parametrized by the cyclic equivalence classes of primitive cycles, yielding the claimed formula.\\[1ex]
Although the Euler characteristic is computed in the proof as a purely combinatorial number, it admits various algebraic interpretations, most notably in relation to the Hochschild homology of the path algebra $kQ$. Namely, the zero-th Hochschild homology $HH_0(kQ)$ equals $kQ/[kQ,kQ]$, the path algebra modulo additive commutators, an object which plays a central role in some approaches to noncommutative algebraic geometry (see e.g. \cite{Ginz}). Now $HH_0(kQ)$ inherits a ${\bf N}I$-grading from $kQ$, and the degree $d$-part has a basis consisting of cyclic equivalence classes of cycles of dimension vector $d$. A similar result holds for $HH_1(kQ)$ (all other Hochschild homology being zero).\\[1ex]
The Euler characteristic also admits a representation-theoretic interpretation: suppose a primitive cycle
$$\omega\, :\, i_0\stackrel{\alpha_1}{\rightarrow}i_1\stackrel{\alpha_2}{\rightarrow}\ldots\stackrel{\alpha_s}{\rightarrow}
i_s=i_0$$ of dimension vector $d$ in $Q$ is given. For each vertex $i\in Q_0$, define
$${\cal K}_i=\{k=0,\ldots,s-1\, :\, i_k=i\}.$$
Consider the $d_i$-dimensional vector space $M_i$ with basis elements $b_k$ for $k\in {\cal K}_i$.
For each arrow $(\alpha:i\rightarrow j)\in Q_1$ and each $k\in {\cal K}_i$, define
$$M(\omega)_\alpha(b_k)=\left\{\begin{array}{ccc}b_{k+1}&,&\alpha=\alpha_{k+1},\\ 0&,&\mbox{otherwise}.
\end{array}\right\}$$
This defines a representation $M(\omega)$. It is easy to see that $M(\omega)\simeq M(\omega')$ if and only if $\omega$ and $\omega'$ are cyclically equivalent, and that $M(\omega)$ is simple if and only if $\omega$ is primitive. One can hope that these representations enter as the $0$-dimensional strata of a conjectural decomposition of ${\bf P}M_d^{\rm simp}(Q)$ into tori, see the end of section \ref{counting}.\\[1ex]
It would be very interesting to have a description of all the individual Betti numbers in singular cohomology with compact support of ${\bf P}M_d^{\rm simp}(Q)$, and to see how they relate to e.g.~Hochschild homology. But such a description cannot be obtained in an obvious way via localization, since the projectivized space of simples is not projective. Again (compare the discussion in section \ref{ned}), a smooth compactification is missing.\\[1ex]
The knowledge so far about the counting polynomials $P_{M_d^{\rm simp}(Q)}(q)$ suggests to view them in analogy to the polynomials $i_d(q)$ counting isomorphism classes of absolutely indecomposable representations of a quiver without oriented cycles, which are the basis for the Kac conjectures \cite{Kac15}.  These state that $i_d(0)$ equals the multiplicity of the root $d$ in the root system associated to $Q$ (compare section \ref{kt}), and that $i_d(q)\in{\bf N}[q]$. The first conjecture is proved in the case of indivisible $d$ in \cite{CBVdB}; a proof for arbitrary $d$ is announced in \cite{Hausel}.\\[1ex]
Compare this to the results and conjectures above: we have a known number $\frac{P_{M_d^{\rm simp}(Q)}(q)}{q-1}|_{q=1}$ with combinatorial and algebraic interpretations, and we conjecture that $P_{M_d^{\rm simp}(Q)}(q)\in{\bf N}[q-1]$.\\[1ex]
This suggests some deep similarities between the counting of indecomposables and the counting of simples.\\[1ex]
It would also be very interesting to have a better understanding of the Euler characteristics $\chi_c(M_d^{\rm st}(Q))$ for non-trivial $\Theta$ and non-coprime $d$. In principle, the above formulas allow to determine this number by evaluation of the counting polynomials at $q=1$, but again, more explicit (combinatorial) formulas are desirable.

\section{The role of Hall algebras}\label{rolehall}

Several of the theorems above on Betti numbers and numbers of points over finite fields of quiver moduli can be proved using calculations in the Hall algebra of a quiver as introduced in \cite{RiHall}.

\subsection{Definition of Hall algebras}

Let $k$ be a finite field with $q$ elements. Let $H_q(Q)$ be a ${\bf Q}$-vector space with basis elements $[M]$ indexed by the isomorphism classes of $k$-representations of $Q$. Define a multiplication on $H_q(Q)$ by
$$[M]\cdot[N]:=\sum_{[X]}F_{M,N}^X\cdot[X],$$
where $F_{M,N}^X$ denotes the number of subrepresentations $U$ of $X$ which are isomorphic to $N$, with quotient $X/U$ isomorphic to $M$. This number is obviously finite. Also note that the sum in the definition of the multiplication is finite, since $F_{M,N}^X\not=0$ implies $\dimv X=\dimv M+\dimv N$, and there are only finitely many (isomorphism classes of) representations of fixed dimension vector.\\[1ex]
The above multiplication defines an associative ${\bf Q}$-algebra structure on $H_q(Q)$ with unit $1=[0]$. This algebra is naturally ${\bf N}I$-graded by the dimension vector.\\[1ex]
We will also consider a completed (with respect to the maximal ideal spanned by non-zero representations) version of the Hall algebra, thus $$H_q((Q))=\prod_{[M]}{\bf Q}[M],$$
with the same multiplication as before. This version has the advantage that certain ``generating series'', like e.g. $\sum_{[M]}[M]$, can be considered in it.

\subsection{Hall algebras and quantum groups}

The Hall algebra is usually considered in relation to quantum groups: let $Q$ be a quiver without oriented cycles, and define $C_q(Q)$ (the composition algebra) as the subalgebra of $H_q(Q)$ generated by the basis elements $[S_i]$ corresponding to the simple representations $S_i$ for $i\in I$. Let $C$ be the matrix representing the symmetric bilinear form $(,)$, to which we can associate a Kac-Moody algebra $\mathfrak{g}$, see \cite{Kac2}. Its enveloping algebra ${\cal U}(\mathfrak{g})$ admits a quantum deformation, the quantized enveloping algebra ${\cal U}_q(\mathfrak{g})$ \cite{Jantzen, Lusztig}. We will only consider its positive part ${\cal U}_q^+(\mathfrak{g})$ (induced from the triangular decomposition of the Lie algebra $\mathfrak{g}$), which can be defined as the ${\bf Q}(q)$-algebra with (Chevalley) generators $E_i$ for $i\in I$ and defining relations (the $q$-Serre relations)
$$\sum_{k+l=1-c_{ij}}\left[{{1-c_{ij}}\atop k}\right]E_i^kE_jE_i^l=0\mbox{ for all }i\not=j\mbox{ in }I.$$

There is a twisted version of the Hall algebra, which we denote by $H_q(Q)^{\rm tw}$; it is defined in the same way as $H_q(Q)$, but the multiplication is twisted by a power of $q$, namely
$$[M]\cdot[N]=q^{-\langle \dimv M,\dimv N\rangle}\sum_{[X]}F_{M,N}^X[X].$$
The following is proved in \cite{Green}:

\begin{theorem} The composition subalgebra $C_q(Q)^{\rm tw}$ of $H_q(Q)^{\rm tw}$ is isomorphic to the specialization of ${\cal U}_q^+(\mathfrak{g})$ at $q=|k|$.
\end{theorem}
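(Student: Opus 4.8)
The plan is to construct an explicit surjective algebra homomorphism $\Phi:{\cal U}_q^+(\mathfrak{g})\to C_q(Q)^{\rm tw}$ sending each Chevalley generator $E_i$ to the basis element $[S_i]$, and then to prove its injectivity by a bilinear form argument. For the forward direction I would verify that the elements $[S_i]$ of $H_q(Q)^{\rm tw}$ satisfy the $q$-Serre relations. Since $Q$ has no oriented cycles there are no loops, so $\langle i,i\rangle=1$ and the symmetric form gives $(i,i)=2$, while the off-diagonal Cartan entries $c_{ij}=(i,j)$ equal the negative of the number of edges between $i$ and $j$ in the underlying graph; thus $1-c_{ij}$ is exactly the exponent appearing in the relation. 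The divided powers $E_i^{(n)}$ correspond, up to the twisting power of $q$, to the classes $[S_i^n]$ of the semisimple representations, and computing the iterated products $[S_i^{(k)}]\cdot[S_j]\cdot[S_i^{(l)}]$ reduces to counting submodule filtrations built only from copies of $S_i$ and $S_j$. This is a rank-two computation in which the relevant Hall numbers are explicit, and the alternating sum weighted by the Gaussian binomials $\left[{{1-c_{ij}}\atop k}\right]$ telescopes to zero. This establishes that $\Phi$ is a well-defined algebra map, and surjectivity is immediate since by definition $C_q(Q)^{\rm tw}$ is generated by the $[S_i]$.

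The harder direction is injectivity of $\Phi$, and here I would follow Green's Hopf-theoretic strategy. The key is to equip the Hall algebra with a compatible coalgebra structure: define a comultiplication $\Delta([X])=\sum_{[M],[N]}q^{\langle\dimv M,\dimv N\rangle}\frac{|{\rm Aut}(M)|\,|{\rm Aut}(N)|}{|{\rm Aut}(X)|}F^X_{M,N}\,[M]\otimes[N]$ together with a symmetric bilinear form $([M],[N])_G=\delta_{[M],[N]}/|{\rm Aut}(M)|$. The central technical step, and the one I expect to be the main obstacle, is to prove that $\Delta$ is an algebra homomorphism for a suitably twisted multiplication on $H_q(Q)^{\rm tw}\otimes H_q(Q)^{\rm tw}$; equivalently, that $(\cdot,\cdot)_G$ is a Hopf pairing making multiplication adjoint to comultiplication. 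This compatibility is precisely Green's formula, a delicate counting identity comparing, for fixed objects, the number of pairs of compatible submodules with products of Hall numbers and automorphism orders. Its proof uses crucially that ${\rm mod}\,kQ$ is hereditary, so that all higher ${\rm Ext}$ groups vanish and Riedtmann's identity, which expresses $F^X_{M,N}$ in terms of the orders of ${\rm Ext}^1(M,N)$, ${\rm Hom}(M,N)$ and the automorphism groups of $M$, $N$ and $X$, becomes available; the homological bookkeeping in this identity is the genuinely intricate part.

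With this structure in place, I would transport the argument to the quantum group side. The algebra ${\cal U}_q^+(\mathfrak{g})$ carries its own comultiplication and a canonical symmetric bilinear form, and it is characterized (following Lusztig and Drinfeld) as the quotient of the free algebra on the $E_i$ by the radical of that form, the $q$-Serre elements generating exactly this radical. The map $\Phi$ intertwines the comultiplications, as one checks by matching $\Delta$ on the generators $[S_i]$, and hence intertwines the two bilinear forms up to the normalization built into their definitions. Since Green's form $(\cdot,\cdot)_G$ is manifestly nondegenerate — distinct isomorphism classes are orthogonal with nonzero self-pairing $1/|{\rm Aut}(M)|$ — the kernel of $\Phi$ must lie in the radical of the form on ${\cal U}_q^+(\mathfrak{g})$, which is zero. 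Therefore $\Phi$ is injective, and together with the surjectivity from the first paragraph this yields the claimed isomorphism. I would finally note that the Gaussian binomials and all structure constants are polynomial in $q$, so specializing the formal parameter to $q=|k|$ is harmless and the identification $C_q(Q)^{\rm tw}\cong{\cal U}_q^+(\mathfrak{g})|_{q=|k|}$ holds at the numerical level.
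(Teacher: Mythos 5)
Your sketch is exactly the Ringel--Green argument, which is also what the paper relies on: the paper itself gives no proof of this theorem but quotes it from Green's article \cite{Green}, where the two ingredients you describe (the rank-two Hall-number verification of the $q$-Serre relations going back to Ringel \cite{RiHall}, and injectivity via the comultiplication, Green's formula for hereditary categories, and the Hopf pairing) constitute the actual proof. So in substance your proposal coincides with the proof the paper points to, and its overall structure is sound.

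One step is stated more casually than it deserves: the closing claim that ``specializing the formal parameter to $q=|k|$ is harmless.'' The characterization you invoke --- that ${\cal U}_q^+(\mathfrak{g})$ is the free algebra on the $E_i$ modulo the radical of the canonical form, with the $q$-Serre elements generating that radical --- is Lusztig's theorem \cite{Lusztig} over the field ${\bf Q}(v)$ with $v$ an indeterminate. Your injectivity argument needs this statement after specializing to the concrete value $q=|k|$: you must know that the radical of the specialized form on the free algebra over ${\bf Q}$ is still generated by the Serre elements, equivalently that the form on the specialized ${\cal U}_q^+(\mathfrak{g})$ (defined via the integral form) remains nondegenerate. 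This is true because $\sqrt{q}>1$ is not a root of unity, so Lusztig's argument applies, but it is a genuine input rather than a formality, and it is precisely the point where the single-finite-field statement (as formulated in the theorem) requires more care than the generic ($v$ transcendental) version.
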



\subsection{Applications of Hall algebras to quiver moduli}\label{apphall}

The Hall algebra admits an evaluation homomorphism to a skew polynomial ring: as in section \ref{counting}, consider the ring ${\bf Q}_q[I]$ which has basis elements $t^d$ for $d\in{\bf N}I$ and multiplication $$t^d\cdot t^e=q^{-\langle d,e\rangle}t^{d+e}.$$
We have a natural skew formal power series version ${\bf Q}_q[[I]]$ of ${\bf Q}_q[I]$. Then we can define the evaluation morphism as in \cite{RCRP}:

\begin{lemma} The map sending $[M]$ to $\frac{1}{|{\rm Aut}(M)|}\cdot t^{\dimv M}$ induces ${\bf Q}$-algebra morphisms $\int:H_q(Q)\rightarrow {\bf Q}_q[I]$ and $\int:H_q((Q))\rightarrow {\bf Q}_q[[I]]$, respectively.
\end{lemma}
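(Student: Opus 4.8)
The plan is to note first that $\int$ is $\mathbf{Q}$-linear and unital by construction: it is defined on the basis $\{[M]\}$ and extended linearly, and since ${\rm Aut}(0)$ is trivial and $\dimv 0=0$ we have $\int([0])=t^0=1$. Thus the only substantive point is multiplicativity, $\int([M]\cdot[N])=\int([M])\cdot\int([N])$, where the left product is the (untwisted) Hall product and the right product is the twisted one of ${\bf Q}_q[I]$. Expanding both sides and using that $F_{M,N}^X=0$ unless $\dimv X=\dimv M+\dimv N$, this reduces to the single numerical identity
$$\sum_{[X]}\frac{F_{M,N}^X}{|{\rm Aut}(X)|}=\frac{q^{-\langle\dimv M,\dimv N\rangle}}{|{\rm Aut}(M)|\,|{\rm Aut}(N)|},$$
where the factor $q^{-\langle\dimv M,\dimv N\rangle}$ is exactly the twist $t^d\cdot t^e=q^{-\langle d,e\rangle}t^{d+e}$ in the target.

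The key step is to establish this identity by counting the short exact sequences $0\to N\to X\to M\to 0$ (with $M,N$ fixed and $X$ ranging over isomorphism-class representatives) in two ways. Fixing the middle term $X$, such a sequence is determined by a subrepresentation $U\subset X$ with $U\simeq N$ and $X/U\simeq M$, together with an isomorphism $N\to U$ and a surjection $X\to M$ of kernel $U$, giving $F_{M,N}^X\cdot|{\rm Aut}(N)|\cdot|{\rm Aut}(M)|$ sequences. On the other hand, ${\rm Aut}(X)$ acts on this set by $\theta\cdot(i,p)=(\theta i,p\theta^{-1})$; its orbits are precisely the classes in ${\rm Ext}^1(M,N)$ whose middle term is isomorphic to $X$, and the stabilizer of each sequence is the group $\{{\rm id}+igp:g\in{\rm Hom}(M,N)\}$ of order $|{\rm Hom}(M,N)|$. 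Orbit counting yields $F_{M,N}^X=|{\rm Ext}^1(M,N)_X|\cdot|{\rm Aut}(X)|/(|{\rm Hom}(M,N)|\,|{\rm Aut}(M)|\,|{\rm Aut}(N)|)$. Dividing by $|{\rm Aut}(X)|$, summing over $[X]$, and using $\sum_{[X]}|{\rm Ext}^1(M,N)_X|=|{\rm Ext}^1(M,N)|$ then reduces the identity to $|{\rm Ext}^1(M,N)|/|{\rm Hom}(M,N)|=q^{-\langle\dimv M,\dimv N\rangle}$, which is immediate from the hereditarity of ${\rm mod}\,kQ$: one has $\langle\dimv M,\dimv N\rangle=\dim{\rm Hom}(M,N)-\dim{\rm Ext}^1(M,N)$, and both spaces are counted over the field with $q$ elements.

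I expect the main obstacle to lie in the careful bookkeeping of the automorphism factors in this double count — in particular in verifying that the ${\rm Aut}(X)$-orbits are canonically indexed by ${\rm Ext}^1(M,N)_X$ and that the stabilizer is identified with ${\rm Hom}(M,N)$ via $g\mapsto{\rm id}+igp$; everything else is formal. Finally, to pass to the completed algebras $\int\colon H_q((Q))\to{\bf Q}_q[[I]]$, I would observe that over the finite field $k$ there are only finitely many isomorphism classes of representations of any fixed dimension vector $d$ (they are the finitely many $G_d(k)$-orbits on the finite set $R_d(Q)(k)$), so only finitely many basis elements $[M]$ contribute to the coefficient of a given $t^d$. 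Hence $\int$ is continuous for the topologies defined by the ${\bf N}I$-grading, extends uniquely to the completions, and, the multiplicativity identity being graded, remains an algebra homomorphism there.
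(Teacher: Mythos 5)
Your proposal is correct: the reduction to the single identity $\sum_{[X]}F_{M,N}^X/|{\rm Aut}(X)|=q^{-\langle\dimv M,\dimv N\rangle}/(|{\rm Aut}(M)|\,|{\rm Aut}(N)|)$, its proof by double-counting short exact sequences via the ${\rm Aut}(X)$-action with stabilizer $\{{\rm id}+igp\,:\,g\in{\rm Hom}(M,N)\}$ (Riedtmann's formula), and the final use of heredity to get $|{\rm Ext}^1(M,N)|/|{\rm Hom}(M,N)|=q^{-\langle\dimv M,\dimv N\rangle}$ are all sound, as is the passage to completions by ${\bf N}I$-graded finiteness. The paper itself gives no proof of this lemma (it defers to \cite{RCRP}), and your argument is precisely the standard one underlying that reference, so there is nothing to correct.
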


We will now consider some special elements in $H_q((Q))$ and their evaluations.\\[1ex]
Consider $$e_d=\sum_{\dimv M=d}[M],$$
the sum over all isomorphim classes of representations of dimension vector $d$. We have
$$\int e_d=\sum_{\dimv M=d}\frac{1}{{\rm Aut}(M)}t^d=\sum_{\dimv M=d}\frac{|G_dM|}{|G_d|}t^d=\frac{|R_d(Q)|}{|G_d|}t^d,$$
since the cardinality of the orbit $G_dM$ of $M$ in $R_d$ equals the order of the group $G_d$, divided by the order of the stabilizer, which by definition equals the automorphism group of $M$.\\[1ex]
Next, consider $$e_d^{\rm sst}=\sum_{\substack{{\dimv M=d}\\ {M\text{ semistable}}}}[M],$$
the sum over all isomorphism classes of semistable representations of dimension vector $d$. Similarly to the above, we have $$\int e_d^{\rm sst}=\frac{|R_d^{\rm sst}(Q)|}{|G_d|}.$$

By the results of section \ref{aas} on the Harder-Narasimhan filtration, every representation $M$ admits a unique Harder-Narasimhan filtration
$$0=M_0\subset M_1\subset\ldots\subset M_s=M.$$
Let $d_i$ be the dimension vector of the subquotient $M_i/M_{i-1}$, for $i=1\ldots s$. All the subquotients being semistable, and the HN-filtration being unique, we see that $[M]$ appears with coefficient equal to $1$ in the product $$e_{d_s}^{\rm sst}\cdot\ldots\cdot e_{d_2}^{\rm sst}\cdot e_{d_1}^{\rm sst}.$$
Existence of the HN filtration yields the following identity:

\begin{lemma} We have $$e_d=\sum_*e_{d_s}^{\rm sst}\cdot\ldots e_{d_1}^{\rm sst},$$
the sum running over all decompositions $d_1+\ldots+d_s=d$ of $d$ into non-zero dimension vectors such that $\mu(d_1)>\ldots>\mu(d_s)$.
\end{lemma}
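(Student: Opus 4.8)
The plan is to compute the coefficient of an arbitrary basis element $[X]$ with $\dimv X=d$ on both sides of the claimed identity and to check that they agree. On the left, $e_d=\sum_{\dimv M=d}[M]$ has coefficient $1$ in front of every such $[X]$, so it suffices to prove that the right-hand side also has coefficient $1$ in front of each $[X]$.

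First I would unravel the iterated Hall-algebra product. By the definition of the multiplication through the numbers $F_{M,N}^X$, together with associativity and an easy induction on $s$, the product $e_{d_s}^{\rm sst}\cdots e_{d_1}^{\rm sst}$ expands so that the coefficient of a fixed $[X]$ equals the number of filtrations
$$0=X_0\subset X_1\subset\ldots\subset X_s=X$$
whose successive subquotients $X_j/X_{j-1}$ are semistable of dimension vector $d_j$. Here one must respect the convention that, since in $[M]\cdot[N]$ the right factor $N$ plays the role of the subrepresentation, the rightmost factor $e_{d_1}^{\rm sst}$ produces the bottom step $X_1$ (of dimension vector $d_1$) and the leftmost factor $e_{d_s}^{\rm sst}$ produces the top subquotient; this bookkeeping of the "sub versus quotient" roles is the only delicate point. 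Because each $X_j/X_{j-1}$ is semistable of dimension vector $d_j$, its slope is exactly $\mu(d_j)$.

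Next I would impose the slope condition $\mu(d_1)>\ldots>\mu(d_s)$ that indexes the sum. Under this condition a filtration as above has semistable subquotients whose slopes strictly decrease from bottom to top, which is precisely the defining property of a Harder--Narasimhan filtration. Conversely, the HN filtration of $X$ furnishes exactly one such filtration, for the decomposition $d_*$ recorded by the dimension vectors of its subquotients, and this $d_*$ automatically satisfies the required slope inequalities. Hence summing the coefficient of $[X]$ over all admissible decompositions $d_*$ counts all Harder--Narasimhan filtrations of $X$ simultaneously.

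Finally, I would invoke the existence and uniqueness of the HN filtration proved earlier in the paper: every representation $X$ possesses exactly one HN filtration. Therefore the total coefficient of $[X]$ on the right-hand side is $1$, matching the left-hand side, and since $X$ was an arbitrary representation of dimension vector $d$, the identity follows. No convergence issues arise, since for fixed $d$ only finitely many decompositions $d_*$ occur and only finitely many isomorphism classes $[X]$ of dimension vector $d$ exist, so the entire computation takes place in one finite-dimensional graded piece of the completed Hall algebra. The main obstacle is thus confined to the first step — correctly identifying the iterated structure constants with the count of filtrations and aligning the order of the factors with the direction of the filtration; once that is in place, the conclusion is an immediate appeal to the uniqueness of the Harder--Narasimhan filtration.
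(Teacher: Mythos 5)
Your proof is correct and is essentially the paper's own argument: the paper likewise observes that the coefficient of $[M]$ in the iterated product counts filtrations with semistable subquotients of strictly decreasing slopes, and then invokes existence and uniqueness of the Harder--Narasimhan filtration to conclude the coefficient is $1$ for exactly one decomposition $d_*$ and $0$ for all others. Your extra care with the sub-versus-quotient convention in $F_{M,N}^X$ (rightmost factor gives the bottom step $X_1$) is exactly the bookkeeping the paper leaves implicit.
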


We can thus determine any $e_d^{\rm sst}$ inductively, the induction starting at dimension vectors for which every representation is semisimple (this is equivalent to $\Theta$ being constant on the support of $d$). Applying the evaluation map $\int$, this gives

\begin{corollary} We have
$$\frac{|R_d^{\rm sst}(Q)|}{|G_d|}=\frac{|R_d(Q)|}{|G_d|}-\sum_*q^{-\sum_{k<l}\langle d^l,d^k\rangle}\prod_{k=1}^s\frac{|R_{d^k}^{\rm sst}(Q)|}{|G_{d^k}|},$$
the sum running over all decompositions as above of length $s\geq 2$.
\end{corollary}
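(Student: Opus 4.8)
The plan is to apply the evaluation homomorphism $\int:H_q((Q))\to{\bf Q}_q[[I]]$ to both sides of the identity $e_d=\sum_*e_{d_s}^{\rm sst}\cdots e_{d_1}^{\rm sst}$ from the preceding lemma, and then to compare coefficients of $t^d$. Writing a decomposition in the sum as $d=d^1+\ldots+d^s$ with $\mu(d^1)>\ldots>\mu(d^s)$, the corresponding summand on the right is the product $e_{d^s}^{\rm sst}\cdots e_{d^1}^{\rm sst}$. Since $\int$ is an algebra morphism, it carries this to the product of the images $\int e_{d^k}^{\rm sst}=\frac{|R_{d^k}^{\rm sst}(Q)|}{|G_{d^k}|}t^{d^k}$ in the skew power series ring. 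The scalar coefficients lie in ${\bf Q}(q)$ and hence commute, so they collect into the single factor $\prod_{k=1}^s\frac{|R_{d^k}^{\rm sst}(Q)|}{|G_{d^k}|}$, leaving only the monomial product $t^{d^s}\cdots t^{d^1}$ to be evaluated.

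Next I would compute this monomial product using the twisted multiplication $t^a\cdot t^b=q^{-\langle a,b\rangle}t^{a+b}$. Multiplying the factors from left to right, each factor $t^{d^l}$ standing to the left of a factor $t^{d^k}$ contributes a twist $q^{-\langle d^l,d^k\rangle}$; since the product runs from $d^s$ down to $d^1$, the factor $t^{d^l}$ precedes $t^{d^k}$ precisely when $l>k$, so the accumulated exponent is $-\sum_{l>k}\langle d^l,d^k\rangle=-\sum_{k<l}\langle d^l,d^k\rangle$. Thus the product equals $q^{-\sum_{k<l}\langle d^l,d^k\rangle}t^d$, and comparing the coefficients of $t^d$ on both sides, using $\int e_d=\frac{|R_d(Q)|}{|G_d|}t^d$ on the left, yields
$$\frac{|R_d(Q)|}{|G_d|}=\sum_*q^{-\sum_{k<l}\langle d^l,d^k\rangle}\prod_{k=1}^s\frac{|R_{d^k}^{\rm sst}(Q)|}{|G_{d^k}|},$$
the sum running over all decompositions as above, including the length-one decomposition.

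Finally I would isolate the trivial decomposition $s=1$, $d^1=d$: its twist exponent is empty and its single factor is $\frac{|R_d^{\rm sst}(Q)|}{|G_d|}$, so that this term equals $\int e_d^{\rm sst}$ as expected. Moving it to the left-hand side gives precisely the claimed formula, with the remaining sum restricted to decompositions of length $s\geq 2$. The only genuine bookkeeping is in the twist exponent, where the order reversal between the product $e_{d^s}^{\rm sst}\cdots e_{d^1}^{\rm sst}$ and the slope ordering $\mu(d^1)>\ldots>\mu(d^s)$ must be tracked carefully to reproduce the convention $\sum_{k<l}\langle d^l,d^k\rangle$; beyond this point the argument is a direct consequence of the multiplicativity of $\int$ and the definition of the twisted product.
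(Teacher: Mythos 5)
Your proof is correct and follows exactly the paper's route: the paper obtains this corollary precisely by applying the evaluation morphism $\int$ to the Hall-algebra identity $e_d=\sum_* e_{d_s}^{\rm sst}\cdots e_{d_1}^{\rm sst}$ and isolating the trivial decomposition $s=1$. Your careful bookkeeping of the twist exponent (the factor $t^{d^l}$ left of $t^{d^k}$ exactly when $l>k$, giving $q^{-\sum_{k<l}\langle d^l,d^k\rangle}$) supplies the detail the paper leaves implicit, and it is accurate.
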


General arithmetic considerations prove that, in case $d$ is $\Theta$-coprime, we have $$|M_d^{\rm sst}(Q)|=(q-1)\frac{|R_d^{\rm sst}(Q)|}{|G_d|}$$
(essentially since $M_d^{\rm sst}(Q)$ is the quotient of $R_d^{\rm sst}(Q)$ by the group $PG_d$, which acts freely in the coprime case). This gives an explicit (recursive) formula for the number of rational points of $M_d^{\rm sst}(Q)$. The result Theorem \ref{betti} is obtained from this by applying Deligne's solution of the Weil conjectures (see section \ref{cdrp}) to pass from points over finite fields to Betti numbers, and by giving an explicit resolution of the recursion.\\[1ex] 
As a second example of the use of Hall algebras, used in \cite{RCRP}, consider the element $e_\mu=\sum_{d\in{\bf N}I_\mu}e_d^{\rm sst}\in H_q((Q))$.

\begin{lemma} We have $$e_\mu^{-1}=\sum_{[M]}\gamma_M[M],$$
where $\gamma_M$ is zero if $M$ is not polystable, and $$\gamma_M=\prod_{[S]}(-1)^{m_S}|{\rm End}(S)|^{{m_S}\choose 2}$$ if $M=\bigoplus_SS^{m_S}$ as a direct sum of stable representations of slope $\mu$.
\end{lemma}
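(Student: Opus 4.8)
The plan is to read the identity as an inversion computation inside the Hall algebra of the abelian category $\mathcal{A}={\rm mod}_\mu kQ$ of semistable representations of slope $\mu$, and to verify it by reducing to polystable (i.e. semisimple) objects. First I would observe, using Lemma \ref{basicsemistable}, that $\mathcal{A}$ is an abelian subcategory closed under extensions; hence $F_{M,N}^X\neq 0$ with $M,N\in\mathcal{A}$ forces $X\in\mathcal{A}$, so the ${\bf Q}$-span of the $[M]$ with $M\in\mathcal{A}$ is a (completed) subalgebra of $H_q((Q))$ containing $e_\mu=\sum_{d\in{\bf N}I_\mu}e_d^{\rm sst}=\sum_{M\in\mathcal{A}}[M]$. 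Since $e_\mu=1+(\text{terms of positive degree})$, it is invertible in the completion, so it suffices to exhibit a right inverse.

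Writing $f=\sum_{[M]}\gamma_M[M]$ for the proposed inverse, the coefficient of $[X]$ in $e_\mu\cdot f$ is, by the definition of the Hall product, $\sum_{U\subseteq X}\gamma_U$ (the sum over all subrepresentations $U$ of $X$, since $F_{M,N}^X$ counts subobjects $U\simeq N$ with $X/U\simeq M$). Thus $e_\mu\cdot f=1$ is equivalent to the combinatorial identity $\sum_{U\subseteq X}\gamma_U=\delta_{X,0}$ for every $X\in\mathcal{A}$. Now $\gamma_U=0$ unless $U$ is polystable, i.e. a semisimple object of $\mathcal{A}$; and the semisimple subobjects of $X$ are exactly the subobjects of its socle ${\rm soc}(X)$. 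Hence $\sum_{U\subseteq X}\gamma_U=\sum_{U\subseteq{\rm soc}(X)}\gamma_U$, and since a nonzero finite-length object has nonzero socle, I may assume $X$ itself semisimple and nonzero.

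For semisimple $X$ I would invoke part (4) of Lemma \ref{basicsemistable}: there are no nonzero morphisms between non-isomorphic stables of slope $\mu$, so $X$ splits canonically into isotypic components $X=\bigoplus_S S^{n_S}$, every subobject respects this splitting ($U=\bigoplus_S(U\cap S^{n_S})$, by Schur's lemma), and $\gamma$ is multiplicative across the components. This factorizes the sum as $\sum_{U\subseteq X}\gamma_U=\prod_S\left(\sum_{V\subseteq S^{n_S}}\gamma_V\right)$, reducing everything to a single simple $S$. Here $D={\rm End}(S)$ is a finite division ring, hence a field ${\bf F}_{q_S}$ with $q_S=|{\rm End}(S)|$ by Wedderburn's theorem, and subobjects of $S^{n}$ isomorphic to $S^{m}$ correspond bijectively to $m$-dimensional $D$-subspaces of $D^{n}$, of which there are $\left[{n\atop m}\right]_{q_S}$. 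Therefore $\sum_{V\subseteq S^{n}}\gamma_V=\sum_{m=0}^{n}\left[{n\atop m}\right]_{q_S}(-1)^m q_S^{{m}\choose 2}$, which by the $q$-binomial theorem equals $\prod_{i=0}^{n-1}(1-q_S^{\,i})$; this vanishes as soon as $n\geq 1$ (the factor $i=0$ is zero) and equals $1$ for $n=0$. Hence the product over $S$ vanishes whenever $X\neq 0$, establishing the identity and thus $f=e_\mu^{-1}$.

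The main obstacle is the reduction step for semisimple $X$: one must justify that the sum over all subobjects factorizes over the isotypic components and that $\gamma$ is correspondingly multiplicative. This is precisely where the representation-theoretic input enters — the vanishing of homomorphisms between non-isomorphic stables of the same slope — and it is what forces the inverse to be supported on polystables, with each isotypic block contributing an independent factor. The remaining ingredient, the evaluation of the alternating Gaussian-binomial sum, is the classical $q$-binomial identity and is routine once the factorization is in place.
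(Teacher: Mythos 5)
Your proof is correct. One point of order first: this survey states the lemma \emph{without} proof (it is imported from \cite{RCRP}), so there is no in-paper argument to compare against; the natural comparison is with the standard argument from that reference. What you verify -- that the coefficients $\gamma$ satisfy $\sum_{U\subseteq X}\gamma_U=\delta_{X,0}$ for every $X\in{\rm mod}_\mu kQ$ -- is precisely the defining recursion of the M\"obius function of the lattice of subobjects of $X$ in the finite-length abelian category ${\rm mod}_\mu kQ$. The usual route reaches the same point differently: expanding $e_\mu^{-1}=\sum_{k\geq 0}(-1)^k(e_\mu-1)^k$, the coefficient of $[X]$ becomes an alternating sum of chain counts in that lattice, which Ph.~Hall's formula identifies with the M\"obius function $\mu(0,X)$; one then quotes the classical evaluations that this M\"obius function vanishes unless $X$ is semisimple (i.e.\ polystable) and equals $\prod_S(-1)^{m_S}|{\rm End}(S)|^{{m_S}\choose 2}$ on $\bigoplus_S S^{m_S}$. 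Your argument replaces these lattice-theoretic citations by a self-contained verification -- socle reduction, isotypic factorization via Lemma \ref{basicsemistable}(4), Wedderburn to identify ${\rm End}(S)$ with a finite field, and the $q$-binomial identity -- which is more elementary and makes visible exactly where the representation theory (vanishing of morphisms between non-isomorphic stables of the same slope) enters. Two small steps deserve a sentence in a final write-up, though neither is a gap: (i) the $[X]$-coefficient of $e_\mu\cdot f$ is a priori the sum of $\gamma_U$ over only those $U$ with $X/U$ semistable of slope $\mu$; this equals your unrestricted sum because whenever $\gamma_U\neq 0$ and $X\in{\rm mod}_\mu kQ$, the quotient $X/U$ lies in ${\rm mod}_\mu kQ$ automatically (closure under cokernels, Lemma \ref{basicsemistable}(2), or a slope computation via Lemma \ref{l41}); (ii) the bijection between subobjects of $S^n$ in ${\rm mod}_\mu kQ$ and $D$-subspaces of ${\rm Hom}(S,S^n)\simeq D^n$, given by $U\mapsto{\rm Hom}(S,U)$ with inverse $W\mapsto\sum_{\phi\in W}{\rm im}\,\phi$, is the step producing the Gaussian binomial coefficient and should be stated explicitly.
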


What is surprising about this lemma is that the inverse is a sum over polystable representations only. Applying the evaluation map, we get

\begin{corollary}\label{basisfornumb} We have $$\sum_{d\in{\bf N}I_\mu}\frac{|R_d^{\rm sst}(Q)|}{|G_d|}=\sum_{m:\mathcal{S}\rightarrow{\bf N}}\prod_{[S]}(-1)^{m_S}\frac{{|{\rm End}(S)|}^{{m_S}\choose 2}}{|{\rm  Aut}(S^{m_S})|}t^{\sum_Sm_S\dimv S},$$
where the sum runs over all maps (with finite support) from the set $\mathcal{S}$ of isomorphism classes of stable representations of slope $\mu$ to the set of nonnegative integers.
\end{corollary}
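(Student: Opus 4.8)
The plan is to obtain the corollary by applying the evaluation homomorphism $\int$ to the identity of the preceding lemma, $e_\mu^{-1}=\sum_{[M]}\gamma_M[M]$, and reading off both sides. First I would observe that $e_\mu=\sum_{d\in{\bf N}I_\mu}e_d^{\rm sst}$ has constant term $[0]=1$ (the summand $d=0$ contributes the unit), so $e_\mu$ is invertible in the completed Hall algebra $H_q((Q))$; since $\int$ is a continuous ${\bf Q}$-algebra morphism into ${\bf Q}_q[[I]]$, it respects inversion, giving $\int(e_\mu^{-1})=(\int e_\mu)^{-1}$. Using the identity $\int e_d^{\rm sst}=\frac{|R_d^{\rm sst}(Q)|}{|G_d|}t^d$ established above, the inverted factor is $\int e_\mu=\sum_{d\in{\bf N}I_\mu}\frac{|R_d^{\rm sst}(Q)|}{|G_d|}t^d$, so that the right-hand side of the corollary is exactly the inverse of this generating series.

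Next I would evaluate the right-hand side of the preceding lemma directly. Applying $\int$ term by term gives $\int(e_\mu^{-1})=\sum_{[M]}\gamma_M\frac{1}{|{\rm Aut}(M)|}t^{\dimv M}$, and since $\gamma_M=0$ unless $M$ is polystable, only polystable $M$ survive. The crucial structural input is Lemma \ref{basicsemistable}(4): the stable representations of slope $\mu$ are the simple objects of the abelian category ${\rm mod}_\mu kQ$, with no nonzero morphisms between non-isomorphic ones. Consequently every polystable representation of slope $\mu$ is uniquely of the form $M=\bigoplus_{[S]}S^{m_S}$ for a finitely supported function $m\colon\mathcal{S}\to{\bf N}$, so the sum over isomorphism classes of polystables becomes precisely a sum over such functions $m$.

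It then remains to substitute the explicit data. For $M=\bigoplus_{[S]}S^{m_S}$ one has $\dimv M=\sum_S m_S\dimv S$ and, from the preceding lemma, $\gamma_M=\prod_{[S]}(-1)^{m_S}|{\rm End}(S)|^{{m_S}\choose 2}$. The one point requiring care is the automorphism group: again because ${\rm Hom}(S,S')=0$ for non-isomorphic stables of the same slope, every automorphism preserves the isotypic decomposition, whence ${\rm Aut}(M)=\prod_{[S]}{\rm Aut}(S^{m_S})$ and $|{\rm Aut}(M)|=\prod_{[S]}|{\rm Aut}(S^{m_S})|$. Feeding these three identities into $\sum_{[M]}\gamma_M\frac{1}{|{\rm Aut}(M)|}t^{\dimv M}$ reproduces the claimed expression $\sum_{m\colon\mathcal{S}\to{\bf N}}\prod_{[S]}(-1)^{m_S}\frac{|{\rm End}(S)|^{{m_S}\choose 2}}{|{\rm Aut}(S^{m_S})|}t^{\sum_S m_S\dimv S}$.

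I expect no genuine obstacle, since the representation-theoretic content is already packaged in the preceding lemma and the argument is purely formal once that is granted. The two steps deserving explicit mention are that $\int$ commutes with inversion (which needs invertibility of $e_\mu$ together with continuity of $\int$ on the completed algebra) and the factorization $|{\rm Aut}(M)|=\prod_{[S]}|{\rm Aut}(S^{m_S})|$; both rest on the vanishing of homomorphisms between distinct stables of equal slope recorded in Lemma \ref{basicsemistable}.
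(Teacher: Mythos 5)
Your proposal is correct and follows essentially the same route as the paper, which derives the corollary in one line by applying the evaluation map $\int$ to the preceding lemma $e_\mu^{-1}=\sum_{[M]}\gamma_M[M]$; you supply exactly the details that make this work, namely that $\int$ carries inverses to inverses (so $\int(e_\mu^{-1})=(\int e_\mu)^{-1}$), that the polystables of slope $\mu$ are parametrized by finitely supported $m\colon\mathcal{S}\to{\bf N}$, and that ${\rm Hom}$-vanishing between non-isomorphic stables of equal slope gives $|{\rm Aut}(\bigoplus_S S^{m_S})|=\prod_{[S]}|{\rm Aut}(S^{m_S})|$. You also correctly interpret the statement (whose printed left-hand side omits the factor $t^d$ and the inversion) as asserting that the right-hand side is the inverse of the generating series $\sum_{d\in{\bf N}I_\mu}\frac{|R_d^{\rm sst}(Q)|}{|G_d|}t^d$, which is the identity the paper actually uses later.
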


This identity forms the basis for the proof of Theorems \ref{simplecounting} and \ref{formulanumbstab}: roughly, we can single out the summands corresponding to stable representations (i.e.~the function $m$ has precisely one non-zero value, equal to $1$) to get a recursive formula for the number of isomorphism classes of stables in terms of the rational functions $P_d(q)$. Some arithmetic considerations allow passage to absolutely stable representations. Using Lemma \ref{key}, the theorem follows.\\[1ex]
As a third application, we prove the cohomology formula Theorem \ref{cohomsm} for the smooth models of section \ref{sm} using Hall algebras. For some $n\in{\bf N}I$, consider the finitely generated projective representation $P^{(n)}=\bigoplus_{i\in I}P_i^{n_i}$. Besides the element $e_\mu\in H_q((Q))$, consider the following elements:
$$h_{\mu,n}=\sum_{[M]\in{\rm mod}_\mu kQ}|{\rm Hom}(P^{(n)},M)|[M],$$
$$e_{\mu,n}=\sum_{[M]\in{\rm mod}_\mu kQ}|{\rm Hom}^0(P^{(n)},M)|[M],$$
where ${\rm Hom}^0(Z,M)$ denotes the set of all maps $f:Z\rightarrow M$ with the following property: if $f(Z)\subset U\subset M$ for $U\in{\rm mod}_\mu(Q)$, then $U=M$. Then the following identity holds, see \cite{ERSM}:
\begin{lemma}\label{app3} We have $e_\mu h_{\mu,n}=e_{\mu,n}$ in $H_q((Q))$.
\end{lemma}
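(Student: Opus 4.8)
The plan is to prove Lemma~\ref{app3} by comparing, for each isomorphism class $[X]$, the coefficients of $[X]$ on the two sides, working in the completed Hall algebra $H_q((Q))$. First I would observe that $e_{\mu,n}$, $h_{\mu,n}$ and the product $e_\mu h_{\mu,n}$ are all supported on ${\rm mod}_\mu kQ$: the product is a sum of terms $[A]\cdot[B]$ with $A,B$ semistable of slope $\mu$, and since ${\rm mod}_\mu kQ$ is closed under extensions (Lemma~\ref{basicsemistable}), every $[X]$ occurring in $[A]\cdot[B]$ is again semistable of slope $\mu$. Thus it suffices to fix a semistable $X$ of slope $\mu$ and match coefficients there. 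Expanding via the Hall multiplication, the coefficient of $[X]$ in $e_\mu h_{\mu,n}$ is an incidence sum over short exact sequences $0\to U\to X\to X/U\to 0$ with $U,X/U\in{\rm mod}_\mu kQ$, weighted by the $\mathrm{Hom}$-counts attached to $h_{\mu,n}$. Here I would use that, for a semistable $X$ of slope $\mu$, a subrepresentation $U$ and its quotient both lie in ${\rm mod}_\mu kQ$ precisely when $\mu(U)=\mu$: by Lemma~\ref{l41} one has $\mu(U)\le\mu\le\mu(X/U)$ for any $U\subseteq X$, and $\mu(U)=\mu$ forces $\mu(X/U)=\mu$ and, by Lemma~\ref{basicsemistable}(1), semistability of both $U$ and $X/U$. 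So the sum runs exactly over the subobjects of $X$ in the abelian category ${\rm mod}_\mu kQ$.

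The representation-theoretic heart is the relation between $\mathrm{Hom}(P^{(n)},-)$ and $\mathrm{Hom}^0(P^{(n)},-)$. To each $f\colon P^{(n)}\to X$ I would attach its saturation $U_f$, the smallest subobject of $X$ in ${\rm mod}_\mu kQ$ containing $f(P^{(n)})$. Such a smallest subobject exists because the slope-$\mu$ subobjects of $X$ are closed under intersection: for $U,V\subseteq X$ of slope $\mu$ the sequence $0\to U\cap V\to U\oplus V\to U+V\to 0$, together with $\mu(U+V)\le\mu$ (as $U+V\subseteq X$) and Lemma~\ref{l41}, forces $\mu(U\cap V)=\mu$. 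By construction $f$ factors through $U_f$ and the induced map $P^{(n)}\to U_f$ lies in $\mathrm{Hom}^0(P^{(n)},U_f)$, while conversely every pair $(U,g)$ with $g\in\mathrm{Hom}^0(P^{(n)},U)$ arises from a unique $f$ with $U_f=U$. This yields a bijection partitioning $\mathrm{Hom}(P^{(n)},X)$ according to the saturation of the image, hence the numerical identity relating the $\mathrm{Hom}$- and $\mathrm{Hom}^0$-counts over the lattice of slope-$\mu$ subobjects of $X$. In Hall-algebra terms this identity is the zeta-transform $h_{\mu,n}=e_\mu\,e_{\mu,n}$ over the poset of slope-$\mu$ subobjects, with $e_\mu$ the incidence avatar summing over all semistables; the desired relation is its inversion.

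The step I expect to require the most care is pinning down the orientation of the Hall product so that the worded form $e_\mu h_{\mu,n}=e_{\mu,n}$ — rather than the variant with $e_\mu^{-1}$ — is exactly what drops out. Concretely, one must reconcile the role of $e_\mu$ as summation-over-subobjects (the zeta transform appearing in the coefficient computation of the first paragraph) with its role in isolating \emph{generating} maps, and this is where the invertibility of $e_\mu$ in $H_q((Q))$ enters, its inverse being the signed polystable series computed in the lemma preceding Lemma~\ref{app3}. The auxiliary facts — existence of the saturation $U_f$ and the automatic semistability of slope-$\mu$ subobjects and their quotients — are the technical core and follow cleanly from Lemmas~\ref{l41} and~\ref{basicsemistable}; the projectivity of $P^{(n)}$, which makes $\mathrm{Hom}(P^{(n)},-)$ exact and gives $|\mathrm{Hom}(P^{(n)},M)|=\prod_{i\in I}|M_i|^{n_i}$, is convenient for computing the weights but is not needed for the counting bijection itself, which rests only on the lattice structure of subobjects in ${\rm mod}_\mu kQ$.
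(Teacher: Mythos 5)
Your argument is mathematically sound and is precisely the intended one: the survey gives no proof of Lemma \ref{app3}, deferring to \cite{ERSM}, and the argument there is exactly your combination of coefficientwise comparison in $H_q((Q))$ with the partition of ${\rm Hom}(P^{(n)},X)$ according to the saturation $U_f$, the minimal slope-$\mu$ subobject containing the image. Your auxiliary steps are all correct: closure of the slope-$\mu$ subobjects of a semistable $X$ under intersection via Lemma \ref{l41}, automatic semistability of $U$ and $X/U$ when $\mu(U)=\mu$ by Lemma \ref{basicsemistable}, support of the product on ${\rm mod}_\mu kQ$ by closure under extensions, and the bijection between maps $f$ with $U_f=U$ and pairs $(U,g)$ with $g\in{\rm Hom}^0(P^{(n)},U)$.

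The one step you deferred -- ``pinning down the orientation of the Hall product so that the worded form $e_\mu h_{\mu,n}=e_{\mu,n}$ drops out'' -- cannot be carried out, and you should not look for a reconciliation: the statement as printed is a typo. With the survey's convention that $F_{M,N}^X$ counts subrepresentations isomorphic to $N$ (the right-hand factor) with quotient isomorphic to $M$, your saturation partition says exactly $e_\mu\cdot e_{\mu,n}=h_{\mu,n}$, equivalently $e_{\mu,n}=e_\mu^{-1}h_{\mu,n}$; invertibility of $e_\mu$ is automatic in the completed algebra since its degree-zero term is $1=[0]$, so the signed polystable formula for $e_\mu^{-1}$ from the preceding lemma is not needed here. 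The printed form is in fact false: for $X$ stable of slope $\mu$ and dimension vector $d$, the only subobjects $U$ with $U,X/U\in{\rm mod}_\mu kQ$ are $0$ and $X$, so the coefficient of $[X]$ in $e_\mu h_{\mu,n}$ is $1+q^{n\cdot d}$, whereas its coefficient in $e_{\mu,n}$ is $|{\rm Hom}^0(P^{(n)},X)|=q^{n\cdot d}-1$. The corrected identity is also the only one consistent with the survey's own use of the lemma: applying the evaluation map $\int$ to $e_\mu e_{\mu,n}=h_{\mu,n}$ gives $\sum_d\int e_{\mu,n}=(\sum_d P_d(q)t^d)^{-1}\cdot(\sum_d q^{n\cdot d}P_d(q)t^d)$, which is the formula of Theorem \ref{cohomsm}, while the printed version would produce a product without the inverse, contradicting that theorem. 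So keep your proof exactly as is, and record its conclusion as $e_\mu e_{\mu,n}=h_{\mu,n}$.
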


Application of the evaluation map immediately yields the formula of Theorem \ref{cohomsm}.

\section{Smooth models and Hilbert schemes}\label{sm}

In this final section, we will consider a variant of quiver moduli (in some respect analogous to the quiver varieties of \cite{Nak2}) which enjoys several of the desirable properties which the original moduli lack in general: they admit universal bundles, they are always smooth and projective (over the moduli of semisimple representations), their Betti numbers can be calculated and the Poincare polynomial equals the counting polynomial for points over finite fields. In special cases we can even construct a cell decomposition and thus give a normal form. The drawback is that these moduli do not parametrize just isomorphism classes of representations, but equivalence classes of representations together with an additional structure. The material of this section is contained in \cite{ERSM}.

\subsection{Definition of smooth models}\label{dsm}

Let $Q,d,\Theta$ be a quiver, a dimension vector and a stability as before. Choose another dimension vector $d\in{\bf N}I$ and consider extended quiver data $\widetilde{Q},\widetilde{d},\widetilde{\Theta}$ defined as follows: the vertices of $\widetilde{Q}$ are given by $\widetilde{Q}_0=Q_0\cup\{\infty\}$, and the arrows of $\widetilde{Q}$ are those of $Q$, together with $n_i$ arrows from the additional vertex $\infty$ to any $i\in I$. We extend $d$ to a dimension vector $\widetilde{d}$ for $\widetilde{Q}$ by defining $\widetilde{d}_\infty=1$, and we define a stability $\widetilde{\Theta}$ for $\widetilde{Q}$ by setting $\widetilde{\Theta}_i=N\Theta_i$ for $i\in I$ and $\widetilde{\Theta}_\infty=\Theta(d)+1$, for some sufficiently large integer $N\in{\bf N}$.\\[1ex]
It is now easy to see that (because of the additional entry $1$ in $\widetilde{d}$) the dimension vector $\widetilde{d}$ is always $\widetilde{\Theta}$-coprime, so that the resulting moduli space of (semi-)stable representation $M_{\widetilde{d}}(\widetilde{Q})$ is smooth. We denote this moduli space by $M_{d,n}^\Theta(Q)$.

\begin{theorem} The moduli space $M_{d,n}^{\Theta}(Q)$ is smooth, and projective over the moduli of semisimple $M_d^{\rm ssimp}(Q)$. It parametrizes pairs consisting of a semistable representation $M$ of $Q$ of dimension vector $d$, together with a map $f:P^{(n)}\rightarrow M$ from the finitely generated projective representation $$P^{(n)}=\bigoplus_{i\in I}P_i^{n_i}$$ to $M$ with the property:
\begin{center} if $U$ is a proper subrepresentation of $M$ containing ${\rm Im}(f)$, then $\mu(U)<\mu(M)$.\end{center}
These pairs are parametrized up to isomorphisms respecting the additional data, i.e.~$(M,f)$ and $(M',f')$ are equivalent if there exists an isomorphism $\varphi:M\rightarrow M'$ such that $f'=\varphi\circ f$.
\end{theorem}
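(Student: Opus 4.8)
The plan is to reduce the assertion entirely to the already-established theory for the extended quiver $\widetilde{Q}$. By construction $\widetilde{d}$ is $\widetilde{\Theta}$-coprime (because of the entry $\widetilde{d}_\infty=1$), so by the results summarized in section \ref{bgp} the moduli space $M_{\widetilde{d}}(\widetilde{Q})=M_{\widetilde{d}}^{\rm \widetilde{\Theta}-sst}(\widetilde{Q})=M_{\widetilde{d}}^{\rm \widetilde{\Theta}-st}(\widetilde{Q})$ is automatically smooth, and projective over $M_{\widetilde{d}}^{\rm ssimp}(\widetilde{Q})$. The first step is therefore to observe that smoothness and projectivity of $M_{d,n}^{\Theta}(Q)$ are inherited directly from these facts; the only genuine content is the \emph{modular interpretation}, i.e.\ identifying the points of $M_{\widetilde{d}}^{\rm \widetilde{\Theta}-st}(\widetilde{Q})$ with the claimed equivalence classes of pairs $(M,f)$. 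By Corollary \ref{corstable}, $M_{\widetilde{d}}^{\rm \widetilde{\Theta}-st}(\widetilde{Q})$ parametrizes isomorphism classes of $\widetilde{\Theta}$-stable representations $\widetilde{M}$ of $\widetilde{Q}$ of dimension vector $\widetilde{d}$, so everything comes down to a dictionary between such $\widetilde{M}$ and the pairs $(M,f)$.

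**The dictionary between $\widetilde{Q}$-representations and pairs.**
First I would unwind what a representation $\widetilde{M}$ of $\widetilde{Q}$ of dimension vector $\widetilde{d}$ is. Since $\widetilde{d}_\infty=1$ and the only arrows touching $\infty$ are the $n_i$ arrows $\infty\to i$, such a representation consists of a representation $M$ of the original quiver $Q$ of dimension vector $d$ (the restriction to the vertices $I$), a one-dimensional space at $\infty$, and the data of the $n_i$ maps $k=\widetilde{M}_\infty\to M_i$ for each $i$. Choosing the generator $1\in\widetilde{M}_\infty=k$, these maps are precisely the same data as a morphism of $Q$-representations $f:P^{(n)}\to M$, using the universal property $\Hom(P_i,M)\cong M_i$ of the projective indecomposable $P_i$ (so that $\Hom(P^{(n)},M)=\bigoplus_i M_i^{n_i}$). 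Thus the underlying set is correctly matched; the next step is to match the isomorphisms. An isomorphism $\widetilde{M}\to\widetilde{M}'$ of $\widetilde{Q}$-representations that is the identity on the fixed one-dimensional space at $\infty$ is exactly an isomorphism $\varphi:M\to M'$ with $f'=\varphi\circ f$; allowing rescaling of the generator at $\infty$ rescales $f$ by a nonzero scalar, which one checks is absorbed by the $\widetilde{\Theta}$-stability normalization (the scalars act through the already-quotiented torus), so the equivalence relation matches on the nose.

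**Matching stability with the subrepresentation condition.**
The crux — and the step I expect to be the main obstacle — is proving that $\widetilde{\Theta}$-stability of $\widetilde{M}$ is equivalent to the stated condition: $M$ is semistable of slope $\mu(M)$ and every proper subrepresentation $U\subseteq M$ containing $\mathrm{Im}(f)$ has $\mu(U)<\mu(M)$. Here I would use King's characterization (the last theorem of section \ref{gitquiver}): $\widetilde{M}$ is $\widetilde{\Theta}$-stable iff $\mu_{\widetilde{\Theta}}(\widetilde{U})<\mu_{\widetilde{\Theta}}(\widetilde{M})$ for all proper nonzero subrepresentations $\widetilde{U}$. Every subrepresentation $\widetilde{U}\subseteq\widetilde{M}$ falls into two types according to whether $\widetilde{U}_\infty=0$ or $\widetilde{U}_\infty=k$. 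When $\widetilde{U}_\infty=0$, $\widetilde{U}$ is just a subrepresentation $U\subseteq M$ of $Q$, and the $\widetilde{\Theta}$-slope inequality should, after using $\widetilde{\Theta}_i=N\Theta_i$ with $N$ large, force $\mu(U)\le\mu(M)$ — i.e.\ semistability of $M$. When $\widetilde{U}_\infty=k$, the subrepresentation necessarily contains $\mathrm{Im}(f)$, and its $\widetilde{\Theta}$-slope condition translates into the stated strict inequality $\mu(U)<\mu(M)$ for $U=\widetilde{U}\cap M$. The delicate point is the role of $N$: one must verify that choosing $N$ sufficiently large (together with the precise value $\widetilde{\Theta}_\infty=\Theta(d)+1$) makes the dominant term in each $\widetilde{\Theta}$-slope comparison the contribution from the $Q$-part, so that the two families of inequalities decouple cleanly into ``$M$ semistable'' and ``the framing condition on subrepresentations containing $\mathrm{Im}(f)$.'' I would carry out this estimate carefully, separating the finitely many candidate dimension vectors $e\le d$ and bounding the cross-terms, to pin down how large $N$ must be; this is the only part requiring real inequalities rather than formal identifications. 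Once this equivalence is established, combining it with the dictionary above and Corollary \ref{corstable} yields the full modular description, completing the proof.
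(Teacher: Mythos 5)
Your overall strategy coincides with the paper's own treatment: in section \ref{dsm} the space $M_{d,n}^{\Theta}(Q)$ is \emph{defined} as the moduli space $M_{\widetilde{d}}(\widetilde{Q})$ of the extended quiver, smoothness and projectivity are exactly the consequences of $\widetilde{\Theta}$-coprimality of $\widetilde{d}$ recorded in section \ref{bgp}, and the real content of the theorem is the modular interpretation, which the cited reference \cite{ERSM} establishes by precisely the dictionary and stability-matching you outline. Your identification of representations $\widetilde{M}$ with pairs $(M,f)$ via $\Hom(P_i,M)\cong M_i$ is correct; so is the matching of equivalence relations (the clean reason: if $\varphi\circ f=\lambda f'$ for $\lambda\in k^*$, then $\lambda^{-1}\varphi$ is again an isomorphism and satisfies $(\lambda^{-1}\varphi)\circ f=f'$); and the dichotomy $\widetilde{U}_\infty=0$ versus $\widetilde{U}_\infty=k$ is the right case division, with the second case automatically forcing ${\rm Im}(f)\subseteq U$.

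The genuine gap sits exactly in the step you defer. As you phrase it (``for $N$ large the dominant term in each slope comparison is the $Q$-part, so the inequalities decouple''), the estimate \emph{fails} when carried out with the constants as literally written, unless $\Theta(d)=0$. Concretely, the $I$-part $M\subset\widetilde{M}$ (with nothing at $\infty$) is always a proper nonzero subrepresentation, so $\widetilde{\Theta}$-stability forces
$$\frac{N\Theta(d)}{\dim d}<\frac{N\Theta(d)+\Theta(d)+1}{\dim d+1},$$
equivalently $(\Theta(d)+1)\dim d>N\Theta(d)$: if $\Theta(d)>0$ this is violated for $N\gg 0$, so there would be no stable points at all, while if $\Theta(d)<0$ the $\widetilde{U}_\infty=0$ inequalities decouple for large $N$ to the threshold $\mu(U)<\Theta(d)/(\dim d+1)$, which in general differs from semistability $\mu(U)\leq\mu(d)$. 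The repair is small but must be made explicit: first replace $\Theta$ by $\dim d\cdot\Theta-\Theta(d)\cdot\dim$, using the two operations listed at the beginning of section \ref{further} (which change neither the semistable nor the stable representations), so that $\Theta(d)=0$ and hence $\widetilde{\Theta}_\infty=1$. With this normalization your decoupling works uniformly over the finitely many dimension vectors $e\leq d$: the case $\widetilde{U}_\infty=0$ yields $\mu(U)\leq 0=\mu(M)$, i.e. semistability of $M$, and the case $\widetilde{U}_\infty=k$ yields $\mu(U)<0=\mu(M)$ for every proper $U\supseteq{\rm Im}(f)$, which is the framing condition; the same computation shows $\widetilde{d}$ is $\widetilde{\Theta}$-coprime, so semistable equals stable and Corollary \ref{corstable} applies. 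One further small omission: section \ref{bgp} gives projectivity over $M_{\widetilde{d}}^{\rm ssimp}(\widetilde{Q})$, whereas the theorem asserts projectivity over $M_d^{\rm ssimp}(Q)$; you still need the identification of these two spaces, which holds because the $I$-part of any $\widetilde{Q}$-representation is a subrepresentation, so the simple $\widetilde{Q}$-representations are the simple $Q$-representations together with the one-dimensional simple at $\infty$, and the semisimples of dimension vector $\widetilde{d}$ are exactly the $M\oplus S_\infty$ with $M$ semisimple of dimension vector $d$.
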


In other words, this moduli space, which we call a smooth model (for $M_d^{\rm sst}(Q)$), parametrizes semistable representations $M$ together with a map from a fixed projective to $M$ which ``avoids all subrepresentations contradicting stability of $M$''.\\[1ex]
The map forgetting the extra datum of the map $f$ induces a projective morphism $$\pi:M_{d,n}^\Theta(Q)\rightarrow M_d^{\rm sst}(Q).$$ The fibres of this map can be described explicitely using the Luna stratification of section \ref{bgp}. In particular, the generic fibre - the fibre over the stable locus $M_d^{\rm st}(Q)$ -- is isomorphic to projective space ${\bf P}^{n\cdot d-1}$, where $n\cdot d=\sum_{i\in I}n_id_i$.\\[1ex]
In the case where $d$ is $\Theta$-coprime, the smooth model stays very close to the original moduli space: it is isomorphic to the projectivization of the bundle $\bigoplus_{i\in I}\mathcal{V}_i^{n_i}.$
In all other cases, the smooth models $M_{d,i}^\Theta(Q)$ can therefore be viewed as ``projectivizations of non-existing universal bundles''. 

\subsection{Cohomology of smooth models}

Since the smooth models are a particular case of moduli in the coprime case, we know from the discussion in section \ref{sectbetti} that the odd Betti numbers vanish, and that the even Betti numbers can be computed using Theorem \ref{betti}. We will make this formula more explicit, using again the rational functions $P_d(q)$ of Definition \ref{defpdq}.

\begin{theorem}\label{cohomsm} In the skew formal power series ring ${\bf Q}_q[[I]]$, we have the following identity:
$$\sum_{d\in{\bf N}I_\mu}\sum_i\dim H^i((M_{d,n}^\Theta(Q),{\bf Q})q^{i/2}t^d=(\sum_{d\in{\bf N}I_\mu}P_d(q)t^d)^{-1}\cdot(\sum_{d\in{\bf N}I_\mu}q^{n\cdot d}P_d(q)t^d).$$
\end{theorem}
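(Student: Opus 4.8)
The plan is to prove the identity by transporting a single relation in the completed Hall algebra $H_q((Q))$ into the skew power series ring ${\bf Q}_q[[I]]$ via the evaluation morphism $\int$, and then reading off the three relevant generating functions. The central input is the identity of Lemma \ref{app3} relating the elements $e_\mu$, $e_{\mu,n}$ and $h_{\mu,n}$; solving it for $e_{\mu,n}$ gives $e_{\mu,n}=e_\mu^{-1}\cdot h_{\mu,n}$, the inverse being legitimate because $e_\mu=[0]+\ldots$ has invertible leading term. Since $\int$ is an algebra morphism, applying it yields $\int(e_{\mu,n})=\int(e_\mu)^{-1}\cdot\int(h_{\mu,n})$, so the whole proof reduces to computing the three evaluations separately and recognizing the left-hand side.

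First I would dispatch the two easy factors. For $e_\mu=\sum_{d\in{\bf N}I_\mu}e_d^{\rm sst}$, the already-established formula $\int e_d^{\rm sst}=|R_d^{\rm sst}(Q)|/|G_d|=P_d(q)$ gives $\int(e_\mu)=\sum_{d\in{\bf N}I_\mu}P_d(q)t^d$ at once. For $h_{\mu,n}$ the only new ingredient is $|{\rm Hom}(P^{(n)},M)|$: since $P_i$ is projective and ${\rm Hom}(P_i,-)$ is the functor taking the component at the vertex $i$, one has $|{\rm Hom}(P_i,M)|=|M_i|=q^{d_i}$, whence $|{\rm Hom}(P^{(n)},M)|=q^{n\cdot d}$ depends only on $\dimv M=d$. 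Pulling this constant out of the sum over isomorphism classes of slope $\mu$ gives $\int(h_{\mu,n})=\sum_{d\in{\bf N}I_\mu}q^{n\cdot d}P_d(q)t^d$, exactly the second factor in the claimed formula.

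The real content is the identification of $\int(e_{\mu,n})$ with the left-hand side, which I would carry out in three steps. First, match the combinatorics: the theorem defining the smooth model presents $M_{d,n}^\Theta(Q)=M_{\widetilde d}^{\rm st}(\widetilde Q)$ as parametrizing pairs $(M,f)$ with $M$ semistable of slope $\mu$ and $f$ avoiding every proper subrepresentation of slope $\ge\mu$; because a subrepresentation of a slope-$\mu$ semistable module that itself has slope $\mu$ is automatically an object of ${\rm mod}_\mu kQ$ (each of its subobjects has slope $\le\mu$), this avoidance condition is precisely the defining condition of ${\rm Hom}^0(P^{(n)},M)$. Second, count rational points: as $\widetilde d$ is $\widetilde\Theta$-coprime, $PG_{\widetilde d}$ acts freely on $R_{\widetilde d}^{\rm sst}(\widetilde Q)$, so the arithmetic of free quotients gives $|M_{d,n}^\Theta(Q)({\bf F}_q)|=(q-1)|R_{\widetilde d}^{\rm sst}(\widetilde Q)|/|G_{\widetilde d}|$; unwinding $G_{\widetilde d}=G_d\times k^*$ (so that $|G_{\widetilde d}|=(q-1)|G_d|$) and writing $R_{\widetilde d}^{\rm sst}$ as the pairs $(M,f)$ with $f\in{\rm Hom}^0(P^{(n)},M)$, the factor $q-1$ cancels and one obtains $|M_{d,n}^\Theta(Q)({\bf F}_q)|=\sum_{[M]}|{\rm Hom}^0(P^{(n)},M)|/|{\rm Aut}(M)|$, which is exactly the $t^d$-coefficient of $\int(e_{\mu,n})$. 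Third, pass to cohomology: Theorem \ref{betti}, applied to the coprime datum $(\widetilde Q,\widetilde d,\widetilde\Theta)$, identifies $(q-1)P_{\widetilde d}(q)$ simultaneously with $|M_{d,n}^\Theta(Q)({\bf F}_q)|$ and with $\sum_i\dim H^i(M_{d,n}^\Theta(Q),{\bf Q})q^{i/2}$. Combining these three steps shows that $\int(e_{\mu,n})$ is the left-hand side of the assertion, and the Hall-algebra identity then delivers the formula.

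The step I expect to be the main obstacle is the passage from the orbit-counting sum to the number of rational points of the geometric quotient together with its cohomological interpretation: one must justify that counting $G_{\widetilde d}({\bf F}_q)$-orbits genuinely computes $|M_{\widetilde d}^{\rm st}(\widetilde Q)({\bf F}_q)|$, which rests on the freeness of the $PG_{\widetilde d}$-action in the coprime case and a Lang-theorem/$H^1$-vanishing argument for the principal bundle, and that the resulting counting polynomial coincides with the Poincar\'e polynomial, which is the purity input already packaged in Theorem \ref{betti}. Everything else is a direct and essentially formal transport of the Lemma \ref{app3} relation through the evaluation morphism $\int$.
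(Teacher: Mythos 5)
Your proposal is correct and follows the paper's own route: transport the Hall-algebra identity of Lemma \ref{app3} through the evaluation morphism $\int$, compute $\int(e_\mu)=\sum_{d\in{\bf N}I_\mu}P_d(q)t^d$ and $\int(h_{\mu,n})=\sum_{d\in{\bf N}I_\mu}q^{n\cdot d}P_d(q)t^d$ directly, and identify $\int(e_{\mu,n})$ with the Poincar\'e-polynomial series via point counting for the free $PG_{\widetilde{d}}$-action in the coprime case together with the purity input packaged in Theorem \ref{betti}. One remark: the identity you actually use, $e_\mu\cdot e_{\mu,n}=h_{\mu,n}$ (equivalently $e_{\mu,n}=e_\mu^{-1}h_{\mu,n}$), is the correct one and the one the theorem requires, whereas Lemma \ref{app3} as printed reads $e_\mu h_{\mu,n}=e_{\mu,n}$, which under $\int$ would yield the product $\int(e_\mu)\cdot\int(h_{\mu,n})$ instead of the quotient $\int(e_\mu)^{-1}\cdot\int(h_{\mu,n})$; the printed lemma is a typo (already visible for the quiver with one vertex and no arrows, where the corrected identity just counts linear maps $k^n\rightarrow k^d$ according to their images), so your silent correction is the right reading.
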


As noted in section \ref{apphall}, the result uses the identity of Lemma \ref{app3} in the Hall algebra of $Q$ and the passage from counting points over finite fields to Betti numbers, as in section \ref{cdrp}.

\subsection{Hilbert schemes}\label{hilbert}

We consider the special case $\Theta=0$ and denote the smooth model $M_{d,n}^0(Q)$ by ${\rm Hilb}_{d,n}(Q)$, which we call a Hilbert scheme for $Q$. It parametrizes (arbitrary) representations $M$ of dimension vector $d$, together with a surjective map from $P^{(n)}$ to $M$. In other words, it parametrizes subrepresentations $U\subset P^{(n)}$ of finitely generated projective representations such that $\dimv P^{(n)}/U=d$.\\[1ex]
In this case, there are much more explicit results on the structure of that space. For example, in the case of quivers without oriented cycles, ${\rm Hilb}_{d,n}(Q)$ can be described as an iterated Grassmann bundle, see \cite{RFQM}.\\[1ex]
For general quivers, we can give an explicit (and non-recursive) criterion for non-emptyness of ${\rm Hilb}_{d,n}(Q)$ (compare the discussion of non-emptyness in section \ref{stableschur}):

\begin{theorem} Let $Q$ be an arbitrary quiver, and let $d$ and $n$ be dimension vectors. We have ${\rm Hilb}_{d,n}(Q)\not=\emptyset$ if and only if the following two conditions are fulfilled:
\begin{enumerate}
\item $n_i\geq\langle d,i\rangle$ for all $i\in I$,
\item for every $i\in{\rm supp}(d)$ there exists $j\in{\rm supp}(n)$ and a path from $j$ to $i$ in ${\rm supp}(d)$.
\end{enumerate}
\end{theorem}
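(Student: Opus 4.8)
The plan is to prove both implications through the representation-theoretic reformulation of the Hilbert scheme. By the description preceding the statement, ${\rm Hilb}_{d,n}(Q)$ is non-empty if and only if there is a representation $M$ with $\dimv M=d$ admitting a surjection $f\colon P^{(n)}\to M$; since $\Hom(P_i,M)\cong M_i$, such an $f$ is exactly a tuple of generators consisting of $n_i$ elements of $M_i$ for each $i$, and surjectivity of $f$ means precisely that these elements generate $M$. So throughout I would work with the question: does there exist a representation of dimension vector $d$ generated by $n_i$ elements placed at each vertex $i$?

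For necessity I would treat the two conditions separately. Condition (2) is the reachability constraint: the subrepresentation generated by elements sitting at the vertices of ${\rm supp}(n)$ is spanned, at each vertex $i$, by the images $M_p(g)$ of the generators $g$ along paths $p$ ending at $i$; a path contributing a non-zero vector must run through vertices where $M$ is non-zero, i.e. through ${\rm supp}(d)$. Hence if $M_i\neq 0$ there is a path from ${\rm supp}(n)$ to $i$ inside ${\rm supp}(d)$, which is (2). For condition (1), I would apply the right-exact functor $\mathrm{top}$, defined by $\mathrm{top}(N)=N/JN$ for the arrow ideal $J\subset kQ$, to the surjection $f$. Since $\mathrm{top}(P_i)=S_i$, this yields a surjection $\bigoplus_i S_i^{n_i}\to\mathrm{top}(M)$, so $n_i\ge\dim\mathrm{top}(M)_i$. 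Finally $(JM)_i=\sum_{\alpha\colon k\to i}\mathrm{im}(M_\alpha)$ has dimension at most $\sum_{\alpha\colon k\to i}d_k$, whence $\dim\mathrm{top}(M)_i=d_i-\dim(JM)_i\ge\langle d,i\rangle$, giving $n_i\ge\langle d,i\rangle$.

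For sufficiency I would exploit that the locus where the chosen generators generate $M$ is open: the functions $\dim\langle U\rangle_i$ are lower-semicontinuous in the pair $(M,U)$, where $U_i\subset M_i$ is the span of the generators, so it suffices to show that a generic pair generates (equivalently, to exhibit one generating example). I would therefore take generic matrices $M_\alpha$ for the arrows and generic subspaces $U_i\subset M_i$ of dimension $\min(n_i,d_i)$, and analyse the generation filtration $U^{(0)}\subset U^{(1)}\subset\cdots$ given by $U^{(0)}=U$ and $U^{(m+1)}_i=U_i+\sum_{\alpha\colon k\to i}M_\alpha(U^{(m)}_k)$, which stabilises to $\langle U\rangle$. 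The aim is to show $\langle U\rangle_i=M_i$ for every $i$: condition (2) guarantees that the support of $\langle U\rangle$ eventually exhausts ${\rm supp}(d)$, while condition (1), in the form $d_i\le\sum_{\alpha\colon k\to i}d_k+n_i$, supplies exactly the incoming capacity plus generator budget needed to fill the full dimension $d_i$ at each vertex.

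The hard part will be establishing this filling statement in the presence of oriented cycles. For an acyclic quiver one may order the vertices topologically and fill them one at a time, so that at vertex $i$ the already-generated incoming spaces reach dimension $\min(d_i,\sum_{\alpha\colon k\to i}d_k)$ and, together with the $n_i$ generic generators and condition (1), span $M_i$. With cycles there is no such order: the filtration can keep growing around a cycle, and a single generator may generate an entire cyclic block, so a per-vertex capacity count is neither necessary nor obviously attainable simultaneously. I would address this either by an induction that at each stage selects a vertex whose accumulated incoming capacity has already met $d_i$ and peels it off while preserving generation -- taking care that decrementing $d$ does not destroy condition (1) at successor vertices -- or by specialising the arrow matrices to a combinatorial model built from the spanning paths supplied by condition (2), computing the stabilised ranks directly, and then invoking semicontinuity to return to the generic case. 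Controlling these stabilised ranks uniformly over all cyclic configurations is where I expect the genuine technical work to lie.
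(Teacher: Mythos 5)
Your reduction of ${\rm Hilb}_{d,n}(Q)\neq\emptyset$ to the existence of a representation $M$ with $\dimv M=d$ generated by $n_i$ elements at each vertex $i$ is correct, and your necessity argument is complete and valid: right exactness of ${\rm top}(M)=M/JM$ and ${\rm top}(P_i)=S_i$ hold also in the presence of oriented cycles (no nilpotency of $J$ is needed in that direction), giving $n_i\geq\dim{\rm top}(M)_i\geq\langle d,i\rangle$, and the path-support argument gives condition (2). Note that the survey states this theorem without proof, deferring to \cite{ERSM}, so your argument has to stand on its own; the natural in-paper reference point is section \ref{cellhilb}, whose cell decomposition by $n$-forests indicates the intended route for the converse.

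The sufficiency half, however, is a plan rather than a proof, and the step you defer is exactly where the content of the theorem lies: you must rule out that the generation filtration $U^{(0)}\subset U^{(1)}\subset\cdots$ stabilises strictly below $d$, and your sketch never pins down how conditions (1) and (2) accomplish this. Of your two strategies, the first (peeling off a vertex) is problematic as stated, since decrementing $d_i$ can destroy condition (1) at the successors of $i$ and in a strongly connected piece there is no order in which to peel; the second is the right idea, but ``computing the stabilised ranks'' \emph{is} the theorem, not a routine verification. Here is the kind of argument that is needed and is absent. Build $M$ with basis a prefix-closed set of paths starting at ${\rm supp}(n)$ (an $n$-forest), arrows acting along the forest and by zero on the corona; such an $M$ is generated by its roots, so it suffices to produce an $n$-forest of dimension vector $d$. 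Filling path-slots greedily, the numbers of filled slots converge to the least fixpoint of $f_i=\min\bigl(d_i,\min(n_i,d_i)+\sum_{\alpha:k\to i}f_k\bigr)$, and one must show this fixpoint equals $d$. If not, every vertex $i$ of the deficit set $Z=\{i\,:\,f_i<d_i\}$ satisfies $f_i=n_i+\sum_{\alpha:k\to i}f_k<d_i$, hence by condition (1) has an in-neighbour in $Z$; therefore each source strongly connected component $S$ of $Z$ contains a cycle, so every vertex of $S$ has an out-arrow inside $S$. On the other hand, condition (2) forces at least one unit of external input into $S$ (a root, or an arrow from a vertex outside $Z$, which is completely filled), and summing the fixpoint equations over $S$ yields $\sum_{i\in S}f_i(1-{\rm outdeg}_S(i))\geq 1$, which is impossible when all these out-degrees are at least $1$. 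Without an argument of this type (or an equivalent induction), your proposal does not establish the non-trivial implication.
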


We also have a positive combinatorial formula for the Betti numbers (compare the discussion following Theorem \ref{betti}), based on certain multipartitions:\\[1ex]
Let $\Lambda_d$ be the set of tuples $(\lambda^i=(\lambda^i_1\geq\lambda^i_2\geq\ldots\geq\lambda^i_{d_i}\geq 0))_{i\in I}$ of partitions of length $d_i$, which we call multipartitions of length $d$. The weight of such a multipartition $\lambda$ is defined as $$|\lambda|=\sum_{i\in I}\sum_{k=1}^{d_i}\lambda^i_k.$$
We define a subset $S_{d,n}$ consisting of multipartitions $\lambda$ of length $d$ as fulfilling the following condition:
\begin{center}for every $0\leq e<d$, there exists a vertex $i\in I$ such that $\lambda^i_{d_i-e_i}<n_i-\langle e,i\rangle$.\end{center}

\begin{theorem} We have the following formula for the Betti numbers of the Hilbert scheme ${\rm Hilb}_{d,n}(Q)$:
$$\sum_i\dim H^i({\rm Hilb}_{d,n}(Q),{\bf Q})q^{i/2}=q^{n\cdot d-\langle d,d\rangle}\sum_{\lambda\in S_{d,n}}q^{-|\lambda|}.$$
\end{theorem}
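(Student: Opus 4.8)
The plan is to deduce the formula from Theorem \ref{cohomsm} by specializing to $\Theta=0$ and then reducing the statement to a single combinatorial identity about multipartitions. First I would unwind Definition \ref{defpdq} in the case $\Theta=0$. Then $\mu\equiv 0$, so the requirement $\mu(d^1+\ldots+d^k)>\mu(d)$ reads $0>0$ and can never hold for $k<s$; hence only the trivial decomposition $s=1$ contributes and
$$P_d(q)=q^{-\langle d,d\rangle}\prod_{i\in I}\prod_{j=1}^{d_i}(1-q^{-j})^{-1}.$$
Since $\prod_{j=1}^{m}(1-x^j)^{-1}$ is the generating function $\sum_\lambda x^{|\lambda|}$ over partitions $\lambda$ with at most $m$ parts, taking $x=q^{-1}$ identifies the product factor with $F_d(q):=\sum_{\lambda\in\Lambda_d}q^{-|\lambda|}$, so that $P_d(q)=q^{-\langle d,d\rangle}F_d(q)$ and $q^{n\cdot d}P_d(q)=q^{n\cdot d-\langle d,d\rangle}F_d(q)$. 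Writing $H_d:=\sum_i\dim H^i({\rm Hilb}_{d,n}(Q),{\bf Q})q^{i/2}$ and $\widetilde S_d:=\sum_{\lambda\in S_{d,n}}q^{-|\lambda|}$, the theorem to be proved is exactly the assertion $H_d=q^{n\cdot d-\langle d,d\rangle}\widetilde S_d$.

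Next I would feed these closed forms into Theorem \ref{cohomsm}, taking $\mu=0$ so that the sums run over all $d\in{\bf N}I$. Because $P_0=1$, the series $\sum_d P_dt^d$ is invertible in the twisted ring ${\bf Q}_q[[I]]$, and the theorem is equivalent to $(\sum_e P_et^e)\cdot(\sum_d H_dt^d)=\sum_d q^{n\cdot d}P_dt^d$. Extracting the coefficient of $t^f$ using $t^et^d=q^{-\langle e,d\rangle}t^{e+d}$ yields $\sum_{e+d=f}q^{-\langle e,d\rangle}P_eH_d=q^{n\cdot f}P_f$. Substituting the claimed value of $H_d$ together with the closed forms of $P_e$ and $P_f$, and then cancelling the common factor $q^{n\cdot f-\langle f,f\rangle}$ by means of $\langle f,f\rangle=\langle e,e\rangle+\langle e,d\rangle+\langle d,e\rangle+\langle d,d\rangle$ and $n\cdot f=n\cdot e+n\cdot d$, the entire statement collapses to the single combinatorial identity, which I will call $(\star)$:
$$F_f=\sum_{e+d=f}q^{\langle d,e\rangle-n\cdot e}\,F_e\,\widetilde S_d\qquad(f\in{\bf N}I).$$
Conversely, $(\star)$ determines $\widetilde S_f$ from $F$ and the lower $\widetilde S_d$, so by induction on $f$ it forces $H_f=q^{n\cdot f-\langle f,f\rangle}\widetilde S_f$, matching the recursion satisfied by the $H_f$. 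Thus the whole problem is reduced to proving $(\star)$.

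The heart of the matter, and where I expect the main obstacle, is $(\star)$. Read off on weights, the summand for a fixed $e$ (with $d=f-e$) collects $q^{-|\mu|-|\nu|}$ over $\mu\in\Lambda_e$ and $\nu\in S_{d,n}$ against the $\lambda$-independent shift $n\cdot e-\langle d,e\rangle$, so $(\star)$ asserts a weight-compatible bijection
$$\Lambda_f\;\longleftrightarrow\;\bigsqcup_{e+d=f}\Lambda_e\times S_{d,n},\qquad |\lambda|=|\mu|+|\nu|+n\cdot e-\langle d,e\rangle.$$
I would construct it by a canonical ``destabilizing decomposition'' of a multipartition: call $e$ bad for $\lambda$ when $\lambda^i_{f_i-e_i}\ge n_i-\langle e,i\rangle$ for all $i\in I$, so that $\lambda\in S_{f,n}$ precisely when no $e$ with $0\le e<f$ is bad. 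The $e=0$ summand $\Lambda_0\times S_{f,n}$ (with trivial shift) then accounts for $S_{f,n}\subset\Lambda_f$ itself, and for $\lambda\notin S_{f,n}$ one selects the maximal bad $e$, cuts each Young diagram $\lambda^i$ into a top part recording $\mu^i\in\Lambda_e$ and a bottom part recording $\nu^i$, after the staircase shift producing the factor $n\cdot e-\langle d,e\rangle$, with maximality of $e$ guaranteeing $\nu\in S_{d,n}$.

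The genuine difficulty is to show that the bad vectors of a given $\lambda$ admit a unique maximal element and that the resulting cut is a well-defined, weight-preserving bijection; this is the combinatorial shadow of the uniqueness of the saturation/Harder--Narasimhan filtration established in section \ref{aas}, and of the Hall-algebra identity $e_\mu h_{\mu,n}=e_{\mu,n}$ of Lemma \ref{app3}. Should the direct bijection prove recalcitrant, an alternative in the spirit of the Bialynicki--Birula method of section \ref{bb} is to realise the very same stratification geometrically, as a cell decomposition of ${\rm Hilb}_{d,n}(Q)$ whose cells are indexed by $S_{d,n}$ with codimension $|\lambda|$; purity of the smooth projective ${\rm Hilb}_{d,n}(Q)$ then converts the cell count directly into the stated Betti numbers.
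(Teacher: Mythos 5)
Your reduction of the theorem to the identity $(\star)$ is correct, and it is in substance the route the paper indicates: specializing Definition \ref{defpdq} to $\Theta=0$ (only $s=1$ survives), recognizing $P_d(q)=q^{-\langle d,d\rangle}\sum_{\lambda\in\Lambda_d}q^{-|\lambda|}$, and extracting the coefficient of $t^f$ from Theorem \ref{cohomsm} (which itself rests on the Hall algebra identity of Lemma \ref{app3} together with counting and purity) leaves exactly your combinatorial identity $(\star)$, and your induction transferring $(\star)$ back into the Betti number formula is sound. The genuine gap is the step you yourself flag as the heart of the matter: the proposed canonical decomposition by the \emph{maximal} bad vector is not merely unproved, it is false.

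It fails in two ways. First, the set of bad vectors of a given $\lambda$ is closed under componentwise minimum but not under maximum, so a unique maximal bad vector need not exist: if $m=\min(e,e')$ and $m_i=e_i$ at a vertex $i$, then
$$\lambda^i_{f_i-m_i}=\lambda^i_{f_i-e_i}\geq n_i-\langle e,i\rangle=n_i-e_i+\sum_{\alpha:j\rightarrow i}e_j\geq n_i-m_i+\sum_{\alpha:j\rightarrow i}m_j=n_i-\langle m,i\rangle,$$
so $m$ is bad, whereas the analogous estimate for the maximum runs in the wrong direction; concretely, for two vertices, no arrows, $f=n=(1,1)$ and $\lambda=((a),(b))$ with $a,b\geq 1$ (reading the condition at a vertex with $e_i=f_i$ as vacuous), both $(1,0)$ and $(0,1)$ are maximal bad vectors. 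Second, even when the maximal bad vector is unique, cutting there is not a bijection: for one vertex, no arrows, $f=2$, $n=1$, every $\lambda$ has $e=1$ bad, so your recipe sends all of $\Lambda_2$ to $\Lambda_1\times S_{1,1}$; but when $\lambda_2\geq 1$ the bottom row $(\lambda_2)$ violates the condition $\nu_1<1$ defining $S_{1,1}$, and the summand $\Lambda_2\times S_{0,1}$ (weight shift $2$) is never reached. The correct canonical datum is the \emph{minimal} bad vector $e_0$, whose existence and uniqueness follow from the meet-closure just proved, and it plays the role of $d$, not of $e$: the bottom $(e_0)_i$ rows of $\lambda^i$ form $\nu^i$, which lies in $S_{d,n}$ precisely because no $e'<e_0$ is bad, while the top $f_i-(e_0)_i$ rows are all $\geq n_i-\langle d,i\rangle$ by badness of $e_0$ and, after subtracting this quantity from each row, form $\mu^i\in\Lambda_e$; summing the subtracted staircase gives exactly $n\cdot e-\langle d,e\rangle$, and the assembled map is invertible (in the example, $\lambda$ with $\lambda_2\geq 1$ has minimal bad vector $0$ and goes to $\mu=(\lambda_1-1,\lambda_2-1)\in\Lambda_2$). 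With this correction your argument closes and recovers the proof of \cite{ERSM,RFQM} that the paper is quoting. Your fallback via a Bia\l{}ynicki-Birula decomposition indexed by $S_{d,n}$ would also need a new idea: the cell decomposition the paper actually constructs in section \ref{cellhilb} is indexed by $n$-forests, not by multipartitions, so it does not yield the stated formula without a further (nontrivial) combinatorial comparison.
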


\subsection{Cell decompositions for Hilbert schemes}\label{cellhilb}

In fact, all ${\rm Hilb}_{d,n}(Q)$ admit cell decompositions, which we will now construct (the special case of the multiple loop quivers was considered before in \cite{RCNH}). We need some combinatorial notation.\\[1ex]
For each vertex $i\in I$, define a covering quiver (in fact, a tree) $Q_i$ of $Q$ as follows: the vertices of $Q_i$ are parametrized by the paths $\omega$ on $Q$ starting in $i$. The arrows in $Q_i$ are given by $\alpha:\omega\rightarrow(\alpha\omega)$ for arrows $(\alpha:j\rightarrow k)\in Q_1$ and paths $\omega$ in $Q$ starting in $i$ and ending in $j$. Obviously $Q_i$ has a unique source corresponding to the empty path at $i$. We have a natural projection from $Q_i$ to $Q$ associating to a vertex $\omega$ of $Q_i$ the terminal vertex of the path $\omega$. A full subquiver $T$ of $Q_i$ is called a tree if it is closed under predecessors. To such a tree $T$, we can associate a dimension vector $d(T)$ for $Q$, where $d(T)_j$ is defined as the number of vertices in $T$ whose corresponding paths $\omega$ have terminal vertex $j$.\\[1ex]
For $n\in{\bf N}I$, we define $Q_n$ as the disjoint union of $n_i$ copies of each $Q_i$, for $i\in I$. The vertices of $Q_n$ are labelled by triples $(i,j,\omega)$, which means that the path $\omega$ starting in $i$ is placed in the $j$-th copy of $T_i$. An $n$-forest $T_*$ is a full subquiver of $Q_n$ which is closed under predecessors; in other words, it is a tuple $(T_{ij})_{i\in I,\, k=1,\ldots,n_i}$ of trees $T_{ij}$ in $Q_i$. The dimension vector $d(T_*)$ of an $n$-forest $T_*$ is defined as $\sum_{i,j}d(T_{ij})$.

\begin{theorem} Each ${\rm Hilb}_{d,n}(Q)$ admits a cell decomposition, whose cells are parametrized by $n$-forests of dimension vector $d$.
\end{theorem}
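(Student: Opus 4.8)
The plan is to realize ${\rm Hilb}_{d,n}(Q)$ as a Quot scheme and to produce the cells as the strata of a standard-monomial (echelon) stratification indexed by leading $n$-forests. Recall from Section \ref{hilbert} that a point of ${\rm Hilb}_{d,n}(Q)$ is a subrepresentation $U\subset P^{(n)}$ with $\dimv(P^{(n)}/U)=d$, and that the path basis of $P^{(n)}=\bigoplus_{i\in I}P_i^{n_i}$ is precisely the vertex set of the covering quiver $Q_n$ of Section \ref{cellhilb}: its elements are the triples $(i,j,\omega)$ with $\omega$ a path of $Q$ starting at $i$. First I would fix a total order $\preceq$ on this path basis that refines the length filtration and is \emph{multiplicative}, in the sense that $\omega'\prec\omega''$ implies $\alpha\omega'\prec\alpha\omega''$ for every arrow $\alpha$; a length-first, lexicographic-second order does the job. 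This compatibility with prepending arrows is what ties the construction to the module structure.

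Next, to each $[U]$ I associate a leading set $T(U)$: reducing modulo $U$ and scanning the path basis in increasing $\preceq$-order, greedily retain those basis paths whose images in $M=P^{(n)}/U$ are linearly independent of the images of the previously retained ones. Since $\dim_k M_j=d_j$, the set $T(U)$ contains exactly $d_j$ paths ending at $j$. The key structural point is that $T(U)$ is an $n$-forest, i.e.\ closed under predecessors: if a path $\alpha\omega\in T(U)$ while its predecessor $\omega$ were discarded, then the image of $\omega$ would be a combination of $\preceq$-smaller retained paths, and applying the arrow $\alpha$ (legitimate since $U$ is a subrepresentation and the order is multiplicative) would force the image of $\alpha\omega$ into the span of the images of earlier retained paths as well, contradicting $\alpha\omega\in T(U)$. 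Thus $[U]\mapsto T(U)$ decomposes ${\rm Hilb}_{d,n}(Q)$ into locally closed strata $Z_{T_*}=\{[U]:T(U)=T_*\}$ indexed exactly by the $n$-forests $T_*$ of dimension vector $d$ appearing in the theorem.

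It then remains to prove two things. (i) Each stratum $Z_{T_*}$ is isomorphic to an affine space. Over $Z_{T_*}$ the images of the paths of $T_*$ form a basis of $M$, so $U$ is completely determined by expressing the reduction of every path \emph{not} in $T_*$ as a linear combination of strictly $\preceq$-smaller paths in $T_*$; the requirement that $U$ be closed under the arrows of $\widetilde{Q}$ turns the free matrix entries into affine coordinates and the remaining ones into polynomial functions thereof, exhibiting $Z_{T_*}\cong{\bf A}^{N(T_*)}$ by an explicit normal form. (ii) The $Z_{T_*}$ assemble into a genuine cell decomposition: using semicontinuity of the ranks of the reduction maps restricted to initial $\preceq$-segments of the path basis, one shows $\overline{Z_{T_*}}\subset\bigcup_{T_*'\preceq T_*}Z_{T_*'}$, so that the downward-closed unions of strata are closed subvarieties; enumerating the forests in a compatible total order then exhibits the $Z_{T_*}$ as the successive complements of a filtration by closed subvarieties, as required by the definition of a cell decomposition.

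I expect step (i) --- the explicit affine normal form, and the verification that the closure-under-arrows conditions cut out an affine space rather than a more complicated variety --- together with the closure/semicontinuity bookkeeping of (ii) to be the main obstacle; the combinatorics identifying the index set with $n$-forests of dimension vector $d$ is then immediate. As a structural cross-check, and an alternative route, one may instead invoke the Bialynicki-Birula theorem of Section \ref{bb}: the torus $T_Q$ acts on ${\rm Hilb}_{d,n}(Q)$, all limits under a suitable positively-weighted ${\bf C}^*\subset T_Q$ exist because $\pi:{\rm Hilb}_{d,n}(Q)\to M_d^{\rm ssimp}(Q)$ is projective while the diagonal scaling contracts the affine base (every trace coordinate tends to $0$), and by the localization analysis of Section \ref{locqm} the fixed components are Hilbert schemes for the acyclic covering quiver $\widehat{Q}$, which are iterated Grassmann bundles by \cite{RFQM} and hence already cellular. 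The attracting affine bundles then refine to cells, recovering the same count; this is the approach carried out in \cite{ERSM}, and for loop quivers in \cite{RCNH}.
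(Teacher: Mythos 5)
Your main construction coincides with the paper's own proof (given in section \ref{cellhilb} and carried out in detail in \cite{ERSM}): a total order on the path basis of $P^{(n)}$ (the vertices of $Q_n$), strata indexed by the leading $n$-forest, affineness of each stratum with the corona coefficients as free coordinates, and assembly into a filtration by semicontinuity --- your greedy leading-set formulation and the paper's corona conditions cut out the same cells, with your multiplicative length-first order playing the role of the paper's ordering in the forest-closure argument. One caveat on your closing remark: the Bialynicki-Birula route is \emph{not} what is carried out in \cite{ERSM} (both the paper and \cite{ERSM} use exactly the direct combinatorial stratification), and invoking it here would require extra care since ${\rm Hilb}_{d,n}(Q)$ is in general only projective over the affine base $M_d^{\rm ssimp}(Q)$, not projective.
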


To construct the cells, we need a total ordering on $n$-forests. First, choose an arbitary total ordering on $I$. For each pair of vertices $i,j\in I$, choose a total ordering on the arrows from $i$ to $j$. Then, define a total ordering on $Q_1$ as follows: $(\alpha:i\rightarrow j)\leq(\beta:k\rightarrow l)$ if one of the following conditions holds:
\begin{itemize}
\item $i=k$, $j=l$ and $\alpha\leq\beta$ in the ordering chosen on the arrows from $i$ to $j$,
\item $i=k$ and $j<l$,
\item $i<k$.
\end{itemize}
Now we define a total ordering on the vertices of $Q_1$: let $\omega=(\alpha_s\ldots\alpha_1)$ and $\omega'=(\beta_t\ldots\beta_1)$ be two paths in $Q$ starting in $i$. Then $\omega\leq\omega'$ if $\alpha_k<\beta_k$ for the minimal index $k$ such that $\alpha_k\not=\beta_k$; if no such $k$ exists, we set $\omega\leq\omega'$ if $s\leq t$.\\[1ex]
Finally, we define a total ordering on the vertices of $Q_n$: we define $(i,j,\omega)\leq (i',j',\omega')$ if one of the following conditions holds:
\begin{itemize}
\item $i<i'$ in the total ordering on $I$,
\item $i=i'$ and $j<j'$,
\item $i=i'$, $j=j'$ and $\omega\leq\omega'$ in the total ordering on vertices of $Q_i$.
\end{itemize}

Define the corona $C(T_*)$ of $T_*$ as the set of all vertices $(i,j,\omega)$ of $Q_n$ which are not elements of $T_*$, but whose (unique) immediate predeccessor in $Q_n$ is an element of $T_*$.\\[1ex]
Choose a basis $v_{ij}$ for each vector space $V_i$. For a representation $M$ and a path $\omega=\alpha_s\ldots\alpha_1$ in $Q$, write $M_\omega=M_{\alpha_s}\circ\ldots\circ M_{\alpha_1}$. Given an $n$-forest $T_*$ of dimension vector $d$, let $Z_{T_*}$ be the set of all points $(M,f)$ such that the following conditions hold:
\begin{itemize}
\item the collection of elements $b_{(i,j,\omega)}=M_\omega(f_i(v_{i,j}))$ for $(i,j,\omega)\in T_*$ (i.e.~$\omega\in T_{i,j}$ for all $i\in I$ and $j=1,\ldots,n_i$) forms a basis of $M=\bigoplus_{i\in I}M_i$,
\item for each $(i,j,\omega)\in C(T_*)$, the element $M_\omega(f_i(v_{i,j}))$ belongs to the span of all suitable $b_{(i',j',\omega')}$ for $(i',j',\omega')\in T_*$ such that $(i',j',\omega')<(i,j,\omega)$.
\end{itemize}

\begin{theorem} There exists a filtration ${\rm Hilb}_{d,n}(Q)=X_0\supset X_1\supset\ldots\supset X_t=\emptyset$ such that the successive complements $X_{q-1}\setminus X_{q}$ are precisely the sets $Z_{T_*}$ defined above, where $T_*$ runs over all $n$-forests of dimension vector $d$. Consequently, ${\rm Hilb}_{d,n}(Q)$ admits a cell decomposition, whose cells are parametrized by the $n$-forests of dimension vector $d$.
\end{theorem}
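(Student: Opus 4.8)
The plan is to realize the $Z_{T_*}$ as the pieces of a Gröbner-type (standard-monomial) decomposition of ${\rm Hilb}_{d,n}(Q)$ and then to order the $n$-forests so that the corresponding unions of pieces are closed. Throughout I use that a point of ${\rm Hilb}_{d,n}(Q)$ is a pair $(M,f)$ with $f:P^{(n)}\to M$ surjective, equivalently a representation $M$ of dimension vector $d$ together with a generating system, so that applying all path operators to the generators $f_i(v_{i,j})$ produces the spanning family $b_{(i,j,\omega)}=M_\omega(f_i(v_{i,j}))$ indexed by the vertices of $Q_n$, ordered by the total order fixed before the statement. First I would define the combinatorial invariant: running through the vertices of $Q_n$ in increasing order, retain $(i,j,\omega)$ exactly when $b_{(i,j,\omega)}$ is linearly independent of the previously retained vectors, and let $T_*(M,f)$ be the retained set. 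The crucial point is that $T_*(M,f)$ is an $n$-forest, since the immediate predecessor of any vertex is a proper prefix and hence strictly smaller in the chosen order, so the retained set is closed under predecessors; and the retained vectors form a basis of $M$, whence $d(T_*(M,f))=d$. This partitions ${\rm Hilb}_{d,n}(Q)$ according to $n$-forests of dimension vector $d$, and the part indexed by $T_*$ is exactly the locally closed set $Z_{T_*}$ of the two bullet conditions: membership says precisely that the vertices of $T_*$ are retained (condition one) and that each corona vector collapses into the span of strictly smaller retained vectors (condition two).

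Next I would show each $Z_{T_*}$ is an affine space by writing down an explicit normal form. On $Z_{T_*}$ the vectors $b_\tau$, $\tau\in T_*$, are a basis, and the representation maps are forced: for $\tau=(i,j,\omega)\in T_*$ and an arrow $\alpha$, the vector $M_\alpha b_\tau=b_{(i,j,\alpha\omega)}$ is either another basis vector (when $(i,j,\alpha\omega)\in T_*$) or, when $(i,j,\alpha\omega)\in C(T_*)$, equals $\sum c_\mu b_\mu$ with $\mu\in T_*$ and $\mu<(i,j,\alpha\omega)$. Conversely, any choice of coefficients $c_\mu$ defines a representation on the vector space with basis $\{b_\tau\}$ together with a generating system, because the value of each $M_\alpha$ on each basis vector is prescribed and there is no consistency relation to verify. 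I would then check that these two assignments are mutually inverse morphisms between $Z_{T_*}$ and the affine space ${\bf A}^{N}$ of admissible coefficient tuples, the one genuine point being confluence: that the representation reconstructed from arbitrary $c_\mu$ really has $T_*$ as its retained set, so that no deeper vertex lying beyond the corona accidentally enlarges the staircase. This reduces to the usual standard-monomial reduction, since every deeper vertex rewrites, through its predecessor, into strictly smaller standard vectors.

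Finally I would assemble the filtration. I would fix a total order $T^{(1)},\ldots,T^{(t)}$ on the $n$-forests of dimension vector $d$ refining the term-order comparison of their complements $Q_n\setminus T_*$, and set $X_q=\bigcup_{p>q}Z_{T^{(p)}}$. The content is that each $X_q$ is closed, i.e.\ that the locus where the retained set is ``at least as deep as'' a given forest is a closed condition; this is the semicontinuity of leading terms, and it follows because the condition ``$b_\nu$ lies in the span of the strictly smaller $b_{\nu'}$'' is cut out by the vanishing of the relevant maximal minors. With the $X_q$ closed and $X_{q-1}\setminus X_q=Z_{T^{(q)}}\simeq{\bf A}^{N_q}$ from the previous step, one obtains the asserted cell decomposition indexed by the $n$-forests of dimension vector $d$.

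The step I expect to be the main obstacle is the semicontinuity underlying the closedness of the $X_q$, together with verifying that the chosen order on forests is compatible with it, since this is where the naive ``leading terms can only jump up under specialization'' intuition has to be made precise through the minors. The confluence point in the affine-space step is the other delicate place, because one must rule out that imposing the corona relations secretly forces extra linear dependencies among the standard vectors and thereby changes the staircase.
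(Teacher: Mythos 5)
Your overall architecture --- assign to each point a canonical forest, identify the fibres of this assignment with the sets $Z_{T_*}$, realize each $Z_{T_*}$ as an affine space by prescribing structure constants on corona vertices, then order the forests so that unions of later strata are closed --- is exactly the architecture of the proof the survey alludes to (the survey itself only states the construction; the proof is in the cited papers). However, there is a genuine gap at your very first step, the definition of $T_*(M,f)$ by ``running through the vertices of $Q_n$ in increasing order''. This is ill-defined precisely in the interesting case, namely when $Q$ has oriented cycles (e.g.\ the loop quivers, which give the noncommutative Hilbert schemes): then $Q_n$ has infinitely many vertices, and the paper's ordering is lexicographic-from-the-start with prefixes smaller, which is \emph{not} a well-order, so the vertices cannot be processed in increasing order. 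Concretely, for $L_2$ with arrows $a<b$ and $n=1$ (writing vertices simply as paths, $ba^k$ meaning ``traverse $a$ $k$ times, then $b$'') one has the infinite strictly decreasing chain $b>ba>ba^2>ba^3>\cdots$. Moreover the order-free repair (retain $\nu$ iff $b_\nu$ does not lie in the span of $b_\mu$ over \emph{all} $\mu<\nu$) is provably wrong: take $M=k^2$, $f(v)=e_1$, $M_ae_1=\lambda e_1$ with $\lambda\neq 0$, $M_be_1=e_2$, $M_ae_2=0=M_be_2$. Then $b_{a^k}=\lambda^ke_1$ and $b_{ba^k}=\lambda^ke_2$, so every vertex $\nu\neq e$ has $b_\nu$ proportional to $b_\mu$ for some $\mu<\nu$ (for $\nu=ba^k$ take $\mu=ba^{k+1}$); the retained set is $\{e\}$, which is not a basis, whereas the point does lie in $Z_{T_*}$ for $T_*=\{e,b\}$ (the corona is $\{a,ab,bb\}$; the condition at $a$ holds because $\lambda e_1\in\langle e_1\rangle$ and $b\not<a$, and the other two vectors vanish). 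So your partition claim fails. Relatedly, your one-line justification of predecessor-closedness is a non sequitur: what is actually needed is that $\mu<\tau$ implies $\alpha\mu<\alpha\tau$, and this fails for the paper's order exactly in the prefix case ($e<a$, yet $b=b\cdot e>b\cdot a=ba$), which is the case that occurs when $Q$ has cycles.

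The repair --- and this is how the proof behind the theorem proceeds --- is to build the forest \emph{inductively through its corona} rather than by a global sweep: starting from the empty forest, at each stage consider the finitely many vertices whose immediate predecessor lies in the current forest (together with the parentless root vertices), and adjoin the lex-least one whose vector is not in the current span; if none exists, the current span is stable under all $M_\alpha$ and contains ${\rm Im}(f)$, hence equals $M$, and one stops. Predecessor-closedness is then automatic, and the corona conditions of $Z_{T_*}$ follow by induction along ancestor chains: every element added while a given corona vertex is eligible but still has a new vector is lex-smaller than it, so when its vector finally falls into the span, that span involves only lex-smaller tree elements. Uniqueness of the forest through a given point has a clean direct proof from the $Z_{T_*}$ conditions themselves: the lex-least element $\nu$ of the symmetric difference of two such forests has its predecessor in both, so it is a corona vertex of one of them, and the corona condition exhibits $b_\nu$ as a combination of vectors that are part of a basis containing $b_\nu$, a contradiction. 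Once the assignment is corrected this way, your second step needs no ``confluence'' check at all (the defining conditions of $Z_{T_*}$ hold by construction when one rebuilds $(M,f)$ from the structure constants), and your third step (closedness via rank semicontinuity, forests ordered by lex-comparison of their sorted element lists) goes through essentially as you sketch, provided it is organized around the corona-recursive assignment rather than a leading-term sweep. For quivers without oriented cycles your original argument is in fact correct as written, since then $Q_n$ is finite, distinct paths to the same vertex are never prefixes of one another, and the order is compatible with composition wherever it is needed.
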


As an immediate corollary, we can describe $\chi({\rm Hilb}_{d,n}(Q))$ as the number of $n$-forests of dimension vector $d$. Combinatorial considerations allow to describe the generating functions of Euler characteristics as solutions to algebraic equations in the formal power series ring ${\bf Q}[[I]]$ as follows:

\begin{corollary} For $n\in{\bf N}I$, define $$F_n(t)=\sum_{d\in{\bf N}I}\chi_c({\rm Hilb}_{d,n}(Q))t^d.$$ Then we have
$$F_n(t)=\prod_{i\in I}F_i(t)^{n_i}$$
and
$$F_i(t)=1+t_i\cdot\prod_{\alpha:i\rightarrow j}F_j(t)\mbox{ for all }i\in I.$$
\end{corollary}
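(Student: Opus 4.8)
The plan is to reduce both identities to the combinatorial statement established just above the corollary, namely that $\chi_c({\rm Hilb}_{d,n}(Q))$ equals the number of $n$-forests of dimension vector $d$ (which itself follows from the preceding cell-decomposition theorem, each affine cell contributing $1$ to $\chi_c$). Granting this, $F_n(t)=\sum_{T_*}t^{d(T_*)}$ is simply the generating function counting $n$-forests $T_*$ weighted by their dimension vector, and the two asserted equations become purely combinatorial identities about trees in the covering quivers $Q_i$. I shall work throughout in the formal power series ring ${\bf Q}[[I]]$, where all the sums below are well defined because there are only finitely many trees, hence finitely many $n$-forests, of each fixed dimension vector.

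First I would prove the product formula. By definition an $n$-forest is a tuple $(T_{ij})_{i\in I,\, j=1,\ldots,n_i}$ of trees $T_{ij}$ in $Q_i$, and its dimension vector is $d(T_*)=\sum_{i,j}d(T_{ij})$. Writing $F_i(t)=\sum_d(\#\{\mbox{trees in }Q_i\mbox{ of dimension vector }d\})\,t^d$, which is exactly the special case $n=e_i$ of the definition, the multiplicativity $t^d t^e=t^{d+e}$ together with the additivity of $d(T_*)$ over the components shows that the generating function of tuples is the product of the generating functions of the factors. Hence $F_n(t)=\prod_{i\in I}F_i(t)^{n_i}$, the empty forest supplying the constant term $1$.

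Next I would establish the recursion by exploiting the self-similar structure of the tree $Q_i$. The quiver $Q_i$ has a unique source, the empty path at $i$, which projects to the vertex $i$ of $Q$. A tree $T$ in $Q_i$ is either empty --- contributing the constant term $1$ --- or contains this root, which contributes the factor $t_i$ to $t^{d(T)}$. The children of the root are indexed by the arrows $(\alpha:i\rightarrow j)$ out of $i$, the child along $\alpha$ being the one-arrow path. After deleting the root, the remainder of $T$ decomposes into its restrictions to the branches hanging from the various children, and these restrictions may be chosen independently, subject only to closure under predecessors within each branch.

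The crucial observation is that the branch of $Q_i$ spanned by the descendants of the child along $(\alpha:i\rightarrow j)$ is isomorphic, as a tree equipped with its projection to $Q$, to the covering quiver $Q_j$: a path in $Q$ starting at $i$ whose first arrow is $\alpha$ is the same datum as a path starting at $j$ (delete the initial $\alpha$), and this bijection preserves terminal vertices, hence dimension vectors. Thus the restriction of $T$ to that branch is precisely a tree in $Q_j$, with generating function $F_j(t)$. Assembling the independent branch choices multiplicatively and including the root factor gives $F_i(t)=1+t_i\cdot\prod_{\alpha:i\rightarrow j}F_j(t)$. The one step deserving genuine care is this identification of the branches of $Q_i$ with the trees $Q_j$ together with the correct bookkeeping of terminal vertices; once it is in place, both generating-function identities follow formally, with no further input beyond the cell-decomposition count.
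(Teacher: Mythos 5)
Your proposal is correct and follows exactly the route the paper intends: the cell decomposition theorem identifies $\chi_c({\rm Hilb}_{d,n}(Q))$ with the number of $n$-forests of dimension vector $d$ (each affine cell contributing $1$), the product formula comes from an $n$-forest being an independent tuple of trees in the $Q_i$, and the recursion comes from splitting a nonempty tree in $Q_i$ into its root (factor $t_i$) and its branches, each of which is identified with $Q_j$ via deletion of the initial arrow $\alpha:i\rightarrow j$. The paper leaves these "combinatorial considerations" implicit; you have simply spelled them out, including the one genuinely non-trivial point (the branch $\simeq Q_j$ identification preserving the projection to $Q$).
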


One might conjecture that the generating functions $\sum_{d\in{\bf N}I_\mu}\chi_c(M_{d,n}^\Theta(Q))t^d\in{\bf Q}[[I]]$ for arbitrary smooth models are always algebraic.

\frenchspacing

\end{document}